        \ifstr{\headmark}{}{\KOMAoptions{headsepline=false}}{}
\let\nonClearingSection\section
\def\section{\cleardoublepage\nonClearingSection}
\theoremstyle{plain}
\newtheorem{thm}     {Theorem}[subsection]
\newtheorem{cor}[thm]{Corollary}
\newtheorem{pro}[thm]{Proposition}
\newtheorem{lem}[thm]{Lemma}
\theoremstyle{definition}
\newtheorem{dfn}[thm]{Definition}
\newtheorem{ntt}[thm]{Notation}
\newtheorem{exm}[thm]{Example}
\newtheorem{rem}[thm]{Remark}
\theoremstyle{remark}
\newcommand{\BeginSimonFigure}{
    \begin{figure}[H]
    \centering
}
\newcommand{\EndSimonFigure}[2]{
    \addtocounter{thm}{1}
    \captionof{figure}{#2}
    \label{#1}
    \end{figure}
}
\numberwithin{equation}{thm}
\newcommand{\proj}    {\mathnormal{\textup{pr}}}
\newcommand{\id}      {\mathnormal{\textup{id}}}
\newcommand{\incl}    {\mathnormal{\textup{incl}}}
\newcommand{\del}     {\partial}
\newcommand{\loto}[1] {\stackrel{#1}{\longrightarrow}}
\newcommand{\nat}    {\mathbb N}
\newcommand{\zet}    {\mathbb Z}
\newcommand{\rel}    {\mathbb R}
\newcommand{\com}    {\mathbb C}
\newcommand{\restr}{\restriction}
\newcommand{\cont}    {\mathnormal{\mathscr C}}
\newcommand{\conto}   {\mathnormal{\mathscr{C}_0}}
\newcommand{\dualalg} {\mathnormal{\mathfrak D^*}}
\newcommand{\cualalg} {\mathnormal{\mathfrak C^*}}
\newcommand{\qualalg} {\mathnormal{\mathfrak Q^*}}
\newcommand{\fualalg} {\mathnormal{\mathfrak F^*}}
\newcommand{\im}     {\mathop{\textup{im}}}
\newcommand{\supp}   {\mathop{\textup{supp}}}
\newcommand{\colim} {\mathop{\mathrm{colim}}}
\newcommand{\catc}       {\mathnormal{\mathrm{\underline{C}}}}
\newcommand{\catcstar}   {\mathnormal{\mathrm{\underline{C^*\!A}}}}
\newcommand{\starhom}   {$\ast$-ho\-mo\-mor\-phism}
\newcommand{\starhoms}  {$\ast$-ho\-mo\-mor\-phisms}
\newcommand{\norm}[1]   {{\left\|{#1}\right\|}}
\newcommand{\betr}[1]   {{\left|{#1}\right|}}
\newcommand{\varemptyset}{\varnothing}
\newcommand{\subs}{\subseteq}
\newcommand{\set}[2] {{\left\{ #1 : \, #2 \right\}}}
\newcommand{\card}[1] {\left|{#1}\right|}
\newcommand{\setton} {{\left\{0, 1, \ldots, n\right\}}}
\newcommand{\mama}     {\mamaxy{2em}{2.5em}}
\newcommand{\mamax}[1] {\mamaxy{#1}{2.5em}}
\newcommand{\mamaxy}[2]{\matrix (m) [matrix of math nodes, row sep=#2,  column sep=#1,  text height=2ex, text depth=0.6ex]}
\newcommand{\maeq}    {edge [double distance = 2.5pt]}
\newcommand{\masseq}{\matrix (m) [matrix of math nodes,
             nodes in empty cells,
             nodes={minimum width=5ex, minimum height=5ex,
             inner sep=3pt, outer sep=0pt, anchor=north},
             column sep=2ex, row sep=0ex]}
\newcommand{\drawSseqAxes}[2]{
    \draw[->,thick] (m-#2-1.north east) -- (m-1-1.north east) ;
    \draw[->,thick] (m-#2-1.north east) -- (m-#2-#1.north east) ;
    \draw ($(m-#2-#1.north east)+(0,-.3)$) node [above right] {$p.$};
    \draw ($(m-1-1.north east)+(-.25,.02)$) node [above right] {$q$};
}
\newcommand{\eiddot}[3]
{
    \draw[fill=black] (#1) circle (0.15 em) node[#2] {\tiny $#3$};
}
\newcommand{\textSchochetSseq}
{Let $A = \overline{\bigcup_{p\in\nat} I_p}$ be a C*-algebra, where the
    $I_0 \subseteq I_1 \subseteq I_2 \subseteq \dotsb \subseteq I_p \subseteq
    \dotsb$ form a chain of closed
    two-sided ideals. There is a spectral sequence $\{E^r_{p,q}, d^r\}_{r,p,q}$
    with
    \[
        E^1_{p,q} = K_{p+q}(I_p/I_{p-1}).
    \]
    This spectral sequence converges strongly to $K_* A$; i.e.,
    given $s \in \zet$,
    the groups $E^\infty_{p,q}$ along the diagonal $s = p+q$
    pose an extension problem to reconstruct $K_s A$.
}
\newcommand{\equationEOneFinite}
{
    E^1_{p,q} \cong
        \begin{dcases*}
            \bigoplus_{\card J = p+1}
                K_{q}\Big( \bigcap_{j \in J} I_j \Big)
            &
            for $0 \leq p \leq n$,
            \\
            0 & for $p < 0$ or $p > n$.
        \end{dcases*}
}
\newcommand{\equationsumofnfoldsuspensions}
{
    K_{p+q} \left(Q_p/Q_{p-1}\right)
        \cong \bigoplus_{\card J = p+1} K_{q+n}
        \Big( \bigcap_{j \in J} I_j \Big)
}
\newcommand{\equationDefineB}
{
    \begin{align*}
        B &= B(I_0, I_1, \ldots, I_n) \\
        &= \set{f\colon \Delta^n \to A = \sum_{j=0}^n I_j}
        {f \textrm{ continuous, }
        f \restr \partial \Delta^n = 0 \textrm{, } f(\Delta^n_j) \subseteq I_j
            \textrm{ for all } j}.
    \end{align*}
}
\newcommand{\equationDefineBJ}
{
    \[
        B_J = \set{f \in B}
        {\textrm{for each } j' \notin  J \textrm{, } f(\Delta^n_{j'}) = 0}.
    \]
}
\newcommand{\kTheoryOfRelN}
{
    \[
        K_s \cualalg \rel^n =
        \begin{cases}
            \zet & \textrm{for } s - n \textrm{ even,} \\
            0 & \textrm{for } s - n \textrm{ odd.}
      \end{cases}
    \]
}
\newcommand{\letFualalgBeEither}
{Let $\fualalg$ be either the functor
    $\cualalg$ from the coarse category to $\catcstar$ or one
    of the functors $\dualalg$ or $\qualalg$ from the coarse-continuous
    category to $\catcstar$.
}
\newcommand{\thmSseqCoarselyExcisiveCoversForFor}[2]
{
    \letFualalgBeEither
    There is a spectral sequence
    $\{E^r_{p,q}, d^r\}_{r,p,q}$ with
    \[
        E^1_{p,q} \cong
            \begin{dcases*}
                \bigoplus_{\card J = p+1}
                    K_q \fualalg \Big( \bigcap_{j \in J} X_j \Big)
                &
                #1
                \\
                0 &
                #2
            \end{dcases*}
    \]
}
\newcommand{\withRespectToCoarseOrCoarseContinuousMorphisms}
{with respect to morphisms (coarse maps for $\cualalg$,
    or coarse and continuous maps for $\dualalg$ and $\qualalg$)
    to other coarse spaces with compatible coarsely excisive covers
    (Definition \ref{dfn:compatibleCoarseCovers}).
}
\newcommand{\sseqUnendlich}
{
    E^1_{p,q} \cong
        \begin{dcases*}
            \bigoplus_{\card J = p+1} K_q
                \Big( \bigcap_{j \in J} I_j \Big)
            & for $p \geq 0$, \\
            0 & for $p < 0$,
        \end{dcases*}
}
\newcommand{\syntaxHighlightWorkaround}
{
        X_j =
        \begin{cases}
            {\left]-\infty, 0\right]}
                \times \rel^{n-1} & \textrm{for } j = 0, \\
            {\left[0, \infty \right[}^i
                \times {\left]-\infty, 0\right]}
                \times \rel^{n-j-1} & \textrm{for } 0 < j < n, \\
            {\left[0, \infty \right[}^n & \textrm{for } j = n. \\
        \end{cases}
}
\newcommand{\intCO}[2]{{\left[{#1},{#2}\right[}}
\newcommand{\intOC}[2]{{\left]{#1},{#2}\right]}}
\begin{document}
    \pagenumbering{roman}
    
\thispagestyle{empty}

\phantom{a}
\vspace{1 cm}

\begin{center}
    {\fontfamily{lmss}\selectfont\Huge
        A Mayer-Vietoris Spectral Sequence\\
        for C*-Algebras and Coarse Geometry\\}
    \vspace{3 cm}
    {\Large Dissertation\\}
    \vspace{0.5 cm}
    {zur Erlangung des mathematisch-naturwissenschaftlichen Doktorgrades\\}
    {„Doctor rerum naturalium“\\}
    {der Georg-August-Universit\"at G\"ottingen\\}
    \vspace{0.5 cm}
    {im Promotionsprogramm „School of Mathematical Sciences“\\}
    {der Georg-August University School of Science (GAUSS)\\}
    \vspace{5 cm}
    {vorgelegt von\\}
    {Simon Naarmann\\}
    {aus Lippstadt\\}
    \vspace{0.5 cm}
    {Göttingen, 2018\\}
\end{center}

    \newpage
    \thispagestyle{empty}
    
\newcommand{\ugoe}{Georg-August-Universität Göttingen}
\newcommand{\mthI}{Mathematisches Institut}
\newcommand{\namI}{Institut für Numerische und Angewandte Mathematik}
\newcommand{\vspI}{\vspace{0.3 cm}}
\newcommand{\pppI}[1]{
    \paragraph{#1}
    \mbox{}\\

    \vspace{-0.2 cm}
}

\pppI{Betreuungsausschuss}

\begin{tabular}{ll}
Erstbetreuer: & \textbf{Prof.\ Dr.\ Thomas Schick} \\
& \mthI{} \\ \vspI
& \ugoe{} \\
Zweitbetreuer: & \textbf{Prof.\ Dr.\ Ralf Meyer} \\
& \mthI{} \\
& \ugoe{} \\
\end{tabular}

\pppI{Mitglieder der Prüfungskommission}

\begin{tabular}{ll}
Referent: & \textbf{Prof.\ Dr.\ Thomas Schick} \\
& \mthI{} \\ \vspI
& \ugoe{} \\
Koreferent: & \textbf{Prof.\ Dr.\ Ralf Meyer} \\
& \mthI{} \\
& \ugoe{} \\
\end{tabular}

\pppI{Weitere Mitglieder der Prüfungskommission}

\begin{tabular}{ll}
\textbf{Prof.\ Dr.\ Gert Lube} \\
\namI{} \\ \vspI
\ugoe{} \\

\textbf{Prof.\ Dr.\ Thorsten Hohage} \\
\namI{} \\ \vspI
\ugoe{} \\

\textbf{Prof.\ Dr.\ Ingo Witt} \\
\mthI{} \\ \vspI
\ugoe{} \\

\textbf{Prof.\ Dr.\ Chenchang Zhu} \\
\mthI{} \\
\ugoe{} \\
\end{tabular}

\pppI{Tag der mündlichen Prüfung}
\vspace{-0.1 cm}

\begin{tabular}{l}
10. September 2018\\
\end{tabular}

    \newpage
    
\section*{Abstract}

Let $A$ be a C*-algebra that is the norm closure
$A = \overline{\sum_{\beta \in \alpha} I_\beta}$
of an arbitrary sum of C*-ideals $I_\beta \subs A$. We construct a homological
spectral sequence that takes as input the K-theory of
$\bigcap_{j \in J} I_j$ for all finite nonempty index sets $J \subs \alpha$
and converges strongly to the K-theory of $A$.

For a coarse space $X$, the Roe algebra $\cualalg X$ encodes large-scale
properties. Given a coarsely excisive cover
$\{X_\beta\}_{\beta \in \alpha}$ of $X$, we reshape
$\cualalg X_\beta$ as input for the spectral sequence.
From the K-theory of $\cualalg \big( \bigcap_{j \in J} X_j \big)$
for finite nonempty index sets $J \subs \alpha$, we compute
the K-theory of $\cualalg X$ if $\alpha$ is finite, or of a direct limit
C*-ideal of $\cualalg X$ if $\alpha$ is infinite.

Analogous spectral sequences exist for the algebra $\dualalg X$
of pseudocompact finite-propagation operators that contains the Roe algebra
as a C*-ideal, and for $\qualalg X = \dualalg X / \cualalg X$.

    \newpage
    
\section*{Acknowledgements}

I sincerely thank my main advisor Professor Dr.~Thomas Schick
for sparking my interest in coarse geometry, for suggesting
research goals that matched my background in algebraic topology, and
for his excellent supervision throughout my project.
He granted me the liberty to expand my research in
directions that interested me most.
I have felt deep mutual trust both as his doctorate student and as his
teaching assistant. I continue to respect his knowledge and intuition.

I thank my secondary advisor Professor Dr.~Ralf Meyer for his support and
his formidable strategic advice. I have known both Professor Dr.~Ralf Meyer
and Professor Dr.~Thomas Schick
since my Bachelor's and Master's studies; they laid the foundation for my
research.

I thank my parents Elianne and Bernd Naarmann and my brother Julian
for their patience and encouragement throughout this project.
From my childhood on, they have conveyed the importance of good education.

I thank my girlfriend Sylvia for her emotional support and her understanding.

I thank my friends and fellow mathematicians Thorsten and Mehran
for their unconditional enthusiasm and their inspiration.

My work was funded by the German Research Foundation (DFG) via
the Research Training Group 1493,
Mathematical Structures in Modern Quantum Physics,
from June 2013 through September 2013.
Afterwards,
I worked half-time as a scientific assistant
at the Mathematical Institute in Göttingen from October 2013 through
August 2017.
Finally, I received a DFG stipend
from November 2017 through December 2017.

I am grateful for all support and funding.

    \newpage
    \tableofcontents

    \newpage
    \pagenumbering{arabic}
    
\section{Introduction}\thispagestyle{plain}

\subsection{Our result for abstract C*-algebras}

C*-algebras arise in mathematics and theoretical physics alike.
K-theory is a fundamental tool to study and classify these highly-structured
algebras.

As a covariant $\zet$-graded functor of C*-algebras over $\com$,
K-theory is continuous, is additive,
admits suspension isomorphisms $K_s SA \cong K_{s+1} A$, and admits
a cyclic 6-term exact sequence induced by C*-ideal inclusions
$I \subs A$ via boundary maps and Bott periodicity
$\beta\colon K_s A \cong K_{s+2} A$.
Furthermore, for a sum $A = I_0 + I_1$ of two C*-ideals, there is a
Mayer-Vietoris exact sequence,
\[
    \begin{tikzpicture}
        \mama {
            K_0 (I_0 \cap I_1)
            & K_0 I_0 \oplus K_0 I_1
            & K_0 A \\
            K_1 A
            & K_1 I_0 \oplus K_1 I_1
            & K_1 (I_0 \cap I_1).
            \\
        };
        \path[->, font=\scriptsize]
            (m-1-1) edge (m-1-2)
            (m-1-2) edge (m-1-3)
            (m-1-3) edge node[right]{$\del_2^\mathrm{MV} \circ \beta$}
                (m-2-3)
            (m-2-3) edge (m-2-2)
            (m-2-2) edge (m-2-1)
            (m-2-1) edge node[left]{$\del_1^\mathrm{MV}$} (m-1-1)
        ;
    \end{tikzpicture}
\]
In algebraic topology, a similar Mayer-Vietoris sequence computes
the homology or cohomology of a space $X$ from a cover
$X = X_0^\circ \cup X_1^\circ$ by relating the (co)homology of $X_0$, $X_1$,
and $X_0 \cap X_1$.
This topological Mayer-Vietoris sequence generalizes to a spectral
sequence: Given a suitable cover $\{X_\beta\}_{\beta \in \alpha}$
of a topological space, the spectral sequence
takes as input the (co)homology of
$\bigcap_{j \in J} X_j$ for all finite nonempty $J \subs \alpha$
and converges to the (co)homology of $X$, the full space.

It is natural to seek
analogous spectral sequences for the
K-theory Mayer-Vietoris exact sequence.
This is our first main result:

\begin{restatable*}[Spectral sequence for arbitrary sums]{thm}
    {restateAbstractSseqUncountable}
\label{thm:uncountableAbstractSseq}
    Let $\alpha$ be an arbitrary index set: finite, countable, or uncountable.
    Let $A = \overline{\sum_{\beta \in \alpha} I_\beta}$
    be the norm closure of a sum of $\card{\alpha}$-many C*-ideals
    $I_\beta \subs A$.
    There is a spectral sequence
    $\{E^r_{p,q}, d^r\}_{r,p,q}$ with
    \[
        \sseqUnendlich
    \]
    where $J$ ranges over all nonempty finite index subsets $J \subs \alpha$.
    In general, this is a half-page spectral sequence, any term $E^1_{p,q}$
    with $p \geq 0$ may be nonzero.

    This spectral sequence converges strongly to $K_* A$.
    It is functorial with respect to \starhoms{} that preserve
    $\alpha$-indexed ideal decompositions.
\end{restatable*}

To prove this, we begin with the finite case $A = I_0 + I_1 + \dotsb + I_n$
and construct C*-algebras of continuous functions from the standard
simplex $\Delta^n$ to $A$, interlocking allowed ranges in the $I_j \subs A$
on different regions of the simplex.

Sums of these function algebras become a chain of ideals
$Q_0 \subs Q_1 \subs \dotsb \subs Q_n$.
This chain of ideals fits into an already-known spectral sequence that
converges strongly to $K_* Q_n \cong K_*(S^n A)$,
the K-theory of the $n$-fold suspension of $A$.
This spectral sequence for ideal
inclusions has been described by C. Schochet in \cite{schochet-sseq};
we reprove it to highlight its inner workings, its differentials, and
its filtration that guarantees strong convergence.
Compared to that spectral sequence, our
Theorem \ref{thm:uncountableAbstractSseq} relaxes the input conditions:
We do not require that the $I_j$ form a chain of inclusions.

For countable index sets $\alpha$, we link the spectral sequences for
$n$ ideals and $n+1$ ideals
-- this is only possible on the level of K-theory, not on the level of
C*-algebras -- and construct a filtration on the spectral sequence via a
suitable direct limit.
For uncountable sets $\alpha$, we adapt our direct limit construction
to the directed system of finite subsets of $\alpha$.

\subsection{Our application in coarse geometry}

Coarse geometry studies the large-scale structure of metric spaces. If two
spaces differ only within a compact set, coarse invariants will not detect
any difference.

For a coarse space $X$, i.e., a metric space $(X, d)$,
the Roe algebra $\cualalg X$ encodes such large-scale
properties. This C*-algebra and the larger algebra $\dualalg X$ are
introduced, e.g., by N. Higson and J. Roe in \cite{higson2000analytic},
or by J. Roe in \cite{roe-book}. Via these algebras, the
K-homology of $X$ and further invariants of contemporary research are defined,
such as the coarse index when $X$ is a Riemannian manifold. For our work,
it suffices to define $\qualalg X = \dualalg X / \cualalg X$; we will not
formulate our results in the language of K-homology.

For certain sets $X_0$ and $X_1$ with $X_0 \cup X_1 = X$, there is a coarse
Mayer-Vietoris exact sequence: It relates the K-theory of $\cualalg X_0$,
$\cualalg X_1$, and $\cualalg(X_0 \cap X_1)$ to the K-theory of $\cualalg X$.
Its proof, e.g., in \cite{roe-book},
relies on the Mayer-Vietoris sequence for two abstract C*-ideals.

We generalize to arbitrarily many regions.
A decomposition $X = \bigcup_{\beta \in \alpha} X_\beta$
is called \emph{coarsely excisive} if, for all nonempty finite
subcollections $J \subs \alpha$ and $R > 0$, there exists $S > 0$ such
that the intersection of the $R$-neighborhoods is contained in the
$S$-neighborhood of the intersection according to the metric $d$ on $X$:
\[
    \bigcap_{j \in J} N_d(X_j, R) \subseteq
    N_d\Big(\bigcap_{j \in J} X_j, S\Big).
\]
This leads to our second main result:

\begin{restatable*}[Spectral sequence for coarsely excisive covers]
    {thm}{restatSseqCoarseUncountable}
\label{thm:sseqCoarseUncountable}
    Let $(X, d)$ be a coarse space and
    let $\{X_\beta\}_{\beta \in \alpha}$
    be a coarsely excisive cover of $(X, d)$.
    \thmSseqCoarselyExcisiveCoversForFor{
        for $p \geq 0$,}{
        for $p < 0$,
    }
    where $J$ ranges over all nonempty finite subcollections of indices
    in $\alpha$. For finite $\alpha$, this spectral sequence converges
    strongly to $K_* \fualalg X$. In general, the spectral
    sequence converges strongly to the K-theory of
    $\overline{\bigcup_J \sum_{j \in J} \fualalg(X_j \subs X)}$,
    a C*-ideal of $\fualalg X$,
    where $J$ ranges over all finite subcollections of indices in $\alpha$.
    The spectral sequence is functorial
    \withRespectToCoarseOrCoarseContinuousMorphisms
\end{restatable*}

As an example,
we recompute the known K-theory of
$\cualalg \rel^n$.
Also, we find an infinite coarsely excisive cover
of $\zet^\infty$ under many metrics, then show that the K-theory of the
direct limit ideal of Roe algebras for this cover in
$\cualalg \zet^\infty$ vanishes.
Furthermore, we compute
the K-theory of the direct limit of Roe algebras for a countable wedge sum
$\bigvee_{\nat} \intCO{0}{\infty}$ in a single application
of the spectral sequence; this obviates inductive proofs with
the Mayer-Vietoris exact sequence.

\subsection{Relations to other research}

Early motivation for this project was the
Partitioned Manifold Index Theorem
\cite[Proposition 4.9]{siegel-mv}:
Given certain Riemannian manifolds $N \subs M$ with
$G$-equi\-variant covers, the classes of their Dirac operators -- elements in
K-homology -- map to the same element in $K_* C^*_r G$, the K-theory of
the group C*-algebra for $G$, via the coarse index maps. P. Siegel proves
this by induction with the coarse Mayer-Vietoris principle for two regions.
Our idea was to reprove this theorem by a single application
of our Theorem \ref{thm:sseqCoarseUncountable}.

Spectral sequences, however, do not construct specific maps on their targets;
even strong convergence only leads to isomorphism theorems.
Still, if the spectral sequence cannot compute the equality
for the Partitioned Manifold Index Theorem,
it can classify related C*-algebras for coarse spaces in this setting
and decide about the structure of possible morphisms between them.

Sums or inclusion chains of abstract C*-ideals also arise in other settings.
In \cite{meyer-mukherjee}, D. Mukherjee and R. Meyer construct
a gauge-invariant C*-algebra $\mathcal T_0$ of the Toeplitz algebra
$\mathcal T$ for partial product systems.
By \cite[Theorem 4.5]{meyer-mukherjee}, $\mathcal T_0$ is a direct limit
along $N \in \nat$ of images of maps from $\bigoplus_{n < N} K \mathcal{E}_n$
into $\mathcal T_0$ where the
$\mathcal E_n$
are the compact opreators of correspondences.
These images are C*-subalgebras of $\mathcal T_0$. With
our spectral sequences, to compute the K-theory of $\mathcal T_0$,
we may examine the K-theory of quotients or intersections of these
subalgebras.

\subsection{Structure of this thesis}

Section 2, Fundamentals, establishes the notation and gives an overview of
the basic tools. The reader will likely be familiar with several constructions.
For a metric space $X$, we present the coarse algebras
$\cualalg X$, $\dualalg X$, and $\qualalg X = \dualalg X / \cualalg X$.

In Section 3, Ideal inclusions, we show a spectral sequence that takes
a chain of C*-ideal inclusions
$I_0 \subs I_1 \subs I_2 \subs \dotsb \subs I_p \subs \dotsb$
across all $p \in \nat$. Similar to the material in Section 2, this spectral
sequence is known theory. Nonetheless, we reprove it in detail. This
will be the basis for the research work in the following sections.

In Section 4, Finite sums of ideals, we construct our spectral sequence
that takes intersections $\bigcap_{j \in J} I_j$ of C*-ideals $I_j$
from a finite sum $A = I_0 + I_1 + \dotsb + I_n$.

In Section 5, Finite coarse excision, we define coarsely excisive covers
and relative coarse algebras. For finite subcollections of a coarsely excisive
cover, we show that these relative algebras behave well under intersections
and unions. This leads to a version of our spectral sequence for finite
coarsely excisive covers.
As an example, we recompute the K-theory of $\cualalg \rel^n$.

In Sections 6 and 7, Infinite sums of ideals and
Infinite coarse excision,
we relax the condition that $A = I_0 + I_1 + \dotsb + I_n$ needs to
be a finite sum: Now $A = \overline{\sum_{\beta \in \alpha} I_\beta}$
may be a direct limit of
sums of arbitrarily many C*-ideals. The spectral sequences from Sections 4
and 5 first generalize to countable ideal decompositions as input
instead of only finite decompositions, then to uncountable decompositions.
As examples, we compute the K-theory of direct limits of Roe algebras
for $\zet^\infty$ and $\bigvee_{\nat} \intCO{0}{\infty}$.

In Section 8, Generalizations, we list ideas
for real KO-theory
and equivariant spaces.

\section{Fundamentals}\thispagestyle{plain}
\label{sec:foundations}

We will rehearse well-known constructions
to establish notation and conventions, beginning with C*-algebras
and their K-theory. We introduce the basics of coarse geometry and
spectral sequences.

The set $\nat$ of natural numbers includes $0$.

\subsection{C*-algebras}

\begin{dfn}[Complex Banach algebra]
    Let $(A, \left\|-\right\|)$
    be a normed associative algebra over $\com$ that is topologically
    complete according to its norm.
    For all $x$, $y \in A$, the following inequality shall hold:
    $\left\|xy\right\| \leq \left\|x\right\| \left\|y\right\|$.
    Then we call $A$ a \emph{complex Banach algebra}.
\end{dfn}

\begin{dfn}[C*-algebra, C*-ideal]
    Let $A$ be a complex Banach algebra.
    Let $A$ carry an involution $\ast$, i.e., a map $A \to A$, $x \mapsto x^*$,
    satisfying $x^{**} = x$,
    $(\lambda x + y)^* = \overline \lambda x^* + y^*$,
    and $(xy)^* = y^* x^*$ for all
    $x$, $y \in A$ and $\lambda \in \com$.
    Furthermore, $\ast$ shall satisfy the \emph{C*-identity}
    $\left\|x x^*\right\| = \left\|x\right\|^2$.
    Then $(A, *)$ is a \emph{C*-algebra}.

    A closed two-sided ideal in a C*-algebra is called a \emph{C*-ideal}.
\end{dfn}

\begin{rem}
    The zero algebra $0$ is a C*-algebra. The complex numbers $\com$ themselves
    are a C*-algebra with
    $z \mapsto |z|$ as the norm and $z \mapsto \overline z$ as the
    $\ast$-operation.

    For a C*-algebra $A$ with a C*-ideal $I \subseteq A$, the quotient
    $A/I$ is a well-defined C*-algebra.

    Let $\alpha$ be an arbitrary index set and let $A_\beta$
    be a C*-algebra for each $\beta \in \alpha$.
    The norm completion
    $\overline{\bigoplus_{\beta \in \alpha} A_\beta}$
    of the algebraic direct sum
    is again a C*-algebra with the
    norm $\|(a_\beta)_{\beta \in \alpha}\|
        = \sup \set{ \left\|a_{\beta}\right\| }{\beta \in \alpha}$ and
    component-wise addition, multiplication, and involution.
\end{rem}

\begin{dfn}[Category $\catcstar$ of C*-algebras]
    A bounded algebra homomorphism
    $f\colon A \to B$ between two C*-algebras is called
    a \emph{\starhom{}} if it preserves the involution:
    $f(x^*) = f(x)^*$ shall
    hold for all $x \in A$.

    The category $\catcstar$ encompasses all C*-algebras as objects,
    together with all \starhoms{} as arrows.
\end{dfn}

\begin{dfn}[Homotopy of C*-algebras]
    Let $f$, $g\colon A \to B$ be \starhoms{} between C*-algebras.
    A \emph{homotopy} in the category of C*-algebras between $f$ and $g$
    is a map $H\colon (A \times [0,1]) \to B$ that satisfies:
    \begin{itemize}
        \item For all $a \in A$, $H(a, 0) = f(a)$ and $H(a, 1) = g(a)$.
        \item For all $a \in A$, the map $[0,1] \to B$, $t \mapsto H(a, t)$
            is continuous.
        \item For all $t \in [0,1]$, the map $A \to B$, $a \mapsto H(a, t)$
            is a \starhom.
    \end{itemize}
    If such a homotopy exists, then $f$ and $g$ are called \emph{homotopic},
    denoted by $f \sim g$.

    A \starhom{} $f\colon A \to B$ is a
    \emph{homotopy equivalence} if there exists a \starhom{} $g\colon B \to A$
    such that $g \circ f \sim \id(A)$ and $f \circ g \sim \id(B)$.
    Existence of a homotopy equivalence $A \to B$ is denoted by
    $A \simeq B$; then $A$ is called \emph{homotopy equivalent} to $B$.

    A C*-algebra $A$ is called \emph{contractible} if $A \simeq 0$.
\end{dfn}

Homotopy equivalence as C*-algebras is a stronger condition than
topological homotopy equivalence.

\begin{lem}
\label{lem:noContractibleProjections}
    Let $A$ be a contractible C*-algebra and
    $p = p^* = p^2$ a projection in $A$.
    Then $p = 0$.
\end{lem}

\begin{proof}
    Let
    $H\colon A \times [0,1] \to A$ be the contracting homotopy.
    For all $t \in [0,1]$, we have
    $\norm{H(p, t)} = \norm{H(pp^*, t)} = \norm{H(p,t)}^2 \in \rel_{\geq 0}$
    because $x \mapsto H(x, t)$ is a \starhom{}.
    This implies $\norm{H(p,t)} \in \{0,1\}$ and, because of continuity,
    this norm stays constant across all $t \in [0,1]$.
    By construction, $H(p, 1) = 0$, thus $\norm{H(p, 0)} = 0$ and $p = 0$.
\end{proof}

\begin{cor}
    The C*-algebra $\com$ is contractible as a topological space,
    but not contractible as a C*-algebra.
\end{cor}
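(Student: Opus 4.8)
The plan is to treat the two assertions separately, since together they illustrate exactly the gap noted right after Lemma \ref{lem:noContractibleProjections}: a topological contraction need not respect the multiplicative structure of a C*-algebra.

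For topological contractibility, I would exhibit the straight-line homotopy $H \colon \com \times [0,1] \to \com$ given by $H(z, t) = (1-t)\, z$. This map is jointly continuous, and it satisfies $H(z, 0) = z$ and $H(z, 1) = 0$ for every $z \in \com$; since $\com$ is homeomorphic to $\rel^2$, this contracts $\com$ to the point $0$. Hence $\com$ is contractible as a topological space, with no further work required.

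For C*-algebraic non-contractibility, I would simply invoke Lemma \ref{lem:noContractibleProjections}. The unit $1 \in \com$ is a projection: it satisfies $1 = 1^* = 1^2$ and $1 \neq 0$. Were $\com$ contractible as a C*-algebra, the lemma would force $1 = 0$, a contradiction. Therefore no contracting C*-homotopy on $\com$ can exist.

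There is no genuine obstacle here; the only point worth spelling out is why the two notions diverge. The topological contraction $H(z,t) = (1-t)\, z$ is \emph{not} a homotopy in the category of C*-algebras, because for $0 < t < 1$ the slice $z \mapsto (1-t)\, z$ fails to be multiplicative: one has $(1-t)(zw) \neq \big((1-t)z\big)\big((1-t)w\big)$ in general, since the two sides agree only when $(1-t) = (1-t)^2$, i.e.\ when $t \in \{0,1\}$. Lemma \ref{lem:noContractibleProjections} shows that this failure is unavoidable: the presence of the nonzero projection $1$ obstructs \emph{every} choice of C*-homotopy, not merely the naive linear one.
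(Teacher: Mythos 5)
Your proof is correct and matches the paper's argument: both use a linear topological contraction of $\com$ to the origin (the paper writes $(z,t) \mapsto tz$, you write $(1-t)z$ — the same homotopy up to reparametrization) and both rule out a C*-contraction by applying Lemma \ref{lem:noContractibleProjections} to the nonzero projection $1 \in \com$. Your closing remark on why the linear slice fails to be multiplicative for $0 < t < 1$ is a pleasant elaboration but not needed for the proof.
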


\begin{proof}
    One possible topological homotopy is $(z, t) \mapsto tz$.
    Lemma \ref{lem:noContractibleProjections}
    precludes a C*-contraction because
    $1 = \overline 1 = 1 \cdot 1$ is a nonzero projection in $\com$.
\end{proof}

\begin{dfn}[Cone]
    Let $A$ be a C*-algebra.
    The \emph{cone of $A$} is the C*-algebra
    \[
        CA =
        \set{f\colon [0,1] \to A}
        {f \textrm{ is continuous, } f(0) = 0}.
    \]
    It carries the uniform norm
    $\left\|f\right\| = \sup \set{\left\|f(x)\right\|}{x \in [0,1]}$;
    this is well-defined because $[0,1]$ is compact.
    Algebra multiplication on $CA$ is given by pointwise multiplication
    of functions. The involution on the cone
    is defined by $f^*(x) = f(x)^*$.

    Let $g\colon A \to B$ be a \starhom{}. The \emph{cone map}
    $Cg\colon CA \to CB$ is given by
    $(Cg)(f) = g \circ f\colon [0,1] \to B$. This construction turns
    $C\colon \catcstar \to \catcstar$ into a covariant functor.
\end{dfn}

This is still basic theory of C*-algebras, but it is reasonable to agree on
whether the functions $f$ in the cone must vanish at 0 or vanish at 1.
In later sections, we will construct a spectral sequence for C*-algebras;
some technical lemmas require cones of algebras.

\begin{pro}
\label{pro:coneiscontractible}
    Let $A$ be a C*-algebra. Then the cone $CA$ is contractible; the
    zero algebra is a strong deformation retract of $CA$.
\end{pro}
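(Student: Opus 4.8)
The plan is to exhibit an explicit homotopy that contracts $CA$ to the zero algebra, and to verify that it is in fact a strong deformation retract. The cone $CA$ consists of continuous functions $f\colon [0,1] \to A$ with $f(0) = 0$. The natural idea is to shrink the ``active'' part of each function towards the endpoint $0$ where the function already vanishes. Concretely, I would define $H\colon CA \times [0,1] \to CA$ by rescaling the time parameter: for $s \in [0,1]$, set $H(f, s)$ to be the function $x \mapsto f(sx)$, or equivalently $x \mapsto f((1-s)x)$ depending on which direction one wants the retraction to run. With the convention $H(f,s)(x) = f((1-s)x)$, at $s = 0$ we recover $H(f,0) = f$, and at $s = 1$ we get $H(f,1)(x) = f(0) = 0$, the zero map. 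This is the candidate contracting homotopy.

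**First I would** check that $H$ is well-defined, i.e., that $H(f,s) \in CA$ for every $f$ and $s$: the composite $x \mapsto f((1-s)x)$ is continuous as a composition of continuous maps, and it sends $0$ to $f(0) = 0$, so it indeed lies in the cone. Next I would verify the three defining properties of a homotopy of C*-algebras. The endpoint conditions $H(f,0) = f$ and $H(f,1) = 0$ are immediate from the formula. For each fixed $f$, continuity of $s \mapsto H(f,s)$ in the uniform norm follows from uniform continuity of $f$ on the compact interval $[0,1]$: small changes in $s$ produce uniformly small changes in $f((1-s)x)$ across all $x$. Finally, for each fixed $s$, the map $f \mapsto H(f,s)$ is a \starhom{}, since precomposition with the scalar rescaling $x \mapsto (1-s)x$ respects pointwise multiplication, pointwise involution, addition, and the norm inequality.

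**For the strong deformation retract claim**, I would note that the inclusion $0 \hookrightarrow CA$ and the constant map $CA \to 0$ are the structure maps, and the homotopy $H$ deformation-retracts $CA$ onto the single point $0$. The ``strong'' qualifier requires that the retraction fixes the image of the retract throughout the homotopy; since the only element of the zero subalgebra is the zero function, and $H(0, s)(x) = 0((1-s)x) = 0$ for all $s$, the homotopy is stationary on $0$, as required. This establishes that $0$ is a strong deformation retract of $CA$, and in particular $CA \simeq 0$, so $CA$ is contractible.

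**The main obstacle** I anticipate is not conceptual but bookkeeping: one must confirm that $H$ is genuinely a homotopy in the C*-algebraic sense (Definition of homotopy above) rather than merely a topological one, which amounts to checking that each slice $f \mapsto H(f,s)$ is a \starhom{} and that the only delicate point — joint behavior in $s$ — reduces cleanly to uniform continuity on the compact domain $[0,1]$. No single step is hard, but care is needed to match the precise convention (vanishing at $0$) fixed in the definition of the cone, so that the rescaling collapses the function towards the endpoint where it already vanishes and no spurious discontinuity is introduced.
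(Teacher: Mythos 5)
Your proposal is correct and is essentially the paper's own argument: the paper defines $H(f,t)(x) = f(tx)$, which is your homotopy with the parametrization reversed, and likewise observes that $H$ is stationary on the zero subalgebra to get the strong deformation retract. Your additional verifications (well-definedness, uniform continuity in the homotopy parameter, each slice being a \starhom{}) are routine details the paper leaves implicit.
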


\begin{proof}
    Define a homotopy $H\colon CA \times [0,1] \to CA$ by
    $H(f, t)(x) = f(tx)$.
    This is continuous because $f$ is continuous. It
    is the desired deformation retraction because
    $H(f, 0) = 0$ and $H(f, 1) = f$ for all $f \in CA$.
    Since $H(0, t)(x) = 0$ for all $t$ and
    $x \in [0,1]$, it is even a strong deformation retraction.
\end{proof}

\begin{dfn}[Suspension]
    Let $A$ be a C*-algebra and $CA$ its cone. The \emph{suspension of $A$}
    is the subalgebra
    \[
        SA = \set{f \in CA}{f(1) = 0}.
    \]
    The suspension $SA$ inherits its C*-algebra structure from $CA$.
    Likewise, $*$-homo\-mor\-phisms $\varphi\colon A \to B$ induce
    $S\varphi\colon SA \to SB$ via $S\varphi = (C\varphi) \restr SA$,
    making $S\colon \catcstar \to \catcstar$ another covariant functor.
\end{dfn}

    \subsection{C*-algebras for spaces}
\label{ssec:spacealgebras}

\begin{dfn}[$\cont(X,A)$, $\cont X$]
    Let $X$ be a compact Hausdorff space and $A$ a C*-algebra.
    Then $\cont(X, A)$ denotes the C*-algebra
    of $A$-valued continuous functions on $X$ with the sup-norm
    $\|f\|_{\cont(X,A)} = \sup\set{\|f(x)\|_A}{x \in X}$,
    pointwise addition and multiplication,
    and $f^*(x) = {f(x)}^*$.
    If $A$ is unital, $\cont(X, A)$ contains the constant function
    that maps all points in $X$ to $1 \in A$; this function is then a unit.

    Often, $A = \com$;
    we abbreviate by setting $\cont X = \cont(X, \com)$.
\end{dfn}

When $X$ fails to be compact, $\cont X$ is not a normed algebra because some
$A$-valued functions on $X$ are unbounded. More interesting
function algebras impose boundedness:

\begin{dfn}[$\conto(X,A)$, $\conto X$]
    Let $X$ be a locally compact Hausdorff space, $A$ a C*-algebra.
    A continuous function $f \colon X \to A$ \emph{vanishes at infinity}
    if, for all $\varepsilon > 0$, there exists a compact set $K \subseteq X$
    with
    \[
        \set{x \in X}{\left\|f(x)\right\| > \varepsilon} \subseteq K.
    \]
    The set of all such functions $f$ is denoted $\conto(X, A)$.
    In the common case $A = \com$,
    we shall write $\conto X = \conto(X, \com)$.

    Again, $\conto(X, A)$ carries a C*-algebra structure under
    pointwise multiplication and the sup-norm
    $\norm{f} = \sup \set{\norm{f(x)}}{x \in X}$.
\end{dfn}

\begin{rem}
    The sup-norm is well-defined because functions in $\conto(X, A)$
    are necessarily bounded.
    The C*-algebra $\conto(X, A)$ has a unit if and only if $X$ is
    compact and $A$ is unital.
    For compact $X$, the algebra $\conto(X, A)$ coincides with $\cont(X, A)$.

    Equivalent definitions of $\conto(X, A)$
    embed $X$ into an arbitrary compactification $Y$, then define
    $\conto(X, A)$ as the subset of all continuous functions $f\colon Y \to A$
    such that $f \restr (Y - X) = 0$, then restrict these functions to $X$.

    Taking $A$-valued
    functions that vanish at infinity is a contravariant functor from Hausdorff
    spaces with proper continuous maps into $\catcstar$:
    Let $X$ and $Y$ be Hausdorff spaces and let $f\colon X \to Y$ be a proper
    continuous map. Then $(- \circ f)\colon \conto(Y, A) \to \conto(X, A)$
    maps $g\colon Y \to A$ to $g \circ f\colon X \to A$. The composition
    $g \circ f$ vanishes at infinity because $f$ is proper.
\end{rem}

    \subsection{K-theory of C*-algebras}

The exact constructions of the K-theory $K_* A$ for a C*-algebra $A$
are lengthy and shall be omitted; several textbooks,
e.g., \cite{weggeolsen-93} or \cite{rordam}, cover all technical details.
The \emph{zeroth K-theory} group $K_0 A$ is the
Grothendieck group of equivalence classes of projections in a
ring of matrices over $A$ modulo homotopy equivalence.
The {first K-theory} group $K_1 A$ results from a similar construction
with unitary elements of the matrix ring instead of projections.

Both $K_0$ and $K_1$ become continuous covariant functors from
$\catcstar$ to abelian groups: For a morphism $f\colon A \to A'$, the
resulting morphism $K_*f\colon K_*A \to K_*A'$ applies $f$ to all matrix
entries before taking equivalence classes.

\begin{thm}[Suspension isomorphism]
    For a C*-algebra $A$, there is an isomorphism
    $\sigma\colon K_0 SA \to K_1 A$. This allows $\nat$-graded
    K-theory by defining $K_s A$ as $K_{s-1} SA$ inductively; some authors
    even define $K_1 A$ this way instead of via unitary matrix elements.
\end{thm}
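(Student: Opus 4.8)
The plan is to construct the inverse $\theta \colon K_1 A \to K_0 SA$ explicitly and then argue it is bijective, setting $\sigma = \theta^{-1}$. I write $\widetilde A$ for the minimal unitization and model $K_1 A$ as the group of homotopy classes of unitaries $u \in U_n(\widetilde A)$ under block sum. With the cone convention of the excerpt, $SA = \set{f \in CA}{f(1) = 0}$ consists of $A$-valued paths on $\intCC{0}{1}$ vanishing at both endpoints, so $SA \cong \conto(\intOO{0}{1}, A)$; an element of $M_m(\widetilde{SA})$ is a norm-continuous map $\intCC{0}{1} \to M_m(\widetilde A)$ of constant scalar part, taking a fixed scalar value at $t = 0$ and $t = 1$.

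Since $U_n(\com)$ is connected, every class in $K_1 A$ has a representative $u \in U_n(\widetilde A)$ of scalar part $1_n$, i.e.\ $u \in 1_n + M_n(A)$. For such $u$, join $\operatorname{diag}(u, u^*)$ to $1_{2n}$ by the standard rotation path
\[
    w_t = \operatorname{diag}(u, 1_n)\, R_t\, \operatorname{diag}(u^*, 1_n)\, R_t^{*},
    \qquad
    R_t = \begin{pmatrix} \cos\frac{\pi t}{2} & -\sin\frac{\pi t}{2} \\ \sin\frac{\pi t}{2} & \cos\frac{\pi t}{2} \end{pmatrix} \otimes 1_n,
\]
so that $w_0 = 1_{2n}$, $w_1 = \operatorname{diag}(u, u^*)$, and each $w_t$ again has scalar part $1_{2n}$. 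Put $e = \operatorname{diag}(1_n, 0_n)$ and $p_u(t) = w_t\, e\, w_t^{*}$. As $w_0$ and $w_1$ commute with $e$ we get $p_u(0) = p_u(1) = e$, and as every $w_t$ has scalar part $1$ the scalar part of $p_u(t)$ is the constant $e$. Thus $p_u$ is a projection in $M_{2n}(\widetilde{SA})$ with scalar part $e$, and I set $\theta([u]) = [p_u] - [e] \in K_0 SA$.

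Next I would check the routine points. The path $w_t$ depends continuously on $u$, so a homotopy of unitaries $u_s$ (within scalar part $1_n$) induces a homotopy of projections $p_{u_s}$ rel endpoints; stabilizing $u \mapsto \operatorname{diag}(u, 1)$ only adds a constant scalar summand to $p_u$, leaving $[p_u] - [e]$ fixed; hence $\theta$ descends to $K_1 A$. The homomorphism property holds because, after a rotation homotopy permuting blocks, $p_{\operatorname{diag}(u, v)}$ is homotopic rel endpoints to $p_u \oplus p_v$.

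The main obstacle is bijectivity, where I expect the real work. The clean route is an explicit inverse: a class in $K_0 SA$ is represented by a projection loop $p \colon \intCC{0}{1} \to M_m(\widetilde A)$ of constant scalar part, with common endpoint value a scalar projection $e_0$; lifting $p$ to a unitary path $v_t$ with $v_0 = 1_m$ and $p(t) = v_t\, e_0\, v_t^{*}$ (a continuous path of projections always lifts to such a unitary path) yields an endpoint unitary $v_1$ commuting with $e_0$, whose compression to the range of $e_0$ is a unitary over $\widetilde A$ representing the sought $K_1$ class. The bulk of the proof is then showing $\sigma \theta = \operatorname{id}$ and $\theta \sigma = \operatorname{id}$: one must verify that every $K_0 SA$ class admits a representative of the special form $p_u$ and that the unitary lift $v_t$ is determined up to the equivalences defining $K_1 A$. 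An alternative, less computational route applies the K-theory boundary map to the contractible cone extension $0 \to SA \to CA \to A \to 0$; since $CA$ is contractible by Proposition \ref{pro:coneiscontractible}, exactness forces $K_1 A \to K_0 SA$ to be an isomorphism, but this presupposes the six-term exact sequence and is therefore available only if that machinery is developed independently of the present statement.
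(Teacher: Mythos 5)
The paper does not prove this statement at all: it is quoted as standard background, with the proof deferred to \cite{weggeolsen-93} and \cite{rordam}, so there is no in-text argument to compare against. Your construction of $\theta\colon K_1 A \to K_0 SA$ via the rotation path $w_t$ and the projection loop $p_u(t) = w_t\, e\, w_t^{*}$ is exactly the standard map of \cite{rordam} (Section 10.1), and the formulas check out: $w_0 = 1_{2n}$, $w_1 = \operatorname{diag}(u, u^*)$, $p_u(0) = p_u(1) = e$, and the scalar part of $p_u$ is constantly $e$, so $[p_u] - [e]$ does land in $K_0 SA$. The well-definedness and homomorphism arguments you sketch are the right ones.

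The gap is where you admit it is: bijectivity is asserted, not proven, and that is the substantive half of the theorem. The inverse you outline (lift a projection loop $p$ with scalar endpoint $e_0$ to a unitary path $v_t$ with $v_0 = 1_m$ and $p(t) = v_t\, e_0\, v_t^{*}$, then compress $v_1$ to the range of $e_0$) requires three lemmas you have not supplied: that every class in $K_0 SA$ has a representative projection whose endpoint value is a scalar projection; that a norm-continuous path of projections is implemented by a norm-continuous unitary path starting at $1$; and that the resulting $K_1$ class is independent of the lift and of the representative. A further remark: the \enquote{alternative, less computational route} you mention at the end is in fact how \cite{rordam} proves the theorem, and it is not circular in the standard development --- the index map $\delta_1\colon K_1 A \to K_0 SA$ for the extension $0 \to SA \to CA \to A \to 0$ and the exactness of $K_1 CA \to K_1 A \to K_0 SA \to K_0 CA$ are established before, and independently of, the suspension isomorphism; contractibility of $CA$ (Proposition \ref{pro:coneiscontractible}) then forces $\delta_1$ to be an isomorphism, and one checks that $\delta_1$ agrees with your $\theta$. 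If you want a complete proof, that route is considerably shorter than verifying $\sigma\theta = \id$ and $\theta\sigma = \id$ by hand.
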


\begin{thm}[Bott isomorphism]
    For all $s \in \nat$,
    there are \emph{Bott isomorphisms} $\beta\colon K_s A \to K_{s+2} A$.
    This allows $\zet$-graded K-theory by defining
    $K_s A = K_{s+2} A$ inductively for all $s < 0$.
\end{thm}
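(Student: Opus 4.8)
The plan is to reduce the graded statement to a single instance of classical Bott periodicity and then transport it through the suspension isomorphism. By the inductive definition $K_s A := K_{s-1} SA$, iterating twice gives $K_{s+2} A = K_s S^2 A$ for every $s \geq 0$; likewise the suspension isomorphism $\sigma$ yields $K_s A \cong K_0 S^s A$ for $s \geq 1$ and trivially for $s = 0$. Hence, to produce $\beta\colon K_s A \to K_{s+2} A$ for all $s \geq 0$ it suffices to construct, naturally in $B$, an isomorphism $\beta_B\colon K_0 B \to K_0 S^2 B$ and then apply it to $B = S^s A$; the negative degrees are handled by the defining convention $K_s A = K_{s+2} A$ for $s < 0$, which is consistent precisely once $\beta$ is known to be invertible.

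First I would write down the Bott map explicitly. Passing to the unitalization $\widetilde B$ and to matrices, I send a projection $p \in M_n(\widetilde B)$ to the loop of unitaries
\[
    f_p(t) = e^{2\pi i t}\,p + (1 - p), \qquad t \in [0,1],
\]
which satisfies $f_p(0) = f_p(1) = 1$ and therefore defines a unitary in the unitalization of $SB$, hence a class $[f_p] \in K_1 SB \cong K_0 S^2 B$. One checks that homotopic projections yield homotopic loops, that $f_{p \oplus q} = f_p \oplus f_q$, and that adding a trivial summand does not change the class, so $p \mapsto [f_p]$ descends to a group homomorphism $\beta_B$ on $K_0 B$. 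Naturality is immediate, since any \starhom{} commutes with the formula defining $f_p$.

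The substantial part is proving that $\beta_B$ is bijective; here I would follow the classical argument in the cited references. One efficient route uses the Toeplitz extension
\[
    0 \longrightarrow \mathcal K \otimes B \longrightarrow \mathcal T \otimes B
    \longrightarrow \cont(S^1) \otimes B \longrightarrow 0,
\]
whose six-term exact sequence, combined with the stability isomorphism $K_*(\mathcal K \otimes B) \cong K_* B$ and the splitting $\cont(S^1) \otimes B \cong B \oplus SB$ (evaluation at a basepoint against the suspension summand), identifies the boundary map on the suspension summand with $\beta_B$ up to sign and forces it to be an isomorphism. A more hands-on alternative constructs an explicit inverse via the index (winding-number) map on loops of unitaries and verifies that both composites are the identity through concrete homotopies.

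The hard part is surjectivity: one must show that every class in $K_1 SB$ is represented by a Bott loop $f_p$. This is the genuine analytic content of Bott periodicity and cannot be extracted from the formal properties of K-theory---continuity, additivity, suspension, and half-exactness---alone; it is exactly the step where the Toeplitz computation or the index argument does the real work. Once $\beta_B$ is an isomorphism for all $B$, the $\zet$-graded family $\beta\colon K_s A \to K_{s+2} A$ follows formally, and the inductive extension to $s < 0$ is well defined.
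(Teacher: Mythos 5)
The paper does not actually prove this statement: Bott periodicity is listed among the background facts in Section 2.3 and delegated entirely to the textbook references \cite{weggeolsen-93} and \cite{rordam}, so there is no in-paper argument to compare against. Your sketch is a correct outline of the standard proof: the explicit Bott loop $f_p(t) = e^{2\pi i t}p + (1-p)$ defining $\beta_B\colon K_0 B \to K_1 SB \cong K_0 S^2 B$, reduction of all degrees $s \geq 0$ to the single case $s=0$ via the suspension isomorphism applied to $B = S^s A$, and bijectivity via the Toeplitz extension. You also correctly isolate where the real analytic work lives (surjectivity cannot follow from the formal axioms alone). One small imprecision: $\cont(S^1) \otimes B$ is not isomorphic to $B \oplus SB$ as a C*-algebra; what you have is a split short exact sequence $0 \to SB \to \cont(S^1) \otimes B \to B \to 0$ (evaluation at the basepoint), which yields the direct-sum decomposition only at the level of K-theory. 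Your parenthetical suggests you know this, but the identification should be stated as a K-theoretic splitting, since that is what feeds into the six-term sequence of the Toeplitz extension.
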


\begin{thm}[Six-term exact sequence]
\label{thm:sixTermExactSequence}
    Let $I \subseteq A$ be a C*-ideal. For all $s \in \zet$, K-theory
    admits boundary maps
    $\del_s\colon K_s (A/I) \to K_{s-1} I$
    that make the following six-term
    sequence exact; the horizontal arrows are induced by ideal inclusion and
    projection:
    \[
        \begin{tikzpicture}
            \mama {
                K_0 I
                & K_0 A
                & K_0 (A/I) \\
                K_1 (A/I)
                & K_1 A
                & K_1 I.
                \\
            };
            \path[->, font=\scriptsize]
                (m-1-1) edge (m-1-2)
                (m-1-2) edge (m-1-3)
                (m-1-3) edge node[right]{$\del_2 \circ \beta$} (m-2-3)
                (m-2-3) edge (m-2-2)
                (m-2-2) edge (m-2-1)
                (m-2-1) edge node[left]{$\del_1$} (m-1-1)
            ;
        \end{tikzpicture}
    \]
\end{thm}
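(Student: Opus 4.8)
The plan is to build the six-term sequence from three ingredients already in hand: half-exactness of $K_0$, the suspension isomorphism $\sigma$, and Bott periodicity $\beta$. Rather than follow the classical route---constructing an index map by lifting unitaries over $A/I$ to elements over $A$ and then checking six separate exactness conditions by hand---I would organize everything around the mapping cone, which manufactures all boundary maps uniformly and collapses every exactness claim into one middle-exactness statement.

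First I would establish half-exactness of $K_0$: for the short exact sequence $0 \to I \to A \to A/I \to 0$ the composite $K_0 I \loto{\iota_*} K_0 A \loto{\pi_*} K_0(A/I)$ is exact at $K_0 A$. The inclusion $\im \iota_* \subseteq \ker \pi_*$ is immediate from $\pi \circ \iota = 0$ and functoriality, so the content is the reverse inclusion: a class over $A$ killed by $\pi_*$ must, after stabilizing and applying a homotopy, be represented by a projection coming from $I$. This is the technical core and rests on standard lifting results for projections modulo a C*-ideal.

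Next I would introduce the mapping cone of the quotient map $\pi\colon A \to A/I$,
\[
    C_\pi = \set{(a, f) \in A \oplus C(A/I)}{f(1) = \pi(a)},
\]
which sits in a short exact sequence $0 \to S(A/I) \to C_\pi \to A \to 0$ via $f \mapsto (0, f)$ and $(a, f) \mapsto a$. The crucial lemma is that the inclusion $I \to C_\pi$, $x \mapsto (x, 0)$, is a homotopy equivalence: surjectivity of $\pi$ lets me continuously lift the contracting data in the cone coordinate, so $K_* C_\pi \cong K_* I$. Applying half-exactness to this short exact sequence and rewriting $K_0 S(A/I)$ as $K_1(A/I)$ through the suspension isomorphism yields exactness of $K_1(A/I) \to K_0 I \to K_0 A$ at $K_0 I$; the resulting map $K_1(A/I) \to K_0 I$ is the boundary $\del_1$.

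Finally I would iterate. Running the cone construction and half-exactness on the suspensions of the original sequence produces a long exact sequence $\cdots \to K_s I \to K_s A \to K_s(A/I) \to K_{s-1} I \to \cdots$, and Bott periodicity $\beta\colon K_s \cong K_{s+2}$ folds it into the cyclic six-term diagram, identifying the downward map $K_0(A/I) \to K_1 I$ with $\del_2 \circ \beta$ exactly as displayed. The main obstacle is the pair of foundational lemmas on which this scaffolding rests---half-exactness of $K_0$ and the cone equivalence $I \simeq C_\pi$; once these are secured, the assembly through $\sigma$ and $\beta$ is purely formal.
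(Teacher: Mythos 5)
The paper itself offers no proof of this theorem; it is quoted as standard background with a pointer to \cite{weggeolsen-93} and \cite{rordam}, so there is no in-text argument to compare against. Your mapping-cone strategy is a legitimate textbook route (it is essentially the Puppe-sequence approach), and the half-exactness of $K_0$ and the assembly via $\sigma$ and $\beta$ are described correctly.

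However, there is a genuine gap at your ``crucial lemma.'' The inclusion $I \to C_\pi$, $x \mapsto (x,0)$, is \emph{not} a homotopy equivalence of C*-algebras in general, and the justification you give -- that surjectivity of $\pi$ lets you ``continuously lift the contracting data in the cone coordinate'' -- does not work. The obvious contraction $h_t(a,f) = (a, f_t)$ with $f_t(s) = f(ts)$ does not even land in $C_\pi$, because the compatibility condition becomes $\pi(a) = f(t)$, which fails for $t < 1$; to repair it you would need a path $a_t$ of lifts of $f(t)$, and surjectivity of $\pi$ only provides lifts pointwise, not as a continuous family of $*$-homomorphisms. Indeed, a homotopy inverse $C_\pi \to I$ would have to exist as an actual $*$-homomorphism, and no such map is available. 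The correct statement is the weaker excision assertion that $I \to C_\pi$ induces an \emph{isomorphism on K-theory}, and proving that is the substantive core of the whole theorem: the standard arguments go through the mapping cylinder $Z_\pi \simeq A$ together with half-exactness, or through an explicit index map on unitaries, and in either case the work you have deferred to ``purely formal assembly'' reappears there. The same issue recurs in your iteration step, where each turn of the Puppe sequence needs the analogous identification of the next mapping cone up to K-equivalence. As written, the proposal replaces the hard part of the proof with a false homotopy-equivalence claim.
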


\begin{thm}[Abstract Mayer-Vietoris exact sequence]
\label{thm:abstractMayerVietoris}
    Let $A$ be a C*-algebra such that $I_0$, $I_1 \subseteq A$ are
    two C*-ideals with
    $I_0 + I_1 = A$. There is an exact sequence with Mayer-Vietoris
    boundary morphisms:
    \[
        \begin{tikzpicture}
            \mama {
                K_0 (I_0 \cap I_1)
                & K_0 I_0 \oplus K_0 I_1
                & K_0 A \\
                K_1 A
                & K_1 I_0 \oplus K_1 I_1
                & K_1 (I_0 \cap I_1).
                \\
            };
            \path[->, font=\scriptsize]
                (m-1-1) edge (m-1-2)
                (m-1-2) edge (m-1-3)
                (m-1-3) edge node[right]{$\del_2^\mathrm{MV} \circ \beta$}
                    (m-2-3)
                (m-2-3) edge (m-2-2)
                (m-2-2) edge (m-2-1)
                (m-2-1) edge node[left]{$\del_1^\mathrm{MV}$} (m-1-1)
            ;
        \end{tikzpicture}
    \]
\end{thm}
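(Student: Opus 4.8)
The plan is to assemble the Mayer--Vietoris sequence from two copies of the six-term exact sequence of Theorem \ref{thm:sixTermExactSequence}, glued along a comparison isomorphism of quotients. The crucial input is the second isomorphism theorem for C*-algebras. The inclusion $I_0 \hookrightarrow A$ maps $I_0 \cap I_1$ into $I_1$, so it descends to a \starhom{} $\psi\colon I_0/(I_0 \cap I_1) \to A/I_1$. Its image is $(I_0 + I_1)/I_1 = A/I_1$ (here using $I_0 + I_1 = A$) and its kernel is $I_0 \cap I_1$, so $\psi$ is a bijective \starhom. Since bijective \starhoms{} between C*-algebras are automatically isometric, $\psi$ is a C*-isomorphism $I_0/(I_0 \cap I_1) \cong A/I_1$.

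First I would set up a morphism between two instances of the cyclic six-term sequence. Applying Theorem \ref{thm:sixTermExactSequence} to the C*-ideals $I_0 \cap I_1 \subs I_0$ and $I_1 \subs A$ produces two exact sequences, and the inclusion of pairs $(I_0, I_0 \cap I_1) \hookrightarrow (A, I_1)$ induces a commuting ladder between them: the vertical maps are $K_s(I_0 \cap I_1) \to K_s I_1$ and $K_s I_0 \to K_s A$ induced by inclusion, together with $K_s \psi \colon K_s(I_0/(I_0 \cap I_1)) \to K_s(A/I_1)$, which is an isomorphism by the previous step. Commutativity of the squares involving the boundary maps is the standard naturality of the six-term sequence.

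Next I would invoke the Barratt--Whitehead splicing lemma. Given a morphism of long exact sequences in which every third vertical map --- here the quotient maps $K_s \psi$ --- is an isomorphism, one extracts a new long exact sequence
\[
    \cdots \to K_s(I_0 \cap I_1) \to K_s I_0 \oplus K_s I_1 \to K_s A \xrightarrow{\partial^{\mathrm{MV}}} K_{s-1}(I_0 \cap I_1) \to \cdots,
\]
whose first map is the pair of inclusion-induced maps, whose second map is the difference of the two inclusions into $A$, and whose connecting map is the composite of $K_s A \to K_s(A/I_1)$, the inverse $(K_s \psi)^{-1}$, and the boundary $K_s(I_0/(I_0 \cap I_1)) \to K_{s-1}(I_0 \cap I_1)$. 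This step is pure homological algebra (a diagram chase over abelian groups) and uses nothing C*-specific. Folding this $\zet$-graded cyclic sequence by the Bott isomorphism $\beta$ then yields the six-term diagram of the statement, with $\partial_1^{\mathrm{MV}}$ and $\partial_2^{\mathrm{MV}} \circ \beta$ identified with the spliced connecting maps for $s = 1$ and $s = 0$ respectively.

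The only genuinely C*-algebraic obstacle is the isomorphism $\psi$: it rests on $I_0 + I_1$ being closed (automatic here, since it equals $A$) and on the automatic isometry of bijective \starhoms. Once $\psi$ is available the remainder is formal. The fiddly bookkeeping will be checking that the spliced maps match the intended Mayer--Vietoris morphisms and that the collapse into six terms respects the grading and Bott-periodicity conventions, so that the boundary labels $\partial_1^{\mathrm{MV}}$ and $\partial_2^{\mathrm{MV}} \circ \beta$ emerge exactly as displayed.
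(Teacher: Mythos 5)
The paper does not prove this theorem: it is stated in Section 2 (Fundamentals) as quoted background, and the later coarse Mayer--Vietoris results reduce \emph{to} it rather than establish it. So there is no in-paper proof to compare against; judged on its own, your argument is the standard derivation and is correct. The genuinely C*-algebraic input is exactly where you locate it: the inclusion $I_0 \hookrightarrow A$ induces a bijective \starhom{} $\psi\colon I_0/(I_0 \cap I_1) \to A/I_1$ because $I_0 + I_1 = A$, and bijective \starhoms{} are automatically isometric isomorphisms; the rest (naturality of the six-term sequence giving the commuting ladder, Barratt--Whitehead splicing, folding the $\zet$-graded sequence into six terms via Bott periodicity) is formal. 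One small wording slip: the kernel of the induced map $\psi$ on the quotient $I_0/(I_0 \cap I_1)$ is zero; it is the kernel of the composite $I_0 \to A \to A/I_1$ that equals $I_0 \cap I_1$. This does not affect the argument.
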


    \subsection{Roe algebras}
\label{subsection:dualalg}

\begin{dfn}[Ample representation]
\label{dfn:ample}
    Let $A$ be a separable C*-algebra.
    Let $\varrho\colon A \to BH$ be a
    representation of C*-algebras on a separable
    Hilbert space $H$, where $BH$ denotes the C*-algebra of all
    bounded linear operators $H \to H$.
    Then $\varrho$ is called \emph{ample} if
    \begin{itemize}
        \item $\varrho$ is nondegenerate, and
        \item $\varrho(0) = 0$ is the only compact operator in $\im(\varrho)
            \subseteq BH$.
    \end{itemize}
\end{dfn}

\begin{dfn}[Very ample representation]
\label{dfn:veryAmple}
    Let $A$ be a separable C*-algebra. A representation
    $\varrho\colon A \to BH$ of C*-algebras is called \emph{very ample}
    if it is a countably infinite sum of ample representations.
\end{dfn}

\begin{rem}
    To admit ample representations, the Hilbert space $H$ must be
    both separable and infinite-dimensional.
    Then suitable ample representations $\varrho$ always exist.
    According to \cite{higson2000analytic}, because $H$ is separable,
    the constructions in Section \ref{subsection:dualalg} do not depend
    on the particular choice of $H$ or $\varrho$ up to isomorphy.

    Every very ample representation is ample.
    Most constructions require ample representations. Some isomorphism
    theorems call for very ample representations,
    but, because $H \cong \bigoplus_\nat H$,
    requiring very ample representations is merely a technical convenience,
    not a fundamental restriction.
\end{rem}

\begin{dfn}[Pseudolocal operator]
    Let $X$ be a locally compact Hausdorff space.
    For the C*-algebra $\conto X$, let $\varrho\colon \conto X \to BH$
    be an ample representation.
    Let $T \in BH$ be an operator such that $\varrho(f)T - T\varrho(f)$
    is a compact operator in $BH$ for all $f \in \conto X$.
    Then $T$ is called \emph{pseudolocal}.
\end{dfn}

\begin{dfn}[Finite propagation]
    Let $(X, d)$ be a locally compact metric space and
    $\varrho\colon \conto X \to BH$ an ample representation.
    An operator $T \in BH$ has \emph{finite propa\-gation} if there exists
    a constant $R > 0$ such that for all $f$, $g \in \conto X$ with
    $d(\supp f, \supp g) \geq R$, the product $\varrho(f)T\varrho(g) \in BH$
    is zero.
\end{dfn}

\begin{dfn}[$\dualalg A$]
    Let $(X, d)$ be a locally compact metric space.
    Fix an ample representation $\varrho\colon \conto X \to BH$.
    The norm closure of the set of all pseudolocal operators in $BH$ with
    finite propagation forms a C*-algebra, denoted by $\dualalg X$.
\end{dfn}

\begin{rem}
    The norm closure turns $\dualalg X$ into a sub-C*-algebra of $BH$.
    Without the norm closure, the operators with finite propagation do not
    form a closed subset. Pseudolocal operators by themselves
    already form a sub-C*-algebra in $BH$ without additional closure.
\end{rem}

\begin{dfn}[Locally compact operator]
    Let $(X, d)$ be a locally compact metric space and
    $\varrho\colon \conto X \to BH$ an ample representation.
    Let $T \in BH$ be an operator such that, for all $f \in \conto X$,
    both $\varrho(f)T$ and $T\varrho(f)$ are compact operators in $BH$.
    Then $T$ is called \emph{locally compact}.
\end{dfn}

\begin{rem}
    Given $\varrho\colon \conto X \to BH$ ample for a locally compact
    metric space $(X, d)$,
    the locally compact operators form a C*-ideal in the algebra of
    pseudolocal operators.
\end{rem}

\begin{dfn}[$\cualalg X$, Roe algebra]
    For a locally compact metric space $(X, d)$ and an ample representation
    $\varrho\colon \conto X \to BH$,
    the \emph{translation algebra} or \emph{Roe algebra} $\cualalg X$
    is the norm closure of the operators $T \in BH$ that are both
    locally compact and have finite propagation.
\end{dfn}

\begin{rem}
    The Roe algebra $\cualalg X$ is a C*-ideal in $\dualalg X$.
\end{rem}

\begin{rem}[K-homology]
\label{rem:kHomology}
    Let $A$ be a C*-algebra and $A^+$ the C*-algebra with a unit
    adjoined. It is possible to define an abstract dual algebra $\dualalg A^+$
    by representing $A$ amply and taking all pseudolocal operators,
    without defining finite propagation.
    For $s \in \zet$, we may define
    the \emph{$s$-th K-homology group of $A$} as
    \[
        K^s A = K_{1-s} \dualalg A^+.
    \]
    K-theory of C*-algebras is a covariant functor;
    K-homology becomes a contravariant functor of C*-algebras.
    Were we concerned only with abstract C*-algebras, we could consider
    \enquote{K-homology} a bad name for a contravariant functor and to rename
    it to \enquote{K-cotheory}. But K-homology becomes a
    covariant functor for topological spaces:

    Let $X$ be a locally compact metric space and $s \in \zet$.
    The abelian group
    \[
        K^s X = K_{-s} \conto X
    \]
    is the \emph{K-homology of the space $X$};
    this defines a covariant functor from
    locally compact metric spaces to
    abelian groups.
    By \cite[Lemma 12.3.2]{higson2000analytic}, there is an isomorphism
    $K^s X = K_{s+1}(\dualalg X / \cualalg X)$.
\end{rem}

We will not need K-homology and will instead formulate all results in
the language of K-theory and Roe algebras. Thus we introduce a notation
similar to \cite{siegel-mv}:

\begin{ntt}[$\qualalg X$]
\label{ntt:qualalg}
    Let $X$ be a locally compact metric space. We write
    \[
        \qualalg X = \dualalg X / \cualalg X.
    \]
\end{ntt}

    \subsection{Coarse spaces}

The most general definition of a coarse space $X$ uses \emph{entourages}
or \emph{controlled sets} -- collections of subsets of $X \times X$
with axioms to capture a notion of closeness.
Following \cite[Chapter 2]{roe-book},
we will instead work with
proper metric spaces, a modest restriction. If our spaces are manifolds,
both methods bring the same results.

\begin{dfn}[Coarse space]
    A \emph{coarse space} $X = (X, d)$ is a proper metric space; i.e.,
    a metric space where closed $d$-bounded sets are compact.
\end{dfn}

\begin{dfn}[Coarse map]
\label{dfn:coarseMap}
    Let $f\colon (X, d_X) \to (Y, d_Y)$ be a map between coarse spaces.
    $f$ is called \emph{coarse} if
    \begin{itemize}
    \item $f$ is uniformly expansive: For $R > 0$, there exists $S > 0$
        such that for all $x$, $x' \in X$ with
        $d_X(x, x') \leq R$, we have $d_Y(fx, fx') \leq S$.
    \item $f$ is proper as a map between the metric spaces $X$ and $Y$:
        For each bounded set $B \subseteq Y$, the preimage $f^{-1}(B)$
        is bounded in $X$.
    \end{itemize}
\end{dfn}

Coarse maps are not required to be continuous.

\begin{rem}
    The identity
    $\id(X)\colon (X, d) \to (X, d)$ is coarse.
    Compositions of coarse maps are coarse.
\end{rem}

\begin{dfn}[Coarse category, coarse-continuous category]
\label{dfn:coarseCategory}
    The \emph{coarse category} has as objects all coarse spaces
    and as morphisms all coarse maps.

    The \emph{coarse-continuous category} is the subcategory of the coarse
    category that still comprises all coarse spaces, but that has as morphisms
    only the coarse maps that are also continuous.
\end{dfn}

\begin{dfn}[Closeness]
    Let $f$, $f'\colon (X, d_X) \to (Y, d_Y)$
    be two maps between coarse spaces.
    We call $f$
    \emph{close to $f'$}, or \emph{coarsely equivalent to $f'$}, if
    there exists $S > 0$ such that for all $x \in X$, we have
    $d_Y(fx, f'x) \leq S$.
\end{dfn}

\begin{dfn}[Coarse equivalence]
\label{dfn:coarseEquivalence}
    Let $X$ and $Y$ be coarse spaces with coarse maps $f\colon X \to Y$
    and $g\colon Y \to X$ such that $g \circ f$ is close to $\id(X)$ and
    $f \circ g$ is close to $\id(Y)$. We call $X$, $Y$ \emph{coarsely
    equivalent} and $f$, $g$ \emph{coarse equivalences}.
\end{dfn}

\begin{exm}
    Fix $n \in \nat$. The lattice $\zet^n$ is coarsely equivalent to
    Euclidean space $\rel^n$ under the metric
    $d_\infty$ with $d_\infty(x, x') = \sup_{j < n} | x_j - x'_j |$
    on both spaces.
    The inclusion $f\colon \zet^n \to \rel^n$ and
    \[
        g\colon \rel^n \to \zet^n, \qquad
        g(x_0, x_1, \ldots, x_{n-1})
        = (\lfloor x_0 \rfloor, \lfloor x_1 \rfloor, \ldots,
        \lfloor x_{n-1} \rfloor)
    \]
    serve as coarse equivalences.
    For all $z \in \zet^n$ and $x \in \rel^n$, the
    distances $d_\infty(z, gfz)$ and $d_\infty(x, fgx)$
    are uniformly bounded by the constant $1$.

    This map $g$ is proper, but it is not continuous.
\end{exm}

Coarse equivalences induce isomorphisms on
the K-theory of Roe algebras:

\begin{lem}[{\cite[Lemma 3.5]{roe-book}}]
    Let $X$, $Y$ be coarse spaces, $f\colon X \to Y$ a coarse map.
    Then $f$ induces a functorial homomorphism $f_*\colon K_*\cualalg X
    \to K_* \cualalg Y$. Coarsely equivalent maps induce the same homomorphism.
\end{lem}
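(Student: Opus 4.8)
The plan is to realize the induced map by conjugation with an isometry that geometrically "covers" $f$, so that $f_*$ becomes the K-theory map of an honest \starhom{} between Roe algebras.

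First I would fix ample representations $\varrho_X\colon \conto X \to BH_X$ and $\varrho_Y\colon \conto Y \to BH_Y$ used to define $\cualalg X$ and $\cualalg Y$. Since $f$ need not be continuous, it cannot act on functions directly; instead I would introduce the notion of an operator $V\colon H_X \to H_Y$ that \emph{covers} $f$, meaning there is a constant $R > 0$ such that $\varrho_Y(g)\, V\, \varrho_X(h) = 0$ whenever the supports satisfy $d_Y(\supp g, f(\supp h)) > R$. Exploiting ampleness (in particular that $H_X$ and $H_Y$ are separable and infinite-dimensional), I would construct an \emph{isometry} $V$ covering $f$; this is the standard covering-isometry construction, sending mass supported near $x$ to mass supported near $f(x)$.

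Next I would verify that conjugation $\mathrm{Ad}_V\colon T \mapsto V T V^*$ carries $\cualalg X$ into $\cualalg Y$. Because $V$ is an isometry, $V^* V = \id$, so $\mathrm{Ad}_V$ is multiplicative and preserves the involution, hence is a \starhom{} on operators; it remains to check it preserves the two defining properties of the Roe algebra. Finite propagation is preserved because the propagation of $V T V^*$ is controlled by the propagation of $T$, the covering constant $R$, and the uniform expansiveness of $f$; local compactness is preserved because $V$ is bounded and properness of $f$ keeps the relevant compositions compact. Passing to K-theory then defines $f_* = (\mathrm{Ad}_V)_*$.

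The delicate points are well-definedness and the final closeness statement. For well-definedness I would show that any two isometries $V$, $V'$ covering $f$ induce the same map on K-theory: since both cover $f$, a standard argument (assembling $V$ and $V'$ into a path of covering isometries, or conjugating by a unitary built from $V' V^*$, using $H \cong \bigoplus_\nat H$ to absorb the ambiguity) yields a homotopy $\mathrm{Ad}_V \sim \mathrm{Ad}_{V'}$ landing in $\cualalg Y$, so the maps agree by homotopy invariance of K-theory. Functoriality is then clean: if $V$ covers $f$ and $W$ covers $g$, then $W V$ covers $g \circ f$ and $\mathrm{Ad}_{W V} = \mathrm{Ad}_W \circ \mathrm{Ad}_V$, while $\id_{H_X}$ covers $\id_X$. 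Finally, if $f$ is close to $f'$ with $d_Y(f x, f' x) \leq S$ for all $x$, then any isometry covering $f$ with constant $R$ also covers $f'$ with constant $R + S$; a single $V$ therefore computes both $f_*$ and $f'_*$, forcing $f_* = f'_*$. I expect the main obstacle to be the well-definedness step: constructing the covering isometry is routine once ampleness is invoked, but showing that the choice does not affect the induced K-theory map requires the homotopy and absorption argument, and this is where the infinite-dimensionality of the ample representation is genuinely used.
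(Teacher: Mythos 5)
Your proposal is correct and is essentially the standard covering-isometry argument; the paper itself offers no proof of this lemma but defers to \cite[Lemma 3.5]{roe-book}, where exactly this construction (covering isometries, conjugation $T \mapsto VTV^*$, independence of the choice of $V$ up to K-theory, and the observation that close maps share a covering isometry) is carried out. All the points you flag as delicate — properness for local compactness, uniform expansiveness for propagation, and the absorption argument for well-definedness — are the right ones.
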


\begin{rem}
    Similarly, the constructions $\dualalg$ and $\qualalg$ are
    functorial, but these functors are merely well-defined on the
    coarse-continuous category.
    Only $\cualalg$ is well-defined for coarse non-continuous maps.

    All three of $\cualalg$, $\dualalg$, and $\qualalg$
    are covariant functors to $\catcstar$: Passing from spaces $X$ and $Y$
    to function algebras $\conto X$ and
    $\conto Y$ is contravariant, and passing from function algebras to
    locally compact or pseudocompact operators with finite propagation
    is again contravariant.
\end{rem}

\begin{cor}
    Let $f\colon X \to Y$ and $g\colon Y \to X$ be coarse equivalences.
    Then $K_p \cualalg X \cong K_p \cualalg Y$ for all $p \in \zet$.
\end{cor}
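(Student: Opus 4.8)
The plan is to treat this as a purely formal consequence of the preceding Lemma \cite[Lemma 3.5]{roe-book}, which already carries all of the analytic content: it asserts both that a coarse map induces a \emph{functorial} homomorphism on $K_* \cualalg$ and that coarsely equivalent (close) maps induce the \emph{same} homomorphism. First I would apply that lemma to each of the two coarse maps separately, obtaining induced homomorphisms $f_*\colon K_* \cualalg X \to K_* \cualalg Y$ and $g_*\colon K_* \cualalg Y \to K_* \cualalg X$ in every degree.

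Next I would invoke functoriality to write $(g \circ f)_* = g_* \circ f_*$ and $(f \circ g)_* = f_* \circ g_*$, and I would use that the identity coarse map induces the identity homomorphism, i.e.\ $(\id(X))_* = \id$ on $K_* \cualalg X$ and $(\id(Y))_* = \id$ on $K_* \cualalg Y$. Since $f$ and $g$ form a coarse equivalence, $g \circ f$ is close to $\id(X)$ and $f \circ g$ is close to $\id(Y)$; by the closeness clause of the Lemma, close maps induce equal homomorphisms, hence $g_* \circ f_* = (g \circ f)_* = (\id(X))_* = \id$ and $f_* \circ g_* = (f \circ g)_* = (\id(Y))_* = \id$.

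Combining these two identities shows that $f_*$ and $g_*$ are mutually inverse homomorphisms, so each is an isomorphism; restricting to a fixed degree $p \in \zet$ then yields $K_p \cualalg X \cong K_p \cualalg Y$, as claimed. I do not expect a genuine obstacle here, since every nontrivial ingredient is externalised to the cited Lemma. The only point that deserves a word of care is the identity-preservation half of functoriality, namely that $\id(X)$ really induces the identity on $K_* \cualalg X$; this is implicit in the word \emph{functorial} in the statement of the Lemma and constitutes the one place where the argument leans on structure beyond the bare existence of induced maps.
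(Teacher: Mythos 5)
Your proposal is correct and follows essentially the same route as the paper: both arguments apply \cite[Lemma 3.5]{roe-book} to get functorial induced maps, note that $g \circ f$ and $f \circ g$ are close to the identities and hence induce the identity in K-theory, and conclude that $f_*$ and $g_*$ are mutually inverse isomorphisms. Your extra remark about identity-preservation being implicit in functoriality is a fair point of care but introduces no divergence from the paper's argument.
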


\begin{proof}
    The compositions $g \circ f$ and $f \circ g$ are close to the identities
    on $X$ and $Y$. They induce identities in K-theory,
    thus both $K_p \cualalg f$ and $K_p \cualalg g$ are isomorphisms.
\end{proof}

\subsection{Coarsely excisive pairs}

We will recapitulate \emph{coarse excision} as defined by J. Roe
in \cite{roe-book}. Later, we will define \emph{coarsely excisive covers}
to generalize this idea.

\begin{dfn}[$R$-neighborhood]
\label{dfn:rneighborhood}
    Let $(X, d)$ be a metric space and $Y \subseteq X$ a subspace.
    For a real number $R > 0$, define the \emph{$R$-neighborhood of $Y$}
    as
    \[
        N_{d}(Y, R) = \set{x \in X}{\inf
            \set{d(x, y)}{y \in Y} \leq R}.
    \]
    When $d$ is a standard metric such as the $1$-metric $d_1$, the Euclidean
    metric $d_2$, or the sup-metric $d_\infty$ on $\rel^n$ or $\zet^n$,
    we will also write $N_1 = N_{d_1}$ or, similarly, $N_2$ or $N_\infty$.
\end{dfn}

\begin{dfn}[Coarsely excisive pair]
\label{dfn:coarselyExcisivePair}
    Let $(X, d)$ be a metric space. Let $U$ and $V$ be subspaces of $X$ with
    $U \cup V = X$.

    The pair $(U, V)$ is called a \emph{coarsely excisive pair for $X$} if,
    for every distance $R > 0$, there exists a distance $S > 0$ such that
    the intersection of the $R$-neighborhoods is
    contained in the $S$-neighborhood of the intersection:
    \[
        N_d(U, R) \cap N_d(V, R)
        \subseteq
        N_d(U \cap V, S).
    \]
\end{dfn}

\begin{exm}
    For the metric space $\rel$ with its standard metric $d$,
    the pair of subspaces $(\rel_{\leq 0}, \rel_{\geq 0})$ is coarsely
    excisive: The $R$-neighborhoods are
    $N_d(\rel_{\leq 0}, R) = \intOC{\infty}{R}$ and
    $N_d(\rel_{\geq 0}, R) = \intCO{-R}{\infty}$. Their intersection is
    $[-R,R]$, which, for $S = R$, is the $S$-neighborhood of
    $\rel_{\leq 0} \cap \rel_{\geq 0} = \{0\}$.

    In the same vain, $\rel^{n+1}$
    admits the coarsely excisive pair
    $(\rel^n \times \rel_{\leq 0}, \rel^n \times \rel_{\geq 0})$
    under $d_1$, $d_2$, or $d_\infty$.
\end{exm}

\begin{exm}
\label{exm:coarselyExcisiveEmpty}
    For all $S > 0$, the $S$-neighborhood of $\varemptyset$ is again
    $\varemptyset$. This imposes restrictions on eligible coarsely excisive
    pairs: In any metric space $(X,d)$, disjoint nonempty sets $U$ and $V$
    cannot form a coarsely excisive pair. Choose $R$ larger than $\inf
    \set{d(x,y)}{x \in U\textrm{, } y \in V}$, then $N(U, R) \cap N(V, R)$
    contains points. This is never a subset of
    $N(U \cap V, S) = N(\varemptyset, S) = \varemptyset$.
\end{exm}

\begin{thm}[{\cite[Section 5]{higsonRoeYu-mayerVietoris}}]
\label{thm:roeMayerVietoris}
    For a coarsely excisive pair $(U, V)$ of $(X, d)$,
    there is an exact Mayer-Vietoris sequence:
    \[
        \begin{tikzpicture}
            \mama {
                K_0 \cualalg(U \cap V)
                & K_0 \cualalg U \oplus K_0 \cualalg V
                & K_0 \cualalg X \\
                K_1 \cualalg X
                & K_1 \cualalg U \oplus K_1 \cualalg V
                & K_1 \cualalg(U \cap V). \\
            };
            \path[->, font=\scriptsize]
                (m-1-1) edge (m-1-2)
                (m-1-2) edge (m-1-3)
                (m-1-3) edge (m-2-3)
                (m-2-3) edge (m-2-2)
                (m-2-2) edge (m-2-1)
                (m-2-1) edge (m-1-1)
            ;
        \end{tikzpicture}
    \]
\end{thm}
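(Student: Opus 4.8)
The plan is to reduce the coarse statement to the abstract Mayer-Vietoris sequence of Theorem \ref{thm:abstractMayerVietoris} by realizing $\cualalg X$ as a sum of two C*-ideals whose intersection recovers $\cualalg(U \cap V)$. I fix an ample representation $\varrho\colon \conto X \to BH$ and extend it to bounded Borel functions via the Borel functional calculus, so that multiplication by a characteristic function $\chi_Y$ makes sense on $H$. For a subspace $Y \subs X$, I introduce the \emph{relative Roe algebra} $\cualalg(Y \subs X)$ as the norm closure of those locally compact finite-propagation operators $T \in \cualalg X$ for which there is an $R > 0$ with operator support $\supp T \subs N_d(Y, R) \times N_d(Y, R)$. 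Because every $S \in \cualalg X$ has finite propagation, left- or right-multiplying such a $T$ by $S$ enlarges the supporting neighborhood only by a fixed amount, so $\cualalg(Y \subs X)$ is a C*-ideal in $\cualalg X$.

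Setting $I_0 = \cualalg(U \subs X)$ and $I_1 = \cualalg(V \subs X)$, the first step is to check $I_0 + I_1 = \cualalg X$. Given a locally compact finite-propagation $T$ and using $U \cup V = X$, I split $T = \varrho(\chi_U) T + \varrho(\chi_{V \setminus U}) T$; since $\varrho(\chi_U)$ commutes with $\varrho(\conto X)$, both summands remain locally compact of the same propagation, and they are supported near $U$ and near $V$ respectively, so a density argument gives $I_0 + I_1 = \cualalg X$. The conceptual heart of the proof is the identity $I_0 \cap I_1 = \cualalg(U \cap V \subs X)$, and this is exactly where coarse excision enters. The inclusion $\supseteq$ is immediate. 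For $\subseteq$, an operator supported in $N_d(U,R) \times N_d(U,R)$ and in $N_d(V,R) \times N_d(V,R)$ is supported in $(N_d(U,R) \cap N_d(V,R)) \times (N_d(U,R) \cap N_d(V,R))$, and the coarse excision hypothesis supplies an $S$ with $N_d(U,R) \cap N_d(V,R) \subs N_d(U \cap V, S)$, placing the operator near $U \cap V$. The delicate point, which I expect to be the main obstacle, is that this support estimate must survive the passage to norm closures rather than merely holding on the dense finite-propagation subalgebras: the intersection of two closed ideals could a priori be larger than the closure of the intersection of the dense parts. I would resolve this by showing that membership in each closed relative ideal is witnessed by uniform neighborhood bounds, so that an element lying in both closed ideals inherits a single common bound and hence lies in $\cualalg(U \cap V \subs X)$.

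Finally, I identify the relative ideals with the intrinsic Roe algebras of the subspaces. Since $U$, $V$, and $U \cap V$ are coarse spaces under the restricted metric, compressing $\varrho$ to the subspaces produces ample representations, and the inclusions are coarse embeddings; the standard comparison of relative and intrinsic Roe algebras then yields K-theory isomorphisms $K_* \cualalg(U \subs X) \cong K_* \cualalg U$, and likewise for $V$ and $U \cap V$. With $A = \cualalg X = I_0 + I_1$ and $I_0 \cap I_1 = \cualalg(U \cap V \subs X)$ established, Theorem \ref{thm:abstractMayerVietoris} furnishes the six-term exact sequence in $K_* (I_0 \cap I_1)$, $K_* I_0 \oplus K_* I_1$, and $K_* A$; substituting the identifications above rewrites it as the claimed coarse Mayer-Vietoris sequence in $\cualalg(U \cap V)$, $\cualalg U \oplus \cualalg V$, and $\cualalg X$, with the horizontal maps induced by the inclusions and the connecting maps inherited from the abstract boundary morphisms.
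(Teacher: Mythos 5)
Your proposal is correct and follows essentially the same route as the paper: the paper cites Higson--Roe--Yu and notes that the proof reduces to the abstract Mayer--Vietoris sequence (Theorem \ref{thm:abstractMayerVietoris}) via exactly the decomposition $\cualalg X = \cualalg(U \subs X) + \cualalg(V \subs X)$ with $\cualalg(U \subs X) \cap \cualalg(V \subs X) = \cualalg(U \cap V \subs X)$, which is also the content of the paper's own generalization in Section 5 (Lemmas \ref{lem:relCapA}--\ref{lem:relCapB}, \ref{lem:relSumA}--\ref{lem:relSumB}, and Theorem \ref{thm:isoOfRelativeRoeA}). The closure-versus-intersection subtlety you flag is real and is the point where coarse excision does its work; your proposed resolution via uniform neighborhood bounds matches how the cited proofs handle it.
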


Boths proofs in \cite{higsonRoeYu-mayerVietoris} and \cite{roe-book}
reduce this to the Mayer-Vietoris principle
for abstract C*-ideals $I_0$, $I_1 \subseteq A$, Theorem
\ref{thm:abstractMayerVietoris}.

The goal of this thesis is to construct a
spectral sequence extending that abstract Mayer-Vietoris principle
and Theorem \ref{thm:roeMayerVietoris} alongside.

\subsection{Spectral sequences}

A good introduction to spectral sequences is \cite{mccleary-ssq}. We will
give the basic definitions to establish notation.
We require no
cohomological spectral sequences or ring structures on the pages.

\begin{dfn}[Spectral sequence]
    A \emph{spectral sequence} (of homological type, of abelian groups)
    is a system of bigraded differential abelian groups
    $E^r_{p,q}$ for all $0 \neq r \in \nat$
    and $p$, $q \in \zet$ with differentials
    $d^r\colon E^r_{p,q} \to E^r_{p-r,q+r-1}$ for all $r$, $p$, $q$
    such that each $E^{r+1}_{p,q}$ is the homology of $d^r$ at $E^r_{p,q}$.
\end{dfn}

We define convergence of spectral seuqences with notation similar to
\cite[Section 5]{BoardmanConditionallyConvergent}; that exposition does not
assert any common origin of the target group and the $E^r_{*,*}$-terms
of the spectral sequence.
We re-index to match
our spectral sequences of homological type and make explicit the grading
of the $\zet$-graded target group.

\begin{dfn}
\label{dfn:Hausdorff}
    Let $G$ be an abelian group with an increasing filtration
    \[
        \dotsb \subs F^p G \subs F^{p+1} \subs \dotsb \subs G
    \] for $p \in \zet$.
    We call the filtration $\{F^p G\}_{p \in \zet}$
    \begin{itemize}
        \item \emph{Hausdorff} if
            $\bigcap_{p \in \zet} F^p G = 0$,
        \item \emph{exhaustive} if
            $\bigcup_{p \in \zet} F^p G = G$, and
        \item \emph{complete} if the right-derived functor
            of taking the inverse limit yields the zero group for the
            inverse system $F^p G$ for $p \to -\infty$.
    \end{itemize}
\end{dfn}

\begin{dfn}[Strong convergence]
\label{dfn:strongConvergence}
    Let $\{E^r_{p,q}, d^r\}_{r,p,q}$ be a spectral sequence. For $r \geq 1$
    and $p$, $q \in \zet$, write
    \begin{align*}
        Z^r_{p,q} &= {\ker d^r}\colon E^r_{p,q} \to E^r_{p-r,q+r-1},\\
        B^r_{p,q} &= \im d^r\colon E^r_{p+r,q-r+1} \to E^r_{p,q}.
    \end{align*}
    Because $E^r_{*,*}$ for $r \geq 2$ is the homology of
    $E^{r-1}_{*,*}$ under $d^{r-1}$,
    an element in $E^r_{p,q}$
    may be written as $x + B^{r-1}_{p,q}$ with $x \in E^{r-1}_{p,q}$.
    Recursively, this allows us to treat
    $Z^r_{p,q}$ and $B^r_{p,q}$ as subgroups of $E^1_{p,q}$ and define
    \[
        E^\infty_{p,q}
            = \Big( \bigcap_{r \geq 1} Z^r_{p,q} \Big)
            \Bigm/ \Big( \bigcup_{r \geq 1} B^r_{p,q} \Big).
    \]
    Let $G = \bigoplus_{s \in \zet} G_s$
    be a $\zet$-graded abelian group.
    The spectral sequence $\{E^r_{p,q}, d^r\}_{r,p,q}$
    \emph{converges strongly} to $G$ if there exist increasing filtrations
    $\{F^p G_s\}_{p \in \zet}$ of each summand $G_s$ such that these
    filtrations are Hausdorff, exhaustive, complete, and allow isomorphisms
    \[
        E^\infty_{p,q} \cong F^p G_{p+q} / F^{p-1} G_{p+q}.
    \]
\end{dfn}

\begin{dfn}[Morphism of spectral sequences]
\label{dfn:morphismOfSseq}
    Given two spectral sequences $\{E_{p,q}^r, d^r\}_{r,p,q}$ and
    $\{\bar E_{p,q}^r, \bar d^r\}_{r,p,q}$ and $(p', q') \in \zet^2$,
    a \emph{morphism of spectral sequences of
    bidegree $(p', q')$} is
    a system of morphisms of abelian groups,
    \[
        f = {\left\{ f_{p,q}^r\colon E_{p,q}^r
            \to \bar E_{p+p',q+q'}^r\right\}_{r,p,q}},
    \]
    such that
    \begin{itemize}
        \item the group morphisms commute with the differentials; i.e.,
        $f^{r}_{*,*} \circ d^r = \bar d^r \circ f^r_{*,*}$ for all pages
        $r$, and
    \item
        each map $f^{r}_{*,*}$ induces $f^{r+1}_{*,*}$ by passing to
        homology on $\{E^r_{*,*}, d^r\}_{r,p,q}$ and $\{\bar E^r_{*,*},
        \bar d^r\}_{r,p,q}$.
    \end{itemize}
\end{dfn}

\begin{rem}
    Spectral sequences with these morphisms form a category.

    By describing a morphism of spectral sequences on the $R$-th page,
    all subsequent $f^r_{*,*}$ for $r > R$ and $r = \infty$
    are implicitly defined because
    the $E^r_{*,*}$-terms are iterative homologies of the earlier
    $E^R_{*,*}$-term. In our setting, we will construct morphisms of spectral
    sequences only for the $E^1_{*,*}$-terms.

    In particular, if $f^R_{*,*}$ is an isomorphism between the differential
    graded abelian groups $E^R_{*,*}$ and $\bar E^R_{*,*}$,
    then all $f^r_{*,*}$ for $r > R$ and $r = \infty$ become isomorphisms.
\end{rem}

\section{Ideal inclusions}\thispagestyle{plain}
\label{sec:schochetSseq}

\subsection{Main theorem}

\begin{restatable}[Spectral sequence for ideal inclusions]{thm}{schochetSseq}
\label{thm:schochetSseq}
    \textSchochetSseq
\end{restatable}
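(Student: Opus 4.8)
The plan is to run the classical exact-couple construction of Schochet \cite{schochet-sseq} on the K-theory of the filtered C*-algebra; the only real work is to check carefully that the present hypotheses force \emph{strong} convergence.

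First I would fix the convention $I_{-1} = 0$, so that $I_p / I_{p-1}$ is a genuine C*-algebra for every $p \geq 0$ and is the zero algebra for $p < 0$. For each $p$ the inclusion $I_{p-1} \subseteq I_p$ exhibits $I_{p-1}$ as a C*-ideal of $I_p$ with quotient $I_p/I_{p-1}$, so Theorem \ref{thm:sixTermExactSequence}, unrolled into a $\zet$-graded $2$-periodic long exact sequence by the Bott isomorphism, supplies
\[
    \dotsb \to K_{p+q}(I_{p-1}) \to K_{p+q}(I_p) \to K_{p+q}(I_p/I_{p-1})
    \to K_{p+q-1}(I_{p-1}) \to \dotsb .
\]
Assembling these over all $p$ produces an exact couple with $D_{p,q} = K_{p+q}(I_p)$ and $E^1_{p,q} = K_{p+q}(I_p/I_{p-1})$, where the inclusion-, projection-, and boundary-maps $i$, $j$, $k$ carry bidegrees $(1,-1)$, $(0,0)$, $(-1,0)$. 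Hence $d^1 = j \circ k$ has bidegree $(-1,0)$ and, in the derived couples, $d^r$ has bidegree $(-r, r-1)$, exactly matching the required $d^r \colon E^r_{p,q} \to E^r_{p-r,q+r-1}$. The standard derived-couple machinery then delivers the pages $\{E^r_{p,q}, d^r\}$ with the stated $E^1$-term and with $E^r_{p,q} = 0$ whenever $p < 0$.

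For convergence I would filter the target by $F^p K_s A = \im\big(K_s(I_p) \to K_s A\big)$. Continuity of K-theory identifies $K_s A = K_s\big(\overline{\bigcup_p I_p}\big) = \colim_p K_s(I_p)$, so this (increasing) filtration is exhaustive; and since $I_p = 0$ for $p < 0$ it satisfies $F^p K_s A = 0$ for $p < 0$, hence is bounded below and therefore Hausdorff and complete in the sense of Definition \ref{dfn:Hausdorff}. Because $D_{p,*} = 0$ for $p < 0$, the inverse limit of $D_{p,*}$ and its first derived functor both vanish as $p \to -\infty$, which is exactly Boardman's condition for conditional convergence to the colimit \cite{BoardmanConditionallyConvergent}. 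Finally, the differentials are \emph{exiting}: from any fixed $(p,q)$ the target $E^r_{p-r,q+r-1}$ is zero once $r > p$, so each group supports only finitely many nonzero outgoing differentials; for a half-plane spectral sequence of this exiting type, conditional convergence upgrades to strong convergence, yielding the isomorphisms $E^\infty_{p,q} \cong F^p K_{p+q}A / F^{p-1} K_{p+q} A$ required by Definition \ref{dfn:strongConvergence}.

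The hard part will be precisely this last upgrade from conditional to strong convergence. The construction of the couple and the identification of $E^1$ are routine, but the spectral sequence occupies a full half-plane ($p \geq 0$ with $q$ unbounded), so in a fixed bidegree infinitely many incoming differentials $d^r \colon E^r_{p+r, q-r+1} \to E^r_{p,q}$ may be nonzero and the boundary groups $\bigcup_r B^r_{p,q}$ need not stabilize after finitely many pages. The crux is to exploit \emph{both} features of the filtration at once: exhaustiveness (from continuity) to guarantee that the limit is the full group $K_* A$, and bottom-boundedness to kill the derived-limit term that otherwise obstructs strong convergence and to make the filtration Hausdorff and complete, so that $E^\infty$ genuinely computes the associated graded of $K_* A$.
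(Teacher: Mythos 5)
Your proposal is correct, and it is built on the same raw material as the paper's proof: the long exact K-theory sequences of the pairs $I_{p-1} \subseteq I_p$, the first page $E^1_{p,q} = K_{p+q}(I_p/I_{p-1})$ with $d^r$ of bidegree $(-r, r-1)$, and the filtration $F^p K_s A = \im\big(K_s I_p \to K_s A\big)$, which the paper likewise shows to be exhaustive (by continuity of K-theory) and trivially Hausdorff and complete (since $I_p = 0$ for $p < 0$; see Proposition \ref{pro:filtrationExhausts}). Where you genuinely diverge is in the packaging and in the convergence step. You assemble the data into a single exact couple $(D, E)$ and then invoke Boardman's machinery \cite{BoardmanConditionallyConvergent}: bounded-below filtration gives $\lim D = \lim^1 D = 0$, hence conditional convergence to the colimit, and the exiting-differential condition upgrades this to strong convergence. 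That is a legitimate and efficient route, and for a half-plane spectral sequence with exiting differentials the upgrade is indeed automatic (no extra $RE_\infty$-type hypothesis is needed, unlike the entering-differential case). The paper instead encodes the data as a Cartan--Eilenberg H-system \cite{cartan1973homological}, which retains all groups $K_*(I_p/I_{p'})$ rather than only consecutive quotients, and then proves the identification $E^\infty_{p,q} \cong F^p K_{p+q}A / F^{p-1}K_{p+q}A$ (Theorem \ref{thm:einftyisfiltered}) by constructing the isomorphism explicitly and verifying well-definedness, injectivity, and surjectivity through diagram chases. The payoff of the paper's heavier route is not the theorem itself but the later sections: linking the spectral sequences for $n$ and $n+1$ ideals, and passing to direct limits over uncountable index sets, requires knowing precisely which inclusions, quotient projections, and K-theoretic boundary maps constitute every differential and every filtration step, and a black-box citation of Boardman would not supply that naturality data. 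For the statement as posed, your argument suffices; the one point you should make explicit is the reference to the precise convergence theorem for exiting differentials (Boardman's Theorem 6.1 or the classical bounded-below convergence theorem), since that is the step your proof delegates entirely.
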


In \cite{schochet-sseq}, C. Schochet gave a proof of Theorem
\ref{thm:schochetSseq}.

Nonetheless, we will reprove Theorem \ref{thm:schochetSseq}
based on very general theory from \cite{cartan1973homological}.
This extra work reveals the inner mechanisms of the spectral sequence,
shows that the convergenge is strong,
and highlights naturality of all constructions:
The morphisms in K-theory arise from
natural inclusions of C*-ideals, quotients of C*-ideals, and boundary
maps.

This spectral sequence serves as groundwork for the
Mayer-Vietoris results in later sections.

\subsection{Abstract H-systems}

In \cite{cartan1973homological}, H. Cartan and S. Eilenberg
construct an abstract
spectral sequence from a bigraded system of groups,
but they omit some details during their proof of
convergence. Their construction uses cohomological differentials: On the
$E_r^{*,*}$-page, the differential has the degree $(r,1-r)$.
For homological spectral sequences, they suggest the renumbering
$E^r_{p,q} = E_r^{-p,-q}$. We will state the main theorem of
\cite{cartan1973homological} in this renumbered notation, then prove
it with all details.

\begin{dfn}[Ungraded H-system]
\label{dfn:ungradedhsystem}
    Let $H(p,p')$ be abelian groups for $p' \leq p$
    from the range $\zet \cup \{\pm \infty\}$. We introduce the shorthand
    notations
    \begin{align*}
        H(p) &= H(p,-\infty),\\
        H = H(\infty) &= H(\infty, -\infty).
    \end{align*}
    For each $(p, p')$ and $(q, q')$ with $-\infty \leq p \leq q \leq \infty$
    and $p' \leq p \leq \infty$ and $q' \leq q \leq \infty$, let
    there be a morphism
    \[
        i\colon H(p,p') \to H(q,q').
    \]
    For each $-\infty \leq p'' \leq p' \leq p \leq \infty$, let
    \[
        \del\colon H(p,p') \to H(p',p'')
    \]
    be a \emph{connecting homomorphism}.
    We call this collection of groups together with the above morphisms
    an \emph{ungraded H-system} if the following axioms are satisfied:
    \begin{enumerate}
        \item $i\colon H(p,p') \to H(p,p')$ is the identity.
        \item All triangle and square diagrams built with the
            morphisms $i$ commute.
        \item For all $p'' \leq p' \leq p$, there is an exact sequence
            \begin{equation}
            \label{eqn:hsystemexactsequence}
                \dotsb \loto{\del}
                H(p',p'') \loto{i}
                H(p,p'') \loto{i}
                H(p,p') \loto{\del}
                H(p',p'') \to \dotsb.
            \end{equation}
        \item For each index $p' \in \zet \cup \{-\infty\}$,
            the group $H(\infty, p')$ is the direct limit of the
            morphisms
            $i\colon H(p, p') \to H(p+1, p')$ along $p' \leq p$.
    \end{enumerate}
\end{dfn}

In \cite{cartan1973homological}, the indices of these H-systems are
denoted by $(p,q)$
instead of $(p,p')$. To avoid confusion with the bigrading $(p,q)$
of the pages $E^r_{p,q}$ later, we shall use $H(p,p')$.

\begin{dfn}[Graded H-system]
\label{dfn:gradedhsystem}
    Let $\{H(p, p'), i, \del\}_{p,p'}$ be an ungraded H-system as
    in Definition \ref{dfn:ungradedhsystem}.
    We call this a \emph{graded H-system} if
    it satisfies the following extra axioms:
    \begin{enumerate}
    \setcounter{enumi}{4}
        \item All $H(p,p')$ carry a $\zet$-grading:
            $H(p,p') = \bigoplus_{s \in \zet} H_s(p,p')$.
        \item All morphisms $i\colon H(p,p') \to H(q,q')$
            are degree-preserving.
        \item All morphisms $\del\colon H(p,p') \to H(p',p'')$
            have degree $-1$; i.e.,\[
                \im \big(\del \restr H_s(p,p')\big)
                \subseteq H_{s-1}(p',p'').
            \]
    \end{enumerate}
\end{dfn}

\begin{ntt}
\label{ntt:cartanZBE}
    Let $H(p,p')$ for $-\infty \leq p' \leq p \leq \infty$ form a graded
    H-system. For $r \geq 0$ and $q \in \zet$, write
    \begin{align*}
        Z^r_{p,q} &= \im i\colon H_{p+q}(p, p-r-1) \to H_{p+q}(p, p-1), \\
        B^r_{p,q} &= \im \del\colon H_{p+q+1}(p+r, p) \to H_{p+q}(p, p-1), \\
        E^{r+1}_{p,q} &= Z^r_{p,q} / B^r_{p,q}.
    \end{align*}
    In this way, we define $E^r_{p,q}$ only for $r \geq 1$, not for $r \geq 0$.
    Compared to \cite{cartan1973homological}, we have shifted the index
    $r$ in $Z^r_{*,*}$ and $B^r_{*,*}$ by $1$ to match our Definition
    \ref{dfn:strongConvergence} of these groups as closely as possible;
    e.g., we write $Z^0_{p,q}$ for what would be denoted by $Z^1_{p,q}$
    in \cite{cartan1973homological}.
\end{ntt}

\begin{lem}
    We have $B^0_{p,q} = 0$ and
    \[
        E^1_{p,q} \cong Z^0_{p,q} = H_{p+q}(p,p-1).
    \]
\end{lem}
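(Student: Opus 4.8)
The plan is to read both equalities directly off the definitions in Notation \ref{ntt:cartanZBE}, feeding in only the H-system axioms. The identity $Z^0_{p,q} = H_{p+q}(p, p-1)$ costs nothing: by definition $Z^0_{p,q}$ is the image of $i\colon H_{p+q}(p, p-1) \to H_{p+q}(p, p-1)$, and this $i$ is the identity by axiom (1), so its image is the whole group. The middle equality of the claim is therefore immediate.

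The substance is $B^0_{p,q} = 0$. By definition $B^0_{p,q}$ is the image of $\del\colon H_{p+q+1}(p, p) \to H_{p+q}(p, p-1)$, so it suffices to show that the diagonal groups vanish, i.e.\ that $H(p, p) = 0$ in every degree. I would specialize the exact sequence \eqref{eqn:hsystemexactsequence} of axiom (3) to the degenerate triple $p'' \leq p' = p \leq p$. In a fixed degree $n$ the relevant stretch reads
\[
    H_n(p, p'') \loto{i} H_n(p, p'') \loto{i} H_n(p, p) \loto{\del} H_{n-1}(p, p'') \loto{i} H_{n-1}(p, p''),
\]
where the first and the last map $i$ join a group $H(p, p'')$ to itself and are therefore the identity by axiom (1). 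A short diagram chase then pins down the middle term: the map $i\colon H_n(p, p'') \to H_n(p, p)$ has kernel equal to the image of an identity, hence is zero, so by exactness $\del$ leaving $H_n(p, p)$ is injective; on the other hand the image of that same $\del$ equals the kernel of the following identity map, hence is zero, so $\del$ is also the zero map. A homomorphism that is simultaneously injective and zero has trivial domain, giving $H_n(p, p) = 0$ for every $n$. In particular $H_{p+q+1}(p, p) = 0$, whence $B^0_{p,q} = 0$.

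Combining the two computations finishes the lemma formally, since $E^1_{p,q} = E^{0+1}_{p,q} = Z^0_{p,q}/B^0_{p,q} = H_{p+q}(p, p-1)/0 \cong H_{p+q}(p, p-1)$.

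I expect the only genuine obstacle to be the vanishing of the diagonal groups $H(p, p)$; everything else is just unwinding definitions. The point to get right is that this vanishing is forced purely by the interplay of the triple exact sequence (axiom 3) with the normalization that the index-preserving $i$ is the identity (axiom 1) — morally the same reason $H_*(X, X) = 0$ in ordinary homology. Since the chase runs in each degree independently and only tracks the degree $-1$ shift of $\del$, no extra grading input is required.
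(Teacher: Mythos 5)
Your proposal is correct, and it rests on exactly the two ingredients the paper uses: the exact sequence of axiom (3) and the normalization (axiom (1)) that an index-preserving $i$ is the identity. The handling of $Z^0_{p,q}$ is identical. For $B^0_{p,q}$ the routes diverge slightly: the paper takes the triple $(p, p, p-1)$ and applies exactness once, at the target of $\del\colon H_{p+q+1}(p,p) \to H_{p+q}(p,p-1)$ — the image of $\del$ is the kernel of the identity $i\colon H(p,p-1) \to H(p,p-1)$, hence zero — and never needs to know anything about the group $H(p,p)$ itself. You instead prove the stronger intermediate fact that the diagonal groups $H(p,p)$ vanish in every degree (a three-step chase in the triple $(p,p,p'')$, showing $\del$ out of $H_n(p,p)$ is both injective and zero), and then conclude $B^0_{p,q} = 0$ because the domain of $\del$ is trivial. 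Both arguments are valid; yours is a little longer but establishes the analogue of $H_*(X,X)=0$, which the paper's one-line chase deliberately sidesteps.
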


\begin{proof}
    In the long exact sequence
    \[
        \dotsb
        \to H_{p+q+1}(p,p) \loto{\del} H_{p+q}(p,p-1)
        \loto{i} H_{p+q}(p,p-1)
        \loto{i} H_{p+q}(p,p) \to \dotsb,
    \]
    the central
    map $i\colon H(p,p-1) \to H(p,p-1)$ is the identity by Definiton
    \ref{dfn:ungradedhsystem}. Its image is $Z^0_{p,q}$, which
    is all of $H_{p+q}(p,p-1)$. The exactness of the sequence forces
    the preceding map $\del$ to vanish.
    The group $B^0_{p,q}$ is the image of $\del$, therefore it is
    the trivial group.
\end{proof}

The main statement in \cite{cartan1973homological} becomes:
\begin{thm}
\label{thm:hsystemsseq}
    With Notation \ref{ntt:cartanZBE},
    there is a spectral sequence
    $
        \{E^r_{p,q}, d^r_{p,q}\}_{r,p,q}
    $
    of homological type. Its differentials
    $d^r_{p,q}\colon E^r_{p,q} \to E^r_{p-r,q+r-1}$
    are defined as the composition
    \[
        \begin{tikzpicture}
            \mamax{0.5 cm} {
                E^r_{p,q} && E^r_{p-r,q+r-1}
                \\
                Z^{r-1}_{p,q}/B^{r-1}_{p,q}
                &
                Z^{r-1}_{p,q}/Z^r_{p,q}
                \cong
                B^r_{p-r,q+r-1}/B^{r-1}_{p-r,q+r-1}
                & Z^{r-1}_{p-r,q+r-1}/B^{r-1}_{p-r,q+r-1},
                \\
            };
            \path[->, font=\scriptsize]
                (m-1-1) edge node[auto] {$d^r_{p,q}$} (m-1-3)
                (m-2-1) edge (m-2-2)
                (m-2-2) edge (m-2-3)
            ;
            \path[double, font=\scriptsize]
                (m-1-1) \maeq (m-2-1)
                (m-1-3) \maeq (m-2-3)
            ;
        \end{tikzpicture}
    \]
    where the three maps at the bottom are all constructed in
    \cite[Chapter XV, Paragraph 1]{cartan1973homological}: The bottom-left map
    arises from factoring out the larger group $Z^r_{p,q} \supseteq
    B^{r-1}_{p,q}$, the central map is an isomorphism, and the last map arises
    from the inclusion $B^r_{p-r,q+r-1} \to Z^{r-1}_{p-r,q+r-1}$.
    The homology of $E^r_{*,*}$ under $d^r$ at $(p,q)$
    is isomorphic to $E^{r+1}_{p,q}$.
\end{thm}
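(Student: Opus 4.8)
The plan is to follow the Cartan--Eilenberg derivation, but to make every step explicit on the groups of Notation \ref{ntt:cartanZBE}, viewing all of them as subgroups of $E^1_{p,q} = H_{p+q}(p,p-1)$. First I would re-express the images $Z^r_{p,q}$ and $B^r_{p,q}$ as kernels. Applying exactness of \eqref{eqn:hsystemexactsequence} to the triple $(p,p-1,p-r-1)$ gives $Z^r_{p,q} = \ker\big(\del\colon H_{p+q}(p,p-1)\to H_{p+q-1}(p-1,p-r-1)\big)$, and applying it to the triple $(p+r,p,p-1)$ gives $B^r_{p,q} = \ker\big(i\colon H_{p+q}(p,p-1)\to H_{p+q}(p+r,p-1)\big)$. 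From the factorization of $i$ through intermediate indices (Axiom 2) the inclusion $Z^r_{p,q}\subseteq Z^{r-1}_{p,q}$ is immediate; the monotonicity $B^{r-1}_{p,q}\subseteq B^r_{p,q}$ and, more importantly, the nesting $B^r_{p,q}\subseteq Z^r_{p,q}$ require the compatibility of $\del$ with $i$ and are the delicate points. Together these assemble the tower $0=B^0\subseteq B^1\subseteq\dotsb\subseteq B^r\subseteq Z^r\subseteq\dotsb\subseteq Z^1\subseteq Z^0=E^1_{p,q}$, which is what makes $E^{r+1}_{p,q}=Z^r_{p,q}/B^r_{p,q}$ meaningful.

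The heart of the construction is the central isomorphism $Z^{r-1}_{p,q}/Z^r_{p,q}\cong B^r_{p-r,q+r-1}/B^{r-1}_{p-r,q+r-1}$, and I would obtain it from the single group $M=H_{p+q}(p,p-r)$. It carries $i\colon M\to H_{p+q}(p,p-1)$, with image $Z^{r-1}_{p,q}$, and $\del\colon M\to H_{p+q-1}(p-r,p-r-1)$, with image $B^r_{p-r,q+r-1}$. Exactness of \eqref{eqn:hsystemexactsequence} identifies $\ker(i\restr M)=\im\big(i\colon H_{p+q}(p-1,p-r)\to M\big)$ and $\ker(\del\restr M)=\im\big(i\colon H_{p+q}(p,p-r-1)\to M\big)$. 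A short computation then shows that, under $i$, the subgroup $Z^r_{p,q}$ has preimage $\ker(\del\restr M)+\ker(i\restr M)$, while under $\del$ the subgroup $B^{r-1}_{p-r,q+r-1}$ has the \emph{same} preimage; both steps use the compatibility $\del\circ i=\del$ for nested triples. Hence both quotients are canonically $M/\big(\ker(i\restr M)+\ker(\del\restr M)\big)$, and the asserted isomorphism is the one induced by $\del$.

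With these two ingredients the differential assembles exactly as stated: the quotient map $Z^{r-1}_{p,q}/B^{r-1}_{p,q}\to Z^{r-1}_{p,q}/Z^r_{p,q}$ (legitimate because $B^{r-1}_{p,q}\subseteq B^r_{p,q}\subseteq Z^r_{p,q}$), followed by the central isomorphism, followed by the inclusion-induced map into $Z^{r-1}_{p-r,q+r-1}/B^{r-1}_{p-r,q+r-1}=E^r_{p-r,q+r-1}$. The bidegree is $(-r,r-1)$ by inspection of indices, and reading off kernel and image yields $\ker d^r_{p,q}=Z^r_{p,q}/B^{r-1}_{p,q}$ and $\im d^r_{p,q}=B^r_{p-r,q+r-1}/B^{r-1}_{p-r,q+r-1}$. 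Because $B^r\subseteq Z^r$ everywhere, the image of $d^r$ lands inside the kernel of the next $d^r$, so $d^r\circ d^r=0$; and at $(p,q)$ the third isomorphism theorem gives homology $\big(Z^r_{p,q}/B^{r-1}_{p,q}\big)\big/\big(B^r_{p,q}/B^{r-1}_{p,q}\big)\cong Z^r_{p,q}/B^r_{p,q}=E^{r+1}_{p,q}$, as required.

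The main obstacle is precisely the containment $B^r_{p,q}\subseteq Z^r_{p,q}$, equivalently $\ker(i)\subseteq\ker(\del)$ for the two maps above, equivalently the vanishing of the composite of connecting homomorphisms $H(p+r,p)\xrightarrow{\del}H(p,p-1)\xrightarrow{\del}H(p-1,p-r-1)$. This does not follow from the exactness axiom alone; I expect to need the naturality of the connecting homomorphism with respect to the maps $i$, namely that $\del\colon H(p,p-1)\to H(p-1,p-r-1)$ factors as $H(p,p-1)\xrightarrow{i}H(p+r,p-1)\xrightarrow{\del}H(p-1,p-r-1)$. The same $i$--$\del$ compatibility is what drives the central isomorphism above, so it is the single structural fact the whole argument rests on. Since it is only implicit in Definition \ref{dfn:ungradedhsystem} (Axiom 2 is phrased for the $i$-morphisms), making it precise --- either as a consequence of the six-term sequences that generate these H-systems, or as an explicit additional relation --- is the step that needs the most care. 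Once it is in hand, everything else is the bookkeeping of nested subgroups carried out above.
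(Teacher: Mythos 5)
Your proof is correct and is precisely the Cartan--Eilenberg construction that the paper cites rather than reproves: the identification of both $Z^{r-1}_{p,q}/Z^r_{p,q}$ and $B^r_{p-r,q+r-1}/B^{r-1}_{p-r,q+r-1}$ with $M/\big(\ker(i\restr M)+\ker(\del\restr M)\big)$ for $M=H_{p+q}(p,p-r)$ is exactly how the central isomorphism arises in the cited source, and the surrounding bookkeeping with the tower $B^{r-1}\subseteq B^{r}\subseteq Z^{r}\subseteq Z^{r-1}$, the kernel and image of $d^r$, and the third isomorphism theorem is all as it should be. Your closing reservation is also well taken: as literally stated, axiom 2 of Definition \ref{dfn:ungradedhsystem} covers only diagrams built from the morphisms $i$, while both the containment $B^r_{p,q}\subseteq Z^r_{p,q}$ and the central isomorphism need the commuting square relating $\del\colon H(p,p')\to H(p',p'')$ to the maps $i$ in the first variable. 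That compatibility belongs to the full Cartan--Eilenberg axiom set and does hold for the H-system of Definition \ref{dfn:howhbecomesk2}, where $\del=\proj_*\circ\,\del_K$ and the required square commutes by naturality of the K-theoretic boundary map; so the correct repair is to add this relation to the axiom list, not to alter your argument.
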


We will first look at our application to K-theory of C*-algebras, then
return to the full proof of convergence.

\subsection{Application: K-theory}

We construct a graded H-system to compute the K-theory
of a C*-algebra, taking a chain of ideals as data. This is a
$\zet$-graded theory. Bott periodicity forces $K_s A = K_{s+2} A$
for all C*-algebras $A$ over the complex numbers,
leaving only two different K-groups to be computed.

Throughout the remainder of Section \ref{sec:schochetSseq},
let
\[
    0 \subseteq I_0 \subseteq I_{1} \subseteq \dotsb \subseteq I_p
\subseteq \dotsb
\]
be an increasing chain of C*-ideals for $p \in \nat$ and set
\[
    A = \overline{\bigcup_{p \in \nat} I_p}.
\]
For convenience, set $I_p = 0$ for $p < 0$, obtaining a $\zet$-graded
chain of ideals $(I_p)_{p \in \zet}$.

Commonly, the chain of ideals will stabilize after finitely many steps; i.e.,
there exists $n \in \nat$ with $I_n = I_{n+1} = I_{n+2} = \dotsb$. Still,
we develop the spectral sequence
for the general case without assuming stabilization.
The extra work is marginal and the final results of this
thesis will rely on that general case.

\begin{dfn}
\label{dfn:howhbecomesk1}
    For all $\zet$-indices $p' \leq p$ and K-theory degrees
    $s \in \zet$, define
    \begin{align*}
        H_s(p,p') &= K_s(I_p/I_{p'}), \\
        H_s(p) &= K_s I_p, \\
        H_s &= K_s A.
    \end{align*}
    The morphisms $i\colon H_s(p, p') \to H_s(p+1, p')$
    in K-theory are induced by inclusions of ideals.
    The morphisms of the form
    $i\colon H_s(p, p') \to H_s(p, p'+1)$
    are induced by the natural projection $I_p / I_{p'} \to I_p / I_{p'+1}$;
    this projection is well-defined because $I_{p'} \subseteq I_{p' + 1}$.
    All these morphisms commute with each other and preserve the degree
    in K-theory.

    The assignment $H_s(p,p') = K_s(I_p/I_{p'})$
    satisfies the direct limit axiom
    from Definition \ref{dfn:ungradedhsystem} regarding $H(p)$ and $H$:
    \begin{align*}
        K_s(A/I_q) &= \colim_{p \to \infty} H_s(p,p') = H_s(\infty,p'), \\
        K_s A &= \colim_{p \to \infty} H_s(p,-\infty) = H_s.
    \end{align*}
    For $i\colon H_s(p) \to H_s(p,p')$ and $i\colon H_s(p) \to H_s$,
    we use the respective limit maps. Because K-theory is a continuous functor,
    these are induced by inclusions and projections of $A$.
    Again, these limit maps preserve the degree $s$ as desired.
\end{dfn}

\begin{ntt}
    We will deal with two kinds of boundary maps: The K-theoretic boundary map
    and the connecting homomorphism of the resulting H-system. To distinguish
    these, throughout Section \ref{sec:schochetSseq},
    we shall denote the K-theoretic map by
    $\del_K$ and the connecting homomorpism by $\del$.
\end{ntt}

\begin{dfn}
\label{dfn:howhbecomesk2}
    Let $p'' \leq p' \leq p$ be indices in $\zet$.
    We implement the connecting homomorphisms $\del$ in the diagram
    \begin{equation}
    \label{eq:threetermh}
        \dotsb \to
        H_s(p,p'') \loto{i}
        H_s(p,p') \loto{\del}
        H_{s-1}(p',p'') \loto{i}
        H_s(p,p'') \to \dotsb
    \end{equation}
    by a composition of maps in K-theory, making this diagram commutative:
    \[
        \begin{tikzpicture}
            \mama {
                H_s(p,p') & & & H_{s-1}(p',p'')
                \\
                K_s(I_p/I_{p'}) & K_{s-1} I_{p'} & {\phantom{wwwww}}
                    & K_{s-1}(I_{p'}/I_{p''}).
                \\
            };
            \path[->, font=\scriptsize]
                (m-1-1) edge node[above] {$\del$} (m-1-4)
                (m-2-1) edge node[below] {$\del_K$} (m-2-2)
                (m-2-2) edge node[below]
                    {$K_s(\proj\colon I_{p'} \to I_{p'}/I_{p''})$} (m-2-4)
            ;
            \path[double, font=\scriptsize]
                (m-1-1) \maeq (m-2-1)
                (m-1-4) \maeq (m-2-4)
            ;
        \end{tikzpicture}
    \]
\end{dfn}

\begin{lem}
    This choice of connecting homomorphism $\del$ in Definition
    \ref{dfn:howhbecomesk2} makes the sequence \ref{eq:threetermh} exact.
\end{lem}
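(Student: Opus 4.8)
The plan is to recognise the sequence (\ref{eq:threetermh}) as the six-term exact sequence of Theorem \ref{thm:sixTermExactSequence} for a single short exact sequence of C*-algebras, and then to check that the connecting homomorphism $\del$ built in Definition \ref{dfn:howhbecomesk2} agrees with the intrinsic K-theoretic boundary map of that sequence.

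First I would produce the relevant short exact sequence. Since $I_{p''} \subseteq I_{p'} \subseteq I_p$ are nested C*-ideals, the image $I_{p'}/I_{p''}$ is a C*-ideal of $I_p/I_{p''}$, and the third isomorphism theorem for C*-algebras gives the canonical identification $(I_p/I_{p''})/(I_{p'}/I_{p''}) \cong I_p/I_{p'}$. Hence
\[
    0 \to I_{p'}/I_{p''} \loto{\iota} I_p/I_{p''} \loto{\proj} I_p/I_{p'} \to 0
\]
is a short exact sequence of C*-algebras, with $\iota$ the inclusion and $\proj$ the projection. Feeding it into Theorem \ref{thm:sixTermExactSequence} produces a cyclic six-term exact sequence whose groups are exactly $H_s(p',p'') = K_s(I_{p'}/I_{p''})$, $H_s(p,p'') = K_s(I_p/I_{p''})$, and $H_s(p,p') = K_s(I_p/I_{p'})$, and whose inclusion- and projection-induced arrows are precisely the maps $i$ of Definition \ref{dfn:howhbecomesk1}. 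This already matches every group and every $i$-arrow appearing in (\ref{eq:threetermh}).

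The crux is the identification of the boundary map. Write $\del_K^{\mathrm{SES}}\colon K_s(I_p/I_{p'}) \to K_{s-1}(I_{p'}/I_{p''})$ for the boundary map of the six-term sequence just obtained, and $\del_K\colon K_s(I_p/I_{p'}) \to K_{s-1} I_{p'}$ for the boundary map attached in Definition \ref{dfn:howhbecomesk2} to the short exact sequence $0 \to I_{p'} \to I_p \to I_p/I_{p'} \to 0$. I would compare these through the morphism of short exact sequences whose three vertical \starhoms{} are the projection $\proj\colon I_{p'} \to I_{p'}/I_{p''}$ on the ideals, the projection $I_p \to I_p/I_{p''}$ in the middle, and the identity on the common quotient $I_p/I_{p'}$; both squares commute directly from the definitions of these maps. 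Naturality of the K-theory boundary map then yields $\del_K^{\mathrm{SES}} = K_{s-1}(\proj) \circ \del_K$, and the right-hand side is by construction the map $\del$ of Definition \ref{dfn:howhbecomesk2}. Consequently the boundary of the six-term sequence is exactly $\del$, so exactness of (\ref{eq:threetermh}) is nothing but the exactness guaranteed by Theorem \ref{thm:sixTermExactSequence}.

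The step I expect to be the main obstacle is this last naturality argument: one must confirm that the chosen triple of projections genuinely forms a morphism of short exact sequences and, more importantly, that the boundary map of Theorem \ref{thm:sixTermExactSequence} is natural in the short exact sequence. Naturality of $\del_K$ is standard in K-theory but is not stated explicitly in the excerpt, so I would either cite it from a textbook such as \cite{rordam} or \cite{weggeolsen-93}, or rederive it from the functorial construction underlying the six-term sequence; once it is in hand, the commuting square that realises $\del$ as the intrinsic boundary is immediate.
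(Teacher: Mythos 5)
Your proposal is correct and follows essentially the same route as the paper: both identify the sequence as the long exact K-theory sequence of $0 \to I_{p'}/I_{p''} \to I_p/I_{p''} \to I_p/I_{p'} \to 0$ and then use naturality of the K-theoretic boundary map under the morphism of short exact sequences given by factoring out $I_{p''}$ to conclude that $\del = K_{s-1}(\proj) \circ \del_K$ is the intrinsic connecting homomorphism. The paper likewise leans on the (standard but uncited in the excerpt) naturality of the six-term sequence, so your flagged obstacle is handled identically there.
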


\begin{proof}
    Let $p'' \leq p' \leq p$ be indices in $\zet$ and
    $I_{p''} \subseteq I_{p'} \subseteq I_p$
    be a chain of ideals in $A$. According to Definition
    \ref{dfn:howhbecomesk1},
    we can rewrite the long exact sequence \ref{eq:threetermh}
    into the top row of the following commutative diagram:
    \[
        \newcommand{\kpq}{K_{s}}
        \newcommand{\frr}[2]{\left(#1/#2\right)}
        \begin{tikzpicture}
            \mama {
                \kpq\frr{I_p}{I_{p''}}
                & \kpq\frr{I_p}{I_{p'}}
                & K_{s-1}\frr{I_{p'}}{I_{p''}}
                & K_{s-1}\frr{I_{p}}{I_{p''}}
                \\
                \kpq I_p
                & \kpq\frr{I_p}{I_{p'}}
                & K_{s-1} I_{p'}
                & K_{s-1} I_{p}.
                \\
            };
            \path[->, font=\scriptsize]
                (m-1-1) edge node[auto] {$i$} (m-1-2)
                (m-1-2) edge node[auto] {$\del$} (m-1-3)
                (m-1-3) edge node[auto] {$i$} (m-1-4)
                (m-2-1) edge node[auto] {$f_*$} (m-1-1)
                (m-2-3) edge node[auto] {$g_*$} (m-1-3)
                (m-2-4) edge node[auto] {$h_*$} (m-1-4)
                (m-2-1) edge node[below] {$\proj_*$} (m-2-2)
                (m-2-2) edge node[below] {$\del_K$} (m-2-3)
                (m-2-3) edge node[below] {$\incl_*$} (m-2-4)
            ;
            \path[double, font=\scriptsize]
                (m-1-2) \maeq (m-2-2)
            ;
        \end{tikzpicture}
    \]
    The vertical arrows $f_*$, $g_*$, $h_*$ arise from natural projections:
    They are induced in K-theory from factoring out
    $I_{p''}$. The identity arrow is also of this type because
    \[
        I_{p''} \subseteq I_{p'}
        \qquad \Longrightarrow \qquad
        \frac{I_p/I_{p''}}{I_{p'}/I_{p''}} \cong I_p/I_{p'}.
    \]
    The top row -- except possibly at $\del$ -- matches the long
    exact sequence in K-theory that corresponds to the short exact sequence
    $0 \to I_{p'}/I_{p''} \to I_p/I_{p''} \to I_p/I_{p'} \to 0$.
    We wish to prove that $\del$ turns the upper row into a long exact
    sequence.

    We recognize $\del_K$ as the boundary map in K-theory for the
    short exact sequence $0 \to I_{p'} \to I_p \to I_p/I_{p'} \to 0$.
    By naturality of the exact sequence with respect to factoring
    out $I_{p''}$, the composition $g_* \circ \del_K$ is the K-theoretic
    connecting homomorphism to make the upper row exact.
    By Definition \ref{dfn:howhbecomesk2},
    we have $\del = g_* \circ \del_K$. Thus $\del$ is the correct arrow
    to construct the H-system.
\end{proof}

\subsection{Filtration}

\begin{dfn}
\label{dfn:filtrationOfKA}
    For $s \in \zet$,
    the chain of ideals leads to a $\zet$-indexed increasing
    filtration $\{F^p K_sA\}_{p \in \zet}$ of $K_s A$,
    \[
        F^p H_s = F^p K_s A = \im(i\colon K_s I_p \to K_s A) \subseteq K_s A.
    \]
\end{dfn}

To discuss this filtration,
we will continue to write $s \in \zet$ for the index in K-theory.
Afterwards, the K-theory groups will be indexed by $(p+q)$ to show
convergence of the spectral sequence.

\begin{pro}
\label{pro:filtrationExhausts}
    For all $s \in \zet$, the filtration
    $\{F^p K_s A\}_{p \in \zet}$
    in Definition \ref{dfn:filtrationOfKA}
    is Hausdorff, exhaustive, and complete according to Definition
    \ref{dfn:Hausdorff}.
\end{pro}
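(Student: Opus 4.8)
The plan is to handle the three properties separately, since two of them are immediate from the vanishing of the filtration in negative degrees, while the third is precisely continuity of K-theory.

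First I would dispatch exhaustiveness. By continuity of K-theory together with $A = \overline{\bigcup_{p} I_p}$, we have the identification $K_s A = \colim_{p \to \infty} K_s I_p$ already recorded in Definition \ref{dfn:howhbecomesk1}. Since every element of a directed colimit of abelian groups lies in the image of one of the structure maps, each class in $K_s A$ arises from some $K_s I_p$ and hence lies in $F^p K_s A = \im(i\colon K_s I_p \to K_s A)$. Therefore $\bigcup_{p \in \zet} F^p K_s A = K_s A$, which is exhaustiveness in the sense of Definition \ref{dfn:Hausdorff}.

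Next, both the Hausdorff property and completeness follow from a single observation: by our convention $I_p = 0$ for $p < 0$, we get $K_s I_p = 0$ and thus $F^p K_s A = 0$ for every $p < 0$. For the Hausdorff property this gives $\bigcap_{p \in \zet} F^p K_s A \subseteq F^{-1} K_s A = 0$. For completeness, the inverse system $\{F^p K_s A\}$ considered as $p \to -\infty$ is identically zero for all $p < 0$; reindexing by $n = -p$, it is a tower that equals the zero group for all $n \geq 1$, so it stabilizes at $0$ in the limiting direction. A tower that is eventually zero has vanishing inverse limit and vanishing first right-derived functor $\lim^1$, which is exactly the completeness condition of Definition \ref{dfn:Hausdorff}.

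I do not expect a genuine obstacle here. The only real content is invoking continuity of K-theory for exhaustiveness, while the convention $I_p = 0$ for $p < 0$ trivializes the remaining two conditions. The sole point demanding a little care is the bookkeeping for the derived inverse limit: one must confirm that the relevant tower is the one indexed toward $p \to -\infty$ and that an eventually-zero tower has $\lim^1 = 0$. This is standard and requires no computation.
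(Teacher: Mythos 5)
Your proposal is correct and follows essentially the same route as the paper: exhaustiveness via continuity of K-theory applied to $A = \overline{\bigcup_p I_p}$, and both the Hausdorff property and completeness from the vanishing $F^p K_s A = 0$ for $p < 0$, which forces the inverse limit and its derived functor to vanish. No gaps.
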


\begin{proof}
    The Hausdorff property is immediate because $I_{p'} = 0$ for $p' < 0$,
    therefore $F^{p'} H_s = \im(i\colon 0 \to K_s A) = 0
    \supseteq \bigcap_{p \in \zet} F^p H_s$.

    For exhaustion, consider the input of the spectral sequence: An
    inclusion chain
    of C*-ideals $I_0 \subseteq I_1 \subseteq I_2 \subseteq \dotsb
    \subseteq I_p \subseteq \dotsb$
    with $\overline{\bigcup_{p \in \nat} I_p} = A$.
    This closure is the direct limit object of the system of ideal inclusions
    $(I_p \to I_{p+1})_{p \in \nat}$ in $\catcstar$. K-theory is a
    continuous functor, rendering the K-theory of the limit object $A$
    isomorphic to the limit of the system of K-theory groups along the
    morphisms $i\colon K_s I_p \to K_s I_{p+1}$. By Definition
    \ref{dfn:howhbecomesk1}, these are exactly the morphisms that appear
    in Definition \ref{dfn:filtrationOfKA}
    of the filtration $\{F^p H_s\}_{p \in \zet}$.
    Finally, universality of the limit object $K_s A$ guarantees
    that the above system of morphisms $i$ exhausts $K_s A$.

    Completeness is trivially satisfied because $F^p H_s = 0$ for all
    $p < 0$. Both the inverse limit and its right derivative
    vanish for this system.
\end{proof}

\begin{rem}
\label{rem:iDiagramCommutes}
    Even when one C*-ideal $I_0 \subseteq I_1$ is included in another,
    the K-theory groups need not
    be connected by a system of injections
    $K_* I_0 \hookrightarrow K_* I_1 \hookrightarrow \dotsb$;
    for example, let $H$ be an infinite-dimensional Hilbert space, then
    $KH$, the compact operators of $H$, have $K_0 KH = \zet$, but
    $KH \subseteq BH$ is an inclusion of ideals and $K_0 BH = 0$.

    Nonetheless, the filtration $\{F^p H_s\}_{p \in \zet} =
    \{F^p K_s A\}_{p \in \zet}$
    satisfies $F^p H_s \subs F^{p+1} H_s$ for all $p \in \zet$:
    The group $\im(i\colon K_s I_p \to K_s A)$ is a subgroup of
    $\im(i\colon K_s I_{p+1} \to K_s A)$ because, by definition of an
    H-system, the morphisms $i$ factor through each other, making this
    diagram commutative:
    \[
        \newcommand{\col}{\colim_p}
        \begin{tikzpicture}
            \mamaxy{1.5cm}{0.3cm}{
                K_s I_p & \\
                & K_s A = \col K_s I_p = K_s \col I_p. \\
                K_s I_{p+1} & \\
            };
            \path[->, font=\scriptsize]
                (m-1-1) edge node[left] {$i$} (m-3-1)
                (m-1-1) edge node[above right] {$i$} (m-2-2.175)
                (m-3-1) edge node[below right] {$i$} (m-2-2.185)
            ;
        \end{tikzpicture}
    \]
\end{rem}

\subsection{Convergence}

We would like to show that this filtration makes the K-theory spectral sequence
converge strongly to $K_*A$. Here we replace the K-theoretic degree
$s$ by $p + q$.

\begin{ntt}[$Z^\infty_{p,q}$, $B^\infty_{p,q}$]
    With $Z^r_{p,q}$ and $B^r_{p,q}$ as in Notation \ref{thm:hsystemsseq},
    write
    \[
        Z^\infty_{p,q} = \bigcap_{r \geq 1} Z^r_{p,q},
        \qquad
        B^\infty_{p,q} = \bigcup_{r \geq 1} B^r_{p,q}.
    \]
\end{ntt}

For each $q \in \zet$,
the filtration $\{F^p K_{p+q} A\}_{p \in \zet}$ from Definition
\ref{dfn:filtrationOfKA} leads to
successive quotients $F^p K_{p+q}A / F^{p-1} K_{p+q}A$ across all $p \in \zet$.
We have to show that this $(p,q)$-indexed collection of quotients
coincides with
\[
    E^\infty_{p,q} = Z^\infty_{p,q}/B^\infty_{p,q} =
        \Big( \bigcap_{r \geq 1} Z^r_{p,q} \Big) \Bigm/
        \Big( \bigcup_{r \geq 1} B^r_{p,q} \Big).
\]
Because the filtration has already been proven
Hausdorff, exhaustive, and complete, the convergence will then be strong.

We have shaped our input
according to the most general axioms in \cite{cartan1973homological}.
Even though convergence for more specialized input is proven in that source,
convergence is merely claimed for our input.
Henceforth, we shall give a full proof.

\begin{lem}
\label{lem:finiteSseqCollapses}
    If there exists $n \in \nat$ with $A = I_n = I_{n+1} = I_{n+2} = \dotsb$,
    then the spectral sequence collapses at page $n+1$:
    We have $E^r_{*,*} = E^{r+1}_{*,*}$ for all
    $r \geq n + 1$, thus $E^\infty_{p,q} = E^{n+1}_{p,q}$.
\end{lem}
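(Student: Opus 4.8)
The plan is to exploit that the stabilization hypothesis confines the $E^1$-page to finitely many columns, so that a width argument forces every differential $d^r$ with $r \geq n+1$ to be zero.

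First I would locate the vanishing columns of $E^1_{p,q} = K_{p+q}(I_p/I_{p-1})$. The convention $I_p = 0$ for $p < 0$ gives $E^1_{p,q} = 0$ for $p < 0$. The hypothesis $A = I_n = I_{n+1} = \dotsb$ yields $I_p = I_{p-1} = I_n$ whenever $p \geq n+1$, so $I_p/I_{p-1} = 0$ and hence $E^1_{p,q} = 0$ for $p \geq n+1$. Thus $E^1_{p,q}$ is potentially nonzero only in the $n+1$ columns $0 \leq p \leq n$. Since each $E^{r+1}_{p,q}$ is the homology of $d^r$ at $(p,q)$ and therefore a subquotient of $E^r_{p,q}$, an induction on $r$ propagates the vanishing to every page: $E^r_{p,q} = 0$ whenever $p < 0$ or $p > n$.

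Next I would run the width argument on $d^r\colon E^r_{p,q} \to E^r_{p-r,q+r-1}$, which lowers the first index by $r$. Fix $r \geq n+1$. For the outgoing differential at $(p,q)$ to have a chance of being nonzero, both columns $p$ and $p-r$ must lie in $[0,n]$; but $0 \leq p \leq n$ together with $r \geq n+1$ forces $p - r \leq n - (n+1) < 0$, so the target vanishes. Symmetrically, for the incoming differential $d^r\colon E^r_{p+r,q-r+1} \to E^r_{p,q}$ with $p \geq 0$, the source sits in column $p + r \geq n+1 > n$ and hence vanishes. Therefore both the $d^r$ entering and the $d^r$ leaving every bidegree are zero once $r \geq n+1$.

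Finally I would read off the collapse. As $E^{r+1}_{p,q}$ is computed as $\ker d^r / \im d^r$ at $(p,q)$, and both the $d^r$ into and the $d^r$ out of $(p,q)$ vanish for $r \geq n+1$, we obtain $\ker d^r = E^r_{p,q}$ and $\im d^r = 0$, so $E^{r+1}_{p,q} = E^r_{p,q}$; iterating from $r = n+1$ gives $E^\infty_{p,q} = E^{n+1}_{p,q}$. I expect no genuine obstacle here, as this is the standard finite-width collapse; the only points needing care are correctly pinning down the precise pair of vanishing columns (using both the stabilization and the convention $I_p = 0$ for $p < 0$) and verifying both the incoming and the outgoing differentials rather than just one.
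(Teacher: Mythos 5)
Your proof is correct, and it reaches the conclusion by a slightly different route than the paper. The paper works inside its Cartan--Eilenberg description of the pages, where $E^{r+1}_{p,q} = Z^r_{p,q}/B^r_{p,q}$ with $Z^r_{p,q} = \im\big(K_{p+q}(I_p/I_{p-r-1}) \to K_{p+q}(I_p/I_{p-1})\big)$ and $B^r_{p,q} = \im\big(\del\colon K_{p+q+1}(I_{p+r}/I_p) \to K_{p+q}(I_p/I_{p-1})\big)$, and simply observes that for $p \geq 0$ and $r$ past the stabilization point the algebras $I_{p-r-1} = 0$ and $I_{p+r} = A$ no longer change with $r$, so $Z^r_{p,q}$ and $B^r_{p,q}$ --- and hence $E^{r+1}_{p,q} = Z^r_{p,q}/B^r_{p,q}$ --- stabilize on the nose as subgroups of $E^1_{p,q}$. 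You instead use only the abstract axiom that $E^{r+1}_{*,*}$ is the homology of $(E^{r}_{*,*}, d^r)$, pin the support of every page to the columns $0 \leq p \leq n$, and kill each $d^r$ with $r \geq n+1$ by a bidegree count on both the incoming and outgoing arrows. The two arguments are equivalent in content: vanishing of the differentials into and out of $(p,q)$ is exactly the statement that $Z^r_{p,q} = Z^{r-1}_{p,q}$ and $B^r_{p,q} = B^{r-1}_{p,q}$, so your canonical identification $E^{r+1}_{p,q} \cong \ker d^r/\im d^r = E^r_{p,q}$ matches the paper's literal equality of subquotients. What your version buys is independence from the explicit form of $Z^r$ and $B^r$ --- it needs only the shape of the $E^1$-page --- whereas the paper's version exploits the concrete H-system data it already has on hand and avoids any discussion of the differentials.
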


\begin{proof}
    Fix $p$ and $q \in \zet$. If $p < 0$, then $B^r_{p,q}$ and $Z^r_{p,q}$
    vanish by definition for all $r \geq 0$ because $I_p = 0$ and we have
    $E^{r+1}_{p,q} = 0$ here.

    Consider the case $p \geq 0$. For all $r \geq n$, we have
    \[
        B^r_{p,q} = \im \del\colon K_{p+q+1}(I_{p+r}/I_p) \to
            K_{p+q}(I_p/I_{p-1}),
    \]
    where $I_{p+r} = I_n = A$ for all $r \geq n+1$. Thus
    all $B^r_{p,q}$
    coincide for such high page numbers $r$. In similar fashion,
    all $Z^r_{p,q}$ become $\im i\colon K_{p+q} I_p \to K_{p+q}(I_p/I_{p-1})$.
    The collapse follows from the definition
    $E^{r+1}_{p,q} = Z^r_{p,q} / B^r_{p,q}$
    for all $r \in \zet$.
\end{proof}

This collapsing lemma reveals some structure of the pages $E^r_{*,*}$ when the
chain of ideals stabilizes.
The following convergence theorem holds with or without stabilization of the
ideals.

\begin{thm}
\label{thm:einftyisfiltered}
    The $E^\infty$-term admits the desired filtration; i.e.,
    \[
        E^\infty_{p,q} \cong F^p H_{p+q} / F^{p-1} H_{p+q}.
    \]
\end{thm}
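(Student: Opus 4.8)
The plan is to describe $Z^\infty_{p,q}$ and $B^\infty_{p,q}$ explicitly as subgroups of $H_{p+q}(p,p-1) = K_{p+q}(I_p/I_{p-1})$, and then exhibit both $E^\infty_{p,q}$ and $F^p H_{p+q}/F^{p-1}H_{p+q}$ as the \emph{same} quotient of $K_{p+q} I_p$. Throughout write $s = p+q$. The case $p<0$ is immediate: there $I_p = 0$, so $Z^\infty_{p,q} = B^\infty_{p,q} = 0$, giving $E^\infty_{p,q}=0$, while also $F^p H_s = F^{p-1} H_s = 0$. So assume $p \geq 0$. First I would pin down $Z^\infty_{p,q}$. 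Since $I_{p'} = 0$ for $p' < 0$, the group $H_s(p, p-r-1)$ is just $K_s I_p$ as soon as $r \geq p$, and then the structure map $i\colon H_s(p,p-r-1) \to H_s(p,p-1)$ is exactly $\pi := K_s(\proj)\colon K_s I_p \to K_s(I_p/I_{p-1})$. As the $Z^r_{p,q}$ are decreasing in $r$ (the map for $r'>r$ factors through the one for $r$) and equal $\im\pi$ once $r \geq p$, we get $Z^\infty_{p,q} = \bigcap_{r} Z^r_{p,q} = \im\pi$. This is the point where boundedness-below of the ideal chain replaces the general limiting argument of \cite{cartan1973homological}.

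Next I would identify $B^\infty_{p,q}$. By Definition \ref{dfn:howhbecomesk2}, the H-system map $\del\colon H_{s+1}(p+r,p) \to H_s(p,p-1)$ factors as $\pi \circ \del_K$, where $\del_K\colon K_{s+1}(I_{p+r}/I_p) \to K_s I_p$ is the K-theoretic boundary map for $I_p \subseteq I_{p+r}$. Naturality of the six-term sequence along $(I_{p+r}, I_p) \hookrightarrow (A, I_p)$ gives $\del_K = \del' \circ j_r$, where $\del'\colon K_{s+1}(A/I_p) \to K_s I_p$ is the boundary map for $I_p \subseteq A$ and $j_r\colon K_{s+1}(I_{p+r}/I_p) \to K_{s+1}(A/I_p)$. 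By the direct-limit axiom (Definition \ref{dfn:ungradedhsystem}(4), verified for K-theory in Definition \ref{dfn:howhbecomesk1}) the images $\im j_r$ exhaust $K_{s+1}(A/I_p)$; since all maps in sight are homomorphisms and the relevant images are nested, $B^\infty_{p,q} = \bigcup_r \pi(\im \del_K) = \pi(\im\del')$.

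Finally I would assemble the isomorphism from two surjections out of $K_s I_p$: the projection $\pi$ onto $\im\pi = Z^\infty_{p,q}$, and the inclusion-induced map $\phi := \incl_*\colon K_s I_p \to K_s A$ onto $F^p H_s$. Writing $\psi := K_s(\incl)\colon K_s I_{p-1} \to K_s I_p$, the six-term sequence for $I_{p-1} \subseteq I_p$ gives $\ker\pi = \im\psi$, and the six-term sequence for $I_p \subseteq A$ gives $\ker\phi = \im\del'$. Using $F^{p-1} H_s = \phi(\im\psi)$ together with $\pi^{-1}(\pi(\im\del')) = \im\del' + \ker\pi$ and $\phi^{-1}(\phi(\im\psi)) = \im\psi + \ker\phi$, the isomorphism theorems yield
\begin{align*}
    E^\infty_{p,q} = \im\pi \big/ \pi(\im\del') &\cong K_s I_p \big/ (\im\del' + \im\psi),\\
    F^p H_s \big/ F^{p-1} H_s = \im\phi \big/ \phi(\im\psi) &\cong K_s I_p \big/ (\im\psi + \im\del').
\end{align*}
The two right-hand sides coincide, which proves $E^\infty_{p,q} \cong F^p H_{p+q}/F^{p-1}H_{p+q}$.

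The only genuinely delicate step I expect is the identification of $B^\infty_{p,q}$: it requires combining the factorisation $\del = \pi \circ \del_K$, naturality of the K-theoretic boundary map, and continuity of K-theory (the direct-limit axiom) in order to replace the union over $r$ by the single boundary map $\del'$ of the pair $I_p \subseteq A$. Everything else is routine bookkeeping with the six-term sequences and the isomorphism theorems, together with the observation that boundedness-below of the chain makes $Z^r_{p,q}$ stabilise, so that $Z^\infty_{p,q}$ needs no limiting argument at all.
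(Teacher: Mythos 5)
Your proof is correct, and its core --- lifting everything to $K_{p+q} I_p$ and comparing the two quotients there --- is the same comparison the paper makes; the packaging differs in two ways worth noting. First, you actually justify the identifications $Z^\infty_{p,q} = \im\pi$ (via stabilization of the decreasing $Z^r_{p,q}$, using $I_{p'} = 0$ for $p' < 0$) and $B^\infty_{p,q} = \pi(\im\del')$ (via naturality of the K-theoretic boundary map and the filtered-colimit description of $K_{p+q+1}(A/I_p)$); the paper writes both descriptions directly into its displayed reformulation of the claim without comment, so here you are filling in a step the paper leaves implicit. Second, where the paper defines the map $f$ on representatives and then verifies well-definedness, injectivity and surjectivity by chasing a diagram built from the exact sequences of the pairs $(A, I_{p-1})$ and $(A/I_{p-1},\, I_p/I_{p-1})$, you instead exhibit both sides as the single quotient $K_{p+q} I_p / (\im\del' + \im\psi)$ using the isomorphism theorems together with the exact sequences of the pairs $(I_p, I_{p-1})$ and $(A, I_p)$. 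The two routes invoke different (but equivalent) pieces of the same H-system; yours trades the diagram chase for the elementary computations $\pi^{-1}(\pi(\im\del')) = \im\del' + \ker\pi$ and $\phi^{-1}(\phi(\im\psi)) = \im\psi + \ker\phi$, and the resulting isomorphism agrees with the paper's $f$, since both identifications send the class of $\pi(y)$ to the class of $\phi(y)$ for $y \in K_{p+q} I_p$.
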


Substituting the definitions lets us rewrite the claim like this,
denoting the yet-undefined isomorphism in the middle by $f$:
\begin{multline}
\label{eqn:einftyisfiltered}
    E^\infty_{p,q} =
    \frac{Z^\infty_{p,q}}{B^\infty_{p,q}}
    =
    \frac{
        \im i\colon K_{p+q} I_p \to K_{p+q}(I_p/I_{p-1})
    }{
        \im \del\colon K_{p+q+1}(A/I_p) \to K_{p+q}(I_p/I_{p-1})
    } \\
    \stackrel{f}{\cong}
    \frac{
        \im i\colon K_{p+q} I_p \to K_{p+q} A
    }{
        \im i\colon K_{p+q} I_{p-1} \to K_{p+q} A
    }
    =
    \frac{F^p H_{p+q}}{F^{p-1} H_{p+q}}.
\end{multline}
Our strategy is to construct the central
isomorphism $f$ in \ref{eqn:einftyisfiltered} explicitly.
After giving its construction, we show that $f$ is well-defined, injective,
and surjective. That will constitute the proof of
Theorem \ref{thm:einftyisfiltered}.

\begin{dfn}
\label{dfn:einftyisfiltered}
    For $x + B^\infty_{p,q}$, an element in
    $Z^\infty_{p,q}/B^\infty_{p,q}$, we must define $f(x + B^\infty_{p,q})$.
    Find $y \in K_{p+q} I_p$ with $i(y) = x$. Define
    \[
        f(x + B^\infty_{p,q}) = i^p_A(y) + F^{p-1} K_{p+q} A,
    \]
    where $i^p_A\colon K_{p+q} I_p \to K_{p+q} A$
    denotes the standard map from our
    H-system, induced by the inclusion of algebras.
\end{dfn}

\begin{lem}
\label{lem:fiswelldefined}
    The morphism $f$ is well-defined: The construction in Definition
    \ref{dfn:einftyisfiltered}
    is independent of the choice of $y \in K_{p+q} I_p$ with $i(y) = x$.
\end{lem}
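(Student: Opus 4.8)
The plan is a short diagram chase in K-theory. Suppose $y$, $y' \in K_{p+q} I_p$ both satisfy $i(y) = i(y') = x$, where $i\colon K_{p+q} I_p \to K_{p+q}(I_p/I_{p-1})$ is the projection-induced map appearing in the description $Z^\infty_{p,q} = \im\big(i\colon K_{p+q} I_p \to K_{p+q}(I_p/I_{p-1})\big)$ from \ref{eqn:einftyisfiltered}. Writing $w = y - y'$, it suffices to show that $w \in \ker i$ forces $i^p_A(w) \in F^{p-1} K_{p+q} A$; then the two candidate values $i^p_A(y)$ and $i^p_A(y')$ agree modulo $F^{p-1} K_{p+q} A$, and $f$ is well-defined.

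The key tool is the six-term exact sequence (Theorem \ref{thm:sixTermExactSequence}) for the short exact sequence of C*-algebras
\[
    0 \to I_{p-1} \to I_p \to I_p/I_{p-1} \to 0.
\]
In degree $p+q$ it contains the segment
\[
    K_{p+q} I_{p-1} \loto{\incl_*} K_{p+q} I_p \loto{\proj_*} K_{p+q}(I_p/I_{p-1}),
\]
in which $\proj_*$ is precisely the map $i$ above. By exactness, $\ker \proj_* = \im \incl_*$, so from $w \in \ker i = \ker \proj_*$ I obtain some $z \in K_{p+q} I_{p-1}$ with $\incl_*(z) = w$.

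It remains to apply $i^p_A$ and land inside the correct filtration step. The composite $i^p_A \circ \incl_*$ realizes the inclusion $I_{p-1} \hookrightarrow I_p \hookrightarrow A$ in K-theory; by the commutativity of the H-system maps $i$ (axiom (2) in Definition \ref{dfn:ungradedhsystem}), this composite equals $i^{p-1}_A\colon K_{p+q} I_{p-1} \to K_{p+q} A$. Hence $i^p_A(w) = i^{p-1}_A(z) \in \im i^{p-1}_A = F^{p-1} K_{p+q} A$ by Definition \ref{dfn:filtrationOfKA}, which is exactly what is required.

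There is no serious obstacle here; the argument is routine once the bookkeeping is right. The only points requiring care are selecting the correct short exact sequence $0 \to I_{p-1} \to I_p \to I_p/I_{p-1} \to 0$ among the various ideal quotients in play, and invoking the commuting triangle $I_{p-1} \hookrightarrow I_p \hookrightarrow A$ so that the difference lands in $F^{p-1} K_{p+q} A$ rather than merely in $F^p K_{p+q} A$.
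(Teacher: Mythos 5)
Your argument is correct as far as it goes, but it only proves the easier half of the lemma, and the half it omits is the actual content. You show that the value $i^p_A(y) + F^{p-1}K_{p+q}A$ does not depend on the choice of lift $y$ of a \emph{fixed} $x$: if $i(y)=i(y')$ then $y-y'\in\ker\big(K_{p+q}I_p\to K_{p+q}(I_p/I_{p-1})\big)=\im\big(K_{p+q}I_{p-1}\to K_{p+q}I_p\big)$, and composing with $i^p_A$ lands in $\im i^{p-1}_A=F^{p-1}K_{p+q}A$. That step is fine. But $f$ is defined on $E^\infty_{p,q}=Z^\infty_{p,q}/B^\infty_{p,q}$, so its input is the coset $x+B^\infty_{p,q}$, and well-definedness also requires independence of the chosen representative of that coset. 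Concretely, you must handle $y,y'$ with $i(y-y')\in B^\infty_{p,q}$ nonzero, where $B^\infty_{p,q}=\im\big(\del\colon K_{p+q+1}(A/I_p)\to K_{p+q}(I_p/I_{p-1})\big)$. In that case $y-y'$ need not lie in $\ker i$ at all, so your exactness argument for the pair $(I_{p-1},I_p)$ does not apply, and there is in general no $z\in K_{p+q}I_{p-1}$ with $\incl_*(z)=y-y'$.

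The paper's proof treats both ambiguities at once by assuming only $i(y-y')\in B^\infty_{p,q}$. The missing ingredient is a different diagram chase: write $i(y-y')=\del(z)$ for some $z\in K_{p+q+1}(A/I_p)$, use exactness of
\[
    K_{p+q+1}(A/I_p) \loto{\del} K_{p+q}(I_p/I_{p-1})
    \loto{i'} K_{p+q}(A/I_{p-1})
\]
to conclude $(i'\circ i)(y-y')=0$, then use the commuting square comparing the two routes $K_{p+q}I_p\to K_{p+q}(A/I_{p-1})$ (through $K_{p+q}A$ and through $K_{p+q}(I_p/I_{p-1})$) to get $i^p_A(y-y')\in\ker\big(K_{p+q}A\to K_{p+q}(A/I_{p-1})\big)$, and finally exactness of $K_{p+q}I_{p-1}\to K_{p+q}A\to K_{p+q}(A/I_{p-1})$ to place $i^p_A(y-y')$ in $\im i^{p-1}_A=F^{p-1}K_{p+q}A$. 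Your case is the special case $z=0$ of this argument; you should either run the general version or explicitly add the $B^\infty_{p,q}$ case, since without it $f$ is not yet known to be a map on $E^\infty_{p,q}$.
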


\begin{proof}
    Let $y$ and $y' \in K_{p+q} I_p$ with
    $i(y - y') \in B^\infty_{p,q}$. We have to show that $f(y) = f(y')$,
    equivalently, that
    $f(y - y') \in F^{p-1} K_{p+q} A
        = \im i\colon K_{p+q} I_{p-1} \to K_{p+q} A$.

    Because $i(y - y') \in B^\infty_{p,q} =
    \im \del\colon K_{p+q+1}(A/I_p) \to K_{p+q}(I_p/I_{p-1})$,
    we find $z \in K_{p+q+1}(A/I_p)$ with $\del(z) = i(y - y')$.
    This $\del$ belongs to the exact sequence
    \begin{multline*}
        \dotsb \to
        K_{p+q+1}(A/I_{p-1})
        \loto{i'} K_{p+q+1}(A/I_p)\\
        \loto{\del} K_{p+q}(I_p/I_{p-1})
        \loto{i'} K_{p+q}(A/I_{p-1})
        \to \dotsb.
    \end{multline*}
    Onto this exact sequence, we draw a commutative square, and then extend
    the right-hand side of the square to a vertical exact sequence in K-theory.
    \begin{equation}
    \label{eqn:einftyisfiltereddiagram}
        \begin{tikzpicture}
            \mamax{0.6 cm} {
                & & & K_{p+q} I_{p-1} \\
                & & K_{p+q} I_p & K_{p+q} A \\
                \dotsb & K_{p+q+1}(A/I_p) & K_{p+q}(I_p/I_{p-1})
                    & K_{p+q}(A/I_{p-1}) & \dotsb. \\
            };
            \path[->, font=\scriptsize]
                (m-1-4) edge node[right] {$i^{p-1}_A$} (m-2-4)
                (m-2-3) edge node[auto] {$i^p_A$} (m-2-4)
                (m-2-3) edge node[left] {$i$} (m-3-3)
                (m-2-4) edge node[right] {$i$} (m-3-4)
                (m-3-1) edge node[below] {$i'$} (m-3-2)
                (m-3-2) edge node[below] {$\del$} (m-3-3)
                (m-3-3) edge node[below] {$i'$} (m-3-4)
                (m-3-4) edge node[below] {$i'$} (m-3-5)
            ;
        \end{tikzpicture}
    \end{equation}
    Chasing $y - y' \in K_{p+q} I_p$ through this diagram, we obtain
    \[
        (i' \circ i)(y - y') = (i' \circ \del)(z) = 0 \in K_{p+q}(A/I_{p-1})
    \]
    because the bottom row is exact.
    Then $(i \circ i^p_A)(y - y') = 0$
    due to the commutativity of the square.
    With $i^p_A(y - y') \in \ker i\colon K_{p+q} A \to K_{p+q}(A/I_{p-1})$,
    we conclude from the exactness of the vertical sequence that
    $i^p_A(y - y') \in \im i^{p-1}_A = F^{p-1} K_{p+q} A$.

    This shows that
    $f\colon E^\infty_{p,q} \to F^p K_{p+q} A / F^{p-1} K_{p+q} A$
    is well-defined.
\end{proof}

\begin{lem}
    The morphism $f$ is injective.
\end{lem}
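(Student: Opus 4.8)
The plan is to prove injectivity in its quotient form: if $f(x + B^\infty_{p,q}) = 0$, then already $x \in B^\infty_{p,q}$, so $x + B^\infty_{p,q}$ is the zero class of $E^\infty_{p,q}$. The starting point is the explicit description of the two groups recorded in \eqref{eqn:einftyisfiltered}, namely $Z^\infty_{p,q} = \im(i\colon K_{p+q} I_p \to K_{p+q}(I_p/I_{p-1}))$ and $B^\infty_{p,q} = \im(\del\colon K_{p+q+1}(A/I_p) \to K_{p+q}(I_p/I_{p-1}))$. By Definition \ref{dfn:howhbecomesk2}, this connecting homomorphism factors as $\del = i \circ \del_K$, where $\del_K\colon K_{p+q+1}(A/I_p) \to K_{p+q} I_p$ is the K-theoretic boundary map of the extension $0 \to I_p \to A \to A/I_p \to 0$ and $i$ is the very projection-induced map already appearing in $Z^\infty_{p,q}$. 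The six-term exact sequence of that extension (Theorem \ref{thm:sixTermExactSequence}) identifies $\im \del_K = \ker(i^p_A\colon K_{p+q} I_p \to K_{p+q} A)$.

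Next I would take the hypothesis and fix a lift. Suppose $x = i(y)$ with $y \in K_{p+q} I_p$ and $f(x + B^\infty_{p,q}) = 0$; by Definition \ref{dfn:einftyisfiltered} this means $i^p_A(y) \in F^{p-1} K_{p+q} A = \im(i^{p-1}_A\colon K_{p+q} I_{p-1} \to K_{p+q} A)$, so I may choose $w \in K_{p+q} I_{p-1}$ with $i^{p-1}_A(w) = i^p_A(y)$. Writing $\iota\colon K_{p+q} I_{p-1} \to K_{p+q} I_p$ for the map induced by the ideal inclusion $I_{p-1} \subseteq I_p$, I replace $y$ by the corrected lift $y_0 = y - \iota(w)$ and aim to show that $y_0$ still projects to $x$ under $i$ yet now lies in $\ker i^p_A$.

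Two cancellations, both forced by the H-system axioms, make this work at once. First, the composite $I_{p-1} \hookrightarrow I_p \twoheadrightarrow I_p/I_{p-1}$ is the zero \starhom, so $i \circ \iota = 0$ and hence $i(y_0) = i(y) = x$. Second, the inclusions compose, $i^p_A \circ \iota = i^{p-1}_A$ by commutativity of the $i$-maps (Definition \ref{dfn:ungradedhsystem}), so $i^p_A(y_0) = i^p_A(y) - i^{p-1}_A(w) = 0$, that is $y_0 \in \ker i^p_A = \im \del_K$. Choosing $z \in K_{p+q+1}(A/I_p)$ with $\del_K(z) = y_0$, I then obtain $\del(z) = i(\del_K(z)) = i(y_0) = x$, so $x \in \im \del = B^\infty_{p,q}$; thus $x + B^\infty_{p,q} = 0$ and $f$ is injective.

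The only delicate point is the bookkeeping in passing from $y$ to $y_0$: the given lift $y$ need not lie in $\ker i^p_A$, and the correction term $\iota(w)$ is engineered precisely so that the two identities $i^p_A \circ \iota = i^{p-1}_A$ and $i \circ \iota = 0$ simultaneously push $y_0$ into $\ker i^p_A$ (to invoke exactness and produce $z$) while leaving its image $i(y_0) = x$ unchanged. Everything else is unwinding the definitions of $\del$, of $B^\infty_{p,q}$, and of the filtration, all of which are already in place.
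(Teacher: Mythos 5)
Your argument is correct. Every step checks out: the factorization $\del = i \circ \del_K$ is exactly Definition \ref{dfn:howhbecomesk2} applied to the triple $(p-1,p,\infty)$, the identity $\im \del_K = \ker i^p_A$ is exactness of the six-term sequence of $0 \to I_p \to A \to A/I_p \to 0$ at $K_{p+q}I_p$, and the two cancellations $i \circ \iota = 0$ and $i^p_A \circ \iota = i^{p-1}_A$ follow from functoriality and the commutativity of the $i$-maps (Remark \ref{rem:iDiagramCommutes}). The route is, however, genuinely different from the paper's. The paper runs the well-definedness diagram \ref{eqn:einftyisfiltereddiagram} backwards: from $i^p_A(y) \in \im i^{p-1}_A$ it deduces $(i \circ i^p_A)(y) = 0$ via the vertical exact sequence $K_{p+q}I_{p-1} \to K_{p+q}A \to K_{p+q}(A/I_{p-1})$, transports this across the commuting square to $(i' \circ i)(y) = 0$, and concludes $i(y) \in \im\del$ from exactness of the row $K_{p+q+1}(A/I_p) \loto{\del} K_{p+q}(I_p/I_{p-1}) \loto{i'} K_{p+q}(A/I_{p-1})$. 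You instead never pass through $A/I_{p-1}$: you correct the lift to $y_0 = y - \iota(w)$, land in $\ker i^p_A = \im\del_K$, and exhibit an explicit preimage $z$ of $x$ under $\del$. What your version buys is a constructive witness for $x \in B^\infty_{p,q}$ and a proof that only uses the exact sequence of the single extension $I_p \subseteq A$ plus the definition of $\del$; what the paper's version buys is symmetry with the well-definedness lemma (the same diagram chased in reverse), which makes the pair of lemmas read as one argument. Either is acceptable.
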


\begin{proof}
    Let $x + B^\infty_{p,q}$ be a class in $E^\infty_{p,q}$ that vanishes
    under $f$. We have to show that $x \in B^\infty_{p,q}$.
    This proof looks like the proof of Lemma \ref{lem:fiswelldefined}
    in reverse.

    Select $y \in K_{p+q}(I_p)$ with $i(y)
    = x \in \im i\colon K_{p+q} I_p \to K_{p+q}(I_p/I_{p-1})$.
    From $f\big(i(y) + B^\infty_{p,q}\big) = 0$ and the definition of $f$,
    we know
    $i^p_A(y) \in \im i^{p-1}_A\colon K_{p+q} I_{p-1} \to K_{p+q} A$.
    Consider diagram \ref{eqn:einftyisfiltereddiagram} again:
    By exactness of the vertical sequence, $(i \circ i^p_A)(y) = 0$.
    Commutativity of the square shows that $(i' \circ i)(y) = 0$.
    Exactness of the bottom row gives
    $i(y) \in \im \del\colon K_{p+q+1}(A/I_p) \to K_{p+q}(I_p/I_{p-1})$.

    After substituting $i(y) = x$ and $\im \del = B^\infty_{p,q}$, we have
    shown $x \in B^\infty_{p,q}$ and therefore the injectivity of $f$.
\end{proof}

\begin{lem}
    The morphism $f$ is surjective.
\end{lem}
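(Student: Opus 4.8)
The plan is to show surjectivity directly by producing, for each class in $F^p H_{p+q}/F^{p-1}H_{p+q}$, an explicit preimage under $f$; this is the easy half of Theorem \ref{thm:einftyisfiltered}, and it falls out of the very construction of $f$ in Definition \ref{dfn:einftyisfiltered}. First I would unwind the target group. By Definition \ref{dfn:filtrationOfKA} we have $F^p H_{p+q} = \im(i^p_A\colon K_{p+q} I_p \to K_{p+q} A)$, so an arbitrary class in the quotient $F^p H_{p+q}/F^{p-1} H_{p+q}$ is represented by $i^p_A(y)$ for some $y \in K_{p+q} I_p$. The goal is to hit this class with $f$.

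The natural candidate preimage is the class of $x := i(y) \in K_{p+q}(I_p/I_{p-1})$, where $i$ is the projection-induced map. The one point that needs verification is that $x$ really defines a class in $E^\infty_{p,q}$, i.e.\ that $x \in Z^\infty_{p,q}$. This is exactly the identification $Z^\infty_{p,q} = \im(i\colon K_{p+q} I_p \to K_{p+q}(I_p/I_{p-1}))$ already recorded in the rewriting \ref{eqn:einftyisfiltered}. I would justify it from Notation \ref{ntt:cartanZBE}: since $I_{p'} = 0$ for $p' < 0$, the source group $H_{p+q}(p, p-r-1) = K_{p+q}(I_p/I_{p-r-1})$ collapses to $K_{p+q} I_p$ once $r \geq p$; moreover the maps factor through one another, so the groups $Z^r_{p,q}$ are descending and stabilize at $\im(i\colon K_{p+q} I_p \to K_{p+q}(I_p/I_{p-1}))$, which is therefore their intersection $Z^\infty_{p,q}$. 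Consequently $x = i(y)$ lies in $Z^\infty_{p,q}$ and determines a class $x + B^\infty_{p,q} \in E^\infty_{p,q}$.

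Finally I would evaluate $f$ on this class. Definition \ref{dfn:einftyisfiltered} instructs us to choose any lift $y'$ with $i(y') = x$ and set $f(x + B^\infty_{p,q}) = i^p_A(y') + F^{p-1} K_{p+q} A$; taking $y' = y$ gives $f(x + B^\infty_{p,q}) = i^p_A(y) + F^{p-1} K_{p+q} A$, which is precisely the class we started from. Hence every element of the target lies in the image of $f$, establishing surjectivity.

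There is no genuine obstacle in this direction: surjectivity is forced by construction once the identification of $Z^\infty_{p,q}$ is in hand, and the only thing to check is that the lift $y$ representing the target class may be reused as the lift in the definition of $f$, which is immediate. The substantive work of the convergence argument sits instead in the well-definedness and injectivity lemmas, where the commuting square and the two exact sequences of diagram \ref{eqn:einftyisfiltereddiagram} are exploited (cf.\ Lemma \ref{lem:fiswelldefined}); by contrast, surjectivity requires only a single diagram-free computation.
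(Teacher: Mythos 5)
Your proof is correct and follows the same route as the paper: lift a class $z \in F^p K_{p+q}A = \im i^p_A$ to $y \in K_{p+q} I_p$, observe that $i(y) \in Z^\infty_{p,q}$, and note that $f\big(i(y) + B^\infty_{p,q}\big) = i^p_A(y) + F^{p-1}K_{p+q}A$ by the definition of $f$. Your extra justification that $Z^\infty_{p,q}$ stabilizes to $\im\big(i\colon K_{p+q} I_p \to K_{p+q}(I_p/I_{p-1})\big)$ once $r \geq p$ is a detail the paper leaves implicit in the rewriting \ref{eqn:einftyisfiltered}, but it is accurate and does not change the argument.
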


\begin{proof}
    Let $z + \im i^{p-1}_A$ be in $F^p K_{p+q} A / F^{p-1} K_{p+q} A$.
    For this $z \in F^p K_{p+q} A = \im i^p_A$, we may find a lift
    $y \in K_{p+q} I_p$ with $i^p_A(y) = z$.
    Then $i(y) \in Z^\infty_{p,q}$ already satisfies
    $f\big(i(y) + B^\infty_{p,q}\big) = z + \im i^{p-1}_A$ by definition
    of $f$. Thus $f$ is surjective.
\end{proof}

These lemmas conclude the proof of Theorem \ref{thm:einftyisfiltered}.

\subsection{Summary}

We recall the main theorem of Section \ref{sec:schochetSseq}:

\schochetSseq*

The theorem is plausible: Fix $n \in \nat$, choose $I_p = 0$ for $p < n$
and $I_n = A$. The spectral sequence begins with $E^1_{p,q} = 0$ for $p \neq n$
and $E^1_{n,q} \cong K_{n+q} A$. In the only nonzero column
$E^1_{n,*} \cong E^\infty_{n,*}$, we see the expected
$K_s A \cong E^\infty_{n,q}$ for $n + q = s$.

\begin{proof}[Proof of Theorem \ref{thm:schochetSseq}]
    The computation of $E^1_{p,q}$ is straightforward:
    \[
        E^1_{p,q} =
        \frac{Z^0_{p,q}}{B^0_{p,q}}
        =
        \frac{
            \im \id\colon K_{p+q}(I_p/I_{p-1}) \to K_{p+q}(I_p/I_{p-1})
        }{
            \im \del\colon \underbrace{K_{p+q+1}(I_p/I_p)}_{=0}
                \to K_{p+q}(I_p/I_{p-1})
        }
        \cong K_{p+q}(I_p/I_{p-1}).
    \]
    The differentials $d^r\colon E^r_{*,*} \to E^r_{*,*}$ were defined in
    Theorem \ref{thm:hsystemsseq} and have the correct bidegrees
    $(-r, r-1)$ on page $r$. The strong convergence of this spectral sequence
    follows from Proposition \ref{pro:filtrationExhausts} and
    Theorem \ref{thm:einftyisfiltered}.
\end{proof}

\begin{rem}
    Even though this is a half-plane spectral sequence, its convergence
    is provable like the convergence of a single-quadrant spectral sequence
    because we have
    \emph{exiting differentials}: For each a bidegree $(p, q) \in \zet^2$,
    all except finitely many differentials
    $d^r_{p,q}\colon E^r_{p,q} \to E^r_{p-r, q+r-1}$ exit the half-plane
    $E^r_{p',*}$ for $p' \geq 0$ of nonzero groups.

    There are more intricate results for
    half-plane spectral sequences with \emph{entering differentials} or for
    whole-plane spectral sequences; these will not arise in our setting.
    Besides the classic reference
    \cite{mccleary-ssq}, a good resource for convergence theorems
    is \cite{BoardmanConditionallyConvergent}.
\end{rem}

\section{Finite sums of ideals}\thispagestyle{plain}
\label{sec:finiteSums}

Let $A$ be a C*-algebra.
There is a K-theory spectral sequence for ideals $I_0 \subseteq I_1 \subseteq
I_2 \subseteq \dotsb \subseteq I_n = A$. We will postulate a new spectral
sequence that weakens the \enquote{$\subseteq$} to a mere \enquote{$+$}:
For ideals $I_0$, $I_1$, $I_2$, $\ldots$, $I_n$ with $\sum_{j=0}^n I_j = A$,
there is a spectral sequence that relates the K-theory of their intersections
to the K-theory of $A$.

\subsection{Ideal decompositions}

Even though Section \ref{sec:finiteSums} deals only with the finite case,
we will define \starhoms{} that preserve arbitrarily-sized
ideal decompositions in light of later sections.

\begin{dfn}[Preservation of ideal decompositions]
\label{dfn:preserveIdealDecomps1}
    Let $\alpha$ and $\alpha'$ be arbitrary index sets with
    $\alpha \subs \alpha'$.
    Let $A$
    be the norm closure
    $A = \overline{\sum_{\beta \in \alpha} I_\beta}$
    of the sum of $\card{\alpha}$-many C*-ideals $I_\beta$.
    Let
    $A' = \overline{\sum_{\beta \in \alpha'} I'_\beta}$
    be another
    C*-algebra written as the sum of $\card{\alpha'}$-many C*-ideals
    $I'_\beta$.

    A \starhom{} $f\colon A \to A'$
    \emph{preserves the ideal decomposition} if
    $f(I_\beta) \subseteq I'_\beta$ for every $\beta \in \alpha$.
    Both $f$ and the specific decompositions
    $\{I_\beta\}_{\beta \in \alpha}$ and $\{I'_{\beta}\}_{\beta \in \alpha'}$
    are part of the input data.
\end{dfn}

\begin{rem}[Naturality w.r.t.\ ideal decompositions]
\label{rem:preserveIdealDecomps2}
    It is conceivable to define a category of C*-ideal decompositions
    and decomposition-preserving \starhoms{},
    e.g., with cardinal numbers $\alpha$ as index sets
    to ensure $\alpha \subs \alpha'$
    wherever $\card \alpha \leq \card{\alpha'}$.
    But it will be enough to work in $\catcstar$, the standard category of
    C*-algebras, because all natural constructions here will already be
    natural will w.r.t. ideal decompositions of C*-algebras:

    Let $\catc$ be any category. Let
    $F$, $G\colon \catcstar \to \catc$ be functors of C*-algebras
    and let $\eta\colon F \to G$ be a natural transformation.
    Let $f\colon A \to A'$ be a \starhom{} that preserves an
    $\card{\alpha}$-fold ideal decomposition as in Definition
    \ref{dfn:preserveIdealDecomps1}. Since $f(I_\beta) \subs I'_\beta$
    for all $\beta \in \alpha$ and \starhoms{} are compatible with sums,
    the following diagram commutes in $\catc$:
    \[
        \begin{tikzpicture}
        \mamax{2cm} {
            F(A) = F\big(\sum_{\beta \in \alpha} I_\beta\big)
            &
            G(A) = G\big(\sum_{\beta \in \alpha} I_\beta\big)
            \\
            F(A') = F\big(\sum_{\beta \in \alpha} I'_\beta\big)
            &
            G(A') = G\big(\sum_{\beta \in \alpha} I'_\beta\big).
            \\
        };
        \path[->, font=\scriptsize]
            (m-1-1) edge node[above] {$\eta(A)$} (m-1-2)
            (m-2-1) edge node[below] {$\eta(A')$} (m-2-2)
            (m-1-1) edge node[left] {
                $F(f) = F\big(\sum_{\beta \in \alpha} f \restr I_\beta\big)$
            } (m-2-1)
            (m-1-2) edge node[right] {
                $G(f) = G\big(\sum_{\beta \in \alpha} f \restr I_\beta\big)$
            } (m-2-2)
        ;
        \end{tikzpicture}
    \]
\end{rem}

\subsection{Cake pieces}

To construct the spectral sequence for finite ideal decompositions,
we will define function algebras over
certain subsets of the standard simplex.

\begin{dfn}[Cake piece]
\label{dfn:cakePieces}
    Fix $n \in \nat$. The standard \emph{$n$-simplex} is the
    topological space
    \[
        \Delta^n = \set{(x_0, x_1, \ldots, x_n) \in [0,1]^{n+1}}
        {\sum_{i=0}^n x_i = 1}.
    \]
    Its boundary $\partial \Delta^n$ shall be the subset of points with
    at least one zero entry.
    Let $j \in \nat$ be an index with $j \leq n$. This index defines the
    \emph{$j$-th cake piece}
    \[
        \Delta^n_j = \set{(x_0, x_1, \ldots, x_n) \in \Delta^n}
        {x_j \leq x_{i} \textrm{ for all } i \leq n}.
    \]
    Let $J \subseteq \{0, 1, \ldots, n\}$ be a nonempty subset of the $n+1$
    indices. This determines an intersection of cake pieces:
    \[
        \Delta^n_J = \bigcap_{j \in J} \Delta^n_j.
    \]
    We will see how $\Delta^n_J$ behaves very much like a $j$-th cake piece,
    and therefore also call it a \emph{cake piece}.
\end{dfn}

\BeginSimonFigure
\begin{tikzpicture}
	\coordinate (B1) at (6 cm, 0 cm);
	\coordinate (B2) at (10 cm, 0 cm);
	\coordinate (B3) at (8 cm, 3.464 cm);
    \coordinate (Cb) at (barycentric cs:B1=1,B2=1,B3=1);
    \fill[gray!50] (B2) -- (Cb) -- (B3) -- cycle;
	\draw[thick] (B2) -- (Cb) -- (B3) -- cycle;
    \draw[thick,dashed] (B3) -- (B1) -- (B2);
    \eiddot{B1}{below}{(1, 0, 0)}
    \eiddot{B2}{below}{(0, 1, 0)}
    \eiddot{B3}{above}{(0, 0, 1)}
    \eiddot{Cb}{below}{}
    \node at (barycentric cs:B1=-0.23,Cb=1) {$\Delta^2_{\{0\}}$};
\end{tikzpicture}
\EndSimonFigure{fig:cakeExample}{
    The simplex $\Delta^2$ with the cake piece
    $\Delta^2_{\{0\}} = \Delta^2_0 \subseteq \Delta^2$ marked
}

Cake pieces are closed subsets of $\Delta^n$.
The central point $\big(\frac1{n+1}, \frac1{n+1}, \ldots, \frac1{n+1}\big)$
of the $n$-simplex
is part of every cake piece; this point is the only element of
$\Delta^n_J$ for the full set $J = \{0, 1, \ldots, n\}$.

Arbitrary
index subsets $J \neq \varemptyset$ make $\Delta^n_J$ look like
$\Delta^{n+1- \card J}$:

\begin{pro}
\label{pro:howdeltanlookslike}
    For a nonempty $J \subseteq \{0, 1, \ldots, n\}$, the subset
    $\Delta^n_J$ is the image of
    $\Delta^{n+1-\card J} \times \{0\}^{\card J -1}$
    under a nondegenerate affine transformation in $\rel^{n+1}$.
\end{pro}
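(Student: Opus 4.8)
The plan is to reduce to a canonical index set, to identify $\Delta^n_J$ as a genuine geometric simplex by listing its vertices, and then to match those vertices with the vertices of $\Delta^{n+1-\card J}\times\{0\}^{\card J -1}$ by a single invertible affine map of $\rel^{n+1}$. Throughout write $m = \card J$. First I would record the elementary fact that, for $x\in\Delta^n$, one has $x\in\Delta^n_J$ exactly when $x_j=\min_i x_i$ for every $j\in J$: the defining inequalities $x_j\le x_i$ (one for each $j\in J$ and all $i$) force the coordinates $(x_j)_{j\in J}$ to be mutually equal and to attain the global minimum. Since permuting the $n+1$ coordinates is a nondegenerate linear automorphism of $\rel^{n+1}$ that preserves $\Delta^n$ and carries $\Delta^n_J$ onto $\Delta^n_{J'}$ whenever $\card{J'}=m$, I may assume $J=\{0,1,\dotsc,m-1\}$.

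Next I would determine the extreme points of $\Delta^n_J$. Writing $t$ for the common value $x_0=\dotsb=x_{m-1}=\min_i x_i$ and using $\sum_i x_i=1$, the conditions $t\ge 0$ and $x_i\ge t$ for $i\ge m$ describe a polytope whose vertices are the barycenter $b=\tfrac{1}{n+1}(1,\dotsc,1)$ of $\Delta^n$ (where all $x_i=t$) together with the standard basis vectors $e_i$ for $i\notin J$ (where $t=0$ and all but one of the remaining coordinates vanish). These are $1+(n+1-m)=n+2-m$ points, and they are affinely independent: the $e_i$ with $i\notin J$ are linearly independent, while $b$ cannot lie in their affine hull because its $0$-th coordinate is $\tfrac1{n+1}\neq 0$ whereas each such $e_i$ (with $i\ge m\ge 1$) vanishes there. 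Hence $\Delta^n_J$ is the convex hull of these $n+2-m$ points and is a geometric $(n+1-m)$-simplex.

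Finally, $\Delta^{n+1-m}\times\{0\}^{m-1}\subset\rel^{\,n+2-m}\times\rel^{\,m-1}=\rel^{n+1}$ is likewise a geometric $(n+1-m)$-simplex, spanned by its $n+2-m$ vertices $(e_k,0)$. I would extend each of these two affinely independent tuples to a full affine frame of $\rel^{n+1}$ by adjoining $m$ further affinely independent points (on the source side, the origin together with the $m-1$ standard basis points of the $\rel^{m-1}$-factor; on the target side, the origin together with the remaining vertices $e_i$, $i\in J\setminus\{0\}$, of $\Delta^n$). The unique affine map of $\rel^{n+1}$ sending the ordered source frame to the ordered target frame is then a bijection, i.e.\ a nondegenerate affine transformation, and since affine maps preserve convex hulls it carries $\Delta^{n+1-m}\times\{0\}^{m-1}$ onto $\Delta^n_J$. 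Concretely, this map rescales the diagonal $J$-direction by $\tfrac1{n+1}$, which is exactly what turns the constraint $\sum_i x_i=1$ into the unit-sum constraint defining $\Delta^{n+1-m}$. The main obstacle is the vertex computation together with its affine-independence check: this is the one genuinely non-formal step, since it simultaneously certifies that $\Delta^n_J$ really has the claimed dimension $n+1-m$ and that the frame-matching transformation is nondegenerate. The reduction by a coordinate permutation, the extension to affine frames, and the preservation of convex hulls are then routine, as is the dimension bookkeeping $(n+2-m)+(m-1)=n+1$ ensuring that both sets live in the same $\rel^{n+1}$.
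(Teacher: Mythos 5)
Your argument is correct. It proves the statement by a single uniform construction: you characterize $\Delta^n_J$ as the set of $x \in \Delta^n$ whose $J$-coordinates all equal the global minimum, read off its vertex set as the barycenter $b$ together with the standard basis vectors $e_i$ for $i \notin J$, verify affine independence of these $n+2-\card J$ points, and then match them with the vertices of $\Delta^{n+1-\card J}\times\{0\}^{\card J-1}$ inside a full affine frame of $\rel^{n+1}$. The paper instead argues by cases on $\card J$: for the full set it observes $\Delta^n_J$ is a point; for $\card J = 1$ it writes down an explicit affine automorphism $f$ of $\rel^{n+1}$ (with $f_0(x)=x_0/(n+1)$ and $f_i(x)=x_i+x_0/(n+1)$) together with its inverse; and for $1 < \card J \le n$ it reduces to the singleton case via an intermediate affine isomorphism $\Delta^{n+1-\card J}_{\{0\}} \to \Delta^n_J$ defined on corners and extended by convex combinations. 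The underlying geometric content is the same -- the paper's "several elements" case also identifies the extreme points of $\Delta^n_J$ as the center of $\Delta^n$ plus the corners indexed by $\setton - J$ -- but your version avoids the case split and the detour through the lower-dimensional cake piece, and it supplies the affine-independence check (via the parametrization by the common minimum $t$ and the excess coordinates $x_i - t$) that the paper leaves as an assertion of extremality. What the paper's route buys in exchange is the explicit coordinate formula for the transformation in the $\card J = 1$ case, which it reuses later (e.g.\ in Lemma \ref{lem:pairishomeomorphic}); your frame-matching map is only characterized abstractly as the unique affine map sending one ordered frame to the other.
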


\begin{proof}
    We will analyze several cases explicitly by cardinality of $J$.
    \paragraph{Full set.}
    For $J = \setton$, the full set of $(n+1)$ elements,
    we have already argued how $\Delta^n_J$
    contains only a single point. This is an image of
    $\Delta^0 \times \{0\}^n \subseteq \rel^{n+1}$.

    \paragraph{One element.}
    For $n \geq 2$ and $J = \{j\}$, we have $\Delta^n_J = \Delta^n_j$.
    Without loss of generality, choose $J = \{0\}$.
    The affine
    transformation of $\rel^{n+1}$ to get $\Delta^n_0$
    from the standard $n$-simplex is
    \[
        f = (f_0, f_1, \ldots, f_n)\colon \rel^{n+1} \to \rel^{n+1},
    \]\[
        f_0(x) = \frac{x_0}{n+1},
        \qquad f_i(x) = x_i + \frac{x_0}{n+1}
        \textrm{ for } i \neq 0.
    \]
    The purpose of $f_i(x)$ is to equally distribute among the $n$ other
    coordinates the value $\frac{nx_0}{n+1}$ that has been taken away from
    $x_0$.

    The $f_i$ are nontrivial linear maps, and their direct product
    $f = (f_0, \ldots, f_n)$
    is an affine automorphism of $\rel^{n+1}$. Its inverse is
    \[
        f^{-1} = g = (g_0, g_1, \ldots, g_n)\colon \rel^{n+1} \to \rel^{n+1},
    \]\[
        g_0(x) = (n+1) \cdot x_0,
        \qquad g_i(x) = x_i - x_0
        \textrm{ for } i \neq 0.
    \]
    Even though these maps are defined in $\rel^{n+1}$, they restrict
    well to maps on the simplex: For $x = (x_0, \ldots, x_n) \in \Delta^n$,
    we have $\sum_{i=0}^n x_i = 1 = \sum_{i=0}^n f(x_i)$.
    Furthermore, $f \restr \Delta^n$ maps into $\Delta^n_0$
    because all points $x \in \Delta^n$ satisfy $f_i(x) \geq f_0(x)$.
    The restricted inverse $f^{-1} \restr \Delta^n_0$ maps into $\Delta^n$:
    Positive coordinates stay positive because
    $x_i \geq x_0$ for all $0 \leq i \leq n$.

    \paragraph{Several elements.}
    For $1 < \card J \leq n$, observe how $\card J$ coordinates
    in $\Delta^n_J$ remain equal to each other at all times, and are always
    the smallest. Without loss of generality, let $J$ be
    $\{0, 1, \ldots, \card J - 1\}$, the $\card J$ first coordinates.
    We will construct an affine isomorphism
    \[
        h \colon \Delta^{n+1-\card{J}}_{\{0\}} \to \Delta^n_J
    \]
    by defining $h$ on the $(n + 2 - \card{J})$ corners of
    $\Delta^{n+1-\card{J}}_{\{0\}}$, then extending $h$ to the entire
    cake piece by preserving convex combinations.
    Thereby, $h$ reduces the case of $\Delta^n_J$ to the already-proven
    case $\card J = 1$.

    The central point of $\Delta^{n+1-\card{J}}$ is a corner of
    $\Delta^{n+1-\card{J}}_{\{0\}}$. Have $h$ map this point to the center
    of $\Delta^n$, this is extremal in $\Delta^n_J$.
    Biject the remaining $(n + 1 - \card{J})$
    corners of $\Delta^{n+1-\card{J}}_{\{0\}}$ to the
    corners in $\Delta^n$ that belong to the coordinates in $\setton - J$;
    these points remain extremal in $\Delta^n_J$. This bijection can
    even be chosen to preserve the order of coordinates.
\end{proof}

\begin{cor}
\label{cor:howDeltaNLooksLike}
    Let $J \subseteq \setton$ be nonempty.
    Then $\Delta^{n+1-\card{J}} \cong \Delta^n_J \cong D^{n+1-\card{J}}$,
    where $D^{n+1-\card{J}}$
    denotes the $(n+1-\card{J})$-dimensional unit disk.

    In particular, if $J = \{j\}$, then $\Delta^n_j \cong D^{n}$.
\end{cor}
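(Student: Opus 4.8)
The plan is to chain two homeomorphisms: the first comes directly from Proposition \ref{pro:howdeltanlookslike}, and the second is the classical identification of a simplex with a disk of matching dimension.

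First I would note that a nondegenerate affine transformation of $\rel^{n+1}$ is invertible and therefore a homeomorphism onto its image. Hence the transformation furnished by Proposition \ref{pro:howdeltanlookslike} restricts to a homeomorphism from $\Delta^{n+1-\card{J}} \times \{0\}^{\card{J}-1}$ onto $\Delta^n_J$. Forgetting the trailing zero coordinates identifies $\Delta^{n+1-\card{J}} \times \{0\}^{\card{J}-1}$ with $\Delta^{n+1-\card{J}}$ (the projection that drops the last $\card{J}-1$ coordinates is a homeomorphism here), which yields the first isomorphism $\Delta^{n+1-\card{J}} \cong \Delta^n_J$.

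Second I would invoke the standard fact that the standard $m$-simplex $\Delta^m$ is homeomorphic to the closed $m$-disk $D^m$: both are compact convex bodies of dimension $m$, and concretely, radial projection from the barycenter sends the boundary $\partial \Delta^m$ homeomorphically to $\partial D^m$ and extends radially to a homeomorphism $\Delta^m \cong D^m$. Applying this with $m = n+1-\card{J}$ completes the chain $\Delta^{n+1-\card{J}} \cong \Delta^n_J \cong D^{n+1-\card{J}}$. The stated special case $J = \{j\}$ is merely $\card{J} = 1$, so $n+1-\card{J} = n$ and $\Delta^n_j \cong D^n$.

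Both steps are routine, so I do not expect a genuine obstacle; the first isomorphism is immediate from the preceding proposition and the second is standard point-set topology. The only point that demands care is dimension bookkeeping: the standard $m$-simplex lives inside $\rel^{m+1}$ but is an $m$-dimensional object, so it matches $D^m$ rather than $D^{m+1}$. Writing $m = n+1-\card{J}$ throughout keeps the indices consistent and makes the degenerate case $\card{J} = n+1$ (a single point, i.e.\ $\Delta^0 \cong D^0$) fall out correctly.
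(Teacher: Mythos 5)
Your proposal is correct and follows the same route as the paper, which simply cites Proposition \ref{pro:howdeltanlookslike} together with the standard homeomorphism $\Delta^m \cong D^m$; you have merely spelled out the details (invertibility of the affine map, dropping the trailing zero coordinates, radial projection) that the paper leaves implicit.
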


\begin{proof}
    This follows from $\Delta^n \cong D^n$ and Proposition
    \ref{pro:howdeltanlookslike}.
\end{proof}

These technical constructions relate various subspaces of simplices
to disks. The boundaries of disks are spheres. This will become useful
once we consider C*-algebras of functions
on these subspaces of simplices: When we force functions to vanish on the
boundaries, the C*-algebras can be viewed as suspensions of other algebras.

\subsection{Cake algebras}
\label{sec:cakePieceAlgebras}

\begin{dfn}[Cake algebra]
\label{dfn:cakealgebra}
    Fix $n \in \nat$. Let $A$ be a C*-algebra and $I_j \subseteq A$
    be closed two-sided ideals for $j \in \setton$ with $A = \sum_{j=0}^n I_j$.
    This gives rise to a suspension-like C*-algebra, the
    \emph{cake algebra}
    \equationDefineB
    For $J \subseteq \setton$, define the sub-C*-algebra
    $B_J \subseteq B$, again called a \emph{cake algebra}, by
    \equationDefineBJ
\end{dfn}

\begin{rem}
\label{rem:cakealgebra}
    We observe $B_\setton = B$ and $B_\varemptyset = 0$. Larger index sets
    mean larger function algebras because fewer restrictions apply. Whenever
    $J' \subseteq J$ is a subset, then $B_{J'} \subseteq B_J$ is a
    subalgebra. For $J = \{j\}$, we can characterize $B_{\{j\}}$:
\end{rem}

\begin{pro}
\label{pro:isnfoldsuspension}
    $B_{\{j\}}$ is isomorphic to the $n$-fold C*-algebra suspension of $I_j$.
\end{pro}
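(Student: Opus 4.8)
The plan is to realise $B_{\{j\}}$ as the algebra of $I_j$-valued functions that live only on the cake piece $\Delta^n_j$ and vanish on its polytope boundary, and then to invoke Corollary~\ref{cor:howDeltaNLooksLike} to recognise this as an $n$-fold suspension. First I would note that every $f \in B_{\{j\}}$ is supported on $\Delta^n_j$: the cake pieces $\Delta^n_0, \ldots, \Delta^n_n$ cover $\Delta^n$ (each point has some minimal coordinate), and $f$ vanishes on every $\Delta^n_{j'}$ with $j' \neq j$, so $f$ vanishes on $\Delta^n \setminus \Delta^n_j$. Hence $f$ is determined by $f \restr \Delta^n_j$, which takes values in $I_j$ by the defining condition $f(\Delta^n_j) \subs I_j$ of $B$, and which vanishes on the set $V = (\partial \Delta^n \cap \Delta^n_j) \cup \bigcup_{j' \neq j}(\Delta^n_{j'} \cap \Delta^n_j)$. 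Because $f$ vanishes off $\Delta^n_j$, the map $f \mapsto f \restr \Delta^n_j$ is an isometric \starhom{} into $\set{g \in \cont(\Delta^n_j, I_j)}{g \restr V = 0}$.

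The heart of the argument is identifying $V$ with the topological boundary of the disk $\Delta^n_j \cong D^n$. The cake piece $\Delta^n_j$ is the convex $n$-polytope cut out inside the hyperplane $\sum_i x_i = 1$ by the inequalities $x_i \geq 0$ and $x_j \leq x_i$ for $i \neq j$. Its facets arise by turning exactly one inequality into an equality: the constraints $x_i = 0$ give the facets lying on $\partial \Delta^n$, and the constraints $x_j = x_{j'}$ give the internal walls $\Delta^n_j \cap \Delta^n_{j'}$ shared with the neighbouring cake pieces. The union of all facets is precisely $V$, and this union is the manifold boundary $\partial \Delta^n_j$. By invariance of the boundary, the homeomorphism $\Delta^n_j \cong D^n$ of Corollary~\ref{cor:howDeltaNLooksLike} carries $V = \partial \Delta^n_j$ onto $S^{n-1} = \partial D^n$, and hence the open interior $\mathrm{int}\,\Delta^n_j$ onto $\rel^n$.

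Next I would show the restriction map is surjective by extension by zero. Given $g \in \cont(\Delta^n_j, I_j)$ with $g \restr V = 0$, define $\tilde g$ on $\Delta^n$ by $\tilde g \restr \Delta^n_j = g$ and $\tilde g = 0$ elsewhere. The sets $\Delta^n_j$ and $\overline{\Delta^n \setminus \Delta^n_j}$ are closed, cover $\Delta^n$, and overlap within the internal walls $\bigcup_{j' \neq j}(\Delta^n_j \cap \Delta^n_{j'}) \subs V$, where $g$ vanishes; so the pasting lemma makes $\tilde g$ continuous. One then checks directly that $\tilde g$ satisfies all the conditions defining $B_{\{j\}}$: it vanishes on $\partial \Delta^n$ (as $\partial \Delta^n \cap \Delta^n_j \subs V$), it sends $\Delta^n_i$ into $I_i$ for every $i$, and it vanishes on each $\Delta^n_{j'}$ with $j' \neq j$. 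Restriction and extension are mutually inverse, so restriction is an isometric \starhom{} isomorphism $B_{\{j\}} \cong \set{g \in \cont(\Delta^n_j, I_j)}{g \restr \partial \Delta^n_j = 0}$.

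Finally, since $\Delta^n_j$ is a compactification of its interior obtained by adjoining the boundary sphere, the equivalent description of $\conto$ (restricting functions that vanish on the complement of the open part) identifies the algebra of $I_j$-valued functions on $\Delta^n_j$ vanishing on $\partial \Delta^n_j$ with $\conto(\mathrm{int}\,\Delta^n_j, I_j) \cong \conto(\rel^n, I_j)$, using $\rel^n \cong \intOO{0}{1}^n$ and iterating $SA = \conto(\intOO{0}{1}, A)$ to get $S^n I_j \cong \conto(\intOO{0}{1}^n, I_j)$. Chaining these isomorphisms yields $B_{\{j\}} \cong S^n I_j$. The only genuine obstacle is the boundary bookkeeping of the second paragraph: confirming that the two conditions "vanish on $\partial \Delta^n$" and "vanish on the other cake pieces" together cut out exactly the sphere $S^{n-1}$ and nothing more. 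Once the facet structure of the polytope $\Delta^n_j$ is laid out, the suspension isomorphism follows formally.
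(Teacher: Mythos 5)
Your proposal is correct and follows essentially the same route as the paper: the paper likewise observes that every point of $\partial \Delta^n_j$ lies either in $\partial \Delta^n$ (when $x_j = 0$) or in some other cake piece $\Delta^n_{j'}$ (when $x_j = x_{j'}$), identifies $B_{\{j\}}$ with $\set{f\colon \Delta^n_j \to I_j}{f \restr \partial \Delta^n_j = 0}$ by restriction, and concludes via $\Delta^n_j \cong D^n$ from Corollary \ref{cor:howDeltaNLooksLike}. You simply spell out in more detail the facet bookkeeping and the extension-by-zero inverse that the paper leaves implicit.
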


\begin{proof}
    By $\partial\Delta^n_j$, we denote the topological
    boundary of $\Delta^n_j$ as a
    subset of $\rel^{n+1}$. A point $x \in \partial\Delta^n_j$
    lies in $\partial \Delta^n$
    if $x_j = 0$. Otherwise, we have $x_j = x_{j'}$ for an index
    $j' \neq j$ and $x$ then lies in $\Delta^n_{j'}$.

    Understanding this, we simplify the above definition for $B_J = B_{\{j\}}$:
    \begin{align*}
        B_{\{j\}}
        &= \set{f\colon \Delta^n \to A}
            {f \restr \partial \Delta^n = 0
            \textrm{, } f(\Delta^n_j) \subseteq I_j
            \textrm{, } f(\Delta^n_{j'}) = 0 \textrm{ for all }
            j' \neq j} \\
        &= \set{f\colon \Delta^n \to A}
            {f \restr \partial \Delta^n = 0
            \textrm{, } f(\Delta^n_j) \subseteq I_j
            \textrm{, } f(\Delta^n - \Delta^n_j) = 0 } \\
        &\cong \set{f\colon \Delta^n_j \to I_j}
            {f \restr \partial \Delta^n_j = 0}.
    \end{align*}
    Because $\Delta^n_{\{j\}} \cong D^n$, the algebra $B_{\{j\}}$
    is isomorphic to the $n$-fold C*-algebra suspension of $I_j$;
    i.e., the $A$-valued functions on $D^n$
    that vanish on the boundary $\partial D^n$.
\end{proof}

\begin{rem}
    As subsets of functions that vanish on a given set, the $B_J$ are
    closed two-sided ideals in $B$. For $J' \subseteq J$, $B_{J'}$ is a
    closed two-sided ideal in $B_J$.
\end{rem}

\begin{lem}
\label{lem:bjcapbjisbjcapj}
    For subsets $J$ and $J'$ of $\setton$, we have
    $B_J \cap B_{J'} = B_{J \cap J'}$.
\end{lem}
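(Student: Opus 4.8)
The plan is to unfold the defining vanishing conditions of each cake algebra and reduce the identity to De Morgan's law for subsets of $\setton$. First I would recall from Definition \ref{dfn:cakealgebra} that, for any $K \subs \setton$, a function $f \in B$ lies in $B_K$ precisely when $f(\Delta^n_{j'}) = 0$ for every index $j' \in \setton \setminus K$. Since $B_J$, $B_{J'}$, and $B_{J \cap J'}$ are all sub-C*-algebras of the common ambient cake algebra $B$, it suffices to compare their membership conditions as subsets of $B$.

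Next I would characterize the intersection directly: a function $f \in B$ belongs to $B_J \cap B_{J'}$ exactly when it satisfies both vanishing conditions, that is, $f(\Delta^n_{j'}) = 0$ for every $j' \in \setton \setminus J$ and also for every $j' \in \setton \setminus J'$. These two families of requirements together amount to demanding that $f(\Delta^n_{j'}) = 0$ for every index in the union $(\setton \setminus J) \cup (\setton \setminus J')$.

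The key step is then De Morgan's law, $(\setton \setminus J) \cup (\setton \setminus J') = \setton \setminus (J \cap J')$, which shows that the index set over which $f$ is forced to vanish is exactly $\setton \setminus (J \cap J')$. By the defining condition of $B_{J \cap J'}$, this is precisely the statement $f \in B_{J \cap J'}$. Hence the two membership conditions coincide and $B_J \cap B_{J'} = B_{J \cap J'}$.

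I do not expect any substantial obstacle here: once the definitions are unwound the statement is purely set-theoretic, and the only point demanding minor care is that the two separate quantifiers \enquote{for each $j' \notin J$} and \enquote{for each $j' \notin J'$} must be merged into a single quantifier over the union of the complements, so that De Morgan's law can be applied at the level of index subsets of $\setton$.
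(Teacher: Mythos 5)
Your proposal is correct and follows essentially the same route as the paper: the paper's proof also unfolds the vanishing conditions defining $B_J$ and $B_{J'}$ and merges the two quantifiers into the single condition ``$f(\Delta^n_{j'}) = 0$ for $j'$ with $j' \notin J$ or $j' \notin J'$'', which is your De Morgan step stated implicitly. No gap.
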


\begin{proof}
    This is immediate from the definition of $B_J$:
    \begin{align*}
        B_J \cap B_{J'}
        &=
        \set{f \in B}{f(\Delta^n_{j'})=0 \textrm{ for } j' \notin J}
        \cap
        \set{f \in B}{f(\Delta^n_{j'})=0 \textrm{ for } j' \notin J'}
        \\
        &= \set{f \in B}{f(\Delta^n_{j'})=0
            \textrm{ for } j' \textrm{ with } j' \notin J
            \textrm{ or } j' \notin J'}
        \\
        &= B_{J \cap J'}. \qedhere
    \end{align*}
\end{proof}

\begin{dfn}[Cake sums $Q_p$ for $p \in \zet$]
\label{dfn:cakeSums}
    Let $A$ and $B(I_0, I_1, \ldots, I_n)$ be as in Definition
    \ref{dfn:cakealgebra}. For $p \in \zet$, define the C*-algebra $Q_p$,
    called a \emph{cake sum}, by
    \[
        Q_p = \sum_{\card J \leq p+1} B_J,
    \]
    where $J$ ranges over all subsets of $\setton$ that have cardinality
    $(p+1)$ or less.
\end{dfn}

\begin{rem}[Cake sums for $p < 0$ or $p \geq n$]
\label{rem:cakeSumTrivial}
    For $p < 0$, the sum $Q_p$ is either $B_\varemptyset$ or an empty sum;
    both of these are the zero algebra $Q_p = 0$.

    For $p \geq n$, the sum $Q_p$ is taken over all $B_J$ for all possible
    subsets $J \subseteq \setton$ including $\setton$ itself. By Remark
    \ref{rem:cakealgebra}, for all $J \subseteq \setton$,
    the cake algebra $B_J$ is
    already a subalgebra of $B_{\setton}$.
    Thus $Q_p$ is identical to $B_\setton = B$ for $p \geq n$.
\end{rem}

\begin{rem}[Inclusions $Q_{p-1} \subseteq Q_p$]
\label{rem:cakeSumInclusions}
    We have
    well-defined inclusions $Q_{p-1} \subseteq Q_p$ for all $p \in \zet$
    because $Q_p$ collects at least the cake algebras from $Q_{p-1}$,
    possibly more.

    If $p \leq n$, the
    relation $B_{J'} \subseteq B_J$ for $J' \subseteq J$
    allows another characterization of
    $Q_p$ by summing over fewer sets:
    \begin{equation}
    \label{eqn:cakeSumCharacterization}
        Q_p = \sum_{\card J \leq p+1} B_J = \sum_{\card J = p+1} B_J.
    \end{equation}
    For $p > n$, this characterization would be false because there are
    no subsets of cardinality $(n+2)$ in $\setton$.
\end{rem}

\begin{lem}
\label{lem:cakeSumQuotients}
    For all $p' \leq p \in \zet$, the cake sum $Q_{p'}$ is a C*-ideal
    in $Q_p$. Thus we have well-defined quotients $Q_p/Q_{p'}$, in particular
    $Q_p/Q_{p-1}$.
\end{lem}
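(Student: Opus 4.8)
The plan is to prove the statement in three stages: first that each summand $B_J$ appearing in $Q_{p'}$ is a two-sided ideal in the larger algebra $Q_p$, then that their finite algebraic sum $Q_{p'}$ inherits the ideal property, and finally---the only genuinely analytic point---that this sum is norm-closed.

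For the algebraic part, note that whenever $\card J \le p'+1 \le p+1$ we have $B_J \subs Q_{p'} \subs Q_p \subs B$ by Definition~\ref{dfn:cakeSums} and Remark~\ref{rem:cakeSumInclusions} (iterating $Q_{p-1} \subs Q_p$, or simply observing that the indexing sets with $\card J \le p'+1$ form a subfamily of those with $\card J \le p+1$). The Remark following Proposition~\ref{pro:isnfoldsuspension} records that each $B_J$ is a closed two-sided ideal in $B$; since $Q_p$ is a $*$-subalgebra of $B$ containing $B_J$, it follows at once that $B_J$ is a two-sided ideal in $Q_p$ as well (for $a \in Q_p \subs B$ and $x \in B_J$, both $ax$ and $xa$ land in $B_J$ because $B_J$ is an ideal in $B$). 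As a finite sum of two-sided ideals of $Q_p$ is again a two-sided ideal, the cake sum $Q_{p'} = \sum_{\card J \le p'+1} B_J$ is a two-sided ideal in $Q_p$.

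The hard part is closedness, for which I would invoke the standard fact that the sum of two---and hence, by induction, of finitely many---closed two-sided ideals in a C*-algebra is again closed. Because $\setton$ is finite, there are only finitely many subsets $J$, so $Q_{p'}$ really is a \emph{finite} sum of the closed ideals $B_J$ inside the C*-algebra $B$; the cited fact then makes $Q_{p'}$ norm-closed. A closed two-sided ideal is precisely a C*-ideal, so $Q_{p'}$ is a C*-ideal in $Q_p$ and the quotient $Q_p/Q_{p'}$---in particular $Q_p/Q_{p-1}$---is a well-defined C*-algebra. As an alternative that avoids the C*-algebra fact, one can check by a pigeonhole argument that every element of $Q_{p'}$ vanishes on all cake pieces $\Delta^n_{J''}$ with $\card{J''} \ge p'+2$, and then prove the reverse inclusion by a partition of unity subordinate to the cake pieces; this would realize $Q_{p'}$ as the ideal of functions vanishing on a fixed closed subset of $\Delta^n$ and hence as closed directly, but the partition-of-unity step is more laborious than simply citing closedness of finite sums of ideals.
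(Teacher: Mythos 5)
Your proof is correct and follows essentially the same route as the paper's: both arguments reduce to the fact that each summand $B_J$ is a closed two-sided ideal in $B$ (the paper verifies this by hand via the cake pieces on which $f \in B_J$ vanishes, you cite the remark recording it), so that products with elements of $Q_p \subs B$ land back in $B_J \subs Q_{p'}$. If anything you are slightly more careful than the paper, which simply calls $Q_{p'}$ and $Q_p$ sub-C*-algebras of $B$, whereas you explicitly invoke the standard fact that a finite sum of closed two-sided ideals in a C*-algebra is norm-closed.
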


\begin{proof}
    For $p' \geq n$, we trivially have $Q_{p'} = Q_p = B$.

    We will prove the case $p' < n$.
    $Q_{p'}$ and $Q_p$ are sub-C*-algebras of the same
    commutative C*-algebra $B$ and we have $Q_{p'} \subseteq Q_p$.
    It remains to show that $Q_{p'}$ is an algebraic ideal.

    For $f \in Q_{p'}$ and $g \in Q_p$, find $J \subseteq \setton$
    such that $f \in B_J$ and $\card{J} = p' + 1$; such a $J$ exists
    according to the characterization \ref{eqn:cakeSumCharacterization}.

    By Definition \ref{dfn:cakealgebra} of $B_J$, the function $f$ must
    vanish on at least $(n-p')$ different cake pieces.
    For any $g \in Q_p$, the pointwise product $fg$ must vanish on the same
    cake pieces, therefore $fg \in Q_{p'}$.
\end{proof}

\begin{rem}
    The algebra $Q_p$ is defined by summing over all
    $B_J$ with $\card J \leq p+1$,
    not merely over those with $\card J \leq p$.
    This index shift is deliberate:
    We are going to define a spectral sequence with the
    K-theory of quotients of
    $Q_p/Q_{p-1}$ as input.
    The index shift
    will affect the layout of the first page $\{E^1_{p,q}\}_{p,q\in \zet}$.

    Consider the trivial input $n = 0$ and $A = I_0$.
    Here $Q_p = B_{\{0\}}$ for $p \geq 0$ and $Q_p = 0$ for $p < 0$.
    This leads to a spectral sequence with $E^1_{0,q} \cong K_q A$ and
    $E^1_{p,q} = 0$ for $p \neq 0$. This is the most desirable layout;
    the K-theory of the lone ideal $A = I_0$ is not shifted in any way:
    \[
    \begin{tikzpicture}
    \masseq{
        \phantom{-}2 & 0 & K_2 A
        & 0
        & 0
        \\
        \phantom{-}1 & 0 & K_1 A
        & 0
        & 0
        \\
        \phantom{-}0 & 0 & K_0 A
        & 0 & 0
        \\
        -1 & 0 & K_{-1} A
        & 0 & 0
        \\
        & -1  &  0  &  1  & 2 \\
        };
    \drawSseqAxes{5}{5}
    \end{tikzpicture}
    \]
\end{rem}

\subsection{K-theory of cake algebras}
\label{subsection:kTheoryCake}

We started with a sum of ideals $I_0$, $I_1$, $\ldots$, $I_n$ and have
developed a chain of ideals
$\dotsb \subseteq Q_p \subseteq Q_{p+1} \subseteq \dotsb$.
The main theorem of Section \ref{subsection:kTheoryCake} relates the K-theory
of this chain to the K-theory of the original ideals:

\begin{thm}
\label{thm:sumofnfoldsuspensions}
    For $A = I_0 + I_1 + \dotsb + I_n$ and the cake sums
    $Q_p$ defined as before, given
    $p \in \setton$ and $q \in \zet$, the K-theory
    of $Q_p/Q_{p-1}$ decomposes as
    \[
        \equationsumofnfoldsuspensions.
    \]
\end{thm}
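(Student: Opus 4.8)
The plan is to promote the statement to a C*-algebra isomorphism
\[
    Q_p/Q_{p-1} \cong \bigoplus_{\card J = p+1} S^{n-p}\Big( \bigcap_{j \in J} I_j \Big),
\]
after which the theorem follows by applying the suspension isomorphism $K_s SA \cong K_{s+1} A$ a total of $n-p$ times, turning the degree $p+q$ on the left into $p+q+(n-p) = q+n$ on the right. By Remark \ref{rem:cakeSumTrivial} and equation \ref{eqn:cakeSumCharacterization} I may assume $0 \le p \le n$ and use $Q_p = \sum_{\card J = p+1} B_J$ together with $Q_{p-1} = \sum_{\card J = p} B_J$.

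For $x \in \Delta^n$ write $M(x) = \set{i}{x_i = \min_\ell x_\ell}$ for its set of minimal coordinates, so that $x \in \Delta^n_j$ exactly when $j \in M(x)$, and any $f \in B_J$ vanishes at $x$ whenever $M(x) \not\subseteq J$. Hence every $f \in Q_p$ vanishes on the closed locus $V_{p+2} = \bigcup_{\card J = p+2} \Delta^n_J$ of points with at least $p+2$ minimal coordinates, and every $f \in Q_{p-1}$ vanishes already on $V_{p+1}$. For $\card J = p+1$ I use the restriction map $f \mapsto f \restr \Delta^n_J$ from $Q_p$ into $\set{g\colon \Delta^n_J \to \bigcap_{j \in J} I_j}{g \restr \partial \Delta^n_J = 0}$, where $\partial \Delta^n_J$ is the topological boundary of the disk $\Delta^n_J \cong D^{\,n-p}$ (Corollary \ref{cor:howDeltaNLooksLike}). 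This is well defined: $f(\Delta^n_J) \subseteq \bigcap_{j \in J} I_j$ because $\Delta^n_J \subseteq \Delta^n_j$ for each $j \in J$, and $\partial \Delta^n_J \subseteq \partial \Delta^n \cup V_{p+2}$ — the non-$\partial\Delta^n$ part of the boundary being exactly where a further coordinate becomes minimal — so $f$ vanishes there. As in Proposition \ref{pro:isnfoldsuspension}, the target is precisely $S^{n-p}(\bigcap_{j \in J} I_j)$.

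Assembling these over all $\card J = p+1$ yields a homomorphism $\Phi\colon Q_p \to \bigoplus_{\card J = p+1} S^{n-p}(\bigcap_{j \in J} I_j)$. A generator $f \in B_J$ restricts to $0$ on every $\Delta^n_{J'}$ with $J' \ne J$ of the same size, since $\Delta^n_J \cap \Delta^n_{J'} = \Delta^n_{J \cup J'} \subseteq V_{p+2}$ by Lemma \ref{lem:bjcapbjisbjcapj}; thus $B_J$ feeds only its own summand, and $\Phi$ is onto, each summand being filled by the zero-extension of a boundary-vanishing $g$, which lies in $B_J \subseteq Q_p$. It remains to identify $\ker \Phi$ with $Q_{p-1}$. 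One inclusion is free: $Q_{p-1}$ vanishes on $V_{p+1} \supseteq \bigcup_{\card J = p+1} \Delta^n_J$, so $Q_{p-1} \subseteq \ker \Phi$.

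The reverse inclusion $\ker \Phi \subseteq Q_{p-1}$ is the \emph{main obstacle}: an $f \in Q_p$ vanishing on $V_{p+1}$ must be exhibited as $f = \sum_{\card{J'} = p} f_{J'}$ with $f_{J'} \in B_{J'}$. I would build this from an explicit partition of unity adapted to the cake-piece stratification. With $d_i(x) = x_i - \min_\ell x_\ell \ge 0$, set
\[
    \lambda_{J'}(x) = \frac{\prod_{i \notin J'} d_i(x)}{\sum_{\card{J''} = p} \prod_{i \notin J''} d_i(x)} \qquad (\card{J'} = p),
\]
whose denominator is positive exactly off $V_{p+1}$, with each $\lambda_{J'} \in [0,1]$ and $\sum_{\card{J'} = p} \lambda_{J'} = 1$ there. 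Then $f_{J'} := \lambda_{J'} f$ extends continuously by $0$ across $V_{p+1}$ because $\norm{f_{J'}} \le \norm{f} \to 0$; it lies in $B_{J'}$, since scalar multiplication preserves the constraints $f(\Delta^n_j) \subseteq I_j$ while $\lambda_{J'}$ vanishes wherever $M(x) \not\subseteq J'$ and so annihilates the forbidden cake pieces; and $\sum_{J'} f_{J'} = f$. The genuinely delicate points, which I would verify pointwise through $M(x)$, are the continuity of each $f_{J'}$ along the frontier of $V_{p+1}$ and the compatibility of scalar multiplication with the ideal-valued conditions. This yields $\ker \Phi = Q_{p-1}$ and hence the displayed isomorphism.
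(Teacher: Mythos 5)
Your proof is correct, and while its overall architecture parallels the paper's (identify $Q_p/Q_{p-1}$ with restriction algebras on the pieces $\Delta^n_J$, recognized as $(n-p)$-fold suspensions via $\Delta^n_J \cong D^{n-p}$), the decisive technical step is handled by a genuinely different device. The paper first splits the quotient into summands $B_J/(B_J \cap Q_{p-1})$ (Lemma \ref{lem:forSumOfNFoldSuspensions}), identifies each via the abstract restriction-quotient isomorphism of Proposition \ref{pro:werner}, and proves the hard inclusion -- vanishing on all $(p+1)$-fold intersections forces membership in $Q_{p-1}$ -- by the inductive peeling-and-extension argument of Lemma \ref{lem:vanishingOnNcakePieces}. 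You instead package everything into one surjective \starhom{} $\Phi$ on $Q_p$ and compute $\ker\Phi = Q_{p-1}$ directly with an explicit partition of unity $\{\lambda_{J'}\}_{\card{J'}=p}$ subordinate to the cake-piece stratification; the norm estimate $\norm{\lambda_{J'}f} \leq \norm{f}$ across the frontier of $V_{p+1}$ replaces the paper's induction on $p$, and the vanishing of the numerator $\prod_{i\notin J'} d_i$ on the forbidden pieces puts each $\lambda_{J'}f$ into $B_{J'}$. This buys a more explicit and self-contained kernel computation at the cost of checking continuity of the $\lambda_{J'}$ by hand; the paper's route is longer but isolates reusable lemmas (notably Proposition \ref{pro:werner} and Lemma \ref{lem:smallerIdealsVanish}) that it invokes again elsewhere. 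Two small points to tidy: the $p=0$ case degenerates (the denominator of $\lambda_{\varemptyset}$ vanishes identically), but there $V_1 = \Delta^n$ forces $f = 0 \in Q_{-1}$ directly; and the reason a generator $f \in B_J$ restricts to zero on a whole piece $\Delta^n_{J'}$ with $J' \neq J$ is that $\Delta^n_{J'} \subseteq \Delta^n_{j'}$ for some $j' \in J' - J$, not merely that $\Delta^n_J \cap \Delta^n_{J'}$ lies in $V_{p+2}$ -- your own observation that $f$ vanishes wherever $M(x) \not\subseteq J$ already gives this.
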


\begin{rem}
    For $p \notin \setton$, the K-theory $K_*(Q_p/Q_{p-1})$ vanishes because
    $Q_p = Q_{p-1}$.
\end{rem}

\begin{exm}
    Before we prove Theorem \ref{thm:sumofnfoldsuspensions}
    for all $p \in \setton$, we look at the simplest case,
    $p = 0$. One-fold intersections of ideals are merely the ideals themselves.
    The above formula reduces to
    \[
        K_{q} \left(Q_0/Q_{-1}\right)
        \cong \bigoplus_{j \leq n} K_{q+n} I_j.
    \]
    Inserting the definitions $Q_{-1} = 0$ and
    $Q_0 = \sum_{j \leq n} B_{\{j\}}$, we rewrite our claim to
    \[
        K_{q}\Big(\sum_{j \leq n} B_{\{j\}}\Big)
        \cong \bigoplus_{j \leq n} K_{q+n} I_j.
    \]
    Lemma \ref{lem:bjcapbjisbjcapj} implies that
    $B_{\{j\}}$ and $B_{\{j'\}}$ overlap trivially
    as algebras for $j \neq j'$;
    i.e., $B_{\{j\}} \cap B_{\{j'\}}$ contains only the zero function.
    The sum $\sum_{j \leq n} B_{\{j\}}$
    on the left-hand side is therefore isomorphic to a direct sum
    $\bigoplus_{j \leq n} B_{\{j\}}$ of the function spaces $B_{\{j\}}$.

    In Proposition \ref{pro:isnfoldsuspension}, we have shown that $B_{\{j\}}$
    is isomorphic to the $n$-fold suspension of $I_j$, providing the
    desired shift by $n$ degrees,
    $K_{q} B_{\{j\}} \cong K_{q} S^n I_j \cong K_{q+n} I_j$.

    Because taking K-theory commutes with taking direct sums,
    we have shown the theorem for $p = 0$.
\end{exm}

The main ingredient $B_{\{j\}} \cap B_{\{j'\}} = B_\varemptyset = 0$
must now be generalized to prove Theorem \ref{thm:sumofnfoldsuspensions}
for $p > 0$.

\begin{lem}
\label{lem:forSumOfNFoldSuspensions}
    Let $J' \neq J''$ be nonempty $(p+1)$-element subsets of $\setton$
    and let $f \in Q_{p}$ lie in
    the intersection $B_{J'} \cap B_{J''}$. Then $f$ lies already in
    the next-smaller ideal,
    \[
        f \in Q_{p-1} = \sum_{\card{J} = p} B_{J}.
    \]
\end{lem}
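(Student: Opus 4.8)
The plan is to reduce the claim to the already-established identity $B_{J'} \cap B_{J''} = B_{J' \cap J''}$ from Lemma \ref{lem:bjcapbjisbjcapj}, combined with a one-line observation about cardinalities. The entire statement then drops out of the definition of $Q_{p-1}$ as a sum over cake algebras indexed by sufficiently small subsets, so there is no real machinery to invoke beyond what has already been set up.

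First I would apply Lemma \ref{lem:bjcapbjisbjcapj} to rewrite the hypothesis $f \in B_{J'} \cap B_{J''}$ as the single condition $f \in B_{J' \cap J''}$. Next I would observe that, since $J'$ and $J''$ are \emph{distinct} subsets each of cardinality $p+1$, neither can contain the other, so their intersection is a proper subset of each; consequently $\card{J' \cap J''} \leq p$.

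Finally, I would appeal directly to Definition \ref{dfn:cakeSums}. Accounting for the deliberate index shift, $Q_{p-1} = \sum_{\card J \leq p} B_J$, so every cake algebra $B_K$ with $\card K \leq p$ is among the summands and is therefore a subalgebra of $Q_{p-1}$. Taking $K = J' \cap J''$ gives $B_{J' \cap J''} \subseteq Q_{p-1}$, and hence $f \in Q_{p-1}$. One may equally invoke the alternative characterization $Q_{p-1} = \sum_{\card J = p} B_J$ from Remark \ref{rem:cakeSumInclusions}, since $B_K \subseteq B_J$ whenever $K \subseteq J$, and any $K \subseteq \setton$ with $\card K \leq p$ extends to some $p$-element $J \subseteq \setton$.

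There is essentially no obstacle in the argument itself; the only point demanding attention is the bookkeeping of the index shift built into the $Q_p$, which makes $Q_{p-1}$ a sum over subsets of cardinality at most $p$ rather than $p-1$. Once that convention is kept straight, the combinatorial bound $\card{J' \cap J''} \leq p$ is precisely what is needed to place $f$ inside $Q_{p-1}$, completing the proof.
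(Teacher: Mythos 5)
Your argument is correct and is essentially identical to the paper's own proof: both reduce via Lemma \ref{lem:bjcapbjisbjcapj} to $f \in B_{J' \cap J''}$, observe that distinct $(p+1)$-element sets satisfy $\card{J' \cap J''} \leq p$, and conclude that $B_{J' \cap J''}$ is among the summands of $Q_{p-1} = \sum_{\card J \leq p} B_J$. Your explicit attention to the index shift and to the edge case handled by $B_{J' \cap J''} \subseteq B_J$ for an extending $p$-element set $J$ is sound and matches the paper's conventions.
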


\begin{proof}
    By Lemma
    \ref{lem:bjcapbjisbjcapj}, $B_{J'} \cap B_{J''} = B_{J' \cap J''}$.
    The algebra $B_{J' \cap J''}$ is a summand of
    $Q_{\card{J' \cap J''}}$ This algebra is equal to or
    a subset of $Q_{p-1}$ because $\card{J' \cap J''} \leq p$.
\end{proof}

\begin{lem}
\label{lem:vanishingOnNcakePieces}
    Fix an index subset $J \subs \setton$.
    Let $p \in \nat$ be a cardinality.
    Let $f \in B_J$ vanish on all
    $(p + 1)$-fold intersections of cake pieces:
    $f \restr \Delta^n_L = 0$ when
    $\card L = p + 1$.

    Then $f$ is a finite sum of functions $f_L$ with each
    $f_L \in B_L$ for $L \subs J$ and each occurring set
    $L$ has cardinality $\card{L} \leq p$.
\end{lem}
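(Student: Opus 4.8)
The plan is to recast everything in terms of the \emph{minimal-coordinate set} of a point $x=(x_0,\dots,x_n)\in\Delta^n$, namely $M(x)=\set{i\in\setton}{x_i=\min_{k}x_k}$. A point lies in the cake piece $\Delta^n_j$ precisely when $j\in M(x)$, so $x\in\Delta^n_L$ iff $L\subseteq M(x)$. Two vanishing facts then frame the problem. Since $f\in B_J$, we have $f(x)=0$ whenever $M(x)\not\subseteq J$ (some minimal coordinate lies outside $J$); and by hypothesis $f(x)=0$ whenever $\card{M(x)}\ge p+1$, since then $x\in\Delta^n_L$ for some $L$ with $\card L=p+1$. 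Hence $f$ is supported in the open region $V=\set{x\in\Delta^n}{\card{M(x)}\le p}$ (its complement $Z=\bigcup_{\card L=p+1}\Delta^n_L$ is a finite union of closed cake pieces, hence closed), and on $\{f\neq 0\}$ one has $M(x)\subseteq J$ with $\card{M(x)}\le p$.

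For each $S\subseteq\setton$ I would put $U_S=\set{x\in\Delta^n}{M(x)\subseteq S}$; this is open, being $\bigcap_{j\notin S}\set{x}{x_j>\min_{k}x_k}$, and as $S$ ranges over all subsets with $\card S\le p$ the sets $U_S$ cover $V$ (each $x\in V$ lies in $U_{M(x)}$). Since $V$ is open in the compact metric space $\Delta^n$, it is paracompact, so I would choose a \emph{finite} partition of unity $\{\lambda_S\}_{\card S\le p}$ on $V$ subordinate to this cover: continuous $\lambda_S\colon V\to[0,1]$ with $\{\lambda_S\neq 0\}\subseteq U_S$ and $\sum_S\lambda_S\equiv 1$ on $V$. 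For each nonempty $L\subseteq J$ with $\card L\le p$ I then set
\[
  f_L(x)=\sum_{S\cap J=L}\lambda_S(x)\,f(x)\ \ (x\in V),\qquad f_L(x)=0\ \ (x\in\Delta^n\setminus V).
\]
Any $S$ with $S\cap J=\emptyset$ contributes a term supported where $M(x)\subseteq S$ and $M(x)\subseteq J$, i.e.\ where $M(x)=\emptyset$, which never happens; so those terms vanish and $\sum_L f_L=\sum_{\card S\le p}\lambda_S f=f$ on $V$, an identity that extends to all of $\Delta^n$ because $f$ vanishes off $V$.

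It remains to verify $f_L\in B_L$. The ideal and boundary conditions are inherited from $f$: the $\lambda_S$ are scalar-valued, so for $x\in\Delta^n_j$ one gets $f_L(x)\in I_j$ from $f(x)\in I_j$, and $f_L\restr\partial\Delta^n=0$ since $f$ vanishes there. For the defining vanishing of $B_L$, fix $j'\notin L$ and $x\in\Delta^n_{j'}$, so $j'\in M(x)$; a summand $\lambda_S(x)f(x)$ of $f_L(x)$ is nonzero only when $x\in U_S\cap\{f\neq0\}$, forcing $M(x)\subseteq S\cap J=L$, which contradicts $j'\in M(x)\setminus L$. Hence $f_L(x)=0$, so $f_L(\Delta^n_{j'})=0$ and $f_L\in B_L$, with $L\subseteq J$ and $\card L\le p$ as required.

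The one genuinely delicate point—the main obstacle—is continuity of $f_L$ across the closed set $Z=\Delta^n\setminus V$, where the partition functions $\lambda_S$ need not be defined. I would handle this by the uniform bound $\norm{f_L(x)}\le\bigl(\sum_{S}\lambda_S(x)\bigr)\norm{f(x)}\le\norm{f(x)}$ on $V$: as $x\to x_0\in Z$ one has $\norm{f(x)}\to\norm{f(x_0)}=0$ since $f$ vanishes on $Z$, whence $f_L(x)\to 0=f_L(x_0)$ and the extension by zero is continuous. This completes the decomposition $f=\sum_L f_L$.
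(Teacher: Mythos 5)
Your proof is correct, and it takes a genuinely different route from the paper's. The paper argues by induction on $p$: for each $L \subs J$ with $\card L = p$ it restricts $f$ to $\Delta^n_L$, notes that this restriction vanishes on $\partial\Delta^n_L$ and takes values in $\bigcap_{j\in L} I_j$, \emph{extends} it to some $g_L \in B_L$, and feeds $f - \sum_L g_L$ (which now vanishes on all $p$-fold intersections) into the induction hypothesis. You instead produce the whole decomposition in one pass by cutting $f$ with a finite partition of unity on the open set $V = \set{x \in \Delta^n}{\card{M(x)} \leq p}$ subordinate to the cover by your sets $U_S$, where $M(x)$ is the minimal-coordinate set. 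Because each $f_L$ is a scalar cutoff of $f$ itself, the conditions defining $B$ (values in $I_j$ on $\Delta^n_j$, vanishing on $\partial \Delta^n$) are inherited for free, and the support condition on $\lambda_S$ forces $f_L(\Delta^n_{j'}) = 0$ for $j' \notin L$ exactly by your observation that a nonzero summand requires $M(x) \subs S \cap J = L$. This sidesteps the extension step \enquote{find $g_L \in B_L$ with $f \restr \Delta^n_L = g_L \restr \Delta^n_L$}, which the paper asserts without detail and which is a Tietze-type extension that must respect both the ideal constraints and the vanishing conditions; the price is the continuity check across $Z = \Delta^n - V$, which you settle correctly with the bound $\norm{f_L(x)} \leq \norm{f(x)}$ and the vanishing of $f$ on the closed set $Z$. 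Both arguments are valid; yours is more self-contained at the extension point, while the paper's induction keeps the stratification by individual cake pieces visible, which is the picture reused in the surrounding lemmas.
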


\begin{rem}
    It follows that $f$ is in $Q_{p-1}$, but
    the claim is stronger: Only summands $B_L$ with $L \subs J$ are
    required to construct $f$ in $Q_{p-1}$.
\end{rem}

\begin{proof}[Proof of Lemma \ref{lem:vanishingOnNcakePieces}]
    We prove this by induction along $p$. The base case is $p = 0$:
    One-fold intersections of cake pieces -- where $f$ vanishes by assumption
    -- are the cake pieces themselves, thus $f = 0$, the only function
    in the zero algebra $B_{\varemptyset}$. This concludes the base case.

    For the induction hypothesis, assume that all functions that vanish
    on $p$-fold intersections are sums of functions from $B_L$ with
    $L \subs J$ and $\card{L} \leq p-1$.
    We will show the claim for $p$: Let $f \in B_J$ vanish on $(p + 1)$-fold
    intersections of cake pieces.

    Consider all subspaces $\Delta^n_{L}$ for $L \subs J$
    with $\card L = p$. We may treat each as a
    topological submanifold of $\rel^{n+1}$ on its own and consider its
    boundary $\partial \Delta^n_L$.
    Each point $x \in \partial \Delta^n_L$ lies
    on the boundary $\partial \Delta^n$ of the entire simplex $\Delta^n$
    or in a cake piece $\Delta^n_j$ with $j \neq L$, see Figure
    \ref{fig:twoFoldIntersectionThreeBoundary}.
    If $x \in \partial \Delta^n$, then $f(x) = 0$ by definition of $B$.
    If $x \in \Delta^n_j$ with $j \neq J$, then also $f(x) = 0$ because
    now $x \in \Delta^n_{L \cup \{j\}}$, a $(p + 1)$-fold intersection of
    cake pieces. Thus $f \restr \partial \Delta^n_L = 0$.

    On the interior $\Delta^n_L - \partial \Delta^n_L$,
    $f$ assumes values in $\bigcap_{j \in L} I_j$ by definition of $B_J$.
    From this and the restriction $f \restr \partial \Delta^n_L = 0$,
    we can find a function $g_{L} \in B_{L}$
    such that $f \restr \Delta^n_{L} = g_L \restr \Delta^n_{L}$.
    After defining $g_L$ for each $L \subs J$
    of cardinality $p$, consider the function
    \[
        f' = f - \sum_{\substack{L \subs J \\ \card{L} = p}} g_L.
    \]
    This $f'$ still lies in the C*-algebra $B_J$ because $B_L \subs B_J$
    for each $L$. Furthermore, $f'$
    vanishes on all $p$-fold intersections of cake pieces,
    not merely on the $(p+1)$-fold intersections.

    By our induction hypothesis,
    $f'$ is a finite sum of functions from $B_L$ for $L \subs J$ of cardinality
    $\card{L} \leq p - 1$.
    Each $g_L$ is in $B_L$ with $L \subs J$ and $\card{L} = p$.
    Since $f = f' + \sum_L g_L$, we have shown the induction case
    for cardinality $p$.
\end{proof}

\BeginSimonFigure
\begin{tikzpicture}
	\coordinate (B1) at (6 cm, 0 cm);
	\coordinate (B2) at (10 cm, 0 cm);
	\coordinate (B3) at (8 cm, 3.464 cm);
    \coordinate (Cb) at (barycentric cs:B1=1,B2=1,B3=1);
	\draw[thick] (Cb) -- (B2);
    \draw[thick,dashed] (B3) -- (B1) -- (B2) -- (B3);
    \eiddot{B1}{below}{(1, 0, 0)}
    \eiddot{B2}{below}{(0, 1, 0)}
    \eiddot{B3}{above}{(0, 0, 1)}
    \eiddot{Cb}{below}{}
    \node at (8.9 cm, 1.05 cm) {$\Delta^2_{\{0,2\}}$};
\end{tikzpicture}
\EndSimonFigure{fig:twoFoldIntersectionThreeBoundary}{
    The two-fold intersection $\Delta^2_{\{0,2\}}$
    and its two-point boundary:
    one point in $\partial \Delta^2$,
    one in the three-fold intersection $\Delta^2_{\{0,1,2\}}$
}

\newcommand{\smallJCup}{\sum_{L \subsetneqq J} B_L}

\begin{lem}
\label{lem:smallerIdealsVanish}
    Let $J \subseteq \setton$ be a nonempty index set.
    \begin{itemize}
    \item
        Let $f$ be a function in $\smallJCup$.
        Then $f \restr \Delta^n_J = 0$.
    \item
        Conversely, let $g$ be a function in $B_J$
        with $g \restr \Delta^n_J = 0$.
        Then $g \in \smallJCup$.
    \end{itemize}
\end{lem}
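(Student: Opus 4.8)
The plan is to prove the two inclusions separately, reserving the genuine work for the converse and reducing it to the already-proven Lemma \ref{lem:vanishingOnNcakePieces}.

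For the forward direction, I would take $f \in \sum_{L \subsetneqq J} B_L$ and write it as a finite sum $f = \sum_L f_L$ with each $f_L \in B_L$ and each $L \subsetneqq J$. For every such $L$, properness of the inclusion lets me pick an index $j^* \in J \setminus L$. Since $f_L \in B_L$ and $j^* \notin L$, Definition \ref{dfn:cakealgebra} forces $f_L(\Delta^n_{j^*}) = 0$; and because $j^* \in J$ we have $\Delta^n_J = \bigcap_{j \in J} \Delta^n_j \subs \Delta^n_{j^*}$, so $f_L \restr \Delta^n_J = 0$. Summing over the finitely many $L$ yields $f \restr \Delta^n_J = 0$.

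For the converse, the key observation is that the \emph{single} vanishing condition $g \restr \Delta^n_J = 0$, combined with the membership $g \in B_J$, already forces $g$ to vanish on \emph{every} $\card J$-fold intersection of cake pieces. Indeed, let $L \subs \setton$ with $\card L = \card J$. If $L = J$, then $g \restr \Delta^n_L = 0$ by hypothesis. If $L \neq J$, then since $L$ and $J$ are finite sets of equal cardinality, $L \not\subs J$, so there is an index $j' \in L \setminus J$; as $g \in B_J$ and $j' \notin J$ we get $g(\Delta^n_{j'}) = 0$, and $\Delta^n_L \subs \Delta^n_{j'}$ gives $g \restr \Delta^n_L = 0$. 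Having established that $g$ vanishes on all $(p+1)$-fold intersections with $p + 1 = \card J$, I would invoke Lemma \ref{lem:vanishingOnNcakePieces} with the cardinality parameter $p = \card J - 1$ (a natural number since $J$ is nonempty). That lemma writes $g = \sum_L g_L$ with each $g_L \in B_L$, each $L \subs J$, and each $\card L \leq \card J - 1$. The cardinality bound forces $L \neq J$, hence $L \subsetneqq J$, so $g \in \sum_{L \subsetneqq J} B_L$ as required.

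The main obstacle here is not any computation but recognizing precisely that the hypotheses of Lemma \ref{lem:vanishingOnNcakePieces} are met: the passage from \enquote{$g$ vanishes on the single piece $\Delta^n_J$} to \enquote{$g$ vanishes on all pieces of that dimension} is exactly where the condition $g(\Delta^n_{j'}) = 0$ for $j' \notin J$ (i.e.\ $g \in B_J$) is used, and without that membership the converse would fail. Once this reduction is spotted, the substantive inductive work has already been carried out in the earlier lemma, and the remaining cardinality bookkeeping is routine.
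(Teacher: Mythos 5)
Your proof is correct and follows essentially the same route as the paper: the forward direction picks, for each proper subset $L \subsetneqq J$, an index $j^* \in J - L$ on whose cake piece the summand vanishes, and the converse upgrades the single vanishing condition on $\Delta^n_J$ to vanishing on all $\card{J}$-fold intersections before invoking Lemma \ref{lem:vanishingOnNcakePieces} with $p = \card J - 1$. If anything, your explicit choice of $j' \in L \setminus J$ for the competing intersections makes the key step slightly more transparent than the paper's appeal to the support of $g$.
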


\begin{proof}
    We have $f \in \smallJCup$.
    Each $L \subsetneqq J$ lacks at least one index $j \in J$, therefore
    $f \restr \Delta^n_j = 0$ by definition of $B_{L}$.
    Since $\Delta^n_J$ is contained in the boundary $\partial\Delta^n_j$,
    we conclude $f \restr \Delta^n_J = 0$.

    Conversely, let $g \in B_J$ vanish on $\Delta^n_J$.
    All functions in $B_J$ vanish on $(\card J + 1)$-fold
    intersections of cake pieces. Furthermore, $\Delta^n_J$
    is the only $\card{J}$-fold intersection
    that touches the interior of the support of $g$.
    Thus $g$ vanishes on all $\card{J}$-fold intersections.
    By Lemma \ref{lem:vanishingOnNcakePieces},
    $g$ is a sum of functions $g_L$ from $B_L$ with
    $\card L < \card J$ and $L \subs J$.
\end{proof}

In the following technical proposition, $A$ and $B$ are general C*-algebras;
they
need not coincide with $B(I_0, I_1, \ldots, I_n)$ that we defined before.
Nonetheless, we choose the names $A$ and $B$ here because we will later apply
this result to the $B_J$ from Theorem \ref{thm:sumofnfoldsuspensions}.

\newcommand{\vanD}{\mathrm{Van}_D}

\begin{pro}
\label{pro:werner}
    Let $X \subseteq \rel^n$ be a compact set and
    let $D \subseteq X$ be a compact subspace of $X$.
    Let $A$ be a C*-algebra, $B \subseteq \cont(X, A)$ a C*-ideal
    of functions from $X$ to $A$, and
    $\vanD \subseteq B$ the vanishing ideal of $D$; i.e., the ideal
    of functions $f \in B$ with $f \restr D = 0$.

    Then $B/\vanD$ is isomorphic as a C*-algebra to
    $B' = \set{f \restr D}{f \in B}$.
\end{pro}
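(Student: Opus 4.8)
The plan is to realize $B'$ as the image of the evident restriction \starhom{} and then invoke the first isomorphism theorem for C*-algebras. First I would introduce
\[
    \rho\colon B \to \cont(D, A), \qquad \rho(f) = f \restr D.
\]
Since restriction of continuous $A$-valued functions commutes with pointwise addition, multiplication, and involution, and since $\norm{f \restr D} \leq \norm{f}$ for the respective sup-norms, $\rho$ is a bounded \starhom. By the very definition of the vanishing ideal, $\ker \rho = \set{f \in B}{f \restr D = 0} = \vanD$, and by the definition of $B'$ we have $\im \rho = B'$. Here the hypotheses that $X \subseteq \rel^n$ and $D$ are compact enter only to guarantee that $\cont(X, A)$ and $\cont(D, A)$ are C*-algebras at all, so that $B$, the quotient below, and the image all live in the C*-setting.

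Next I would pass to the induced map. Because $\vanD = \ker \rho$ is, as the kernel of a \starhom, a closed two-sided ideal, the quotient $B/\vanD$ is again a C*-algebra (as recorded earlier in the excerpt), and $\rho$ factors as
\[
    B \twoheadrightarrow B/\vanD \loto{\bar\rho} \cont(D, A),
\]
where $\bar\rho$ is an injective \starhom{} with $\im \bar\rho = \im \rho = B'$.

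The main point — the only step that is more than bookkeeping — is that $\bar\rho$ is automatically an isometry onto its image, so that $B'$ is complete and $\bar\rho$ is the asserted C*-algebra isomorphism. This rests on the standard fact that an injective \starhom{} between C*-algebras is isometric: a \starhom{} is norm-decreasing, and by the C*-identity the norm of any element $a$ is controlled by $\norm{a}^2 = \norm{a^*a}$, reducing matters to self-adjoint elements, whose norms are spectral radii that an injective \starhom{} cannot shrink. Granting this, $\bar\rho$ is an isometric embedding; since $B/\vanD$ is complete, its isometric image $B' = \im \bar\rho$ is complete, hence a genuine C*-subalgebra of $\cont(D, A)$, and $\bar\rho \colon B/\vanD \to B'$ is a bijective isometric \starhom, i.e.\ a C*-algebra isomorphism. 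I expect the closedness/isometry argument to be the only thing worth spelling out; everything else is a routine verification that $\rho$ respects the C*-structure and has the claimed kernel and image.
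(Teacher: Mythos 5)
Your proposal is correct and follows essentially the same route as the paper: the paper likewise defines the restriction map on the quotient, checks it is a bijective \starhom{} of norm at most $1$, and then invokes the standard fact that such a map between C*-algebras is an isometric $\ast$-isomorphism. Your additional remark that isometry forces $B' = \im\bar\rho$ to be complete is a worthwhile point that the paper leaves implicit.
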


This is plausible: When we enlarge $D$, then functions in $\vanD$
are allowed less variation, thus $\vanD$ becomes smaller and
the quotient space $B/\vanD$ becomes larger.

\begin{proof}
    Define the operator $T\colon B/\vanD \to B'$ by
    $T[f] = f \restr D$. This is a well-defined linear map because
    for $f$ and $f' \in [f] \in B/\vanD$, we have
    $f - f' \in \vanD$, therefore $f \restr D = f' \restr D$.

    $T$ is a continuous operator with norm
    \[
        \norm{T}
        = \sup \set{ \norm{f \restr D} }
            { f \in B \textrm{ with } \norm{[f]} \leq 1},
    \]
    where $\norm{[f]} = \inf \set{\norm{f - g}}{g \in \vanD}$.
    From $\norm{f \restr D} \leq \norm{[f]} \leq \norm f$, we see that $T$ is
    continuous with norm $\norm T \leq 1$.

    $T$ is bijective: If $T[f] = 0$, then $f \restr D = 0$, therefore
    $f \in \vanD$ and $[f] = 0 \in B/\vanD$.
    On the other hand, given $f \restr D \in B'$ with $f \in B$,
    surely $[f]$ is a preimage of $f \restr D$ under $T$.

    As a restriction of functions, $T$ preserves products
    and the C*-involution. Together with bijectivity of $T$,
    we conclude that $\norm T = 1$ and that $T$ is an isometric
    *-isomorphism by \cite[Theorem I.5.5]{davidson1996}.
\end{proof}

We could have obtained $\norm T = 1$ from an analytical argument, too:
Given $f \restr D$,
force $f\colon X \to A$ to decay rapidly outside $D \subseteq X$
by multiplying with bump functions. $B$ is an ideal in $\cont(X, A)$,
and $A$ admits an approximate unit.

We shall now return to the setting where $A = I_0 + I_1 + \dotsb + I_n$
is a sum of ideals and $B = B(I_0, I_1, \ldots, I_n)$ is the function algebra
constructed over the $n$-simplex.

\begin{lem}
\label{lem:smallerIdealsKtheory}
    Let $J \subseteq \setton$ be a nonempty
    index set of cardinality $\card J = p+1$.
    Then
    \[
        B_J / (B_J \cap Q_{p-1})
        \cong S^{n-p}\Big( \bigcap_{j \in J} I_j \Big),
    \]
    where $S^{n-p}$ denotes the $(n-p)$-fold suspension of C*-algebras.
\end{lem}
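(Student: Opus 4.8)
The plan is to realize $B_J \cap Q_{p-1}$ as the ideal of functions in $B_J$ that vanish on $D = \Delta^n_J$, apply Proposition \ref{pro:werner} to identify the quotient with the restriction algebra on $\Delta^n_J$, and then recognize that restriction algebra as an $(n-p)$-fold suspension through the homeomorphism $\Delta^n_J \cong D^{n-p}$ of Corollary \ref{cor:howDeltaNLooksLike}.

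First I would prove the identity $B_J \cap Q_{p-1} = \set{f \in B_J}{f \restr \Delta^n_J = 0}$. For \enquote{$\subs$}, expand $f \in Q_{p-1}$ via the characterization \eqref{eqn:cakeSumCharacterization} as $f = \sum_{\card K = p} f_K$ with $f_K \in B_K$; since $\card K = p < p+1 = \card J$, each $K$ misses some index $j \in J$, so $f_K$ vanishes on $\Delta^n_j \supseteq \Delta^n_J$ and hence $f \restr \Delta^n_J = 0$. For \enquote{$\supseteq$}, a function $f \in B_J$ with $f \restr \Delta^n_J = 0$ lies in $\sum_{L \subsetneqq J} B_L$ by Lemma \ref{lem:smallerIdealsVanish}; each $L \subsetneqq J$ satisfies $\card L \le p$ and $L \subs J$, so $B_L \subs Q_{p-1}$ and $B_L \subs B_J$, giving $f \in B_J \cap Q_{p-1}$.

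One checks directly that $B_J$ is a C*-ideal of $\cont(\Delta^n, A)$: multiplying $f \in B_J$ by an arbitrary continuous $A$-valued function preserves vanishing on $\partial \Delta^n$ and on the $\Delta^n_{j'}$ with $j' \notin J$, and keeps values in $I_j$ on $\Delta^n_j$ since $I_j$ is an ideal. Proposition \ref{pro:werner}, applied with $X = \Delta^n$ and $D = \Delta^n_J$, then yields
\[
    B_J / (B_J \cap Q_{p-1}) \cong B_J' = \set{f \restr \Delta^n_J}{f \in B_J}.
\]
It remains to identify $B_J'$ with $\set{g \in \cont(\Delta^n_J, \bigcap_{j \in J} I_j)}{g \restr \partial \Delta^n_J = 0}$, where $\partial \Delta^n_J$ is the manifold boundary. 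The inclusion \enquote{$\subs$} holds because $f(\Delta^n_j) \subs I_j$ for every $j \in J$ forces values in $\bigcap_{j \in J} I_j$ on $\Delta^n_J$, and because each boundary point of $\Delta^n_J$ lies either in $\partial \Delta^n$ or in some $\Delta^n_{j'}$ with $j' \notin J$, where $f$ vanishes (the same boundary analysis as in Proposition \ref{pro:isnfoldsuspension}).

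The surjectivity of restriction -- the extension step -- is the \textbf{main obstacle}, and I would resolve it using convexity. The set $\Delta^n_J = \set{x \in \Delta^n}{x_j \le x_i \textrm{ for all } j \in J,\ i \le n}$ is convex, being cut out of the simplex by the linear inequalities $x_j \le x_i$. Let $\pi \colon \Delta^n \to \Delta^n_J$ be the nearest-point projection onto this convex set; it is continuous, restricts to the identity on $\Delta^n_J$, and maps every point outside $\Delta^n_J$ into the relative boundary $\partial \Delta^n_J$. Given $g$ as above, set $f = g \circ \pi$. Then $f$ is continuous with values in $\bigcap_{j \in J} I_j$, so $f(\Delta^n_j) \subs I_j$ for $j \in J$; on $\partial \Delta^n$ and on each $\Delta^n_{j'}$ with $j' \notin J$ the projection lands in $\partial \Delta^n_J$, where $g = 0$, so $f$ vanishes there; and $f \restr \Delta^n_J = g$. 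Hence $f \in B_J$ restricts to $g$, proving the claimed description of $B_J'$. Finally, Corollary \ref{cor:howDeltaNLooksLike} gives $\Delta^n_J \cong D^{n-p}$ with $\partial \Delta^n_J \cong S^{n-p-1}$, so $B_J'$ consists of the $\bigcap_{j \in J} I_j$-valued functions on $D^{n-p}$ that vanish on $\partial D^{n-p}$; exactly as in Proposition \ref{pro:isnfoldsuspension}, this is the $(n-p)$-fold suspension $S^{n-p}(\bigcap_{j \in J} I_j)$, completing the proof.
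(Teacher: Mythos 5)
Your overall route is the same as the paper's (identify $B_J \cap Q_{p-1}$ as the vanishing ideal of $\Delta^n_J$ via Lemma \ref{lem:smallerIdealsVanish}, apply Proposition \ref{pro:werner}, then read off the suspension via Corollary \ref{cor:howDeltaNLooksLike}), and your first two steps are correct. But the extension step — which you rightly flag as the main obstacle — is broken. The nearest-point projection $\pi$ onto a closed convex set sends an exterior point to the \emph{topological} frontier of that set, not to its \emph{relative} (manifold) boundary. For $\card J \geq 2$ the cake piece $\Delta^n_J$ is only $(n-p)$-dimensional inside the $n$-dimensional simplex, so $\pi$ routinely lands in the relative interior of $\Delta^n_J$, where $g$ need not vanish. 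Concretely, take $n=2$, $J=\{0,1\}$: then $\Delta^2_J$ is the segment $\set{(t,t,1-2t)}{0 \leq t \leq 1/3}$ with $\del\Delta^2_J = \{(0,0,1), (\tfrac13,\tfrac13,\tfrac13)\}$. The point $(0,\tfrac12,\tfrac12) \in \del\Delta^2$ projects to $(\tfrac14,\tfrac14,\tfrac12)$, an interior point of the segment, so $f = g\circ\pi$ fails to vanish on $\del\Delta^2$ and is not in $B$ at all, let alone in $B_J$. Your argument is only valid for $p = 0$, where $\Delta^n_J$ is full-dimensional.

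The surjectivity can be repaired, but it needs a genuine extension argument rather than a retraction. Set $E = \del\Delta^n \cup \bigcup_{j'\notin J}\Delta^n_{j'}$; the boundary analysis you already performed shows $E \cap \Delta^n_J = \del\Delta^n_J$, which is exactly where $g$ vanishes. Hence the function equal to $g$ on $\Delta^n_J$ and to $0$ on $E$ is well defined and continuous on the closed set $\Delta^n_J \cup E$, and a Tietze--Dugundji extension for Banach-space-valued maps (with values in the convex hull of the image, hence in $\bigcap_{j\in J} I_j$) produces an $f \in B_J$ with $f\restr\Delta^n_J = g$. The paper itself elides this point with the phrase "no further restrictions apply," so supplying a correct extension would actually be an improvement — but the one you give does not work.
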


Recall that $Q_p$ was a sum over all $B_{L}$ with index sets $L$
of cardinality $\card{L} = p+1$,
thus $B_J \subseteq Q_p$ and dividing by
the intersection $(B_J \cap Q_{p-1}) \subseteq Q_p$ is meaningful.

\begin{proof}
    All functions in $B_J$ are supported in $\bigcup_{j \in J} \Delta^n_j$.
    Write $D = \Delta^n_J = \bigcap_{j \in J} \Delta^n_j$
    for the subset of $\bigcup_{j \in J} \Delta^n_j$
    where functions in $B_J$ may take
    nonzero values in all $I_j$ for $j \in J$ simultaneously.
    Now
    \[
        B_J \cap Q_{p-1} = \sum_{L \subsetneqq J} B_{L},
    \]
    and, by Lemma \ref{lem:smallerIdealsVanish}, functions in
    $B_J \cap Q_{p-1}$ are exactly those functions in $B_J$ that vanish on $D$.
    We can apply Proposition \ref{pro:werner} to the function algebra $B_J$
    on the base space $X = \bigcup_{j \in J} \Delta^n_j$ and its subset $D$
    to get
    \begin{equation}
    \label{eqn:useWernerOnBJ}
        B_J / (B_J \cap Q_{p-1}) \cong
        \set{f \restr D}{f \in B_J}.
    \end{equation}
    Considering $D = \Delta^n_J$ an
    $(n+1-\card{J})$-dimensional topological manifold on its own,
    $\Delta^n_J$ itself has a boundary $\del \Delta^n_J$ and
    a nontrivial interior $(\Delta^n_J - \del \Delta^n_J)$.
    In the edge case where
    $J = \setton$ is the full set, $\Delta^n_{\setton}$
    is a single point, which is a zero-dimensional manifold with empty
    boundary $\del \Delta^n_{\setton} = \varemptyset$.

    The boundary $\del \Delta^n_J$ is contained in the boundary
    of the original domain $\bigcup_{j \in J} \Delta^n_j$. Therefore functions
    in $B_J$, even when restricted to $D$ as in
    \ref{eqn:useWernerOnBJ},
    must still vanish on this new boundary $\del \Delta^n_J$.

    On the interior of $D = \Delta^n_J$,
    functions in $B_J$ must take values in $\bigcap_{j \in J} I_j$ by
    definition of $B_J$, but no further restrictions apply.
    We can rewrite \ref{eqn:useWernerOnBJ} as
    \[
        B_J / ( B_J \cap Q_{p-1} )
        \cong \set{ f\colon \Delta^n_J \to \bigcap_{j \in J} I_j }
            { f \textrm{ is continuous and } f \restr \del \Delta^n_J = 0 }.
    \]
    Finally, by Lemma \ref{cor:howDeltaNLooksLike},
    $\Delta^n_J \cong \Delta^{n+1-\card{J}} = \Delta^{n-p}$
    is homeomorphic to the $(n-p)$-dimensional
    unit disk. This allows us to further rewrite the algebra
    $B_J / ( B_J \cap Q_{p-1} )$
    as the $(n-p)$-fold suspension in the claim.
\end{proof}

We are now ready to prove the main theorem about the K-theory
of the chain of ideals $Q_p$.

\begin{proof}[Proof of Theorem \ref{thm:sumofnfoldsuspensions}]
    Fix $p \in \setton$ and $q \in \zet$.
    With $Q_p = \sum_{\card J = {p+1}} B_J$
    according to the characterization \ref{eqn:cakeSumCharacterization},
    we have to show:
    \[
        \equationsumofnfoldsuspensions.
    \]
    First, we will show that the quotient $Q_p/Q_{p-1}$ decomposes as
    a direct sum. Let
    $f \in Q_p$ lie in the images of different inclusions $B_J \to Q_p$ and
    $B_{J'} \to Q_p$ for $\card J = \card{J'} = p+1$. By Lemma
    \ref{lem:forSumOfNFoldSuspensions}, we have $f \in Q_{p-1}$ and therefore
    $[f] = 0 \in Q_p/Q_{p-1}$. This shows that $Q_p/Q_{p-1}$ is a direct
    sum. Each summand corresponds to one $B_J$ with $\card J = p+1$:
    \[
        Q_p/Q_{p-1} = \bigoplus_{\card{J} = p+1} B_J / (B_J \cap Q_{p-1}).
    \]
    For each $J$, we computed
    $B_J / (B_J \cap Q_{p-1}) \cong S^{n-p} \big( \bigcap_{j \in J} \big)$
    in Lemma \ref{lem:smallerIdealsKtheory}.
    Passing to K-theory, we can replace the $(n-p)$-fold
    suspension with a degree shift by $(n-p)$:
    \[
        K_{p+q}\big( B_J / (B_J \cap Q_{p-1}) \big)
        \cong
        K_{p+q} \bigg( S^{n-p} \Big( \bigcap_{j \in J} I_j \Big)\bigg)
        \cong
        K_{q+n} \Big( \bigcap_{j \in J} I_j \Big).
    \]
    The claim follows because taking K-theory commutes
    with taking direct sums.
\end{proof}

\begin{thm}
\label{thm:inclusioninducesiso}
    The inclusion of algebras $B \to
    \set{f\colon \Delta^n \to A}{f \restr \partial \Delta^n = 0}$
    induces an isomorphism in K-theory.
\end{thm}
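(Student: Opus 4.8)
The plan is to compare $B$ with its overalgebra, the $n$-fold suspension $S^n A := \{f\colon\Delta^n\to A : f\restr\del\Delta^n = 0\}$, through the short exact sequence $0 \to B \xrightarrow{\incl} S^n A \to C \to 0$ with $C = S^n A / B$. Since $K_* S^n A \cong K_{*+n} A$ by the suspension isomorphism, and the six-term sequence of Theorem \ref{thm:sixTermExactSequence} turns $\incl_*$ into an isomorphism exactly when the connecting algebra $C$ is K-trivial, the whole statement reduces to proving $K_*(C) = 0$.

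To get at $K_*(C)$ I would mirror the cake-sum filtration on the overalgebra. On $S^n A$ define $\hat B_J = \{f : f\restr\del\Delta^n = 0,\ f(\Delta^n_{j'}) = 0 \text{ for } j'\notin J\}$ — the analogue of $B_J$ but \emph{without} any constraint forcing values into $I_j$ — and $\hat Q_p = \sum_{\card J\le p+1}\hat B_J$, so that $\hat Q_n = S^n A$ while $Q_p \subseteq \hat Q_p$ remains a C*-ideal. Setting $C_p = \hat Q_p / Q_p$ filters $C$, and the snake lemma gives $0\to Q_p/Q_{p-1}\to\hat Q_p/\hat Q_{p-1}\to C_p/C_{p-1}\to 0$. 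Running the $\hat{}$-analogues of Proposition \ref{pro:werner} and Lemma \ref{lem:smallerIdealsKtheory} (which put $A$, not $\bigcap_{j\in J}I_j$, on the interior of the disk $\Delta^n_J$) together with exactness of suspension identifies the graded pieces as $C_p/C_{p-1}\cong\bigoplus_{\card J = p+1}S^{n-p}\big(A/\bigcap_{j\in J}I_j\big)$. Feeding this chain into the spectral sequence of Theorem \ref{thm:schochetSseq} yields a spectral sequence converging to $K_*(C)$ with $E^1_{p,q}\cong\bigoplus_{\card J = p+1}K_{q+n}\big(A/\bigcap_{j\in J}I_j\big)$, whose $d^1$ is the alternating sum of the face maps induced by the surjections $A/\bigcap_J I_j \twoheadrightarrow A/\bigcap_{J'}I_j$ with $J' = J\setminus\{j\}$.

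It then remains to show this spectral sequence abuts to $0$, i.e.\ that $E^2 = 0$, which amounts to exactness of the \v{C}ech-type complex of the quotient groups $K_{q+n}(A/\bigcap_J I_j)$. This is the \emph{main obstacle}, for two reasons. First, the hypothesis $A = \sum_j I_j$ must be used decisively: comaximality is exactly what forces the coefficient system $J\mapsto A/\bigcap_{j\in J}I_j$ to be acyclic, in parallel with the constant-coefficient version — the spectral sequence for $S^n A$ itself collapses because the nerve of the cake pieces is the contractible full simplex on $\{0,\dots,n\}$. Second, and more delicately, K-theory is only half-exact, so one cannot simply quote exactness of an algebra-level complex; instead I would construct a \emph{natural chain contraction} of the \v{C}ech complex at the algebra level, using distributivity of the lattice of closed two-sided ideals of a C*-algebra together with generalized Chinese-remainder splittings coming from $\sum_j I_j = A$. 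Being functorial, such a contraction survives application of $K_*$ and forces the $E^1$-page to be exact, hence $K_*(C)=0$.

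Once $K_*(C)=0$ is established, the six-term sequence gives $\incl_*\colon K_* B\xrightarrow{\ \sim\ } K_* S^n A$, which is the claim. I expect essentially all the difficulty to concentrate in the chain-contraction step. A plausible alternative is an induction on $n$ that peels off the top cake piece via the inclusion $Q_{n-1}\subseteq Q_n = B$ (whose quotient is $\bigcap_{j}I_j$) and matches the resulting six-term sequences against those for $S^{n-1}$; but the comaximality-driven acyclicity seems unavoidable in either formulation.
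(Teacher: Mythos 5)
Your reduction to $K_*(C)=0$ for $C = S^nA/B$ is sound, and filtering $C$ by $\hat Q_p/Q_p$ is a reasonable idea, but the argument collapses exactly at the point you flag as the main obstacle, and the proposed repair does not work. First, a logical point: strong convergence to $0$ only forces $E^\infty_{*,*}=0$, so you must actually \emph{prove} the stronger statement $E^2_{*,*}=0$, i.e.\ exactness of the \v{C}ech-type complex of the groups $K_{q+n}\big(A/\bigcap_{j\in J}I_j\big)$; nothing you have written establishes this, and it carries the entire weight of the theorem. Second, the proposed mechanism --- a natural chain contraction \emph{at the algebra level} built from Chinese-remainder splittings of $\sum_j I_j = A$ --- is not available: a contracting homotopy for this complex must lift elements against the quotient maps $A/\bigcap_{j\in J}I_j \to A/\bigcap_{j\in J'}I_j$, and quotient maps of C*-algebras do not admit \starhom{} (or even multiplicative) sections in general. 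The decomposition $a=\sum_j a_j$ with $a_j\in I_j$ is merely linear, and linear maps induce nothing on $K$-theory, so such a ``contraction'' does not survive the application of $K_*$. Since $K$-theory is only half-exact, even exactness of the algebra-level complex (itself nontrivial, resting on distributivity of the ideal lattice) would not give exactness of the $K$-theory complex. There are also smaller unverified claims --- that $\hat Q_p\cap B = Q_p$, which is needed for $C_p=\hat Q_p/Q_p$ to filter $C$ and for your snake-lemma identification of $C_p/C_{p-1}$, and the identification of $d^1$ with the alternating face sum --- but the chain-contraction step is the genuine gap.

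The paper's proof avoids all of this by interpolating between $S^nA$ and $B$ with a chain of subalgebras $R_0 = S^nA \supseteq R_1\supseteq\dotsb\supseteq R_{n+1}=B$, where $R_k$ imposes the constraints $f(\Delta^n_j)\subseteq I_j$ only for $j\le k-1$, i.e.\ it peels off one \emph{constraint} at a time rather than one cake piece of the support filtration $Q_p$. Each quotient $R_{k-1}/R_k$ is identified with an algebra of $A/I_{k-1}$-valued functions on the single cake piece $\Delta^n_{k-1}$ that vanish only on the face $\partial\Delta^n\cap\Delta^n_{k-1}$; since the pair $(\Delta^n_{k-1},\partial\Delta^n\cap\Delta^n_{k-1})$ is homeomorphic to $(D^{n-1}\times[0,1],D^{n-1}\times\{1\})$, this quotient is a cone and hence contractible, so each inclusion $R_k\to R_{k-1}$ is a $K$-theory isomorphism by the six-term sequence. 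No \v{C}ech complex and no acyclicity argument appear; the hypothesis $\sum_j I_j=A$ is absorbed into the identification of these quotients. If you want to salvage your outline, the realistic fix is to abandon the $E^2$-vanishing claim and prove $K_*(C)=0$ by a constraint-by-constraint filtration of $C$ of this kind, with each graded piece contractible, rather than by acyclicity of a complex of quotient $K$-groups.
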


The following Lemmas \ref{lem:inclusioninducesiso:first}
to \ref{lem:inclusioninducesiso:last} will prove this theorem.
Define the following intermediate algebras:
\begin{align*}
    R_0 &= \set{f\colon \Delta^n \to A}{f \restr \partial \Delta^n = 0}, \\
    R_1 &= R_0 \cap \set{f}{f(\Delta^n_0) \subseteq I_0}, \\
    R_2 &= R_1 \cap \set{f}{f(\Delta^n_1) \subseteq I_1}, \\
    & \ \, \vdots \\
    B = R_{n+1} &= R_n \cap \set{f}{f(\Delta^n_n) \subseteq I_n}.
\end{align*}
To show that $B \to R_0$ is an isomorphism in
K-theory, we show that each inclusion
\[
    \incl\colon R_k \to R_{k-1}
\]
induces an isomorphism for $k \in \{n+1, n, \ldots, 2, 1\}$.

\begin{lem}
\label{lem:inclusioninducesiso:first}
    For $k > k'$, the algebra $R_k$ is a C*-ideal in $R_{k'}$.
\end{lem}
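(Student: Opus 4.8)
The plan is to verify the two defining properties of a C*-ideal separately: that $R_k$ sits inside $R_{k'}$ as a norm-closed subalgebra, and that it absorbs multiplication by elements of $R_{k'}$ on both sides. The inclusion $R_k \subseteq R_{k'}$ is immediate from the nested construction, since each $R_k$ is obtained from $R_0$ by imposing the range constraints $f(\Delta^n_j) \subseteq I_j$ exactly for the indices $0 \leq j \leq k-1$; passing from $R_{k'}$ to $R_k$ only adds the further constraints for $k' \leq j \leq k-1$, and adding constraints shrinks the function space.

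For closedness I would observe that every defining condition of $R_k$ is preserved under uniform limits. Vanishing on the closed set $\partial \Delta^n$ is clearly a closed condition, and so is the requirement $f(\Delta^n_j) \subseteq I_j$: if $f_m \to f$ in the sup-norm and each $f_m(x)$ lies in the closed ideal $I_j$ for every $x \in \Delta^n_j$, then the pointwise limit satisfies $f(x) = \lim_m f_m(x) \in I_j$ as well. Hence $R_k$ is norm-closed in $R_{k'}$.

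The heart of the argument is the two-sided ideal property. Since multiplication in these function algebras is pointwise, for $f \in R_k$ and $g \in R_{k'}$ I would check the defining conditions for $fg$ (and symmetrically $gf$) cake piece by cake piece. On $\partial \Delta^n$ the product vanishes because $f$ already does. For indices $j \leq k'-1$, both factors land in $I_j$ at points of $\Delta^n_j$, and $I_j$ is closed under products. For the new indices $k' \leq j \leq k-1$, only $f(x) \in I_j$ is guaranteed while $g(x) \in A$; here the decisive point is that $I_j$ is a \emph{two-sided} ideal in $A$, so both $f(x)g(x)$ and $g(x)f(x)$ remain in $I_j$. This is precisely the set of extra constraints distinguishing $R_k$ from $R_{k'}$, so $fg, gf \in R_k$.

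I do not anticipate a genuine obstacle; the lemma is essentially a bookkeeping verification. The only point requiring care is to keep straight which indices $j$ carry a range constraint in $R_k$ as opposed to $R_{k'}$, and to invoke the two-sided (rather than merely one-sided) nature of the ideals $I_j \subseteq A$ exactly on the indices $k' \leq j \leq k-1$, where only one of the two factors is controlled.
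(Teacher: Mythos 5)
Your proposal is correct and follows essentially the same route as the paper: the paper's (much terser) proof likewise reduces everything to the facts that multiplication is pointwise and that each $I_j$ is a two-sided C*-ideal in $A$, so the extra range constraints defining $R_k$ inside $R_{k'}$ are absorbed under products. Your additional verification of norm-closedness and your explicit attention to the indices $k' \leq j \leq k-1$, where only one factor is controlled, are correct elaborations of details the paper leaves implicit.
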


\begin{proof}
    The additional restrictions to the set of functions in $R_k$ over
    $R_{k'}$ forces the functions to map points into the given
    C*-ideals of $A$
    instead of anywhere in $A$. Because all $I_0, \ldots, I_n$ are C*-ideals
    and the multiplication of functions happens pointwise, $R_k$ becomes
    a C*-ideal in $R_{k'}$.
\end{proof}

\begin{lem}
\label{lem:pairishomeomorphic}
    The pair of topological spaces
    $(\Delta^n_k, \partial \Delta^n \cap \Delta^n_k)$
    is homeomorphic to
    $(D^{n-1} \times [0,1], D^{n-1} \times \{1\})$.
\end{lem}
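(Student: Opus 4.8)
The plan is to identify $\Delta^n_k$ as a cone whose base is exactly the distinguished subset, and then to reduce everything to a single topological fact about cones on disks.

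First I would exploit the symmetry of the cake construction: permuting the coordinates $0,\dots,n$ carries $\Delta^n_k$ to $\Delta^n_0$ and maps $\partial\Delta^n$ to itself, so I may assume $k=0$. On $\Delta^n_0$ the coordinate $x_0$ is the minimal coordinate, so a point of $\Delta^n_0$ has a vanishing coordinate if and only if $x_0=0$; hence $\partial\Delta^n\cap\Delta^n_0=\{x\in\Delta^n_0: x_0=0\}$, which is the facet of $\Delta^n$ opposite $e_0$ and therefore a copy of $\Delta^{n-1}\cong D^{n-1}$ by Corollary \ref{cor:howDeltaNLooksLike}. Call this facet $F$.

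Next I would use $x_0$ as a height function. On $\Delta^n_0$ the value $x_0$ ranges over $[0,\tfrac1{n+1}]$, attaining $\tfrac1{n+1}$ only at the barycentre $c$, so $t=(n+1)x_0$ gives $t\in[0,1]$. A direct computation (substituting $y_i=x_i-x_0$) shows that the map $(w,t)\mapsto (1-t)w+tc$ sends $F\times[0,1]$ onto $\Delta^n_0$, that its level set at fixed $t$ is the copy of $F$ scaled by $1-t$ toward $c$ (full size at $t=0$, collapsing to the point $c$ at $t=1$), and that it is a homeomorphism off the apex. This exhibits $\Delta^n_0$ as the cone on $F$ with apex $c$, the base being exactly $F=\partial\Delta^n\cap\Delta^n_0$ at $t=0$. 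It then remains to prove the purely topological statement $(\operatorname{cone}(D^{n-1}),\ \text{base}) \cong (D^{n-1}\times[0,1],\ D^{n-1}\times\{1\})$ and transport it back along this identification together with $F\cong D^{n-1}$. Both spaces are $n$-balls whose boundary $(n-1)$-spheres split as a union of two $(n-1)$-cells meeting along an equatorial $(n-2)$-sphere: for the cone these are the base $F$ and the lateral surface; for the cylinder they are the top $D^{n-1}\times\{1\}$ and the union of the bottom with the sides. I would build a homeomorphism of the two boundary spheres carrying base to top (matching the two cells and their common equator, each of which is an explicit disk, resp.\ sphere), and then extend it over the balls by coning from an interior point, the Alexander trick, to obtain a homeomorphism of pairs.

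The main obstacle is exactly this last reduction: the apex of $\operatorname{cone}(D^{n-1})$ is a single point, whereas the region of the cylinder it must correspond to is a full $(n-1)$-disk, so the homeomorphism has to \emph{open up} the apex; moreover the base meets the lateral surface in a corner that must be straightened. The radial extension of a carefully chosen boundary homeomorphism is precisely what absorbs both the opening of the apex and the corner-straightening. Alternatively one can write an explicit formula pushing the apex onto the bottom disk, but the boundary-plus-coning argument keeps the bookkeeping minimal while remaining fully rigorous.
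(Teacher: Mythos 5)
Your proof is correct, and it takes a more explicit route than the paper's. Both arguments begin the same way, by identifying $\partial \Delta^n \cap \Delta^n_k$ with the $k$-th facet of the simplex (your observation that on $\Delta^n_0$ a coordinate can vanish only if $x_0$ does is exactly the right justification). After that the two proofs diverge: the paper leans on the affine equivalence $\Delta^n_k \cong \Delta^n$ from Proposition \ref{pro:howdeltanlookslike} (which fixes the facet pointwise) together with general convexity of the cake piece and Corollary \ref{cor:howDeltaNLooksLike}, and essentially leaves the final homeomorphism of \emph{pairs} to the reader as a standard fact about a compact convex body with a distinguished convex cap in its boundary. You instead exhibit $\Delta^n_0$ concretely as the cone on the facet $F$ with apex the barycentre (which is literally correct: by the affine map in Proposition \ref{pro:howdeltanlookslike}, $\Delta^n_0$ is the simplex on the vertices $c, e_1, \dots, e_n$, i.e.\ the convex hull of $F$ and $c$), and then carry out the pair homeomorphism by decomposing both boundary spheres into two $(n-1)$-cells meeting along an equatorial $(n-2)$-sphere and extending by the Alexander trick. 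What your version buys is a fully spelled-out construction of the homeomorphism of pairs, including the two genuine subtleties (opening up the apex, straightening the corner where base meets lateral surface) that the paper's convexity appeal silently absorbs; what the paper's version buys is brevity, at the cost of leaving the key step implicit.
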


\begin{proof}
    $\partial \Delta^n \cap \Delta^n_k$ is exactly the $k$-th face of the
    $n$-simplex.
    In Proposition \ref{pro:howdeltanlookslike}, we have seen how
    $\Delta^n_n \cong \Delta^n$. In particular, for one-element
    sets $J = \{n\}$, the proof shows how $\Delta^n_J$ and $\Delta^n$ are
    diffeomorphic via a stretch by the factor $(n+1)$. This stretch has
    $\partial \Delta^n \cap \Delta^n_n$ as a set of fixed points.
    $\Delta^n_n$ and $\Delta^n_k$ are certainly homeomorphic.

    The cake piece $\Delta^n_k$ is a compact, convex $n$-dimensional manifold
    within $\rel^{n+1}$ and $\partial \Delta^n \cap \Delta^n_k$
    is a convex $(n-1)$-dimensional hypersurface within $\partial \Delta^n_k$.
    Corollary \ref{cor:howDeltaNLooksLike} relates the simplices to
    the desired disks.
\end{proof}

\begin{lem}
\label{lem:quotientkvanishes}
    For all $k \in \{1, 2, \ldots, n+1\}$,
    the quotient $R_{k-1} / R_k$ has trivial K-theory.
\end{lem}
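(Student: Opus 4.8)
The plan is to show that each quotient $R_{k-1}/R_k$ is contractible as a C*-algebra; since K-theory is homotopy invariant, trivial K-theory follows. First I would identify the quotient with a concrete function algebra. Let $\pi\colon A \to A/I_{k-1}$ be the projection and consider the \starhom{} $R_{k-1} \to \cont(\Delta^n_{k-1}, A/I_{k-1})$ sending $f$ to $(\pi \circ f)\restr \Delta^n_{k-1}$. Its kernel is precisely $R_k$, since a function in $R_{k-1}$ lies in $R_k$ exactly when $\pi\circ f$ vanishes on $\Delta^n_{k-1}$. Hence $R_{k-1}/R_k$ is isomorphic to the image, which I claim is the algebra $\mathcal A$ of continuous maps $g\colon \Delta^n_{k-1} \to A/I_{k-1}$ that vanish on the outer face $\partial\Delta^n \cap \Delta^n_{k-1}$ and satisfy $g(\Delta^n_{k-1}\cap\Delta^n_j)\subseteq \pi(I_j)$ for every $j \le k-2$. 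Containment of the image in $\mathcal A$ is immediate from the defining constraints of $R_{k-1}$; for the reverse I would lift values in $\pi(I_j)$ back into $I_j$ and use a Tietze-type extension from $\Delta^n_{k-1}$ to all of $\Delta^n$ respecting the vanishing on $\partial\Delta^n$ and the remaining cake-piece constraints, mirroring the restriction argument of Proposition \ref{pro:werner}.

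Next I would contract $\mathcal A$ using the collar supplied by Lemma \ref{lem:pairishomeomorphic}, which gives $(\Delta^n_{k-1}, \partial\Delta^n\cap\Delta^n_{k-1}) \cong (D^{n-1}\times[0,1], D^{n-1}\times\{1\})$. Under this identification every $g \in \mathcal A$ vanishes on $D^{n-1}\times\{1\}$, so $\mathcal A$ is a cone in the collar coordinate. The homotopy $H(g,s)(x,t) = g(x,\,t+s(1-t))$ pushes the collar parameter towards $t=1$, where $g = 0$; it satisfies $H(g,0)=g$ and $H(g,1)=0$, so it is a strong deformation retraction of $\mathcal A$ onto $0$. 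Contractibility then yields $K_*(R_{k-1}/R_k)=0$ by homotopy invariance (compare Proposition \ref{pro:coneiscontractible} and Lemma \ref{lem:noContractibleProjections}).

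The step that needs real care — and the main obstacle — is checking that $H$ preserves the inner-face constraints $g(\Delta^n_{k-1}\cap\Delta^n_j)\subseteq\pi(I_j)$, equivalently that the collar of Lemma \ref{lem:pairishomeomorphic} can be chosen so that each \emph{constrained} inner face ($j\le k-2$) is vertical, of the form $S_j\times[0,1]$, and hence invariant under $H$. Since only the single bottom face $D^{n-1}\times\{0\}$ fails to be vertical, I can route it to any \emph{unconstrained} inner face ($j\ge k$), which exists precisely when $k\le n$; for these $k$ an explicit such retraction can even be written down on $\Delta^n_{k-1}$ by shrinking the $x_{k-1}$-coordinate and redistributing the freed mass onto the coordinates indexed by $j\ge k$. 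The genuinely different case is the first inclusion $B = R_{n+1}\subseteq R_n$, where every inner face of $\Delta^n_n$ carries a constraint and no collar makes them all vertical simultaneously. Here I expect to argue separately, exploiting the standing hypothesis $A=\sum_{j=0}^n I_j$, which forces $A/I_n = \sum_{j<n}\pi(I_j)$: the inner-face constraints then jointly account for all of $A/I_n$, and the restricted algebra is contractible because it is (up to the collar identification) the mapping cone of a map assembled from the inclusions $\pi(I_j)\hookrightarrow A/I_n$ that induces an isomorphism in K-theory. This last case is where the decomposition hypothesis, rather than pure simplicial geometry, does the work.
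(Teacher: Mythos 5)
Your identification of the quotient is the careful version of what the paper does: the paper passes directly to $\set{f\colon\Delta^n_{k-1}\to A/I_{k-1}}{f\restr(\partial\Delta^n\cap\Delta^n_{k-1})=0}$ with \emph{no} residual inner-face conditions and contracts that cone along the collar of Lemma \ref{lem:pairishomeomorphic}; you correctly observe that the image of the restriction map actually retains the constraints $g(\Delta^n_{k-1}\cap\Delta^n_j)\subseteq\pi(I_j)$ for $j\le k-2$, and you set out to contract the constrained algebra instead. For $2\le k\le n$ this can be made to work, but not with the retraction you describe: if you shrink only $x_{k-1}$ and push the freed mass onto the coordinates indexed by $j\ge k$, then a point of $F_j=\Delta^n_{k-1}\cap\Delta^n_j$ (where $x_j=x_{k-1}$ is the minimum) acquires $x'_j=x_j>(1-s)x_{k-1}=x'_{k-1}$ and leaves $\Delta^n_j$, so the $j$-th constraint is violated along the homotopy. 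The repair is to decrease \emph{all} of $x_0,\dots,x_{k-1}$ by $s\,x_{k-1}$ and distribute the total onto the coordinates indexed by $k,\dots,n$; one checks this maps each $F_j$ with $j\le k-2$ into itself, fixes the outer face, and lands in the outer face at $s=1$.

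The genuine gap is the case $k=n+1$. There the obstruction you point to is real: the barycenter lies on every constrained face, and $\bigcap_{j\le n-1}\bigl(F_j\cup(\partial\Delta^n\cap\Delta^n_n)\bigr)$ is the disconnected set $\{\textrm{barycenter}\}\cup(\partial\Delta^n\cap\Delta^n_n)$, so no contraction induced by a homotopy of point maps can respect all the constraints. But the replacement you sketch is not a proof: a mapping cone of a map inducing a K-theory isomorphism has trivial K-theory, not contractibility, and you establish neither that the constrained algebra carries such a mapping-cone structure nor that the relevant map is a K-equivalence --- ``I expect to argue separately'' is where the argument stops. Since the lemma is quantified over all $k\in\{1,\dots,n+1\}$ and Theorem \ref{thm:inclusioninducesiso} needs precisely the step $B=R_{n+1}\subseteq R_n$, this missing case is not peripheral. (The paper's own write-up avoids the issue only by silently discarding the inner-face conditions when it restricts to $\Delta^n_{k-1}$; once you decide, correctly, to keep them, you owe a complete argument for $k=n+1$, and that is exactly the piece still missing.)
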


\begin{proof}
    The subset $\Delta^n - \Delta^n_{k-1}$ is open in the
    entire space $\Delta^n$.
    With the convention that $\{0, 1, \ldots, -1\}$ denotes
    the empty set and that $\{0, 1, \ldots, 0\} = \{0\}$, we compute:
    \begin{align*}
        R_{k-1} / R_k
        &=
        \frac{\set{f\colon \Delta^n \to A}{f(\partial \Delta^n) = 0
        \textrm{ and } f(\Delta^n_j) \subseteq I_j
        \textrm{ for } j \in \{0, 1, \ldots, k-2\}}}
        {\set{f\colon \Delta^n \to A}{f(\partial \Delta^n) = 0
        \textrm{ and } f(\Delta^n_j) \subseteq I_j
        \textrm{ for } j \in \{0, 1, \ldots, k-1\}}}
        \\
        &\cong
        \frac{
            \set{f\colon \Delta^n_{k-1} \to A}
            {f(\partial \Delta^n \cap \Delta^n_{k-1}) = 0}
            \phantom{\textrm{ and } f(\Delta^n_{k-1}) \subseteq I_{k-1}}
        }{
            \set{f\colon \Delta^n_{k-1} \to A}
            {f(\partial \Delta^n \cap \Delta^n_{k-1}) = 0
            \textrm{ and } f(\Delta^n_{k-1}) \subseteq I_{k-1}}
        }
        \\
        &\cong
        \set{f\colon \Delta^n_{k-1} \to A/I_{k-1}}
        {f(\partial \Delta^n \cap \Delta^n_{k-1}) = 0}.
    \end{align*}
    Because of Lemma \ref{lem:pairishomeomorphic}, the quotient
    $R_{k-1} / R_k$
    is isomorphic to the algebra
    \[
        R' = \set{f\colon D^{n-1} \times [0,1] \to A/I_{k-1}}
                 {f\left(D^{n-1} \times \{1\}\right) = 0}.
    \]
    This is a contractible algebra: The homotopy $h\colon R' \times I \to R'$,
    \[
        h(f, t)(x, t') = f(x, t' \cdot t),
    \]
    defines a \starhom{} for each fixed $t$. This construction is
    analogous to the proof of Proposition \ref{pro:coneiscontractible}
    for the contractibility of cone algebras.
    Since K-theory is homotopy invariant and $h(f, 0) = 0$ for all $f$,
    we conclude that $K_*(R_{k-1}/R_k) = K_* R' = 0$.
\end{proof}

\begin{lem}
\label{lem:inclusioninducesiso:last}
    $\incl\colon R_k \to R_{k-1}$ induces an isomorphism in K-theory.
\end{lem}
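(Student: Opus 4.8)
The plan is to read the claim straight off the six-term exact sequence in K-theory (Theorem~\ref{thm:sixTermExactSequence}), applied to the short exact sequence of C*-algebras determined by the ideal $R_k \subseteq R_{k-1}$. All the real work has already been done in Lemma~\ref{lem:quotientkvanishes}; what remains is exactness bookkeeping.

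First I would invoke Lemma~\ref{lem:inclusioninducesiso:first}, which guarantees that $R_k$ is a C*-ideal in $R_{k-1}$. This yields a short exact sequence of C*-algebras
\[
    0 \to R_k \loto{\incl} R_{k-1} \to R_{k-1}/R_k \to 0,
\]
whose inclusion arrow is exactly the map $\incl\colon R_k \to R_{k-1}$ under consideration. Feeding this sequence into Theorem~\ref{thm:sixTermExactSequence} produces the cyclic six-term sequence in which the two horizontal maps $K_s R_k \to K_s R_{k-1}$ are induced by $\incl$, and the two remaining corners are occupied by $K_0(R_{k-1}/R_k)$ and $K_1(R_{k-1}/R_k)$.

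Next I would substitute Lemma~\ref{lem:quotientkvanishes}, which asserts $K_0(R_{k-1}/R_k) = K_1(R_{k-1}/R_k) = 0$. With both quotient terms trivial, each inclusion-induced map $K_s R_k \to K_s R_{k-1}$ is flanked by zeros in the exact sequence: surjectivity follows because the term immediately after it, $K_s(R_{k-1}/R_k)$, vanishes, so its image is the full kernel of the zero map; injectivity follows because the connecting homomorphism mapping into $K_s R_k$ emanates from a quotient K-group that also vanishes, so the kernel of $\incl_*$ is the image of a zero map. Hence $\incl$ induces isomorphisms $K_s R_k \cong K_s R_{k-1}$ for $s = 0, 1$, and therefore for all $s \in \zet$ by Bott periodicity.

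There is essentially no genuine obstacle at this step, since the contractibility computation of Lemma~\ref{lem:quotientkvanishes} has front-loaded the entire content of the lemma. The only point worth stating with care is that the horizontal arrows in the six-term sequence are indeed the maps induced by $\incl$ rather than some other natural maps, which is immediate from how Theorem~\ref{thm:sixTermExactSequence} assigns the inclusion-induced morphisms to the horizontal edges.
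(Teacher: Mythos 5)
Your proof is correct and follows exactly the paper's own route: apply the six-term exact sequence of Theorem \ref{thm:sixTermExactSequence} to the ideal inclusion $R_k \subseteq R_{k-1}$ and use the vanishing of $K_*(R_{k-1}/R_k)$ from Lemma \ref{lem:quotientkvanishes} to force the inclusion-induced maps to be isomorphisms. The paper merely states this more tersely; your added exactness bookkeeping is the same argument spelled out.
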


\begin{proof}
    We examine the six-term exact sequence associated to the inclusion
    of the ideal.
    \[
        \begin{tikzpicture}
            \mama {
                K_0 R_k &
                K_0 R_{k-1} &
                K_0(R_{k-1} / R_k)
            \\
                K_1(R_{k-1} / R_k) &
                K_1 R_{k-1} &
                K_1 R_k.
            \\};
            \path[->, font=\scriptsize]
                (m-1-1) edge node[auto] {$\incl_0$} (m-1-2)
                (m-1-2) edge node[auto] {$\proj_0$} (m-1-3)
                (m-1-3) edge node[auto] {$\partial_2 \circ \beta$} (m-2-3)
                (m-2-3) edge node[auto] {$\incl_1$} (m-2-2)
                (m-2-2) edge node[auto] {$\proj_1$} (m-2-1)
                (m-2-1) edge node[auto] {$\partial_1$} (m-1-1)
            ;
        \end{tikzpicture}
    \]
    Since $K_p(R_{k-1} / R_k)$ vanishes for both even and odd $p$
    as shown in Lemma \ref{lem:quotientkvanishes},
    $\incl_p$ is an isomorphism for all $p$.
    This also concludes the proof of Theorem \ref{thm:inclusioninducesiso},
    which is an $n$-fold application of these lemmas.
\end{proof}

\subsection{Review of developed theory}
\label{subsection:reviewofcake}

Let $A$ be a C*-algebra that can be written as a finite sum
$I_0 + I_1 + \dotsb + I_n = A$ of
closed two-sided ideals $I_j \subs A$.
For the \emph{cake pieces} $\Delta^n_j \subseteq \Delta^n$,
we have constructed in Definition \ref{dfn:cakealgebra} a new C*-algebra $B$
of functions into $A$,
\equationDefineB
We worked with arbitrary index subsets $J \subseteq \setton$. For such a $J$,
we have defined
\equationDefineBJ
These \emph{cake algebras} $B_J$ are closed two-sided ideals in
$B = B_{\setton}$.
For $p \in \zet$, we defined the \emph{cake sums}
\[
    Q_p = \sum_{\card J \leq p+1} B_J.
\]
There are inclusions $Q_{p-1} \to Q_p$ and quotients of
C*-ideals, $Q_p / Q_{p-1}$.
This inclusion chain of C*-ideals is the decisive structure:
We can later feed these algebras into the spectral sequence
for ideal inclusions.

For $p \in \setton$, Theorem \ref{thm:sumofnfoldsuspensions} computes
\[
    \equationsumofnfoldsuspensions{}.
\]
This expression continues to hold for $p > n$ where $K_{p+q}(Q_p/Q_{p-1}) = 0$:
There are no subsets $J$ with $\card J = n+2$. But the expression fails for
$p = -1$: To avoid inter\-sections over the empty set, we must
explicitly mention $K_{p+q}(Q_p/Q_{p-1}) = 0$ for $p = -1$ whenever we
extend Theorem \ref{thm:sumofnfoldsuspensions} to all $p \in \zet$.

Theorem \ref{thm:inclusioninducesiso} shows that the inclusion
$B \to \set{f\colon \Delta^n \to A}{f \restr \partial \Delta^n = 0}$
induces an isomorphism in K-theory.
The C*-algebra on the right-hand side is isomorphic to the $n$-fold suspension
$S^nA$, therefore $K_p B \cong K_{p+n} A$.
With this, we can go back
to $A$, the algebra of original interest,
even though the quotients $Q_p$ encode information about $B$.

\subsection{Main theorem}

\begin{thm}[Spectral sequence for finite sums of C*-ideals]
\label{thm:sseqforsums}
    Let $A$ be a C*-algebra and $I_0$, $I_1$, $\ldots$, $I_n$ be
    $(n+1)$ C*-ideals in $A$ with
    $I_0 + I_1 + \dotsb + I_n = A$.
    There is a spectral sequence
    $\{E^r_{p,q}, d^r\}_{r,p,q}$ with
    \begin{equation}
    \label{equation:newsseqindices}
        \equationEOneFinite
    \end{equation}
    This spectral sequence converges strongly to $K_* A$.
\end{thm}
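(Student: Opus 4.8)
The plan is to invoke the already-established spectral sequence for ideal inclusions (Theorem \ref{thm:schochetSseq}) on the chain of cake sums $Q_p$, and then to correct for the $n$-fold suspension that the cake construction introduces. Concretely, I would take the chain
\[
    0 = Q_{-1} \subseteq Q_0 \subseteq Q_1 \subseteq \dotsb \subseteq Q_n = Q_{n+1} = \dotsb = B,
\]
which stabilizes at $B = B(I_0, \ldots, I_n)$ by Remark \ref{rem:cakeSumTrivial}, so that $\overline{\bigcup_p Q_p} = B$ while $Q_p = 0$ for $p < 0$. Feeding this chain into Theorem \ref{thm:schochetSseq} produces a spectral sequence $\{{}^Q E^r_{p,q}, {}^Q d^r\}$ with ${}^Q E^1_{p,q} \cong K_{p+q}(Q_p/Q_{p-1})$ that converges strongly to $K_* B$.

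I would then compute the first page. By Theorem \ref{thm:sumofnfoldsuspensions}, for $0 \le p \le n$ we have ${}^Q E^1_{p,q} = K_{p+q}(Q_p/Q_{p-1}) \cong \bigoplus_{\card J = p+1} K_{q+n}\big(\bigcap_{j\in J} I_j\big)$, whereas for $p < 0$ or $p > n$ the chain is locally constant, so $Q_p/Q_{p-1} = 0$ and ${}^Q E^1_{p,q} = 0$. This is precisely the claimed $E^1$-term, except for a uniform shift by $n$ in the $q$-grading that records the $n$ suspensions built into the cake algebras.

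To absorb this shift, I would re-index, setting $E^r_{p,q} := {}^Q E^r_{p, q-n}$. One checks routinely that the differential ${}^Q d^r$, of bidegree $(-r, r-1)$, carries over to a differential $d^r \colon E^r_{p,q} \to E^r_{p-r, q+r-1}$ of the same bidegree, and that the homology relation ${}^Q E^{r+1} \cong H({}^Q E^r, {}^Q d^r)$ is inherited verbatim, so $\{E^r_{p,q}, d^r\}$ is again a spectral sequence. On the first page this gives $E^1_{p,q} = {}^Q E^1_{p,q-n} \cong \bigoplus_{\card J = p+1} K_q\big(\bigcap_{j\in J} I_j\big)$ for $0 \le p \le n$ and $0$ otherwise, matching \ref{equation:newsseqindices}.

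Finally I would pass to the abutment. Strong convergence of the sequence on $Q$ supplies a Hausdorff, exhaustive, complete filtration $\{F^p K_s B\}$ with ${}^Q E^\infty_{p,q} \cong F^p K_{p+q} B / F^{p-1} K_{p+q} B$, so after re-indexing $E^\infty_{p,q} \cong F^p K_{p+q-n} B / F^{p-1} K_{p+q-n} B$. Theorem \ref{thm:inclusioninducesiso} identifies $B$ in K-theory with $S^n A$, yielding an isomorphism $K_{s} B \cong K_{s+n} A$; transporting the filtration along it produces a filtration $\{\tilde F^p K_{p+q} A\}$ of $K_{p+q} A$ with $E^\infty_{p,q} \cong \tilde F^p K_{p+q} A / \tilde F^{p-1} K_{p+q} A$. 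As transport along a group isomorphism preserves being Hausdorff, exhaustive, and complete, the re-indexed spectral sequence converges strongly to $K_* A$. I expect the only delicate point to be the bookkeeping: confirming that the degree shift by $n$ on the $E^1$-page (from Theorem \ref{thm:sumofnfoldsuspensions}) and the shift by $n$ at the abutment (from $K_* B \cong K_{*+n} A$) are one and the same, so that the single re-indexing $q \mapsto q-n$ reconciles both at once.
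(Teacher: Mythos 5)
Your proposal is correct and follows essentially the same route as the paper: feed the chain of cake sums $Q_p$ into Theorem \ref{thm:schochetSseq}, identify the first page via Theorem \ref{thm:sumofnfoldsuspensions}, and shift all K-theoretic degrees down by $n$ using $K_q B \cong K_{q+n} A$ from Theorem \ref{thm:inclusioninducesiso}. Your extra care in checking that the $E^1$-page shift and the abutment shift are reconciled by the single re-indexing $q \mapsto q-n$ is exactly the bookkeeping the paper performs implicitly.
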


This spectral sequence is functorial for \starhoms{} that preserve ideal
decompositions; this will be the next theorem.

\begin{proof}[Proof of Theorem \ref{thm:sseqforsums}]
    For the C*-ideals $I_0$, $I_1$, $\ldots$, $I_n$,
    define cake algebras $B = B(I_0, I_1, \ldots, I_n)$
    and cake sums $Q_p \subseteq B$ for $p \in \zet$ as reviewed in
    Section \ref{subsection:reviewofcake}.
    We have $Q_p = B$ for $p \geq n$ by Remark \ref{rem:cakeSumTrivial}.
    For the series of inclusions
    \[
        \dotsb = 0 = 0 \subseteq Q_0 \subseteq Q_1 \subseteq
        \dotsb \subseteq Q_n = B = Q_{n+1} = Q_{n+2} = \dotsb,
    \]
    Theorem \ref{thm:schochetSseq} gives a spectral sequence
    $\{\bar E^r_{p,q}, \bar d^r\}_{r,p,q}$ with
    \begin{equation}
    \label{equation:oldsseqindices}
        \bar E^1_{p,q} \cong K_{p+q}(Q_p/Q_{p-1}),
    \end{equation}
    converging to $K_*(B)$.
    By Theorem \ref{thm:sumofnfoldsuspensions}, we can replace the K-theory
    of these quotients by the K-theory of a more immediate intersection,
    \begin{equation}
    \label{eqn:newsseqindices}
        \bar E^1_{p,q} \cong K_{p+q}(Q_p/Q_{p-1}) \cong
            \begin{dcases*}
                \bigoplus_{\card J = p+1}
                K_{q+n} \Big( \bigcap_{j \in J} I_j \Big)
                &
                for $0 \leq p \leq n$,
                \\
                0
                &
                for $p < 0$ or $p > n$.
            \end{dcases*}
    \end{equation}
    This spectral sequence converges strongly
    to $K_*\big( \overline{\bigcup_{p \in \zet} Q_p} \big)
    = K_* Q_n = K_* B$. By Theorem \ref{thm:inclusioninducesiso},
    $K_q B \cong K_{q+n} A$. To simplify, we will shift down by $n$ all
    degrees in K-theory, both in \ref{eqn:newsseqindices} and in the
    expression for the convergence. As a result, we obtain
    a new spectral sequence
    $\{E^r_{p,q}, d^r\}_{r,p,q}$ with
    \[
        \equationEOneFinite
    \]
    This spectral sequence converges strongly to $K_* A$.
\end{proof}

\begin{thm}[Functoriality of the spectral sequence from Theorem
    \ref{thm:sseqforsums}]
\label{thm:sseqForSumsIsFunctorial}
    The spectral sequence $\{E^r_{p,q}, d^r\}_{r,p,q}$
    from Theorem \ref{thm:sseqforsums} is functorial with
    respect to \starhoms{} that preserve $(n+1)$-fold ideal decompositions
    (Definition \ref{dfn:preserveIdealDecomps1}):

    For $n' \geq n$,
    let $A' = I'_0 + I'_1 + \dotsb + I'_{n'}$ be a C*-algebra and
    let $f\colon A \to A'$ be a \starhom{} such that $f(I_j) \subseteq I'_j$
    for all $j \leq n$. Let $\{E^r_{p,q}, d^r\}_{r,p,q}$ be the spectral
    sequence from Theorem \ref{thm:sseqforsums} that converges to
    $A$ and let $\{\bar E^r_{p,q}, \bar d^r\}_{r,p,q}$ be the
    spectral sequence that converges to $A'$.

    Then $f$ induces a morphism $\{f^r_{p,q}\}_{r,p,q}$ of spectral sequences
    (Definition \ref{dfn:morphismOfSseq}) of bidegree $(0, 0)$ with
    \[
        f^r_{p,q}\colon E^r_{p,q} \to \bar E^r_{p,q}
    \]
    for all $r \geq 1$ and $p$, $q \in \zet$ that
    commutes with the differentials and, in turn, induces
    $K_*f\colon K_* A \to K_* A'$ on the convergence targets.
\end{thm}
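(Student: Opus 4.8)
The plan is to trace the functoriality through the entire construction, checking at each stage that a decomposition-preserving $\ast$-homomorphism $f\colon A \to A'$ induces compatible maps, and then to invoke the already-established functoriality of the ideal-inclusion spectral sequence (Theorem \ref{thm:schochetSseq}). First I would build the map on the cake algebras. Given $f$ with $f(I_j)\subseteq I'_j$ for $j\le n$, define a $\ast$-homomorphism $B(f)\colon B(I_0,\ldots,I_n) \to B(I'_0,\ldots,I'_{n'})$ by postcomposition, $g \mapsto f\circ g$. This is well-defined because if $g\restr\partial\Delta^n = 0$ then $f\circ g\restr\partial\Delta^n = 0$, and if $g(\Delta^n_j)\subseteq I_j$ then $(f\circ g)(\Delta^n_j)\subseteq f(I_j)\subseteq I'_j$; here I must be mildly careful about indices when $n' > n$, since the target simplex has higher dimension, so the map is really into the cake algebra over the first $n+1$ ideals viewed inside the larger one. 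In fact the cleanest route is to first treat $n' = n$ and then compose with the structural inclusion coming from $A' = \sum_{j\le n} I'_j + \cdots$; I expect the index bookkeeping here to be the main source of friction.

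Next I would verify that $B(f)$ respects the cake-sum filtration. Since $B(f)$ sends each generating ideal $B_J$ into $B'_J$ (the same index set $J$, because ranges are preserved coordinatewise), it carries $Q_p = \sum_{\card J \le p+1} B_J$ into $Q'_p$ for every $p\in\zet$. Thus $B(f)$ is a morphism of filtered C*-algebras, i.e.\ it induces $\ast$-homomorphisms $Q_p \to Q'_p$ compatible with the inclusions $Q_{p-1}\hookrightarrow Q_p$. This is exactly the data needed to apply functoriality of the Cartan--Eilenberg/Schochet construction: a map of chains of C*-ideal inclusions induces a morphism of the associated graded H-systems, hence a morphism $\{\bar f^r_{p,q}\}$ of the spectral sequences $\{\bar E^r_{p,q}\}$ built from $Q_\bullet$ and $Q'_\bullet$. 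On the $E^1$-page this morphism is, by Notation \ref{ntt:cartanZBE}, simply the K-theory map $K_{p+q}(Q_p/Q_{p-1}) \to K_{p+q}(Q'_p/Q'_{p-1})$ induced by $B(f)$, and it commutes with all differentials and with passage to homology automatically, by the remark following Definition \ref{dfn:morphismOfSseq}.

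It then remains to identify this morphism under the two reindexing isomorphisms used in the proof of Theorem \ref{thm:sseqforsums}. First, the direct-sum decomposition of Theorem \ref{thm:sumofnfoldsuspensions} is natural: the isomorphism $K_{p+q}(Q_p/Q_{p-1}) \cong \bigoplus_{\card J = p+1} K_{q+n}(\bigcap_{j\in J} I_j)$ arises from Lemma \ref{lem:smallerIdealsKtheory}, and $B(f)$ maps the $J$-summand $B_J/(B_J\cap Q_{p-1})$ to the $J$-summand $B'_J/(B'_J\cap Q'_{p-1})$ via the suspension of the restriction $\bigcap_{j\in J} I_j \to \bigcap_{j\in J} I'_j$, which is exactly $\bigcap_{j\in J} f\restr I_j$; so the induced map on $E^1$ is the direct sum of the maps $K_{q+n}(\bigcap_J I_j) \to K_{q+n}(\bigcap_J I'_j)$. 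Second, the uniform degree shift by $n$ (via Theorem \ref{thm:inclusioninducesiso}, $K_q B \cong K_{q+n} A$) is natural because the inclusion $B \to \{f\colon\Delta^n\to A : f\restr\partial\Delta^n=0\}\cong S^nA$ commutes with postcomposition by $f$. Defining $f^r_{p,q}$ to be $\bar f^r_{p,q}$ transported across these isomorphisms gives a morphism of bidegree $(0,0)$.

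\medskip

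Finally, for the statement about convergence targets, I would note that $B(f)$ induces $K_* B \to K_* B'$, which under the isomorphism $K_* B \cong K_{*+n} A$ (natural as above) is precisely $K_* f$ up to the degree shift. Since the filtrations on $K_* A$ and $K_* A'$ from Definition \ref{dfn:filtrationOfKA} are the images of the filtration-respecting map $B(f)$ under these identifications, the morphism of spectral sequences is compatible with the limit filtrations, and the map induced on $E^\infty = \mathrm{gr}\, K_* A$ is the associated graded of $K_* f$. The principal obstacle, as flagged, is handling $n' > n$ cleanly: one must fix a convention (e.g.\ pad the decomposition of $A$ with zero ideals $I_j = 0$ for $n < j \le n'$ so that both algebras live over $\Delta^{n'}$) so that $B(f)$ is literally a map of cake algebras over a common simplex, after which every naturality check above is routine.
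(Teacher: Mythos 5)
Your proposal is correct and follows essentially the same route as the paper: the paper's own proof is a short assertion that every construction from the cake algebras onward (cake algebras, cake sums, suspensions, K-theory, direct-sum decompositions) is natural with respect to decomposition-preserving \starhoms{}, which is exactly the chain of naturality checks you carry out in detail. Your extra care with the case $n' > n$ (padding with zero ideals so both cake algebras live over a common simplex) is not addressed in the paper's proof of this theorem, but it matches the mechanism the paper itself uses later in Section \ref{sec:infiniteIdeals}, so it is a faithful elaboration rather than a departure.
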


\begin{proof}
    All constructions since Section \ref{sec:cakePieceAlgebras}
    on the level of C*-algebras have been functorial with respect to
    ideal decompositions:
    Cake algebras, sums of cake algebras,
    cones, suspensions.

    Likewise, taking K-theory and constructing
    direct sums of K-theory groups for each nonempty $J \subseteq \setton$
    are functorial
    in the same way. Thus $\{f^r_{p,q}\}_{r,p,q}$ exists and induces
    the correct morphism $\{f^\infty_{p,q}\}_{p,q}\colon
    \{E^\infty_{p,q}\}
    \to \{\bar E^\infty_{p,q}\}$, which induces
    the desired $K_*f\colon K_* A \to K_* A'$ on the convergence targets.
\end{proof}

\section{Finite coarse excision}\thispagestyle{plain}

We generalize coarsely excisive pairs from
Definition \ref{dfn:coarselyExcisivePair} to coarsely excisive covers
of arbitrary cardinality.
Later in this section,
we will apply the spectral sequence from
Theorem \ref{thm:sseqforsums} for finitely many C*-ideals
to C*-algebras obtained from finite coarsely excisive covers.

\subsection{Coarsely excisive covers}

\begin{dfn}[Coarsely excisive cover]
\label{dfn:coarselyExcisiveCover}
    Let $(X, d)$ be a coarse space. Let
    $\{X_\beta\}_{\beta \in \alpha}$ be a cover of $X$ of arbitrary
    cardinality $\card \alpha$
    such that each $X_\beta$ is a closed subset in $X$.

    The cover $\{X_\beta\}_{\beta \in \alpha}$
    is called \emph{coarsely excisive}
    if, for all nonempty finite sets $J \subs \alpha$
    and for all distances $R > 0$, there exists a distance $S > 0$
    such that the intersection of the
    $\card{J}$-many $R$-neighborhoods
    lies in the $S$-neighborhood of the $\card{J}$-fold intersection:
    \begin{equation}
    \label{eqn:coarselyExcisiveCover}
        \bigcap_{j \in J} N_d(X_j, R) \subseteq
        N_d\Big(\bigcap_{j \in J} X_j, S\Big).
    \end{equation}
\end{dfn}

\begin{rem}
\label{rem:coarselyExcisiveCover}
    The distance $S$ may be chosen depending both on $R$ and the
    particular finite subcollection $\{X_j\}_{j \in J}$ at hand.
    It is not required that,
    given $R$, a single $S > 0$ satisfies \ref{eqn:coarselyExcisiveCover}
    uniformly for all
    subcollections of the cover, or even only for all subcollections of
    a given cardinality $\card J$.
\end{rem}

\begin{rem}
    This is a straightforward generalization of coarsely excisive pairs.
    These covers will yield C*-ideals in the Roe algebra of $X$ suitable for
    our spectral sequence. They behave as expected:
    \begin{itemize}
    \item
        A coarsely excisive pair of closed sets
        is a two-set coarsely excisive cover.
    \item
        The extra requirement that each $X_\beta$ be closed in $X$ does not
        affect any coarse properties. An arbitrary subset $Y \subs X$
        and its closure $\overline Y$ have the same neighborhoods
        $N_d(Y, R) = N_d(\overline Y, R) \subs X$ for any given
        $R > 0$ because $d$ is a metric.
    \item
        Provided $\varemptyset$ is not a member of a coarsely excisive cover,
        all intersections of finitely many sets in the cover must contain
        at least one point. Assume $\{X_j\}_{j \in J}$ are $\card J$ sets in
        the cover with empty intersection. Then $\{X_j\}_{j \in J}$
        does not satisfy
        \ref{eqn:coarselyExcisiveCover}. This can be seen in a similar way
        as Example
        \ref{exm:coarselyExcisiveEmpty}: In a metric space, any two
        nonempty sets
        have finite distance from each other, and thus $R$ can
        be chosen as the maximum of the pairwise distances among the $X_j$,
        producing a nonempty $\bigcap_{j \in J} N_d(X_j, R)$ that is
        not a subset of $N_d(\varemptyset, S) = \varemptyset$.
    \end{itemize}
\end{rem}

\begin{dfn}[Compatible coarsely excisive covers]
\label{dfn:compatibleCoarseCovers}
    Let $(X, d)$ and $(X', d')$ be coarse spaces and $f\colon X \to X'$
    a coarse map.
    Let $\{X_\beta\}_{\beta \in \alpha}$ be a finite or infinite coarsely
    excisive
    cover of $X$ and $\{X'_\beta\}_{\beta \in \alpha'}$ a coarsely
    excisive cover of
    $X'$ with $\alpha \subs \alpha'$ such that
    $f(X_\beta) \subseteq X'_\beta$ for every $\beta \in \alpha$.

    Then the two covers are called \emph{compatible with $f$}, or,
    when $f$ is clear from the context, simply \emph{compatible}.
\end{dfn}

    \subsection{Relative Roe algebras}
\label{sec:relativeRoeAlgebras}

Before we can use our spectral sequence for abstract C*-ideals
on coarsely excisive covers, we have
to shape our data accordingly -- we don't have sums and intersections of
C*-ideals, but rather unions and intersections of subspaces $X_j$.
We connect the coarse world to the world of abstract
C*-ideal intersections via relative Roe algebras and relative
$\dualalg$-algebras.

\begin{ntt}
\label{ntt:relativeRoeAlgebras}
    In Sections \ref{sec:relativeRoeAlgebras} through
    \ref{sec:sumsRelativeAlgebras}, let
    $(X, d)$ be a metric space.
    Fix a very ample representation $\varrho\colon \conto X \to BH$
    for a separable
    Hilbert space $H$ to define $\dualalg X$ and $\cualalg X$.

    Let $\{X_\beta\}_{\beta \in \alpha}$
    be a
    coarsely excisive cover of $(X, d)$ for an arbitrary
    index set $\alpha$.
    Let $J \subseteq \alpha$ be a nonempty finite subset of indices.
\end{ntt}

\begin{dfn}[Support near a subset]
    Let $Y \subseteq X$ be a subspace.
    An operator $T \in \cualalg X$ is \emph{supported near $Y$} if there
    exists a constant $R > 0$ (that may depend on $T$)
    such that all $f \in \conto X$ with $d(\supp f, Y) > R$ satisfy
    $\varrho(f) T = T \varrho(f) = 0 \in BH$.
\end{dfn}

\begin{dfn}[Relative Roe algebra]
\label{dfn:relativeRoeAlgebra}
    For $Y \subseteq X$ closed, the
    \emph{relative Roe algebra of $Y$ in $X$}, denoted
    $\cualalg(Y \subseteq X)$, is defined as the norm closure of
    all operators in $\cualalg X$ that are supported near $Y$.
\end{dfn}

\begin{dfn}[Relative $\dualalg$ algebra]
\label{dfn:relativeD}
    Let $Y \subseteq X$ be a subspace. The C*-algebra
    $\conto X$ contains $\conto(X - Y)$ as a C*-ideal: The inclusion
    morphism $\conto(X - Y) \to \conto X$ extends functions by zero on $Y$.

    Let $T \in \dualalg X$ be an operator such that,
    for all $f \in \conto(X - Y) \subs \conto X$,
    both $\varrho(f) T$ and $T \varrho(f)$ are compact operators.
    Then $T$ is called \emph{locally compact outside $Y$}.

    For $Y \subs X$ closed,
    the \emph{relative $\dualalg$ algebra} $\dualalg(Y \subseteq X)$
    is the norm closure of all operators in $\dualalg X$ that are supported
    near $Y$ and locally compact outside $Y$.
\end{dfn}

\begin{rem}
    The algebra $\cualalg(Y \subseteq X)$ is a C*-ideal in
    $\cualalg X$; the algebra $\dualalg(Y \subseteq X)$ is a C*-ideal
    in $\dualalg X$. As subalgebras of $\cualalg X$ and $\dualalg X$,
    all operators in these ideals
    are locally compact or pseudocompact, respectively.
    Each operator has finite propagation
    or is a norm limit of operators with finite propagation.

    For $Y = X$, we have $\cualalg(X \subseteq X) = \cualalg X$ and
    $\dualalg(X \subseteq X) = \dualalg X$.
    Operators in $\dualalg(Y \subseteq X)$
    act with three strengths on represented functions:
    pseudocompactly on $Y$, in a locally compact way near
    $Y$, and trivially far away from $Y$.
\end{rem}

Our guideline is \cite[Theorem 9.2]{roe-book}:
If the coarsely excisive cover $\{X_0, X_1\}$ has only two sets,
then there are isomorphisms
$\cualalg(X_0 \subs X) + \cualalg(X_1 \subs X) \cong \cualalg X$
and $\cualalg(X_0 \subs X) \cap \cualalg(X_1 \subs X)
\cong \cualalg(X_0 \cap X_1)$.

Besides for $\cualalg$, we prove a similar
result for $\dualalg$ and $\qualalg = \dualalg / \cualalg$; especially
$\dualalg$ requires extra work over the proofs for $\cualalg$.
Also, finite subsets $\{X_j\}_{j \in J}$ of arbitary coarsely excisive covers
$\{X_\beta\}_{\beta \in \alpha}$ require more care than an inductive
application of the result for two regions.
Even when $\bigcup_{j \in J} X_j$ happens to be a small subset of $X$,
our C*-algebras still arise from representations
of $\conto X$ in its entirety.
Our proofs must safely ignore regions of $X$ far away from
$\bigcup_{j \in J} X_j$.

\begin{thm}
\label{thm:isoOfRelativeRoeA}
    For all closed subsets $Y \subseteq X$ and K-theory
    degrees $s \in \zet$, there are natural isomorphisms of K-theory groups:
    \begin{align*}
        K_s\cualalg(Y \subseteq X) &\cong K_s\cualalg Y,\\
        K_s\dualalg(Y \subseteq X) &\cong K_s\dualalg Y.
    \end{align*}
\end{thm}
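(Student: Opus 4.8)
The plan is to prove both isomorphisms by a single template, carrying out the $\cualalg$ case in full and indicating the additional bookkeeping that $\dualalg$ requires. The guiding picture is that an operator supported near $Y$ really lives over some metric neighborhood $N_d(Y,R)$ of $Y$, and that every such neighborhood is coarsely equivalent to $Y$ itself; the isomorphism should then drop out of coarse invariance together with continuity of K-theory. Throughout I work with the fixed very ample representation $\varrho\colon \conto X \to BH$ from Notation \ref{ntt:relativeRoeAlgebras}.

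First I would extract from $\varrho$ an ample representation of $\conto Z$ for every closed $Z \subseteq X$. Since $Z$ is closed, $\conto(X - Z)$ is a C*-ideal of $\conto X$ with quotient $\conto Z$ by restriction of functions, exactly as in Definition \ref{dfn:relativeD}. The essential subspace $\overline{\varrho(\conto(X - Z))H}$ is reducing for $\varrho(\conto X)$, and the ideal acts as zero on its orthogonal complement $H_Z$; hence $\varrho$ factors through a representation $\sigma_Z\colon \conto Z \to B(H_Z)$ with $\sigma_Z(f \restr Z) = \varrho(f) \restr H_Z$. The first technical point is to verify that $\sigma_Z$ is again very ample, which is where the very ampleness of $\varrho$ is spent.

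Next I would identify the relative algebra with a directed union of genuine Roe algebras. Because the support condition forces $\varrho(f)T = T\varrho(f) = 0$ whenever $\supp f$ lies at distance greater than $R$ from $Y$, an operator supported near $Y$ with constant $R$ has the ranges of both $T$ and $T^*$ inside $H_{N_d(Y,R)}$; conversely, any Roe operator living on $H_{N_d(Y,R)}$ is supported near $Y$. Checking that finite propagation and local compactness with respect to $\varrho$ restrict precisely to the corresponding conditions for $\sigma_{N_d(Y,R)}$ (extending functions on $N_d(Y,R)$ to $X$ with controlled supports matches the two propagation constants in both directions), I obtain, realized inside $\cualalg X$, the description $\cualalg(Y \subseteq X) = \overline{\bigcup_{R>0} \cualalg N_d(Y,R)}$. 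Continuity of K-theory then yields $K_s\cualalg(Y \subseteq X) \cong \colim_R K_s\cualalg N_d(Y,R)$. Finally, each inclusion $Y \hookrightarrow N_d(Y,R)$ is a coarse equivalence, with coarse inverse a nearest-point contraction moving points by at most $R$, so the coarse-invariance lemma cited after Definition \ref{dfn:coarseEquivalence} makes every structure map of this system an isomorphism; the colimit is therefore $K_s\cualalg Y$, and the comparison map is natural in $Y$ since the whole construction is functorial. The $\dualalg$ statement runs identically once one checks that pseudolocality (compactness of the commutators $\varrho(f)T - T\varrho(f)$) and local compactness outside $Y$ also restrict correctly to $\sigma_Z$.

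The hard part will be Step two together with the identification in Step three: proving that $\sigma_Z$ is ample and that operators supported near $Y$ coincide, as a C*-algebra with the correct K-theory, with Roe operators on the reducing subspace $H_{N_d(Y,R)}$ — and that the structure maps of the directed system agree with the functorial maps induced by the coarse inclusions. This is the representation-theoretic core, and it is exactly the place where the covering-isometry and ample-representation uniqueness machinery of \cite{higson2000analytic} is needed, rather than a naive compression by the projection onto $H_Z$ (which is not multiplicative). A secondary subtlety, specific to $\dualalg$, is that the nearest-point coarse inverse is not continuous, so the coarse invariance used in the last step must be supplied within the coarse-continuous category for $\dualalg$ and $\qualalg$.
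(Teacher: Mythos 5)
Your strategy --- exhausting $\cualalg(Y \subseteq X)$ by operators supported within $N_d(Y,R)$, identifying these with Roe algebras of the neighborhoods, and passing to the colimit in K-theory via the coarse equivalences $Y \hookrightarrow N_d(Y,R)$ --- is exactly the approach the paper takes, except that the paper does not carry it out itself: it defers to \cite[Theorem 9.2]{roe-book} for $\cualalg$ and to \cite[Proposition 3.8]{siegel-mv} for $\dualalg$, summarizing precisely this neighborhood/direct-limit argument in Remark \ref{rem:isoOfRelativeRoeB}. You also correctly locate the two genuine technical burdens (the compressed representation $\sigma_Z$ need \emph{not} be ample for an arbitrary closed $Z$ --- $H_Z$ can even vanish when $Z$ has empty interior --- so the covering-isometry and uniqueness-of-ample-representation machinery, not naive compression, must carry the identification; and the nearest-point coarse inverse is discontinuous, so the $\dualalg$ case must run in the coarse-continuous category), which is exactly where the cited sources do the work.
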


\begin{rem}
\label{rem:isoOfRelativeRoeB}
    The proof for $\cualalg$
    in \cite[Theorem 9.2]{roe-book} passes from $Y \subseteq X$
    to the coarsely equivalent $N_d(Y, n) \subseteq X$ for $n \in \nat$
    and takes the direct limit in K-theory along $n \to \infty$.
    The isomorphism is induced by the inclusion $Y \to X$.
    The construction for $\cualalg$ is natural with respect to coarse maps.
    The construction for $\dualalg$ is natural with respect to
    maps that are both coarse and continuous.

    A proof for $\dualalg$ is in \cite[Proposition 3.8]{siegel-mv}.
    This construction
    calls for a very ample representation $\varrho\colon \conto X \to BH$
    as we have required in Notation \ref{ntt:relativeRoeAlgebras},
    not merely for an ample representation.
\end{rem}

\begin{lem}
\label{lem:extendToBorel}
    The representation $\varrho\colon \conto X \to BH$ can be extended
    to all Borel functions on $X$.
\end{lem}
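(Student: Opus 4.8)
The plan is to extend $\varrho$ to the algebra of bounded Borel functions on $X$ (the class whose operator values still land in $BH$) by building the Borel functional calculus of the commutative C*-algebra $\conto X$ out of the scalar measures that $\varrho$ attaches to pairs of vectors. First I would fix $\xi$, $\eta \in H$ and consider the linear functional $f \mapsto \langle \varrho(f)\xi, \eta\rangle$ on $\conto X$; since $\norm{\varrho(f)} \leq \norm{f}$, it is bounded with norm at most $\norm{\xi}\,\norm{\eta}$. Because $X$ is a proper metric space, hence locally compact, $\sigma$-compact and Hausdorff, the Riesz--Markov theorem supplies a unique regular complex Borel measure $\mu_{\xi,\eta}$ on $X$ with
\[
    \langle \varrho(f)\xi, \eta\rangle = \int_X f \, d\mu_{\xi,\eta}
    \qquad \text{for all } f \in \conto X,
\]
of total variation $\norm{\mu_{\xi,\eta}} \leq \norm{\xi}\,\norm{\eta}$. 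Uniqueness forces $\mu_{\xi,\eta}$ to depend sesquilinearly on $(\xi,\eta)$.

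Next, for a bounded Borel function $g$ on $X$ I would set $B_g(\xi,\eta) = \int_X g \, d\mu_{\xi,\eta}$. This is a sesquilinear form by the previous remark, and the estimate $\betr{B_g(\xi,\eta)} \leq \norm{g}_\infty \norm{\mu_{\xi,\eta}} \leq \norm{g}_\infty \norm{\xi}\,\norm{\eta}$ shows it is bounded, so the Riesz representation for bounded sesquilinear forms yields a unique operator $\varrho(g) \in BH$ with $\langle \varrho(g)\xi,\eta\rangle = B_g(\xi,\eta)$ and $\norm{\varrho(g)} \leq \norm{g}_\infty$. By construction $g \mapsto \varrho(g)$ is linear, agrees with the original $\varrho$ on $\conto X$, and satisfies $\varrho(\overline g) = \varrho(g)^*$; moreover dominated convergence shows it is sequentially normal, i.e.\ $\varrho(g_n) \to \varrho(g)$ in the weak operator topology whenever $g_n \to g$ pointwise with $\sup_n \norm{g_n}_\infty < \infty$.

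The one nontrivial point, which I expect to be the main obstacle, is multiplicativity $\varrho(gh) = \varrho(g)\varrho(h)$ for bounded Borel $g$, $h$; I would establish it by a measure-uniqueness argument in two stages. First, for $f \in \conto X$ and any continuous $h \in \conto X$, commutativity of $\conto X$ gives
\[
    \int_X h \, d\mu_{\varrho(f)\xi,\eta}
    = \langle \varrho(h)\varrho(f)\xi, \eta\rangle
    = \langle \varrho(hf)\xi, \eta\rangle
    = \int_X hf\, d\mu_{\xi,\eta},
\]
so the uniqueness clause of Riesz--Markov forces $\mu_{\varrho(f)\xi,\eta} = f\,\mu_{\xi,\eta}$; integrating an arbitrary bounded Borel $g$ against this identity yields $\varrho(g)\varrho(f) = \varrho(gf)$, and passing to adjoints gives the same with the factors in the opposite order. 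Second, for a fixed bounded Borel $g'$ the first stage gives $\langle \varrho(h)\varrho(g')\xi,\eta\rangle = \langle \varrho(hg')\xi,\eta\rangle = \int_X hg'\, d\mu_{\xi,\eta}$ for every continuous $h$, so again $\mu_{\varrho(g')\xi,\eta} = g'\,\mu_{\xi,\eta}$; integrating a second bounded Borel $g$ against this gives $\langle \varrho(g)\varrho(g')\xi,\eta\rangle = \int_X gg'\, d\mu_{\xi,\eta} = \langle \varrho(gg')\xi,\eta\rangle$, i.e.\ full multiplicativity. This makes $g \mapsto \varrho(g)$ a \starhom{} from the bounded Borel functions on $X$ into $BH$ extending the given representation, which is exactly what the later arguments need in order to evaluate $\varrho$ on characteristic functions of Borel subsets of $X$.
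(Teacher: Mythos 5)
Your proof is correct, but it takes a genuinely different route from the paper. The paper invokes the spatial structure theory of representations of commutative C*-algebras (citing Davidson): the nondegenerate $\varrho$ is unitarily equivalent to a direct sum of cyclic representations, each realized as multiplication by functions on an $L^2$-space of a regular Borel measure, and the extension to Borel functions is then literally pointwise multiplication on each summand --- multiplicativity and the $\ast$-property come for free, at the cost of quoting the classification theorem. You instead build the Borel functional calculus intrinsically: Riesz--Markov applied to the functionals $f \mapsto \langle \varrho(f)\xi,\eta\rangle$ produces the matrix measures $\mu_{\xi,\eta}$, bounded sesquilinear forms give the operators $\varrho(g)$, and the two-stage uniqueness argument (first $\mu_{\varrho(f)\xi,\eta} = f\,\mu_{\xi,\eta}$ for continuous $f$, then the same for Borel $g'$ after taking adjoints) yields multiplicativity. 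This is the standard construction and all steps check out; it is more self-contained, avoids the decomposition into cyclic summands, and gives you sequential weak-operator normality explicitly as a byproduct. The one restriction --- you extend only to \emph{bounded} Borel functions --- is exactly what the paper actually uses later (indicator functions $\chi(Y)$ and $[0,1]$-valued Borel functions $\psi$), so nothing is lost.
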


\begin{proof}
    This follows from \cite[Theorem II.1.1]{davidson1996} and
    \cite[Proposition II.1.2]{davidson1996}:
    As a nondegenerate representation of $\conto X$, the given $\varrho$
    is equivalent to a direct sum
    $\bigoplus_{\gamma \in \Gamma} \varrho_\gamma$
    of cyclic representations $\varrho_\gamma$,
    each unitarily equivalent
    to pointwise
    multiplication with a continuous function $f_\gamma$ that depends
    only on the cyclic representation $\varrho_\gamma$,
    on the Hilbert space $L^2(X)$ of $H$-valued functions
    using a regular Borel probability measure. Now extend by pointwise
    multiplication.

    The topological space in this construction is compact, but continuous
    functions on a compact space differ, as an algebra, from $\conto X$
    of a noncompact space $X$
    merely by the value at the extra point of the one-point
    compactification of $X$.
\end{proof}

\begin{lem}
\label{lem:partitionStaysInCualalg}
    Let $\psi\colon X \to [0,1]$ be a Borel function.
    Extend $\varrho$ to
    all Borel functions as in Lemma \ref{lem:extendToBorel} such that
    $\varrho(\psi)$ makes sense. Let $Y \subseteq X$ be a closed subset.
    Let $T$ be an operator in $\dualalg(Y \subseteq X)$. Then the following
    statments hold.
    \begin{itemize}
    \item We have $\varrho(\psi) T \in \dualalg(Y \subseteq X)$.
    \item Let $R_\mathrm{supp}$ be a distance constant for the support
        of $T$ near $Y$; i.e., for functions $f \in \conto X$
        with $d(\supp f, Y) > R_\mathrm{supp}$,
        the operators $\varrho(f) T$ and
        $T \varrho(f)$ vanish. Then $\varrho(\psi) T$ is supported
        near $Y$ with the same distance constant $R_\mathrm{supp}$.
    \item If $T$ has finite propagation with distance constant
        $R_\mathrm{prop}$, then $\varrho(\psi) T$ has finite
        propagation with distance constant $R_\mathrm{prop}$.
        (If $T$ does not admit such an $R_\mathrm{prop}$, then $T$
        is in the norm completion of operators that do.)
    \item If $T \in \cualalg(Y \subseteq X)$,
        then also $\varrho(\psi) T \in \cualalg(Y \subseteq X)$.
    \end{itemize}
\end{lem}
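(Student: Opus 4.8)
The plan is to reduce all four assertions to a single structural fact: once $\varrho$ is extended to bounded Borel functions as in Lemma \ref{lem:extendToBorel}, it remains a representation of a \emph{commutative} algebra. In particular $\varrho(\psi)$ is a bounded operator with $\norm{\varrho(\psi)} \leq 1$ that commutes with every $\varrho(f)$ for $f \in \conto X$; that is, $\varrho(f)\varrho(\psi) = \varrho(\psi)\varrho(f)$. I would also record two elementary facts that are used repeatedly: the product of a bounded operator with a compact operator is compact, and left multiplication by the fixed bounded operator $\varrho(\psi)$ is norm-continuous, hence preserves norm limits. Everything else is bookkeeping through these identities, so no approximation of the merely-Borel $\psi$ by continuous functions is required.

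First I would dispatch the support statement. For $f \in \conto X$ with $d(\supp f, Y) > R_{\mathrm{supp}}$, the hypothesis on $T$ gives $\varrho(f)T = T\varrho(f) = 0$. Then $\varrho(\psi)T\varrho(f) = \varrho(\psi)\cdot 0 = 0$, and, using commutativity, $\varrho(f)\varrho(\psi)T = \varrho(\psi)\varrho(f)T = 0$. Hence $\varrho(\psi)T$ is supported near $Y$ with the \emph{same} constant $R_{\mathrm{supp}}$. The finite-propagation statement is identical in spirit: for $f, g \in \conto X$ with $d(\supp f, \supp g) \geq R_{\mathrm{prop}}$ I would write $\varrho(f)\varrho(\psi)T\varrho(g) = \varrho(\psi)\varrho(f)T\varrho(g) = \varrho(\psi)\cdot 0 = 0$, so $\varrho(\psi)T$ has finite propagation with constant $R_{\mathrm{prop}}$. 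For a general $T$ that is only a norm limit of finite-propagation operators $T_k$, continuity of $S \mapsto \varrho(\psi)S$ shows $\varrho(\psi)T = \lim_k \varrho(\psi)T_k$ lies in the same norm completion.

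For membership in $\dualalg(Y \subseteq X)$ I would verify the defining properties one at a time. Pseudolocality follows from the commutator identity $[\varrho(f), \varrho(\psi)T] = \varrho(\psi)\varrho(f)T - \varrho(\psi)T\varrho(f) = \varrho(\psi)[\varrho(f), T]$, which is compact since $[\varrho(f), T]$ is and $\varrho(\psi)$ is bounded. Finite propagation and support near $Y$ are the two previous steps. Local compactness outside $Y$ is the observation that for $f \in \conto(X - Y)$ both $\varrho(f)\varrho(\psi)T = \varrho(\psi)\varrho(f)T$ and $\varrho(\psi)T\varrho(f) = \varrho(\psi)(T\varrho(f))$ are a bounded operator times a compact one, hence compact; passing to norm limits handles operators of $\dualalg(Y \subseteq X)$ that are not themselves of finite propagation. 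The $\cualalg$ statement is the same computation with the stronger hypothesis: when $T \in \cualalg(Y \subseteq X)$ is locally compact, $\varrho(f)\varrho(\psi)T = \varrho(\psi)\varrho(f)T$ and $\varrho(\psi)T\varrho(f)$ are compact for \emph{every} $f \in \conto X$, and together with support near $Y$ and finite propagation this places $\varrho(\psi)T$ in $\cualalg(Y \subseteq X)$.

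The only genuinely delicate point, and the one I would state with care, is the commutativity $\varrho(f)\varrho(\psi) = \varrho(\psi)\varrho(f)$: it is what lets me slide $\varrho(\psi)$ past $\varrho(f)$ and thereby inherit vanishing and compactness from $T$ tested against \emph{continuous} functions $f$, even though $\psi$ is merely Borel. This is exactly what the extension in Lemma \ref{lem:extendToBorel} supplies, since there $\varrho$ acts by pointwise multiplication and its extended image is a commutative algebra containing both $\varrho(f)$ and $\varrho(\psi)$. With that identity fixed at the outset, each of the four bullet points becomes a one-line computation, and the main work of the proof is simply organizing them.
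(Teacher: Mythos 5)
Your proposal is correct, and it verifies the same list of properties by direct computation, but the mechanism differs from the paper's in one respect worth noting. The paper's proof writes $\varrho(f)\varrho(\psi) = \varrho(f\psi)$ and then argues from $\supp(f\psi) \subseteq \supp f$ that the distance hypotheses on $f$ transfer to the Borel function $f\psi$; this implicitly tests the propagation and support conditions of $T$ against a function that is merely Borel, whereas those conditions are stated for $f \in \conto X$. Your version instead uses the commutativity $\varrho(f)\varrho(\psi) = \varrho(\psi)\varrho(f)$ to slide $\varrho(\psi)$ to the left, so that $T$ is only ever tested against the original continuous $f$ and $g$, and the result is then multiplied by the bounded operator $\varrho(\psi)$. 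The two identities come from the same source (the extension of $\varrho$ to Borel functions acts by pointwise multiplication, hence lands in a commutative algebra, which is exactly what the paper itself invokes in its pseudolocality computation via $\varrho(f\psi) = \varrho(\psi f)$), but your arrangement sidesteps the small subtlety about Borel test functions and is the cleaner of the two. The reduction to finite-propagation operators and the treatment of norm limits by continuity of left multiplication match the paper's opening step, and the pseudolocality, local-compactness, and $\cualalg$ computations are identical in substance.
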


The main idea is that the support of a pointwise
product of functions $f \psi$ for $f \in \conto X$
must be a subset of $\supp f$ within $X$. Later in this section,
$\psi$ will be a function from a Borel partition of unity and
$\supp(f \psi)$ may be much smaller than $\supp f$.

\begin{proof}[Proof of Lemma \ref{lem:partitionStaysInCualalg}]
    We show the claim for all finite-propagation operators.
    The absolute algebras $\dualalg X$ and $\cualalg Y$ are norm completions
    of such operators; the general claim follows because
    the relative algebras $\dualalg(Y \subs X)$ and $\cualalg(Y \subs X)$
    carry the same norm as subalgebras of the absolute algebras.

    Let $R_\mathrm{prop} > 0$
    be a constant of finite propagation for $T$;
    i.e., for
    $f$, $g \in \conto X$ with
    $d(\supp f, \supp g) \geq R_\mathrm{prop}$,
    the product $\varrho(f)T\varrho(g) \in BH$ is zero.
    For such $f$, $g \in \conto X$
    with $d(\supp f, \supp g) \geq R_\mathrm{prop}$,
    we have
    $\supp(f \psi) \subseteq \supp f$, therefore
    \[
        d\big(\supp(f \psi), \supp g\big)
        \geq d(\supp f, \supp g) \geq R_\mathrm{prop}
    \]
    and $\varrho(f)\varrho(\psi)T\varrho(g) = \varrho(f\psi) T \varrho(g) = 0$.
    Thus $\varrho(\psi) T$ has finite propagation with the same
    constant $R_\mathrm{prop}$.

    Fix $R_\mathrm{supp} > 0$ such that
    all $f \in \conto X$ with $d(\supp f, Y) > R$ satisfy
    $\varrho(f) T = T \varrho(f) = 0 \in BH$; such an $R_\mathrm{supp}$
    exists because $T$ is supported near $Y$.
    Given $f \in \conto X$ with $d(\supp f, Y) > R$,
    we have
    $\supp(f \psi) \subseteq \supp f$, thus
    $\varrho(f) \varrho(\psi) T = \varrho(f\psi) T = 0$.
    Furthermore, $\varrho(\psi) T \varrho(f) = 0$ because $T \varrho(f) = 0$.
    Thus $\varrho(\psi) T$ is supported near $Y$ with the same
    distance constant $R_\textrm{supp}$.

    For pseudocompactness, given $f \in \conto X$,
    we must show that the following operator is compact:
    \begin{align*}
        \varrho(f)\varrho(\psi)T - \varrho(\psi)T\varrho(f)
        &= \varrho(f \psi)T - \varrho(\psi)T\varrho(f) \\
        &= \varrho(\psi f)T - \varrho(\psi)T\varrho(f) \\
        &= \varrho(\psi)
            \underbrace{\big( \varrho(f) T - T \varrho(f)\big)}_{
            \makebox[0pt]{\textrm{{\scriptsize compact since
                $T \in \dualalg X$}}}},
    \end{align*}
    which is compact as a product with a compact operator.
    Thus $\varrho(\psi) T$ is pseudolocal.

    For local compactness of $\varrho(\psi) T$
    outside $Y$, let $f$ be a function in $\conto(X - Y)$
    Because $T$ is already locally compact outside $Y$,
    $\varrho(f) T$ and $T \varrho(f)$ are compact operators. Then
    $\varrho(f) \varrho(\psi) T = \varrho(\psi) \varrho(f) T$
    and $\varrho(\psi) T \varrho(f)$ are also compact.
    Thus $\varrho(\psi) T$ is locally compact outside $Y$.

    Additionally, if $T \in \cualalg(Y \subseteq X)$, the same argument
    applied to arbitrary $f \in \conto X$ shows that
    $\varrho(f)\varrho(\psi)T$ and $\varrho(\psi)T\varrho(f)$ are
    compact for all $f \in \conto X$. Thus $\varrho(\psi) T$ is locally
    compact if $T$ is.
\end{proof}

\subsection{Intersections of relative algebras}
\label{sec:intersectionsRelativeAlgebras}

\begin{ntt}
    Throughout Section \ref{sec:intersectionsRelativeAlgebras},
    in addition to Notation \ref{ntt:relativeRoeAlgebras} that defines
    the finite nonempty subset
    $\{X_j\}_{j \in J}$ of the coarsely excisive cover
    $\{X_\beta\}_{\beta \in \alpha}$, write
    \[
        Z = \bigcap_{j \in J} X_j.
    \]
\end{ntt}

\begin{lem}
\label{lem:relCapA}
    Let $\fualalg$ denote either the functor $\cualalg$ or $\dualalg$.
    Then
    \[
        \fualalg(Z \subseteq X) \subs
        \bigcap_{j \in J} \fualalg(X_j \subseteq X).
    \]
\end{lem}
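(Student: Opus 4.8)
The plan is to reduce the lemma to a single monotonicity principle: if $Y \subs Y'$ are closed subsets of $X$, then $\fualalg(Y \subs X) \subs \fualalg(Y' \subs X)$. Granting this, the claim follows immediately by applying it to $Z = \bigcap_{j \in J} X_j \subs X_j$ for each individual $j \in J$: each inclusion $\fualalg(Z \subs X) \subs \fualalg(X_j \subs X)$ places the left-hand algebra inside every member of the intersection, hence inside $\bigcap_{j \in J} \fualalg(X_j \subs X)$. Since both sides are norm closures of explicitly described operator sets and each $\fualalg(X_j \subs X)$ is already norm-closed, it suffices to show that every operator in the \emph{pre-closure} defining set of $\fualalg(Z \subs X)$ lies in the defining set of $\fualalg(X_j \subs X)$, and then pass to closures.

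First I would handle the support condition, which governs both $\cualalg$ and $\dualalg$. Fix $j \in J$. From $Z \subs X_j$ we get the monotonicity of neighborhoods $N_d(Z, R) \subs N_d(X_j, R)$, equivalently $d(\supp f, X_j) \leq d(\supp f, Z)$ for every $f \in \conto X$. So if $T$ is supported near $Z$ with constant $R$ -- meaning $\varrho(f) T = T \varrho(f) = 0$ whenever $d(\supp f, Z) > R$ -- then any $f$ with $d(\supp f, X_j) > R$ must also satisfy $d(\supp f, Z) > R$, forcing the same vanishing. Thus $T$ is supported near $X_j$ with the identical constant $R$, and no new estimate is needed.

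For the $\dualalg$ case I would additionally check the "locally compact outside" condition. Again using $Z \subs X_j$, we have $X - X_j \subs X - Z$, hence an inclusion of ideals $\conto(X - X_j) \subs \conto(X - Z)$ inside $\conto X$. If $T$ is locally compact outside $Z$ -- so $\varrho(f) T$ and $T \varrho(f)$ are compact for all $f \in \conto(X - Z)$ -- then this holds a fortiori for all $f \in \conto(X - X_j)$, so $T$ is locally compact outside $X_j$. Combining the two preceding steps, every defining operator of $\fualalg(Z \subs X)$ is a defining operator of $\fualalg(X_j \subs X)$; taking norm closures yields $\fualalg(Z \subs X) \subs \fualalg(X_j \subs X)$, and intersecting over $j \in J$ completes the argument. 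I do not expect a genuine obstacle here: the entire content is the monotonicity $Y \subs Y' \Rightarrow \fualalg(Y \subs X) \subs \fualalg(Y' \subs X)$, and the only care required is to keep the directions of the inclusions straight -- a stronger localization near the smaller set $Z$ automatically satisfies the weaker localization near the larger set $X_j$ -- and to confirm that \emph{both} defining conditions survive the enlargement.
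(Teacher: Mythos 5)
Your proof is correct and takes essentially the same route as the paper: both arguments transfer the support condition via $d(\supp f, X_j) > R \Rightarrow d(\supp f, Z) > R$ (since $Z \subseteq X_j$) and the locally-compact-outside condition via $\conto(X - X_j) \subseteq \conto(X - Z)$. Your explicit framing as a monotonicity principle and the remark about passing to norm closures are minor presentational additions, not a different argument.
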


\begin{proof}
    For $T \in \fualalg(Z \subs X)$, there exists $R > 0$ such that
    $\varrho(f)T = T \varrho(f) = 0$ for all $f \in \conto X$ with
    $d(\supp f, Z) > R$ by definition of $T$ being supported near $Z$.

    Given $j \in J$, let
    $f_j \in \conto X$ be a function with
    $d(\supp f_j, X_j) > R$. Then
    $d(\supp f_j, Z) > R$ because $Z \subseteq X_j$, therefore
    $T$ is supported near $X_j$. This holds for all $j \in J$.
    For $\fualalg = \cualalg$, this finishes the proof: $T$ is in
    $\cualalg(X_j \subs X)$ for all $j \in J$.

    For $\fualalg = \dualalg$, we must show, in addition, that $T$ is
    locally compact outside $X_j$ for the given $j \in J$;
    this holds because $Z \subs X_j$ and $T \in \dualalg(Z \subs X)$
    is locally compact outside $Z$.
\end{proof}

\begin{ntt}
    For a subset $Y \subs X$, let
    \[
        \chi(Y)\colon X \to \{0,1\}
    \]
    denote the charateristic function of $Y$ on $X$; i.e., $\chi(Y)(x) = 1$
    if and only if $x \in Y$.
\end{ntt}

\begin{lem}
\label{lem:relCapB}
    For $\fualalg = \cualalg$ or $\fualalg = \dualalg$, we have
    \[
        \fualalg(Z \subseteq X) \supseteq
        \bigcap_{j \in J} \fualalg(X_j \subseteq X);
    \]
    i.e., the inclusion from Lemma \ref{lem:relCapA} is an equality of sets.
\end{lem}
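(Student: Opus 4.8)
The plan is to establish the reverse inclusion to Lemma \ref{lem:relCapA} by a cutoff-and-approximation argument, combining Borel characteristic functions of neighborhoods with the coarse excision estimate \ref{eqn:coarselyExcisiveCover}. Write $J = \{j_1, \ldots, j_k\}$ and extend $\varrho$ to bounded Borel functions as in Lemma \ref{lem:extendToBorel}, so that $\varrho(\chi(Y))$ is a projection in $BH$ for every $Y \subs X$. Fix $T \in \bigcap_{j \in J} \fualalg(X_j \subs X)$ and $\epsilon > 0$. Since each $\fualalg(X_j \subs X)$ is the norm closure of operators genuinely supported near $X_j$, for each $j$ there is an operator $T_j'$ supported near $X_j$ with some constant $R_j$ and $\norm{T - T_j'} < \epsilon$. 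A function supported outside $N_d(X_j, R_j)$ annihilates $T_j'$, so $\bigl(I - \varrho(\chi(N_d(X_j, R_j)))\bigr)T_j' = 0$; hence $\norm{(I - \varrho(\chi(N_d(X_j,R_j))))T} < \epsilon$, and likewise on the right. Setting $R = \max_j R_j$ and $P_j = \varrho(\chi(N_d(X_j, R)))$, the nesting of neighborhoods gives $\norm{(I - P_j)T} < \epsilon$ and $\norm{T(I - P_j)} < \epsilon$ for every $j \in J$.

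The projections $P_j$ commute, being images under $\varrho$ of commuting characteristic functions, and their product is $P = \varrho(\chi(W))$ with $W = \bigcap_{j \in J} N_d(X_j, R)$. Telescoping $I - P = \sum_{i=1}^k P_{j_1} \cdots P_{j_{i-1}}(I - P_{j_i})$ and using $\norm{P_{j_1}\cdots P_{j_{i-1}}} \le 1$ yields $\norm{(I-P)T} < k\epsilon$ and $\norm{T(I-P)} < k\epsilon$, whence $\norm{T - PTP} < 2k\epsilon$. By coarse excision \ref{eqn:coarselyExcisiveCover} there is $S > 0$ with $W \subs N_d(Z, S)$; thus any $f \in \conto X$ with $d(\supp f, Z) > S$ has $f \cdot \chi(W) = 0$, so $\varrho(f)\,PTP = PTP\,\varrho(f) = 0$ and $PTP$ is genuinely supported near $Z$. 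Moreover $PTP \in \fualalg X$: left multiplication by $\varrho(\chi(W))$ stays inside $\fualalg X$ by Lemma \ref{lem:partitionStaysInCualalg} (applied with the closed set $X_j$ and $\psi = \chi(W)$, giving $\varrho(\chi(W))T \in \fualalg(X_j \subs X) \subs \fualalg X$), and right multiplication is handled by passing to adjoints, which $\fualalg X$ respects.

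For $\fualalg = \cualalg$ this finishes the argument: $PTP$ lies in $\cualalg X$ and is supported near $Z$, hence $PTP \in \cualalg(Z \subs X)$, and letting $\epsilon \to 0$ gives $T \in \cualalg(Z \subs X)$. For $\fualalg = \dualalg$ one further property is required, namely that $PTP$ be locally compact outside $Z$, and this is the \emph{main obstacle}. The plan is to choose a Borel partition of unity $\{\psi_j\}_{j \in J}$ on $X - Z$ subordinate to the open cover $\{X - X_j\}_{j \in J}$ (each $\psi_j$ vanishing on $X_j$, with $\sum_{j} \psi_j = \chi(X - Z)$), so that for $f \in \conto(X - Z)$ one may write $\varrho(f)\,PTP = \sum_{j \in J} \varrho(f\psi_j)\,PTP$. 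Each summand involves the Borel function $f\psi_j$, which vanishes on $X_j$; the remaining task is to upgrade ``locally compact outside $X_j$'' — available for $PTP$ from Lemma \ref{lem:partitionStaysInCualalg} for continuous test functions — to such Borel test functions, where the Borel functional calculus of Lemma \ref{lem:extendToBorel} together with the control of the support of $f\psi_j$ inside $W$ are the delicate ingredients.

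Once each $\varrho(f\psi_j)\,PTP$ (and, by adjoints, each $PTP\,\varrho(f\psi_j)$) is shown to be compact, the operator $PTP$ lies in $\dualalg(Z \subs X)$, and letting $\epsilon \to 0$ yields $T \in \dualalg(Z \subs X)$. In both cases the chain of inclusions $\bigcap_{j \in J}\fualalg(X_j \subs X) \subs \fualalg(Z \subs X)$ follows, which, combined with Lemma \ref{lem:relCapA}, gives the asserted equality $\fualalg(Z \subs X) = \bigcap_{j \in J}\fualalg(X_j \subs X)$.
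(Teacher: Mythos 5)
Your overall strategy is the paper's: the support-near-$Z$ statement is extracted from coarse excisiveness exactly as in the paper (your cutoff projections $P_j$ and the estimate $\norm{T - PTP} < 2k\epsilon$ are a somewhat more careful implementation, since they honestly address the fact that elements of $\fualalg(X_j \subs X)$ are only norm limits of operators with genuine support constants), and your Borel partition of unity subordinate to $\{X - X_j\}_{j \in J}$ is the same device as the paper's decomposition of $X$ into the $2^{\card{J}}$ Borel pieces $Y_L$, each of which misses some $X_j$. For $\fualalg = \cualalg$ your argument is complete, up to the minor point that passing from continuous test functions to the characteristic functions $\chi(N_d(X_j,R_j))$ needs a small buffer in the radius and a bounded-pointwise approximation argument.

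For $\fualalg = \dualalg$, however, the step you yourself label ``the remaining task'' is a genuine gap, and it is exactly the hard point of the lemma. ``Locally compact outside $X_j$'' quantifies only over continuous $f \in \conto(X - X_j)$; formally it yields compactness of $\varrho(h)\,T$ only for $h$ in the norm-closed ideal of bounded Borel functions generated by $\conto(X - X_j)$, i.e., for $h$ dominated up to $\epsilon$ by continuous functions vanishing on $X_j$. Your test functions $f\psi_j$ (like the paper's $\chi(Y_L)g$) are generally not of this form: the set where $\psi_j$ is nonzero can accumulate on $X_j$, so $\betr{f\psi_j}$ need not tend to $0$ on approach to $X_j$, and compactness of $\varrho(f\psi_j)\,PTP$ does not follow from the Borel functional calculus of Lemma \ref{lem:extendToBorel} (strong limits of compact operators need not be compact). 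The clean repair avoids Borel test functions entirely: since $X - Z = \bigcup_{j \in J}(X - X_j)$ and a finite sum of closed ideals of $\conto X$ is again a closed ideal, one has $\conto(X - Z) = \sum_{j \in J} \conto(X - X_j)$, so every $f \in \conto(X - Z)$ decomposes as $f = \sum_{j \in J} f_j$ with each $f_j$ continuous and vanishing on $X_j$; then $\varrho(f)\,PTP = \sum_{j} P\,\varrho(f_j)\,T\,P$ is a finite sum of compact operators directly from the hypothesis, and the adjoint handles $PTP\,\varrho(f)$. (The paper's own proof takes the Borel route and asserts this step; the continuous decomposition is what actually closes it.)
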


\begin{proof}
    Fix $T \in \bigcap_{j \in J} \fualalg (X_j \subs X)$. We will show
    that $T \in \fualalg(Z \subs X)$.

\paragraph{Support of $T$ near $Z$.}
    Since $T$ is supported near $X_j$ for all $j \in J$,
    there are constants $R_j > 0$ such that
    whenever $f \in \conto X$ satisfies $d(\supp f, X_j) > R_j$ for at
    least one $j \in J$, then $\varrho(f)T = T \varrho(f) = 0$.
    (It is not necessary that $d(\supp f, X_j) > R_j$ holds for all
    $j \in J$. It is enough if this holds for one $j \in J$ because
    \enquote{support near a subset} states what
    happens on the complement of the subset, not on the subset itself.)

    The cover $\{X_\beta\}_{\beta \in \alpha}$ is coarsely excisive.
    In particular,
    for the constant $R = (\max_{j \in J} R_j) + 1$ and
    for the chosen finite index set $J$, there exists $S > 0$ with
    $
        \bigcap_{j \in J} N_d(X_j, R) \subseteq N_d(Z, S)
    $
    or, reformulating with complement sets,
    \[
        \big( X - N_d(Z, S) \big)
        \subseteq \bigcup_{j \in J} \big( X - N_d(X_j, R) \big).
    \]
    This constant $S$ depends on $T$ and the $X_j$, but not on any function
    in $\conto X$.
    To finish the proof,
    choose $f \in \conto X$ with $d(\supp f, Z) > S+1$. We must show
    that $\varrho(f)T = T \varrho(f) = 0$.

    The support of this $f$ lies within $X - N_d(Z, S)$, thus there exists
    a $j \in J$ with $\supp f \subseteq \big(X - N_d(X_j, R)\big)$ and
    therefore $d(\supp f, X_j) \geq R > R_j$. Because $T$ is supported
    near $X_j$, we conclude that
    $\varrho(f)T = T \varrho(f) = 0$ as desired. Thus $T$ is supported
    near $Z$.

\paragraph{Local compactness of $T$ outside $Z$.}
    If $\fualalg = \cualalg$, the proof is finished
    because $T$ is locally compact everywhere in $X$.

    If $\fualalg = \dualalg$,
    we must show that $T$ is locally compact outside $Z$.
    Fix a function $g \in \conto(X - Z) \subs \conto X$.
    We must show that $\varrho(g)T$ and $T\varrho(g)$ are compact.

    The support of $g$ may still overlap each $X_j$. To remedy this,
    decompose $X$ into $2^{\card J}$ regions: Given $L \subs J$,
    write
    \begin{align*}
        Y_L
        &= \Big(\bigcap_{j \in L} X_j \Big)
            - \bigcup_{j \notin L} X_j\\
        &= \set{ x \in X}{ x \in X_j \textrm{ if and only if } j \in L}.
    \end{align*}
    For all $L \subs J$, the set $Y_L$
    is a Borel set as a finite union, intersection, and set difference
    of closed sets $X_j$.
    For $L \neq L' \subs J$, the regions $Y_L$ and $Y_{L'}$ are
    disjoint. Furthermore,
    \[
        X = \bigcup_{L \subs J} Y_L, \qquad
        Z = Y_J = \bigcap_{j \in J} X_j.
    \]
    Decompose $g$ into $2^{\card J}$ Borel functions on $X$
    by multiplying with indicator functions:
    \[
        g = \sum_{L \subs J} \chi(Y_L)g.
    \]
    Extend $\varrho$ from $\conto X$ to all Borel functions on $X$.
    Lemma \ref{lem:partitionStaysInCualalg} shows that
    the operators
     $\varrho\big(\chi(Y_L)\big)T$ and
    $T\varrho\big(\chi(Y_L)\big)$ remain
    in $\bigcap_{j \in J} \dualalg(X_j \subs X)$ because the $Y_L$ are Borel
    sets.

    For each $L \subs J$, examine $\chi(Y_L)g$:
    For $L = J$, we have $\chi(Y_J)g = \chi(Z)g = 0$
    since $g \in \conto(X - Z)$.
    Both $\varrho\big(\chi(Y_J)g\big)T$
    and $T\varrho\big(\chi(Y_J)g\big)$ are
    the zero operator, thus compact.
    For $L \neq J$, fix an index $j \in J - L$.
    Then $\chi(Y_L)g$ vanishes on $X_j$ because $Y_L$ contains
    no points from $X_j$ by definition.
    We have $T \in \dualalg(X_j \subs X)$, therefore $T$ is locally compact
    outside $X_j$, making both $\varrho\big(\chi(Y_L)g\big)T$
    and $T\varrho\big(\chi(Y_L)g\big)$ compact.

    This shows that the decomposition
    $g = \sum_{L \subs J} \chi(Y_L)g$
    allows $\varrho(g)T$ and $T\varrho(g)$ to be written
    as finite sums of $2^{\card J}$ compact operators each. Such sums are compact. Thus
    $T$ is locally compact outside $Z$.
\end{proof}

\begin{pro}
\label{pro:connectTwoWorldsCapCD}
    Let $s \in \zet$ be a degree for K-theory.
    Then for the nonempty finite index set $J \subseteq \alpha$,
    there are natural isomorphisms
    \begin{align*}
        K_s \cualalg \Big(\bigcap_{j \in J} X_j\Big)
            &\cong K_s \Big( \bigcap_{j \in J} \cualalg(X_j \subs X) \Big), \\
        K_s \dualalg \Big(\bigcap_{j \in J} X_j\Big)
            &\cong K_s \Big( \bigcap_{j \in J} \dualalg(X_j \subs X) \Big).
    \end{align*}
\end{pro}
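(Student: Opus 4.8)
The plan is to obtain both isomorphisms at once by combining the algebraic identity already established in Lemmas \ref{lem:relCapA} and \ref{lem:relCapB} with the absolute-to-relative K-theory isomorphism of Theorem \ref{thm:isoOfRelativeRoeA}. Throughout, let $\fualalg$ denote either $\cualalg$ or $\dualalg$, and set $Z = \bigcap_{j \in J} X_j$ as in the surrounding notation.

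First I would observe that $Z$ is a closed subset of $X$: each $X_j$ is closed by the definition of a coarsely excisive cover (Definition \ref{dfn:coarselyExcisiveCover}), and a finite intersection of closed sets is closed. This is precisely the hypothesis needed to apply Theorem \ref{thm:isoOfRelativeRoeA} to the subspace $Z$, which supplies a natural isomorphism $K_s \fualalg Z \cong K_s \fualalg(Z \subseteq X)$. Next, Lemmas \ref{lem:relCapA} and \ref{lem:relCapB} together give the equality of C*-algebras $\fualalg(Z \subseteq X) = \bigcap_{j \in J} \fualalg(X_j \subseteq X)$, so their K-theory groups agree on the nose. Chaining the two steps yields
\[
    K_s \fualalg\Big(\bigcap_{j \in J} X_j\Big)
    = K_s \fualalg Z
    \cong K_s \fualalg(Z \subseteq X)
    = K_s\Big(\bigcap_{j \in J} \fualalg(X_j \subseteq X)\Big),
\]
and reading this off for $\fualalg = \cualalg$ and for $\fualalg = \dualalg$ produces exactly the two claimed isomorphisms.

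The only substantive point is \emph{naturality}. The isomorphism of Theorem \ref{thm:isoOfRelativeRoeA} is natural with respect to coarse maps for $\cualalg$ and coarse-continuous maps for $\dualalg$ (Remark \ref{rem:isoOfRelativeRoeB}), while the set identity $\fualalg(Z \subseteq X) = \bigcap_{j \in J} \fualalg(X_j \subseteq X)$ is visibly respected by any morphism of compatible covers (Definition \ref{dfn:compatibleCoarseCovers}): such a map sends $X_\beta$ into $X'_\beta$, hence $Z$ into $Z'$ and each $\fualalg(X_j \subseteq X)$ into $\fualalg(X'_j \subseteq X')$. I expect the main---though modest---obstacle to be checking that the resulting square on K-theory commutes, which comes down to tracing the inclusion-induced maps through the direct-limit description of the isomorphism in Theorem \ref{thm:isoOfRelativeRoeA}; this is pure bookkeeping, since all of the analytic work already resides in Lemmas \ref{lem:relCapA} and \ref{lem:relCapB} and in the cited proofs of Theorem \ref{thm:isoOfRelativeRoeA}.
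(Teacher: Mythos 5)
Your proposal is correct and follows exactly the paper's own argument: combine the set-theoretic equality $\fualalg(Z \subseteq X) = \bigcap_{j \in J} \fualalg(X_j \subseteq X)$ from Lemmas \ref{lem:relCapA} and \ref{lem:relCapB} with the relative-to-absolute K-theory isomorphism of Theorem \ref{thm:isoOfRelativeRoeA}. Your added remarks on the closedness of $Z$ and on naturality are consistent with what the paper relies on implicitly (via Remark \ref{rem:isoOfRelativeRoeB}), so nothing further is needed.
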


\begin{proof}
    Combine Lemma \ref{lem:relCapA} with Lemma \ref{lem:relCapB}
    for $Z = \bigcap_{j \in J} X_j$:
    \begin{align*}
        \cualalg(Z \subseteq X) &= \bigcap_{j \in J} \cualalg(X_j \subseteq X),
        \\
        \dualalg(Z \subseteq X) &= \bigcap_{j \in J} \dualalg(X_j \subseteq X).
    \end{align*}
    Theorem \ref{thm:isoOfRelativeRoeA} relates the K-theory of relative
    algebras with the K-theory of absolute algebras.
    This yields the claimed isomorphisms.
\end{proof}

\begin{pro}
\label{pro:connectTwoWorldsCapQ}
    Write $\qualalg X = \dualalg X / \cualalg X$ as in
    Notation \ref{ntt:qualalg}.
    Let $s \in \zet$ be a degree in K-theory. Then
    there is a natural isomorphism
    \[
        K_s \qualalg \Big(\bigcap_{j \in J} X_j\Big)
            \cong K_s \Big( \bigcap_{j \in J} \qualalg(X_j \subs X) \Big), \\
    \]
\end{pro}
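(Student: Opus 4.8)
The plan is to reduce the $\qualalg$-statement to the already-proven $\cualalg$- and $\dualalg$-statements by a five-lemma argument, but only after identifying the intersection $\bigcap_{j \in J} \qualalg(X_j \subs X)$ with an honest relative algebra. Write $Z = \bigcap_{j \in J} X_j$, let $\pi\colon \dualalg X \to \qualalg X$ be the quotient map, and set $\qualalg(Y \subs X) = \dualalg(Y \subs X)/\cualalg(Y \subs X)$. Since an operator of $\dualalg(Y \subs X)$ that is locally compact everywhere is precisely an operator of $\cualalg(Y \subs X)$, we have $\dualalg(Y \subs X) \cap \cualalg X = \cualalg(Y \subs X)$, so $\qualalg(Y \subs X)$ embeds into $\qualalg X$ as the image $\pi(\dualalg(Y \subs X))$. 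Lemmas \ref{lem:relCapA} and \ref{lem:relCapB} already supply the set equalities $\cualalg(Z \subs X) = \bigcap_j \cualalg(X_j \subs X)$ and $\dualalg(Z \subs X) = \bigcap_j \dualalg(X_j \subs X)$, so everything comes down to the claim $\bigcap_{j \in J} \pi(\dualalg(X_j \subs X)) = \pi(\dualalg(Z \subs X))$ inside $\qualalg X$.

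The inclusion $\supseteq$ is immediate from $\dualalg(Z \subs X) \subs \dualalg(X_j \subs X)$. The reverse inclusion is the crux and the step I expect to be hardest, because the naive algebraic identity $\bigcap_j (D_j + \mathcal C) = \big(\bigcap_j D_j\big) + \mathcal C$ (for subalgebras $D_j$ and an ideal $\mathcal C$) fails in general and must be rescued by coarse excision. I would take $T \in \dualalg X$ with $\pi(T) \in \pi(\dualalg(X_j \subs X))$ for every $j$, so that $T = S_j + K_j$ with $S_j \in \dualalg(X_j \subs X)$ supported near $X_j$ with constant $R_j$ and $K_j \in \cualalg X$. Because $K_j$ is locally compact, $T$ is locally compact outside each $X_j$; the Borel-partition argument from the local-compactness half of Lemma \ref{lem:relCapB}, which needs only this property of $T$, then makes $T$ locally compact outside $Z$, and the same partition (applied to $R = 1 + \max_{j} R_j$, with excision constant $S$) shows that $\varrho(f)T$ and $T\varrho(f)$ are compact whenever $d(\supp f, Z) > S$.

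Extending $\varrho$ to Borel functions by Lemma \ref{lem:extendToBorel}, I set $\chi = \chi(N_d(Z,S))$, $\bar\chi = 1 - \chi$, and $U = \varrho(\chi)\,T\,\varrho(\chi)$. By Lemma \ref{lem:partitionStaysInCualalg} the operator $U$ stays pseudolocal with finite propagation, the cutoff $\chi$ makes $U$ supported near $Z$, and local compactness of $U$ outside $Z$ follows from that of $T$; hence $U \in \dualalg(Z \subs X)$. Writing $T - U = \varrho(\chi)T\varrho(\bar\chi) + \varrho(\bar\chi)T\varrho(\chi) + \varrho(\bar\chi)T\varrho(\bar\chi)$ and observing that the support-near-$Z$-modulo-compacts property forces both $T\varrho(\bar\chi)$ and $\varrho(\bar\chi)T$ into $\cualalg X$, Lemma \ref{lem:partitionStaysInCualalg} places each summand in $\cualalg X$. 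Thus $\pi(T) = \pi(U) \in \pi(\dualalg(Z \subs X))$, which proves the claim and identifies $\bigcap_{j} \qualalg(X_j \subs X) = \qualalg(Z \subs X)$ as C*-algebras.

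It then remains to compare the extension $0 \to \cualalg(Z \subs X) \to \dualalg(Z \subs X) \to \qualalg(Z \subs X) \to 0$ with $0 \to \cualalg Z \to \dualalg Z \to \qualalg Z \to 0$ along the inclusion-induced ladder. Theorem \ref{thm:isoOfRelativeRoeA} makes the $\cualalg$- and $\dualalg$-columns isomorphisms in K-theory, so applying the five lemma to the two six-term exact sequences of Theorem \ref{thm:sixTermExactSequence} forces $K_s \qualalg(Z \subs X) \cong K_s \qualalg Z = K_s \qualalg\big(\bigcap_{j \in J} X_j\big)$. Together with the identification of the intersection above, this is the asserted natural isomorphism, naturality being inherited from the naturality of every construction used.
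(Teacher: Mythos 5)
Your proposal is correct and finishes exactly as the paper does: a five-lemma comparison of the extension $0 \to \cualalg(Z \subs X) \to \dualalg(Z \subs X) \to \qualalg(Z\subs X) \to 0$ with $0 \to \cualalg Z \to \dualalg Z \to \qualalg Z \to 0$, with Theorem \ref{thm:isoOfRelativeRoeA} supplying the outer isomorphisms and Lemmas \ref{lem:relCapA} and \ref{lem:relCapB} supplying the set equalities for $\cualalg$ and $\dualalg$. The genuine difference is the middle of your argument: you isolate and prove the algebra-level identity $\bigcap_{j \in J}\pi\big(\dualalg(X_j \subs X)\big) = \pi\big(\dualalg(Z \subs X)\big)$ inside $\qualalg X$, which is precisely the step the paper's final chain of isomorphisms passes over silently --- its last displayed isomorphism tacitly replaces an intersection of images under the quotient map by the image of the intersection, and, as you note, the corresponding identity $\bigcap_j(D_j + \mathcal C) = \big(\bigcap_j D_j\big) + \mathcal C$ fails for general subalgebras modulo an ideal. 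Your rescue via coarse excision is sound: after showing $T$ is supported near $Z$ and locally compact outside $Z$ only modulo compacts, the corrected operator $U = \varrho(\chi)T\varrho(\chi)$ does land in $\dualalg(Z \subs X)$, and the error terms $\varrho(\bar\chi)T$ and $T\varrho(\bar\chi)$ are in fact already compact once $\bar\chi$ is supported strictly beyond the excision constant (apply your partition argument to the Borel function $\bar\chi$ itself, and to $T^*$ for the right-hand factor); compactness is the cleanest justification for your claim that they lie in $\cualalg X$. The only caveats are ones the paper itself tolerates throughout Section \ref{sec:intersectionsRelativeAlgebras}: the definition of ``supported near'' is applied to Borel indicator functions rather than elements of $\conto X$, and the decomposition $T = S_j + K_j$ should be arranged on the dense subalgebras of finite-propagation operators before passing to norm closures. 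In short, your proof buys a justification the paper omits, at the cost of one extra operator-theoretic lemma.
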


\begin{proof}
    For closed $Y \subseteq X$,
    consider the following commutative
    diagram.
    Its rows are long exact sequences in K-theory.
    Its vertical isomorphisms are induced by the
    inclusion of metric spaces $Y \subseteq X$ as in
    Remark \ref{rem:isoOfRelativeRoeB}.
    The third vertical morphism is well-defined
    as follows because the rows are exact:
    For an operator
    $T \in \dualalg(Y \subs X)$ whose K-theory class $[T]$ maps
    to $[T'] \in \dualalg Y$ under the second vertical morphism,
    the third morphism maps $[T] + K_s \cualalg(Y \subs X)$ to
    $K_s\big([T'] + K_s \cualalg Y\big)$.
    \[
        \begin{tikzpicture}
            \mamax{0.3cm} {
                \dotsb
                & K_s \cualalg(Y \subseteq X)
                & K_s \dualalg(Y \subseteq X)
                & \frac{K_s \dualalg(Y \subseteq X)}
                       {K_s \cualalg(Y \subseteq X)}
                & K_{s-1} \cualalg(Y \subseteq X)
                & \dotsb
                \\
                \dotsb
                    & K_s \cualalg Y
                    & K_s \dualalg Y
                    & K_s \qualalg Y
                    & K_{s-1} \cualalg Y
                    & \dotsb.
                \\
            };
            \path[->, font=\scriptsize]
                (m-1-1) edge (m-1-2)
                (m-1-2) edge (m-1-3)
                (m-1-3) edge (m-1-4)
                (m-1-4) edge (m-1-5)
                (m-1-5) edge (m-1-6)
                (m-2-1) edge (m-2-2)
                (m-2-2) edge (m-2-3)
                (m-2-3) edge (m-2-4)
                (m-2-4) edge (m-2-5)
                (m-2-5) edge (m-2-6)
                (m-1-2) edge node[left] {$\cong$} (m-2-2)
                (m-1-3) edge node[left] {$\cong$} (m-2-3)
                (m-1-4) edge (m-2-4)
                (m-1-5) edge node[left] {$\cong$} (m-2-5)
            ;
        \end{tikzpicture}
    \]
    All squares commute because $\cualalg Y \to \dualalg Y$ is a C*-ideal
    inclusion and because the vertical isomorphisms arose from taking
    natural direct limits.

    By the five lemma, the vertical arrow to $K_s \qualalg Y$ must also
    be an isomorphism. It is natural by construction.
    With the isomorphisms from Propositions
    \ref{pro:connectTwoWorldsCapCD}, we have
    \begin{align*}
        K_s \qualalg \Big(\bigcap_{j \in J} X_j\Big)
            \cong
            \frac{K_s \dualalg \big(\bigcap_{j \in J} X_j\big)}
                 {K_s \cualalg \big(\bigcap_{j \in J} X_j\big)}
            &\cong
            \frac{K_s \big( \bigcap_{j \in J} \dualalg(X_j \subs X) \big)}
                 {K_s \big( \bigcap_{j \in J} \cualalg(X_j \subs X) \big)}
            \\
            &\cong
            K_s \Big( \bigcap_{j \in J} \qualalg(X_j \subs X) \Big).
        \qedhere
    \end{align*}
\end{proof}

\subsection{Sums of relative algebras}
\label{sec:sumsRelativeAlgebras}

\begin{ntt}
    Throughout Section \ref{sec:sumsRelativeAlgebras},
    in addition to Notation \ref{ntt:relativeRoeAlgebras}, write
    \[
        Z = \bigcup_{j \in J} X_j.
    \]
    For a subset $Y \subs X$, again, denote the indicator function by
    $\chi(Y)\colon X \to \{0,1\}$.
\end{ntt}

\begin{lem}
\label{lem:relSumA}
    Let $\fualalg$ be either the functor $\cualalg$ or $\dualalg$.
    Then
    \[
        \fualalg(Z \subseteq X) \subs
        \sum_{j \in J} \fualalg(X_j \subseteq X).
    \]
\end{lem}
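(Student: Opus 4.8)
The plan is to decompose an operator $T \in \fualalg(Z \subseteq X)$, with $Z = \bigcup_{j \in J} X_j$, into finitely many pieces, one supported near each $X_j$. Since $\sum_{j \in J}\fualalg(X_j \subseteq X)$ is a finite sum of C*-ideals in $\fualalg X$, hence itself a closed ideal, and since $\fualalg(Z \subseteq X)$ is the norm closure of its finite-propagation operators, I would first reduce (as in the proof of Lemma \ref{lem:partitionStaysInCualalg}) to the case where $T$ has finite propagation with some constant $R_{\mathrm{prop}}$ and is supported near $Z$ with constant $R_{\mathrm{supp}}$. Next I would build a Borel partition $\{W_j\}_{j \in J}$ of $X$ by assigning each point to an $X_j$ realizing $\min_{j' \in J} d(\cdot, X_{j'})$, breaking ties by the index order. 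The decisive geometric features are that $d(x, X_j) = d(x, Z)$ for every $x \in W_j$ and that $Z \cap W_j \subseteq X_j$. Extending $\varrho$ to Borel functions as in Lemma \ref{lem:extendToBorel}, I set $T_j = \varrho(\chi(W_j)) T$; nondegeneracy gives $\sum_{j \in J} T_j = \varrho(1) T = T$, and Lemma \ref{lem:partitionStaysInCualalg} guarantees that each $T_j$ lies in $\fualalg(Z \subseteq X)$, retains finite propagation $R_{\mathrm{prop}}$, and (for $\fualalg = \dualalg$) stays pseudolocal. It then remains to promote each $T_j$ into $\fualalg(X_j \subseteq X)$.

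For support near $X_j$, the first step is to observe that $\varrho(\chi(U)) T = 0$ for the open far region $U = \set{x \in X}{d(x, Z) > R_{\mathrm{supp}}}$: choosing continuous $0 \le h_n \le 1$ supported in $U$ with $d(\supp h_n, Z) > R_{\mathrm{supp}}$ and $h_n \uparrow \chi(U)$ pointwise, each $\varrho(h_n) T$ vanishes while $\varrho(h_n) \to \varrho(\chi(U))$ in the strong operator topology. Given $f \in \conto X$ with $d(\supp f, X_j)$ large, the identity $d(x, Z) = d(x, X_j)$ on $W_j$ forces $f\chi(W_j)$ to be supported inside $U$, so $\varrho(f) T_j = \varrho(f\chi(W_j))\varrho(\chi(U)) T = 0$; the opposite side $T_j \varrho(f)$ is handled by first using finite propagation to restrict the left factor $\chi(W_j)$ to $W_j \cap N_d(\supp f, R_{\mathrm{prop}})$, which again lands in $U$ once $d(\supp f, X_j) > R_{\mathrm{supp}} + R_{\mathrm{prop}}$. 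This shows $T_j$ is supported near $X_j$ and completes the case $\fualalg = \cualalg$.

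For $\fualalg = \dualalg$ I must additionally show $T_j$ is locally compact outside $X_j$, and this is the \emph{main obstacle}, because the hypothesis on $T$ only supplies local compactness outside the larger set $Z$, while the Borel factor $\chi(W_j)$ sits on one side of $T$. The left multiplication is tractable: for $g \in \conto(X - X_j)$ the product $g\chi(W_j)$ vanishes on all of $Z$ (since $Z \cap W_j \subseteq X_j$ and $g$ vanishes on $X_j$), and $\varrho(g)T_j = \varrho(g\chi(W_j)) T$; approximating $g$ in sup-norm by $g\cdot\chi(\{|g|\ge\varepsilon\})$, whose support is a compact set at positive distance from $X_j$ and therefore, within $W_j$, at positive distance from $Z$, I can absorb a Urysohn function $h \in \conto(X - Z)$ and invoke local compactness of $T$ outside $Z$ to conclude that $\varrho(g)T_j$ is a norm limit of compact operators, hence compact. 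The right multiplication $T_j\varrho(g) = \varrho(\chi(W_j)) T \varrho(g)$ is where the two sides of $T$ seem incompatible; I would bridge them using pseudolocality of $T$: since $g$ is continuous, $T\varrho(g) = \varrho(g) T - [\varrho(g), T]$ with $[\varrho(g), T]$ compact, whence $T_j \varrho(g) = \varrho(g\chi(W_j)) T - \varrho(\chi(W_j))[\varrho(g), T]$, and both summands are compact — the first by the left-multiplication argument just given, the second as the product of a bounded operator with a compact one.

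Finally I would assemble the conclusion: each $T_j$ is supported near $X_j$ and, in the $\dualalg$ case, locally compact outside $X_j$, so $T_j \in \fualalg(X_j \subseteq X)$ and $T = \sum_{j \in J} T_j \in \sum_{j \in J}\fualalg(X_j \subseteq X)$. Passing from finite-propagation operators to their norm limits, and using that the target sum of ideals is closed, yields the stated inclusion for every $T \in \fualalg(Z \subseteq X)$.
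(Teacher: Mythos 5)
Your argument is correct and follows the same overall architecture as the paper's proof: reduce to finite-propagation operators, cut $T$ with characteristic functions of a Borel partition, verify support near $X_j$ using the propagation and support constants, and handle the right-hand products $T_j\varrho(f)$ and $T_j\varrho(g)$ via finite propagation and pseudolocality respectively. The difference lies in the partition. You use a Voronoi-type partition $\{W_j\}$ of all of $X$ by nearest $X_j$ (with tie-breaking), whose key payoff is the identity $d(x,X_j)=d(x,Z)$ on $W_j$ together with $Z\cap W_j\subseteq X_j$; this makes $\sum_j T_j = T$ immediate and lets a single geometric fact drive both the support estimate and the local-compactness estimate. The paper instead partitions only a thickened neighborhood $Y=\bigcup_j N_d(X_j, R_{\mathrm{prop}}+R_{\mathrm{supp}}+1)$ by first index of membership (separately on $Z$ and on $Y-Z$) and adds a correction term $\widetilde T = \varrho(\chi(X-Y))T/\card{J}$ that is shown to annihilate every represented function; your construction avoids that bookkeeping entirely. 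Notably, the coarse excisiveness of the cover is used in neither proof here --- it only enters in the reverse inclusion and in Lemma \ref{lem:relCapB} --- so nothing is lost by your choice. One point both arguments lean on, and which you make more explicit than the paper does, is the passage from statements about $f\in\conto X$ (or $\conto(X-Z)$) to statements about bounded Borel multipliers: your strong-operator approximation of $\chi(U)$ by Urysohn functions, and your $\varepsilon$-truncation plus absorption of a function in $\conto(X-Z)$ before invoking local compactness of $T$ outside $Z$, fill exactly the gap that the paper glosses over when it applies local compactness to the Borel function $f\chi(P_j - Z)$. Your proof is therefore a valid, and in that one respect more careful, alternative.
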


\begin{proof}
    By definition, $\fualalg(Y \subs X)$ and $\fualalg X$ for closed
    subsets $Y \subs X$
    are norm completions of finite-propagation operator algebras. It suffices
    to check the inclusion for finite-propagation operators
    in $\fualalg(Z \subs X)$; passing to norm completions will then
    prove the claim.

    In this light, let $T \in \cualalg(Z \subseteq X)$ be an operator
    with finite propagation.
    We will construct operators $T_j$ for $j \in J$
    such that $T_j \in \fualalg(X_j \subs X)$ and $\sum_{j \in J} T_j = T$.

    Because $T$ has finite propagation,
    find $R_\textrm{prop} > 0$ such that $\varrho(f) T \varrho(g) = 0 \in BH$
    whenever $f$, $g \in \conto X$ satisfy
    $d(\supp f, \supp g) \geq R_\textrm{prop}$.
    Find $R_\textrm{supp} > 0$ such that
    $T$ is supported in the $R$-neighbor\-hood of $Z$.

    Cover $Z = \bigcup_{j \in J} X_j$ by the following sets $Y_j$ for
    $j \in J$ and their union $Y$:
    \begin{align*}
        Y_j &= N_d(X_j, R_\textrm{prop} + R_\textrm{supp} + 1)
            = \set{x \in X}
                {d(x, X_j) \leq R_\textrm{prop} + R_\textrm{supp} + 1},\\
        Y &= \bigcup_{j \in J} Y_j.
    \end{align*}
    Each $Y_j$ is closed in $X$. Certainly,
    $T$ is supported in $Y$, again a closed set.

    Choose a linear order $\prec$ on the finite set $J$.
    Define a partition $\{P_j\}_{j \in J}$ of $Y$ via
    \begin{align*}
        P_j = &\,\set{x \in Z}{x \in X_j
                \textrm{ and there are no } j' \prec j
                \textrm{ with } x \in X_{j'}}\\
            \cup
            &\,\set{x \in Y - Z}{x \in Y_j
                \textrm{ and there are no } j' \prec j
                \textrm{ with } x \in Y_{j'}};
    \end{align*}
    this is a partition of $Y$ because, given either $x \in Z$ or
    $x \in Y - Z$, exactly one $P_j$ is eligible
    to contain $x$. Furthermore, each $P_j$ is a Borel set because it may
    be written as a finite union, intersection, and difference of Borel sets
    $X_{j'}$ and $Y_{j'}$.

    Extend the representation $\varrho\colon \conto X \to BH$
    to the Borel functions of $X$
    according to Lemma \ref{lem:extendToBorel}.
    For all $j \in J$, define operators $T_j \in BH$ by
    \[
        \widetilde T = \frac{\varrho\big(\chi(X - Y)\big) T}{\card J},
        \qquad
        T_j = \varrho\big(\chi(P_j)\big) T + \widetilde T.
    \]
    By Lemma \ref{lem:partitionStaysInCualalg},
    $\widetilde T$ and all $T_j$ are in $\fualalg(Z \subs X)$. The $T_j$ sum to
    \begin{equation}
    \label{eqn:connectTwoWorldsSumA}
        \sum_{j \in J} T_j
            = \sum_{j \in J}
                \varrho\big(\chi(P_j)\big) T
                + \sum_{j \in J}
                \frac{\varrho\big(\chi(X - Y)\big) T}{\card J}
            = \varrho \big( \underbrace{\chi(Y) + \chi(X - Y)
                }_{\textrm{= } 1 \textrm{ on } X} \big) T
            = T.
    \end{equation}
    The summand $\widetilde T$ of $T_j$
    merely clarifies $\sum_{j \in J} T_j = T$; it has no deeper meaning.
    For all functions $g \in \conto X$, the products
    $\varrho(g) \widetilde T$ and
    $\widetilde T \varrho(g)$ vanish because $\chi(X - Y)$ is supported
    further than $R_\textrm{prop} + R_\textrm{supp}$ away from $Z$, whereas
    $\widetilde T \in \fualalg(Z \subs X)$ has the same propagation constant
    $R_\textrm{prop}$ and support distance constant $R_\textrm{supp}$ as $T$
    by Lemma \ref{lem:partitionStaysInCualalg}.

\paragraph{Support of $T_j$ near $X_j$.}
    For a given $j \in J$,
    let $f \in \conto X$ have support far enough away from $X_j$:
    \[
        d(\supp f, X_j) > R_\textrm{prop} + R_\textrm{supp} + 1.
    \]
    We will show $\varrho(f) T_j = T_j \varrho(f) = 0$.
    For $\varrho(f) T_j = 0$, we have
    \begin{equation}
    \label{eqn:connectTwoWorldsSumB}
        \varrho(f) T_j = \varrho \big( f \chi(P_j)\big) T
            + \underbrace{\varrho(f) \widetilde T}_{\textrm{= } 0} = 0
    \end{equation}
    because the pointwise product $f \chi(P_j)$ is zero in
    $\conto X$: The function $f$ is supported more than
    $R_\textrm{prop} + R_\textrm{supp} + 1$
    away from $X_j$, but $P_j \subseteq Y_j
        = N_d(X_j, R_\mathrm{prop} + R_\textrm{supp} + 1)$.
    For $T_j \varrho(f) = 0$, we similarly show that
    \begin{equation}
    \label{eqn:connectTwoWorldsSumC}
        T_j \varrho(f)
            = \varrho\big( \chi(P_j)\big) T \varrho(f)
            + \underbrace{\widetilde T \varrho(f)}_{\textrm{= } 0} = 0;
    \end{equation}
    to see this, observe that $d(X_j, X - P_j) \leq R_\textrm{supp} + 1$
    by construction of $P_j$ and $d(X_j, \supp f) >
    R_\textrm{prop} + R_\textrm{supp} + 1$ by the choice of $f$. The
    difference between these two values
    is more than $R_\textrm{prop}$, the propagation constant
    of $T$, thus $\varrho\big( \chi(P_j)\big) T \varrho(f) = 0$.

\paragraph{Local compactness of $T_j$ outside $X_j$.}
    Let $f$ be a function in $\conto(X - X_j) \subs \conto X$.
    We will show
    that $\varrho(f) T_j$ and $T_j \varrho(f)$ are compact.

    Recall that
    $\varrho(f) T_j = \varrho \big( f \chi(P_j)\big) T + 0$,
    thus it suffices to examine
    the pointwise product $f \chi(P_j)$: It may assume nonzero values only in
    $(Y_j - Z) \cup X_j$ by definition of $P_j$. Because $X_j \subs Z$,
    each point from $P_j$ falls either into $X_j$ or into $Y - Z$.
    We may decompose $f \chi(P_j)$ as
    \[
        f \chi(P_j) = f \chi(P_j \cap X_j) + f \chi(P_j - Z).
    \]
    The left summand is the zero function because $f$ vanishes on $X_j$.
    The right summand may be nonzero, but vanishes on $Z$.
    Outside $Z$, the original $T$ is locally compact, therefore
    $\varrho\big(f \chi(P_j - Z)\big) T$ is compact.
    Since $f \chi(P_j - Z) = f \chi(P_j)$ and furthermore
    $\varrho \big( f \chi(P_j)\big) T = \varrho(f) T_j$,
    the desired operator $\varrho(f) T_j$ is compact.

    The difference $\varrho(f) T_j - T_j \varrho(f)$ is compact because
    $T_j \in \dualalg(Z \subs X)$ is pseudocompact.
    With $\varrho(f) T_j$ already proven
    compact, $T_j \varrho(f)$ must be compact, too.

\paragraph{Summary.}
    The claim follows from \ref{eqn:connectTwoWorldsSumA},
    \ref{eqn:connectTwoWorldsSumB}, \ref{eqn:connectTwoWorldsSumC},
    and from the local compactness of $T_j$ outside $Z$:
    We have decomposed $T$ into a sum
    $\sum_{j \in J} T_j$ with each $T_j$ in $\fualalg(X_j \subs X)$.
\end{proof}

\begin{lem}
\label{lem:relSumB}
    For $\fualalg = \cualalg$ or $\fualalg = \dualalg$, we have
    \[
        \fualalg(Z \subseteq X) \supseteq
        \sum_{j \in J} \fualalg(X_j \subseteq X);
    \]
    i.e., the inclusion from Lemma \ref{lem:relSumA} is an equality of sets.
\end{lem}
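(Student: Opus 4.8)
The plan is to establish the reverse inclusion directly, with no partition-of-unity or excision argument: I will show that each summand $\fualalg(X_j \subseteq X)$ already lies in $\fualalg(Z \subseteq X)$, where $Z = \bigcup_{j' \in J} X_{j'}$ is closed as a finite union of the closed $X_{j'}$. Since $\fualalg(Z \subseteq X)$ is a C*-algebra, it is closed under finite sums, so containing every generator $\fualalg(X_j \subseteq X)$ forces it to contain the whole sum $\sum_{j \in J} \fualalg(X_j \subseteq X)$, which is the claim.

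By Definitions \ref{dfn:relativeRoeAlgebra} and \ref{dfn:relativeD}, it suffices to prove the inclusion on the generating sets before norm closure: I take an operator $T \in \fualalg X$ that is supported near $X_j$ (and, for $\fualalg = \dualalg$, locally compact outside $X_j$) and check that it is supported near $Z$ (and locally compact outside $Z$). For the support condition, fix a support constant $R$ for $T$ near $X_j$. Because $X_j \subseteq Z$, distance to the smaller set dominates, $d(\supp f, X_j) \geq d(\supp f, Z)$, so any $f \in \conto X$ with $d(\supp f, Z) > R$ also satisfies $d(\supp f, X_j) > R$; hence $\varrho(f)T = T\varrho(f) = 0$, and $T$ is supported near $Z$ with the same constant.

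For the local-compactness clause in the $\dualalg$ case, the same inclusion $X_j \subseteq Z$ gives $X - Z \subseteq X - X_j$, so $\conto(X - Z) \subseteq \conto(X - X_j)$ via extension by zero. As $T$ is locally compact outside $X_j$, for every $f \in \conto(X - Z)$ the operators $\varrho(f)T$ and $T\varrho(f)$ are compact, i.e.\ $T$ is locally compact outside $Z$. (For $\fualalg = \cualalg$ this clause is vacuous, since $T$ is locally compact throughout $X$.) Thus $T$ lies in the generating set for $\fualalg(Z \subseteq X)$; taking norm closures inside $\fualalg X$ preserves the containment and yields $\fualalg(X_j \subseteq X) \subseteq \fualalg(Z \subseteq X)$. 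There is no genuine obstacle here --- this is the easy converse of Lemma \ref{lem:relSumA}; the only thing to keep straight is that both defining conditions of the relative algebra are monotone in the reference set, weakening as it grows from $X_j$ to $Z$.
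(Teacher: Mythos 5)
Your proof is correct and follows essentially the same route as the paper's: both arguments rest on the observation that the two defining conditions (support near the reference set, local compactness outside it) are monotone as the reference set grows from $X_j$ to $Z$, the paper simply verifying them for a sum $\sum_{j\in J} T_j$ directly while you verify them generator-by-generator and invoke closedness of $\fualalg(Z \subseteq X)$ under sums. Your explicit reduction to the dense generating sets before norm closure is a welcome (if minor) tightening of the paper's phrasing.
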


\begin{proof}
    For each $j \in J$, let $T_j$ be an operator in
    $\fualalg(X_j \subseteq X)$ such that $T_j$ is supported in an
    $R_j$-neighborhood of $X_j$. Define $T = \sum_{j \in J} T_j$.
    This $T$ is supported
    in the $(\max_{j \in J} R_j)$-neighborhood of $Z = \bigcup_{j \in J} X_j$.

    For $\fualalg = \dualalg$, given $f \in \conto(X - Z) \subs \conto X$
    and $j \in J$, we know that
    $f$ is also in $\conto(X - X_j)$. The operators
    $\varrho(f)T_j$ and $T_j\varrho(f)$ are compact
    since $T_j \in \fualalg(X_j \subs X)$. The finite sums
    $\varrho(f)T$ and $T\varrho(f)$ of compact operators are again compact.
\end{proof}

\begin{pro}
\label{pro:connectTwoWorldsSumCD}
    For all $s \in \zet$, there are natural isomorphisms
    \begin{align*}
        K_s \cualalg \Big(\bigcup_{j \in J} X_j\Big)
            &\cong K_s \Big( \sum_{j \in J} \cualalg(X_j \subs X) \Big), \\
        K_s \dualalg \Big(\bigcup_{j \in J} X_j\Big)
            &\cong K_s \Big( \sum_{j \in J} \dualalg(X_j \subs X) \Big).
    \end{align*}
\end{pro}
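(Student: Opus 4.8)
The plan is to follow the exact pattern of Proposition \ref{pro:connectTwoWorldsCapCD}, with unions in place of intersections throughout. The two preceding lemmas have already carried out the substantive work at the level of C*-algebras; what remains is to assemble them and pass to K-theory.

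First I would combine Lemma \ref{lem:relSumA} and Lemma \ref{lem:relSumB}, applied to $Z = \bigcup_{j \in J} X_j$ and to each of the two functors $\fualalg \in \{\cualalg, \dualalg\}$. Lemma \ref{lem:relSumA} supplies the inclusion $\fualalg(Z \subs X) \subs \sum_{j \in J} \fualalg(X_j \subs X)$, and Lemma \ref{lem:relSumB} supplies the reverse inclusion, so together they give the genuine equality of C*-algebras
\[
    \cualalg(Z \subs X) = \sum_{j \in J} \cualalg(X_j \subs X), \qquad
    \dualalg(Z \subs X) = \sum_{j \in J} \dualalg(X_j \subs X).
\]

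Because these are equalities of C*-algebras, not merely isomorphisms, applying $K_s$ immediately identifies $K_s\big(\sum_{j \in J} \fualalg(X_j \subs X)\big)$ with $K_s \fualalg(Z \subs X)$. I would then invoke Theorem \ref{thm:isoOfRelativeRoeA} with the closed subset $Y = Z = \bigcup_{j \in J} X_j$ to obtain the natural isomorphism $K_s \fualalg(Z \subs X) \cong K_s \fualalg Z = K_s \fualalg\big(\bigcup_{j \in J} X_j\big)$. Chaining the two steps yields the asserted isomorphisms for both $\cualalg$ and $\dualalg$.

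There is essentially no obstacle in the proposition itself; the difficulty was absorbed into Lemma \ref{lem:relSumA}, whose partition-and-cutoff construction relied on the Borel extension of $\varrho$ (Lemma \ref{lem:extendToBorel}) and on the fact that multiplying by an indicator operator $\varrho(\chi(P_j))$ preserves membership in the relative algebras (Lemma \ref{lem:partitionStaysInCualalg}). The one point that still deserves a sentence of care is naturality: the equalities of algebras are literal identities and hence trivially natural, while naturality of the final isomorphism is precisely the naturality asserted in Theorem \ref{thm:isoOfRelativeRoeA} (with respect to coarse maps for $\cualalg$, and coarse-continuous maps for $\dualalg$). Thus naturality is inherited without additional work.
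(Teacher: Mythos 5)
Your proposal is correct and follows exactly the paper's own argument: combine Lemmas \ref{lem:relSumA} and \ref{lem:relSumB} to get the equality $\fualalg(Z \subs X) = \sum_{j \in J} \fualalg(X_j \subs X)$ as sets, then apply Theorem \ref{thm:isoOfRelativeRoeA} to $Y = \bigcup_{j \in J} X_j$. Your added remark on naturality is consistent with Remark \ref{rem:isoOfRelativeRoeB} and does not change the substance.
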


\begin{proof}
    Combine Lemma \ref{lem:relSumA} with Lemma \ref{lem:relSumB}
    for $Z = \bigcup_{j \in J} X_j$:
    \begin{align*}
        \cualalg(Z \subseteq X) &= \sum_{j \in J} \cualalg(X_j \subseteq X),\\
        \dualalg(Z \subseteq X) &= \sum_{j \in J} \dualalg(X_j \subseteq X).
    \end{align*}
    Theorem \ref{thm:isoOfRelativeRoeA} yields the claimed isomorphisms.
\end{proof}

\begin{pro}
\label{pro:connectTwoWorldsSumQ}
    Let $s \in \zet$ be a degree in K-theory. There
    is a natural isomorphism
    \[
        K_s \qualalg \Big(\bigcup_{j \in J} X_j\Big)
            \cong K_s \Big( \sum_{j \in J} \qualalg(X_j \subs X) \Big).
    \]
\end{pro}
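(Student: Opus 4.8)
The plan is to mirror the five-lemma argument that proved Proposition \ref{pro:connectTwoWorldsCapQ}, but feeding in the sum identities of Proposition \ref{pro:connectTwoWorldsSumCD} in place of the intersection identities. Write $Z = \bigcup_{j \in J} X_j$; since $J$ is finite and each $X_j$ is closed, $Z$ is again a closed subset of $X$, so all the relative algebras $\cualalg(Z \subs X)$, $\dualalg(Z \subs X)$, and the quotient $\qualalg(Z \subs X) := \dualalg(Z \subs X)/\cualalg(Z \subs X)$ are defined.

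First I would isolate, from the proof of Proposition \ref{pro:connectTwoWorldsCapQ}, the general statement it actually establishes: for every closed $Y \subs X$ the inclusion of metric spaces $Y \subs X$ induces an isomorphism $K_s \qualalg(Y \subs X) \cong K_s \qualalg Y$. That proof sets up the two long exact K-theory sequences associated to $\cualalg(Y \subs X) \subs \dualalg(Y \subs X)$ and to $\cualalg Y \subs \dualalg Y$, observes that the comparison maps on the $\cualalg$- and $\dualalg$-terms are isomorphisms by Theorem \ref{thm:isoOfRelativeRoeA}, and concludes by the five lemma that the map on the $\qualalg$-terms is an isomorphism. Applying this with $Y = Z$ gives $K_s \qualalg(Z \subs X) \cong K_s \qualalg Z = K_s \qualalg(\bigcup_{j \in J} X_j)$.

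It remains to identify the algebra $\qualalg(Z \subs X)$ with $\sum_{j \in J} \qualalg(X_j \subs X)$ as subalgebras of $\qualalg X$. By Proposition \ref{pro:connectTwoWorldsSumCD} we have $\dualalg(Z \subs X) = \sum_{j \in J} \dualalg(X_j \subs X)$ and $\cualalg(Z \subs X) = \sum_{j \in J} \cualalg(X_j \subs X)$. The quotient map $\dualalg X \to \qualalg X$ sends $\dualalg(Z \subs X)$ onto the sum of the images of the $\dualalg(X_j \subs X)$, i.e. onto $\sum_{j \in J} \qualalg(X_j \subs X)$. To see that this image is genuinely $\qualalg(Z \subs X)$ I would check that the kernel of the restricted quotient map is $\cualalg(Z \subs X)$; that is, $\dualalg(Z \subs X) \cap \cualalg X = \cualalg(Z \subs X)$. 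The inclusion $\supseteq$ is immediate, and for $\subseteq$ an operator $T \in \dualalg(Z \subs X) \cap \cualalg X$ is supported near $Z$ and locally compact everywhere, hence lies in $\cualalg(Z \subs X)$ by definition. The same argument with $X_j$ in place of $Z$ shows $\qualalg(X_j \subs X) \cong \dualalg(X_j \subs X)/\cualalg(X_j \subs X)$, so the identification is consistent.

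Combining the two steps yields $K_s(\sum_{j \in J} \qualalg(X_j \subs X)) = K_s \qualalg(Z \subs X) \cong K_s \qualalg(\bigcup_{j \in J} X_j)$, and naturality is inherited from the naturality of Theorem \ref{thm:isoOfRelativeRoeA} and of the five-lemma comparison. I expect the only real subtlety, and hence the main obstacle, to be the algebraic bookkeeping of the third paragraph: verifying that the sum of quotients $\sum_{j \in J} \qualalg(X_j \subs X)$ coincides with the quotient of sums $\qualalg(Z \subs X)$ inside $\qualalg X$, which rests on the compatibility $\dualalg(Y \subs X) \cap \cualalg X = \cualalg(Y \subs X)$ and on the quotient map respecting finite sums of subalgebras.
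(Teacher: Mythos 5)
Your proposal is correct and follows essentially the same route as the paper: it reuses the five-lemma comparison isomorphism from Proposition \ref{pro:connectTwoWorldsCapQ} together with the set equalities $\fualalg\big(\bigcup_{j \in J} X_j \subs X\big) = \sum_{j \in J}\fualalg(X_j \subs X)$ underlying Proposition \ref{pro:connectTwoWorldsSumCD}. Your extra check that $\dualalg(Z \subs X) \cap \cualalg X = \cualalg(Z \subs X)$, so that the sum of quotients agrees with the quotient of sums inside $\qualalg X$, is a bookkeeping step the paper leaves implicit in its chain of isomorphisms; making it explicit is a reasonable refinement rather than a departure.
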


\begin{proof}
    In the proof of Proposition \ref{pro:connectTwoWorldsCapQ},
    we constructed a natural isomorphism for closed subsets $Y \subseteq X$,
    \[
        \frac{K_s \dualalg(Y \subseteq X)}
             {K_s \cualalg(Y \subseteq X)}
        \loto{\cong} \qualalg Y.
    \]
    Combine this isomorphism with the natural isomorphisms
    from Proposition \ref{pro:connectTwoWorldsSumCD}:
    \begin{align*}
        K_s \qualalg \Big(\bigcup_{j \in J} X_j\Big)
            \cong
            \frac{K_s \dualalg \big(\bigcup_{j \in J} X_j\big)}
                 {K_s \cualalg \big(\bigcup_{j \in J} X_j\big)}
            &\cong
            \frac{K_s \big( \sum_{j \in J} \dualalg(X_j \subs X) \big)}
                 {K_s \big( \sum_{j \in J} \cualalg(X_j \subs X) \big)}
            \\
            &\cong
            K_s \Big( \sum_{j \in J} \qualalg(X_j \subs X) \Big).
        \qedhere
    \end{align*}
\end{proof}

\subsection{Main theorem}

We may summarize Propositions
\ref{pro:connectTwoWorldsCapCD},
\ref{pro:connectTwoWorldsCapQ},
\ref{pro:connectTwoWorldsSumCD},
and
\ref{pro:connectTwoWorldsSumQ}
in a single theorem.

\begin{thm}
\label{thm:connectTwoWorlds}
    Let $(X, d)$ be a metric space. Let $\{X_\beta\}_{\beta \in \alpha}$
    be a finite or infinite coarsely excisive cover of $X$ and let
    $J \subseteq \alpha$ be a finite nonempty subset.

    \letFualalgBeEither
    Let $s$ be a degree in K-theory. Then
    \begin{align*}
        K_s \fualalg \Big(\bigcap_{j \in J} X_j\Big)
            &\cong K_s \Big( \bigcap_{j \in J} \fualalg(X_j \subs X) \Big), \\
        K_s \fualalg \Big(\bigcup_{j \in J} X_j\Big)
            &\cong K_s \Big( \sum_{j \in J} \fualalg(X_j \subs X) \Big).
    \end{align*}
    These isomorphisms are natural
    \withRespectToCoarseOrCoarseContinuousMorphisms{}
\end{thm}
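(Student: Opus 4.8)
The plan is to read the two displayed families of isomorphisms directly off the four propositions already established, organized according to whether one intersects or takes unions of the $X_j$. For the intersection row, Proposition \ref{pro:connectTwoWorldsCapCD} supplies the isomorphism for $\fualalg = \cualalg$ and $\fualalg = \dualalg$ in one stroke, while Proposition \ref{pro:connectTwoWorldsCapQ} supplies the case $\fualalg = \qualalg$. For the union row, Proposition \ref{pro:connectTwoWorldsSumCD} handles $\cualalg$ and $\dualalg$, and Proposition \ref{pro:connectTwoWorldsSumQ} handles $\qualalg$. Since the theorem merely consolidates these results under the single symbol $\fualalg$, the existence of each isomorphism is immediate once one fixes which of the three functors $\fualalg$ denotes.

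It is worth stressing why the $\qualalg$ cases introduce no fresh input. They are not proved independently but are deduced from the $\cualalg$ and $\dualalg$ cases via the presentation $\qualalg X = \dualalg X / \cualalg X$, together with the five-lemma argument already carried out inside Propositions \ref{pro:connectTwoWorldsCapQ} and \ref{pro:connectTwoWorldsSumQ}. So for $\fualalg = \qualalg$ there is nothing to verify beyond invoking those two propositions, and the case distinction on $\fualalg$ collapses to three citations in each row.

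The only substantive remaining point is naturality with respect to morphisms into spaces carrying compatible covers. Given a morphism $f\colon X \to X'$ and compatible covers in the sense of Definition \ref{dfn:compatibleCoarseCovers}, the hypothesis $f(X_\beta) \subseteq X'_\beta$ yields, for each finite $J$, induced maps of relative algebras $\fualalg(X_j \subs X) \to \fualalg(X'_j \subs X')$ and of the absolute algebras $\fualalg(\bigcap_{j \in J} X_j) \to \fualalg(\bigcap_{j \in J} X'_j)$ and likewise for the unions. I would verify that the square relating the source and target isomorphisms commutes. The building block $K_s \fualalg(Y \subs X) \cong K_s \fualalg Y$ of Theorem \ref{thm:isoOfRelativeRoeA} was constructed in Remark \ref{rem:isoOfRelativeRoeB} as a direct limit over neighborhoods of $Y$, induced by the inclusion $Y \to X$; that construction is natural for coarse maps in the case $\cualalg$ and for coarse-continuous maps in the cases $\dualalg$ and $\qualalg$, which is exactly where the functoriality restriction on morphisms originates. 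The set-level identities of Lemmas \ref{lem:relCapA} and \ref{lem:relCapB} and of Lemmas \ref{lem:relSumA} and \ref{lem:relSumB} are phrased purely through support and local-compactness conditions, which are preserved by the induced maps; composing, the diagrams commute and the stated naturality follows. The main obstacle is the bookkeeping here: one must track the functoriality restriction ($\cualalg$ for all coarse maps, but $\dualalg$ and $\qualalg$ only for coarse-continuous maps) and confirm that the five-lemma step for $\qualalg$ is itself natural, which it is, since every arrow in that diagram arose from the natural direct-limit construction.
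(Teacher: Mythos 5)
Your proposal matches the paper exactly: the theorem is stated there purely as a consolidation of Propositions \ref{pro:connectTwoWorldsCapCD}, \ref{pro:connectTwoWorldsCapQ}, \ref{pro:connectTwoWorldsSumCD}, and \ref{pro:connectTwoWorldsSumQ}, with naturality inherited from the direct-limit construction in Remark \ref{rem:isoOfRelativeRoeB}. Your additional care in tracking where the coarse versus coarse-continuous restriction enters is consistent with, and slightly more explicit than, the paper's own treatment.
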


When the cover $\{X_\beta\}_{\beta \in \alpha}$ has a finite index set
$\alpha$, the algebras
become suitable for our spectral sequence for finite ideal inclusions.

\begin{thm}
\label{thm:sseqForFiniteCoarselyExcisiveCovers}
    Let $(X, d)$ be a metric space with a finite coarsely excisive
    cover $\{X_\beta\}_{\beta \in \alpha}$.
    \thmSseqCoarselyExcisiveCoversForFor{
        for $0 \leq p < \card{\alpha}$,}{for $p < 0$ or $p \geq \card{\alpha}$,
    }
    where $J$ ranges over all nonempty subsets of $\alpha$.
    This spectral sequence converges strongly to $K_* \fualalg X$
    and is functorial
    \withRespectToCoarseOrCoarseContinuousMorphisms
\end{thm}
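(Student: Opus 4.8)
The plan is to feed the relative algebras attached to the cover into the abstract spectral sequence for finite sums of C*-ideals (Theorem~\ref{thm:sseqforsums}) and then translate its $E^1$-page back into geometric language using Theorem~\ref{thm:connectTwoWorlds}. Concretely, write $\card{\alpha} = n+1$ and enumerate $\alpha$ as $\setton$. For each $\beta \in \alpha$ set $I_\beta = \fualalg(X_\beta \subs X)$, a C*-ideal of $A = \fualalg X$; for $\fualalg = \qualalg$ one takes the image of $\dualalg(X_\beta \subs X)$ under the quotient map $\dualalg X \to \qualalg X$, which is the subquotient $\qualalg(X_\beta \subs X)$. The first point to verify is the algebraic decomposition $\sum_{\beta \in \alpha} I_\beta = A$. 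For $\fualalg = \cualalg$ and $\fualalg = \dualalg$ this is Lemmas~\ref{lem:relSumA} and~\ref{lem:relSumB} applied to the full finite cover $J = \alpha$, for which $\bigcup_{\beta} X_\beta = X$ (finiteness is what keeps the sum closed, so no extra closure is needed); for $\fualalg = \qualalg$ it follows by pushing the $\dualalg$-identity through the quotient map. Theorem~\ref{thm:sseqforsums} then produces a spectral sequence whose $E^1_{p,q}$ is $\bigoplus_{\card J = p+1} K_q\big(\bigcap_{j \in J} I_j\big)$ for $0 \le p \le n$ and zero otherwise, converging strongly to $K_* A = K_* \fualalg X$. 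Since $n = \card\alpha - 1$, the index ranges $0 \le p \le n$ and $p<0$ or $p>n$ are exactly those in the statement.

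It then remains to rewrite the $E^1$-term geometrically. By the intersection half of Theorem~\ref{thm:connectTwoWorlds} (that is, Lemmas~\ref{lem:relCapA}, \ref{lem:relCapB} together with Propositions~\ref{pro:connectTwoWorldsCapCD} and~\ref{pro:connectTwoWorldsCapQ}), for every nonempty finite $J \subs \alpha$ one has
\[
    K_q\Big(\bigcap_{j \in J} I_j\Big)
    = K_q\Big(\bigcap_{j \in J} \fualalg(X_j \subs X)\Big)
    \cong K_q\fualalg\Big(\bigcap_{j \in J} X_j\Big).
\]
Substituting this into the $E^1$-page yields the asserted formula, and strong convergence to $K_* \fualalg X$ is inherited verbatim from Theorem~\ref{thm:sseqforsums}.

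For functoriality, let $f \colon X \to X'$ be a morphism (coarse for $\cualalg$, coarse and continuous for $\dualalg$, $\qualalg$) with compatible coarsely excisive covers in the sense of Definition~\ref{dfn:compatibleCoarseCovers}, so $f(X_\beta) \subseteq X'_\beta$ for all $\beta$. By naturality of the relative-algebra constructions (Remark~\ref{rem:isoOfRelativeRoeB}), the induced \starhom{} $\fualalg f \colon \fualalg X \to \fualalg X'$ carries $I_\beta$ into $I'_\beta$, hence preserves the ideal decompositions in the sense of Definition~\ref{dfn:preserveIdealDecomps1}. Theorem~\ref{thm:sseqForSumsIsFunctorial} then supplies a morphism of spectral sequences of bidegree $(0,0)$ inducing $K_*\fualalg f$ on the abutments, and the naturality clause of Theorem~\ref{thm:connectTwoWorlds} ensures that this morphism is compatible with the geometric identification of the $E^1$-page.

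The main obstacle is the bookkeeping for $\fualalg = \qualalg$: unlike $\cualalg$ and $\dualalg$, the relative $\qualalg$-algebras are subquotients of $\qualalg X$ rather than honest subalgebras, so one must check that the image ideals $\qualalg(X_\beta \subs X)$ genuinely decompose $\qualalg X$ and that their finite intersections still compute $K_q\qualalg(\bigcap_{j \in J} X_j)$. This is precisely what Propositions~\ref{pro:connectTwoWorldsCapQ} and~\ref{pro:connectTwoWorldsSumQ} arrange, their five-lemma arguments reducing the $\qualalg$ statements to the already-established $\cualalg$ and $\dualalg$ cases; once these are invoked, every remaining step is a purely formal application of the two cited spectral-sequence theorems.
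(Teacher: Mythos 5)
Your proposal is correct and follows essentially the same route as the paper: apply the abstract spectral sequence for finite sums of C*-ideals (Theorem \ref{thm:sseqforsums}) to the relative algebras $\fualalg(X_\beta \subs X)$, using the sum half of Theorem \ref{thm:connectTwoWorlds} to identify the convergence target with $K_*\fualalg X$ and the intersection half to rewrite the $E^1$-page, with functoriality inherited from Theorem \ref{thm:sseqForSumsIsFunctorial} and the naturality of those isomorphisms. Your additional care with the $\qualalg$ case and with the fact that the finite sum of ideals is already closed is more explicit than the paper's proof but entirely consistent with it.
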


\begin{proof}
    Apply the spectral sequence from Theorem \ref{thm:sseqforsums}
    about finite sums of abstract C*-algebras to the algebras from
    Theorem \ref{thm:connectTwoWorlds} for coarse spaces.

    Both properties from Theorem \ref{thm:connectTwoWorlds} are required here:
    The intersection property
    guarantees that the $E^1$-term looks as stated.
    The sum property of the relative C*-algebras guarantees that
    the spectral sequence converges to the non-relative C*-algebra of the
    entire space.

    Functoriality of the spectral sequence follows from functoriality of
    the spectral sequence from Theorem \ref{thm:sseqforsums}
    for finite sums of abstract C*-ideals and from
    the naturality of the isomorphisms in Theorem \ref{thm:connectTwoWorlds}.
\end{proof}

\subsection{Application:
    \texorpdfstring{$K_* \cualalg \rel^n$}{K(C(Rn))}}
\label{sec:applicationKCRn}

Let $d_1$ and $d_\infty$ denote the usual 1-metric and $\sup$-metric on
$\rel^n$: For $x$, $y \in \rel^n$, we have
\[
    d_1(x, y) = \sum_{j < n} \betr{x_j - y_j},
    \qquad
    d_\infty(x, y) = \max_{j < n} \betr{x_j - y_j}.
\]

\begin{thm}
\label{thm:kHomologyOfEuclideanSpace}
    For the $n$-dimensional Euclidean space $\rel^n$, metrized either with the
    1-metric $d_1$ or the $\sup$-metric $d_\infty$,
    the Roe algebra has the following K-theory:
    \kTheoryOfRelN
\end{thm}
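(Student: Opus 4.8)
The plan is to apply the spectral sequence for finite coarsely excisive covers, Theorem~\ref{thm:sseqForFiniteCoarselyExcisiveCovers}, to a cover of $\rel^n$ by $n+1$ closed orthant-like slabs. Concretely I would take $X_0 = \intOC{-\infty}{0}\times\rel^{n-1}$, set $X_j = \intCO{0}{\infty}^{j}\times\intOC{-\infty}{0}\times\rel^{\,n-j-1}$ for $0<j<n$, and $X_n = \intCO{0}{\infty}^{n}$; thus a point lies in $X_j$ when its first $j$ coordinates are nonnegative while its $j$-th coordinate is nonpositive, with $X_n$ absorbing the nonnegative orthant. Each $X_j$ is closed and together they cover $\rel^n$. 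The first step is to check coarse excision. Working with $d_\infty$, the $R$-neighborhood of a coordinatewise product of intervals equals the product of the factorwise $R$-neighborhoods, so the excision inequality reduces to the one-dimensional facts $N_\infty(\intOC{-\infty}{0},R)\cap N_\infty(\intCO{0}{\infty},R)=\intCC{-R}{R}\subseteq N_\infty(\{0\},R)$; this already gives $S=R$. Since $d_1$ and $d_\infty$ are bi-Lipschitz, coarse excision (and all that follows) transfers to the $1$-metric, so one metric suffices.

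The heart of the argument is to identify each intersection coarsely. For a nonempty $J\subseteq\setton$ with $m=\max J$, a short bookkeeping of the coordinate constraints shows that coordinate $i$ is forced to satisfy $x_i\le 0$ exactly when $i\in J$, and $x_i\ge 0$ exactly when $i<m$; hence each coordinate is pinned to $\{0\}$, to a half-line $\intOC{-\infty}{0}$ or $\intCO{0}{\infty}$, or left as all of $\rel$. I would then record the dichotomy: if $J=\setton$ every coordinate is pinned to $0$ and $\bigcap_{j\in J}X_j=\{0\}$ is a single point; if $J\subsetneq\setton$ is proper there is always at least one unbounded \emph{half-line} factor -- coordinate $m$ contributes $\intOC{-\infty}{0}$ when $m<n$, and when $n\in J$ properness forces a free $\intCO{0}{\infty}$ factor in some missing coordinate. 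Every proper intersection is therefore coarsely of the form $W\times\intCO{0}{\infty}$, hence \emph{flasque}: translation along the half-line is an Eilenberg swindle, so $K_*\cualalg\big(\bigcap_{j\in J}X_j\big)=0$ by the standard vanishing theorem for flasque spaces (cf.\ \cite{roe-book}).

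Feeding this into the spectral sequence, the first page is $E^1_{p,q}\cong\bigoplus_{\card J=p+1}K_q\cualalg\big(\bigcap_{j\in J}X_j\big)$. For every $p<n$ each contributing $J$ has $\card J=p+1\le n$ and is a proper subset, so $E^1_{p,q}=0$; for $p=n$ the only term is $J=\setton$, giving $E^1_{n,q}=K_q\cualalg(\{0\})=K_q(KH)$, which is $\zet$ for $q$ even and $0$ for $q$ odd. As only the single column $p=n$ is nonzero, every differential $d^r\colon E^r_{p,q}\to E^r_{p-r,q+r-1}$ has zero source or target, the sequence collapses at $E^1=E^\infty$, and strong convergence identifies $K_s\cualalg\rel^n\cong E^\infty_{n,\,s-n}=K_{s-n}(KH)$. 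This is $\zet$ precisely when $s-n$ is even and $0$ otherwise, which is the claim.

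The step I expect to be the main obstacle is the vanishing $K_*\cualalg(\,\cdot\,)=0$ for the proper intersections: it relies on the standard flasqueness/Eilenberg-swindle principle, which is external to the machinery built here and must be cited, and it is essential that the swindle come from a genuine \emph{half-line} factor rather than a full copy of $\rel$ (a full line alone leaves nonzero K-theory). The accompanying combinatorial check -- that every proper $J$ really does leave at least one half-line free, while only the full set collapses to a point -- is precisely what forces the spectral sequence into the single top column and is the delicate point of the proof.
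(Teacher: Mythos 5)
Your proposal is correct and follows essentially the same route as the paper: the identical block decomposition into $n+1$ closed pieces, coarse excisiveness checked for $d_\infty$ and transferred to $d_1$, flasqueness of all proper intersections versus the one-point full intersection, and collapse of the spectral sequence to the single column $p=n$. The only cosmetic difference is that you verify excisiveness by factoring $d_\infty$-neighborhoods of products coordinatewise, whereas the paper runs the same computation through its notion of \emph{blocky} subsets; both yield $S=R$ for the $\sup$-metric.
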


This is known, but we reprove this with our K-theory
spectral sequence for finite coarsely excisive covers.

\begin{rem}
    Towards the end of Section \ref{sec:applicationKCRn},
    some claims and proofs might look like straightforward geometry of
    $\rel^n$. In particular, since $d_1$ and $d_\infty$ are equivalent
    metrics on $\rel^n$, they must induce equivalent coarse structures;
    it would suffice to look at only one of them.

    Nonetheless, we will conduct these proofs in detail
    for both $d_1$ and $d_\infty$ because these results
    will serve as lemmas for Section \ref{subsection:zinfty}
    to compute the K-theory of a C*-ideal of
    $\cualalg \zet^\infty$ under different metrics.
\end{rem}

\begin{dfn}[Flasque]
\label{dfn:flasque}
    Let $(X, d)$ be a metric space. $X$ is \emph{flasque} if there is a
    coarse map $f\colon X \to X$ satisfying the following conditions:
    \begin{itemize}
        \item The map $f$ is coarsely equivalent to $\id(X)$.
        \item For all $K \subseteq X$, there exists $N \in \nat$ such that
            for all $n \geq N$, $f^n(X) \cap K = \varemptyset$.
        \item The powers of $f$ are uniformly coarse: For all $R > 0$,
            there exists
            $S > 0$ that, for all $n \in \nat$ at once and $x$, $y \in X$
            with $d(x, y) \leq R$, we have $d(f^n x, f^n y) \leq S$.
    \end{itemize}
\end{dfn}

\begin{lem}
    For a metric space $X$, the product $X \times \rel_{\geq 0}$ is
    flasque.
\end{lem}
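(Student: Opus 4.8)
The plan is to exhibit the forward shift in the $\rel_{\geq 0}$-direction as the required map and to check the defining properties one at a time; all of them reduce to the single fact that this shift is an isometry that slides every point a bounded amount towards infinity.

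First I would fix a metric on the product. Equip $Y = X \times \rel_{\geq 0}$ with $d\big((x,s),(x',s')\big) = d_X(x,x') + \betr{s-s'}$ and define $f\colon Y \to Y$ by $f(x,t) = (x,t+1)$; since $t \geq 0$ forces $t+1 \geq 0$, this lands in $Y$. I would record at the outset that each iterate satisfies $f^n(x,t) = (x,t+n)$ and shifts only the second coordinate, so that $d(f^n p, f^n p') = d(p, p')$ for all $n \in \nat$ and all $p, p' \in Y$. From this single observation most of the verification follows. That $f$ is a coarse map (Definition \ref{dfn:coarseMap}) is immediate: being an isometry it is uniformly expansive with $S = R$, and it is proper because $f^{-1}$ shifts the second coordinate back by $1$, sending bounded sets to bounded sets.

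Next I would check the three conditions of Definition \ref{dfn:flasque}. For closeness to the identity, $d\big(f(x,t),(x,t)\big) = \betr{(t+1)-t} = 1$, so $f$ is close to $\id(Y)$ with bound $1$, which in the paper's terminology is the same as being coarsely equivalent to $\id(Y)$. For the uniform coarseness of the powers, the isometry property gives $d(f^n p, f^n p') = d(p,p') \leq R$ whenever $d(p,p') \leq R$, so $S = R$ works simultaneously for all $n$. The only condition carrying genuine content is the escape-to-infinity property, which I read for bounded $K \subseteq Y$ (the ambient flasque space here being $Y$, not $X$). For nonempty bounded $K$, the projection to the $\rel_{\geq 0}$-coordinate has diameter at most $\mathrm{diam}(K)$, hence lies in some interval $[a, a + \mathrm{diam}(K)]$; since $f^n(Y) = X \times [n,\infty)$, any $N > a + \mathrm{diam}(K)$ yields $f^n(Y) \cap K = \varemptyset$ for all $n \geq N$.

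The argument is essentially mechanical, so the only place needing care is the bookkeeping around the escape condition: one must interpret the set $K$ in Definition \ref{dfn:flasque} as a bounded subset of the product $Y$ (not of $X$), and confirm that boundedness of $K$ controls the range of its $\rel_{\geq 0}$-coordinate, which is exactly what forces $K$ to be missed once the shift has moved far enough. No other step presents an obstacle, since the isometry property of the shift trivializes both the coarseness of $f$ and the uniform-coarseness requirement on its iterates.
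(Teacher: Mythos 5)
Your proof is correct and follows exactly the paper's approach: the shift $f(x,t)=(x,t+1)$, closeness to the identity by the constant bound $1$, uniform coarseness of the powers via the isometry property with $S=R$, and escape from bounded sets. Your version merely spells out the metric on the product and the bookkeeping for the escape condition (including the sensible reading of $K$ as a bounded subset of the product), which the paper leaves implicit.
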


\begin{proof}
    Consider the self-map $f$ on $X \times \rel_{\geq 0}$ with
    $f(x, t) = (x, t+1)$. Shifting points by a constant distance, $f$ is
    coarsely equivalent to the identity, yet the powers $f^n$ eventually
    shift points out of any given bounded set. As an isometry, $f$ is
    uniformly coarse: Choose $S = R$ for the third condition
    in Definition \ref{dfn:flasque}.
\end{proof}

\begin{pro}
\label{pro:flasqueiszero}
    Let $X$ be a flasque space. Then $K_*\cualalg X = 0$.
\end{pro}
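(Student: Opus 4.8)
The plan is to run an Eilenberg swindle: I use the self-map $f$ to manufacture an infinite ``shift'' endomorphism of $\cualalg X$ whose induced map on K-theory simultaneously equals the identity and equals itself-plus-the-identity, forcing the identity to vanish. For the setup I may assume the representation defining $\cualalg X$ is very ample, so that $H \cong \bigoplus_{n \in \nat} H_n$ with each $H_n$ carrying its own ample copy of $\varrho$; write $v_n \colon H \to H_n \subseteq H$ for the isometry onto the $n$-th summand, so $v_n^* v_m = \delta_{nm}$ and $\sum_n v_n v_n^* = \id$. Since $f$ is a coarse map, the construction underlying its functoriality on $K_* \cualalg$ (\cite[Lemma 3.5]{roe-book}) supplies an isometry $W$ covering $f$, meaning $\operatorname{conj}_W \colon T \mapsto WTW^*$ is a $*$-endomorphism of $\cualalg X$ that induces $f_*$ on K-theory. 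Setting $V_n = v_n W^n$, I define
\[
    \Phi\colon \cualalg X \to \cualalg X,
    \qquad
    \Phi(T) = \sum_{n \geq 0} V_n T V_n^*
        = \bigoplus_{n \geq 0} W^n T (W^n)^*,
\]
a block-diagonal operator whose $n$-th block lives in $H_n$.

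The crux of the argument, and the step I expect to be the main obstacle, is to verify that $\Phi(T)$ actually lies in $\cualalg X$; this is precisely where all three defining properties of flasqueness (Definition~\ref{dfn:flasque}) must be used together. Finite propagation comes from the uniform-coarseness condition: if $T$ has propagation $R$, the third bullet of Definition~\ref{dfn:flasque} furnishes a single $S$ bounding the propagation of every block $W^n T (W^n)^*$ at once, so the block-diagonal $\Phi(T)$ has propagation $\leq S$. Local compactness comes from the escaping condition: for $g \in \conto X$ the block $\varrho(g)\, W^n T (W^n)^*$ is supported near $f^n(X)$, which misses $\supp g$ for all $n \geq N$ by the second bullet (with $K = \supp g$); the finitely many surviving blocks are compact since $T$ is locally compact, and a finite orthogonal sum of compacts is compact. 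Hence $\Phi$ is a genuine $*$-endomorphism of $\cualalg X$.

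It remains to close the swindle. Splitting off the zeroth summand expresses $\Phi = \phi_0 \oplus \Phi'$ as an orthogonal direct sum of two $*$-homomorphisms, where $\phi_0(T) = v_0 T v_0^*$ and $\Phi'(T) = \bigoplus_{n \geq 1} W^n T (W^n)^*$; for a two-term orthogonal sum K-theory is additive, so $\Phi_* = (\phi_0)_* + \Phi'_*$. Conjugation by the isometry $v_0$ covers $\id(X)$ and so $(\phi_0)_* = \id$. Reindexing $n \mapsto n+1$ and using $W^{n+1} T (W^{n+1})^* = W^n (W T W^*)(W^n)^*$ identifies $\Phi'$ with $\Phi \circ \operatorname{conj}_W$ up to the unitary summand-shift $H_m \to H_{m+1}$, which is K-trivial; and because $f$ is coarsely equivalent to $\id(X)$ (first bullet), $\operatorname{conj}_W$ induces $f_* = \id$. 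Therefore $\Phi'_* = \Phi_*$, and combining the two displays yields $\Phi_* = \id + \Phi_*$ on $K_* \cualalg X$. Subtracting $\Phi_*$ forces $\id = 0$ on $K_* \cualalg X$, whence $K_* \cualalg X = 0$.
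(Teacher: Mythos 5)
The paper itself does not prove this proposition; it simply cites \cite[Proposition 9.4]{roe-pmit}. Your Eilenberg-swindle argument is the standard proof from that source, and its overall architecture --- the block-diagonal endomorphism $\Phi$, the splitting $\Phi_* = (\phi_0)_* + \Phi'_*$, the identification of $\Phi'$ with $\Phi$ composed with something inducing $f_* = \id$, and the conclusion $\id = 0$ --- is correct. The closing steps (additivity of K-theory on orthogonal sums of \starhoms{}, $(\phi_0)_* = \id$ because $v_0$ covers $\id(X)$, and $f_* = \id$ because $f$ is close to the identity) are all fine.

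There is, however, a genuine gap exactly at the step you flag as the crux. You take a single covering isometry $W$ for $f$ and set $V_n = v_n W^n$, then claim the uniform-coarseness condition gives one constant $S$ bounding the propagation of every block $W^n T (W^n)^*$. That condition controls the expansion of the maps $f^n$ uniformly, but it does not control the isometries $W^n$ uniformly: if $W$ covers $f$ with error $C$ (i.e.\ $\supp W$ lies within distance $C$ of the graph of $f$), then $\supp W^2$ only lies within distance $C + S(C)$ of the graph of $f^2$, $\supp W^3$ within $C + S\bigl(C + S(C)\bigr)$ of the graph of $f^3$, and so on --- the covering error compounds and is unbounded in $n$. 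Consequently $\bigoplus_n W^n T (W^n)^*$ need not have finite propagation (nor obviously be a norm limit of such operators), and the same drift infects your local-compactness argument, where you need the blocks supported near $f^n(X)$ with a margin that does not grow with $n$. The standard repair, and what the cited reference does, is to choose for each $n$ a covering isometry $V_n$ for $f^n$ \emph{directly}, with a covering constant independent of $n$ (possible for any coarse map once the ample representation is fixed); uniform coarseness then genuinely yields a uniform propagation bound for the blocks $V_n T V_n^*$, and the escaping condition yields local compactness. The price is that the swindle identity can no longer be read off from the literal factorization $W^{n+1} = W^n \cdot W$; instead one observes that $V_{n+1}$ and $V_n W$ are both covering isometries for $f^{n+1}$ and hence induce the same map on K-theory, so the identity $\Phi_* = (\phi_0)_* + \Phi_* \circ f_*$ holds as an equation of maps on $K_*\cualalg X$ rather than of \starhoms{}. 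With that modification your argument goes through.
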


Proposition \ref{pro:flasqueiszero} is proven in
\cite[Proposition 9.4]{roe-pmit}. To prove Theorem
\ref{thm:kHomologyOfEuclideanSpace} about $K_*\cualalg \rel^n$
with a coarsely excisive cover of $\rel^n$, it is helpful to
have many flasque intersections.

\begin{dfn}[Block decomposition of $\rel^n$]
\label{dfn:blockDecomposition}
    Cover $\rel^n$ with $n+1$
    overlapping subsets, or \emph{blocks},
    $X_0$, $X_1$, $\ldots$, $X_n$, where
    \[
        \syntaxHighlightWorkaround
    \]
\end{dfn}

\begin{exm}
    In the simplest case, $\rel^1$ is covered with two overlapping rays,
    one extending into either direction. $\rel^2$ is covered with three pieces,
    a left-hand half-space $X_0$, a bottom-right-hand quadrant $X_1$, and
    a top-right-hand quadrant $X_2$, as in Figure \ref{fig:reldflasque}:

    \BeginSimonFigure
    \begin{tikzpicture}
        \node at (5 cm, 4 cm) {$\phantom{unusedOnlyForSpacing}$};
        \draw[thick] (4 cm, 2 cm) -- (2 cm, 2 cm);
        \draw[thick] (2 cm, 0 cm) -- (2 cm, 4 cm);
        \draw[thick, dashed, gray!50] (0 cm, 0 cm) -- (0 cm, 4 cm)
            -- (4 cm, 4 cm) -- (4 cm, 0 cm) -- (0 cm, 0 cm);
        \node at (1 cm, 2 cm) {$X_0$};
        \node at (3 cm, 1 cm) {$X_1$};
        \node at (3 cm, 3 cm) {$X_2$};

        \newcommand{\drb}{\draw[thick]}
        \newcommand{\drg}{\draw[thick, dashed, gray!50]}
        \drg (6 cm, 0 cm) -- (9 cm, 0 cm) -- (10 cm, 1 cm)
            -- (10 cm, 4 cm) -- (7 cm, 4 cm) -- (6 cm, 3 cm) -- cycle;
        \drg (9 cm, 0 cm) -- (9 cm, 3 cm) -- (6 cm, 3 cm);
        \drg (9 cm, 3 cm) -- (10 cm, 4 cm);

        \drb (7.5 cm, 0 cm) -- (7.5 cm, 3 cm) -- (8.5 cm, 4 cm);
        \drb (8 cm, 3.5 cm) -- (9.5 cm, 3.5 cm) -- (9.5 cm, 0.5 cm);
        \drb (9.5 cm, 2 cm) -- (10 cm, 2.5 cm);

        \node at (6.75 cm, 1.5 cm) {$X_0$};
        \node at (8.25 cm, 1.5 cm) {$X_1$};
        \node at (9.75 cm, 1.5 cm) {$X_2$};
        \node at (9    cm, 3.75 cm) {$X_3$};
    \end{tikzpicture}
    \EndSimonFigure{fig:reldflasque}{
        Decomposition of $\rel^2$ into 3 blocks and of $\rel^3$
        into 4 blocks
    }
\end{exm}

\begin{rem}
    In the block decomposition $\rel^n = X_0 \cup X_1 \cup \dotsb \cup X_n$,
    each $X_j$ contains at least one flasque factor, therefore
    $K_*\cualalg X_j = 0$. Likewise, intersecting fewer than all $n+1$
    segments produces a flasque intersection with trivial K-theory of the
    Roe algebra.
    Only the $(n+1)$-fold intersection
    is not flasque; it is the compact one-point set. Its Roe algebra has
    K-theory $\zet$ in even degrees and zero in odd degrees.
\end{rem}

\begin{dfn}[Blocky subset of $\rel^n$]
\label{dfn:blocky}
    Let $X$ be a subset of $\rel^n$. We call $X$ \emph{blocky} if
    both of these conditions hold:
    \begin{itemize}
    \item
        The origin $0 \in \rel^n$ is part of $X$.
    \item
        For all $x \in X$, all $n$ coordinates $x_j$ of $x$, and all
        $\lambda \geq 0$, varying $x_j$ by $\lambda$ doesn't leave $X$; i.e.,
        this point is a part of $X$:
        \[
            (x_0, \ x_1, \ \ldots, \ x_{j-1}, \ \lambda x_j, \ x_{j+1}, \
                \ldots, \ x_{n-1})
        \]
    \end{itemize}
\end{dfn}

\begin{exm}
    Blocky subsets of $\rel^n$ are conical, but not all conical subsets
    are blocky.
    Consider the upper-right quadrant in $\rel^2$: This is blocky.
    It remains conical, but not blocky, after rotating around the origin
    by an eighth-turn.
\end{exm}

\begin{rem}
    For a nonempty collection of blocky subsets
    $\{X_\beta\}_{\beta \in \alpha}$ of $\rel^n$,
    the intersection
    $\bigcap_{\beta \in \alpha} X_\beta$ is again blocky.
\end{rem}

All sets of the block decomposition $\{X_j\}_{j \leq n}$ of $\rel^n$
and all their intersections
$\bigcap_{j \in J} X_j$ for nonempty $J \subs \setton$
satisfy this definition of \emph{blocky}. This is the
motivation behind the following Lemma \ref{lem:blockyIntersection}.

\begin{lem}
\label{lem:blockyIntersection}
    Let $X_\beta$, $X_\gamma \subseteq \rel^n$ be blocky sets. Choose $R > 0$.
    Then
    \begin{equation}
    \label{eqn:blockDecompInfty}
        N_\infty(X_\beta, R)
        \cap N_\infty(X_{\gamma}, R)
        = N_\infty(X_\beta \cap X_{\gamma}, R),
    \end{equation}
    where $N_\infty$ denotes the $R$-neighborhood
    under the $\sup$-metric $d_\infty$.
\end{lem}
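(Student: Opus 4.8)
The reverse inclusion $N_\infty(X_\beta \cap X_\gamma, R) \subseteq N_\infty(X_\beta, R) \cap N_\infty(X_\gamma, R)$ is immediate and uses no blockiness: from $X_\beta \cap X_\gamma \subseteq X_\beta$ and $X_\beta \cap X_\gamma \subseteq X_\gamma$, monotonicity of $R$-neighborhoods under inclusion of sets gives the claim. So the plan is to prove the nontrivial inclusion $\subseteq$, and the whole point of the sup-metric is that it decouples the coordinates.

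The first step I would take is to record the structure of a blocky set. Iterating the scaling axiom of Definition \ref{dfn:blocky} over the coordinates shows that every point $b$ of a blocky set $X$ generates a closed coordinate orthant
\[
    O_b = \prod_{j < n} C_j, \qquad
    C_j = \begin{cases} \intCO{0}{\infty} & \text{if } b_j > 0, \\ \intOC{-\infty}{0} & \text{if } b_j < 0, \\ \{0\} & \text{if } b_j = 0, \end{cases}
\]
which is contained in $X$ and contains the origin; hence $X = \bigcup_{b \in X} O_b$. Because the neighborhood of a union is the union of the neighborhoods, and because under the sup-metric the $R$-neighborhood of a product set is the product of the one-dimensional $R$-neighborhoods, membership $x \in N_\infty(O_b, R)$ is equivalent to the conjunction of the independent coordinate conditions: $x_j \geq -R$ when $C_j = \intCO{0}{\infty}$, $x_j \leq R$ when $C_j = \intOC{-\infty}{0}$, and $\betr{x_j} \leq R$ when $C_j = \{0\}$.

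With this in hand, given $x \in N_\infty(X_\beta, R) \cap N_\infty(X_\gamma, R)$ I would choose generating orthants $O_\beta = \prod_j C_j^\beta \subseteq X_\beta$ and $O_\gamma = \prod_j C_j^\gamma \subseteq X_\gamma$ with $x \in N_\infty(O_\beta, R)$ and $x \in N_\infty(O_\gamma, R)$, which exist by the union description above. Their intersection $O_\beta \cap O_\gamma = \prod_j (C_j^\beta \cap C_j^\gamma)$ is again a coordinate orthant containing the origin and contained in $X_\beta \cap X_\gamma$. It then remains to verify coordinate-wise that $x \in N_\infty(O_\beta \cap O_\gamma, R)$, which finishes the proof since $N_\infty(O_\beta \cap O_\gamma, R) \subseteq N_\infty(X_\beta \cap X_\gamma, R)$. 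The only case that is not a direct copy of a condition already satisfied by $x$ is the crossing case $\{C_j^\beta, C_j^\gamma\} = \{\intCO{0}{\infty}, \intOC{-\infty}{0}\}$, where the intersection collapses to $\{0\}$: here the two hypotheses read $x_j \geq -R$ and $x_j \leq R$, which combine to exactly $\betr{x_j} \leq R$, the condition for $\{0\}$. I expect this crossing case to be the crux of the argument; every other coordinate case either repeats a half-line condition or a $\{0\}$-condition that $x$ already meets, and the reduction to independent coordinates is precisely what fails for a general metric but holds for $d_\infty$.
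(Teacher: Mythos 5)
Your argument is correct, and it takes a genuinely different route from the paper's. The paper works directly with the coordinate-wise infima $\inf_{x \in X}\betr{y_\delta - x_\delta}$: it classifies, for each coordinate, whether a blocky set contains the positive or the negative $\delta$-ray, then assumes $d_\infty(y, X_\beta \cap X_\gamma) > R$ and derives a contradiction by repeatedly setting offending coordinates of $y$ to zero while arguing that this never changes the distance to the blocky intersection. You instead decompose each blocky set as a union of coordinate orthants $O_b \subseteq X$ generated by its points, use that under $d_\infty$ the neighborhood of a product is the product of the one-dimensional neighborhoods, and reduce the lemma to a finite coordinate-by-coordinate case check on a single pair of orthants, with the crossing case $\intCO{0}{\infty} \cap \intOC{-\infty}{0} = \{0\}$ as the only nontrivial step. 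One small point you should make explicit: the identity $N_\infty\big(\bigcup_b O_b, R\big) = \bigcup_b N_\infty(O_b, R)$ is needed for a possibly infinite index set of points $b$, but it holds here because there are only at most $3^n$ distinct coordinate orthants, so the union is in fact finite. What your route buys is a direct, constructive argument in which the decoupling of coordinates is applied only to genuine product sets, where $d_\infty\big(y, \prod_j C_j\big) = \max_j d(y_j, C_j)$ holds exactly; the paper's proof applies the analogous coordinate-wise reasoning to $X_\beta \cap X_\gamma$ itself, which is blocky but need not be a product (a union of coordinate axes is blocky), so your orthant reduction is not merely stylistic --- it supplies precisely the product structure that legitimizes the passage from per-coordinate closeness to $d_\infty$-closeness. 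The paper's approach, in exchange, avoids introducing the orthant decomposition and stays closer to the definition of $N_\infty$.
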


\begin{proof}
    The direction \enquote{$\supseteq$} is immediate: If a point $y \in \rel^n$
    is at most $R$ away from $X_\beta \cap X_\gamma$, then it is at most
    $R$ away from both $X_\beta$ and $X_\gamma$ independently.

    To show \enquote{$\subseteq$}, fix
    $y \in N_\infty(X_\beta, R) \cap N_\infty(X_\gamma, R)$.
    We will show that this $y$ is already in
    $N_\infty(X_\beta \cap X_\gamma, R)$.
    For each coordinate
    $y_\delta$, by definition of $d_\infty$,
    we have $\inf_{x \in X} \betr{y_\delta - x_\delta} \leq R$
    for both $X = X_\beta$ and $X = X_\gamma$. In particular, for
    both $X = X_\beta$ and $X = X_\gamma$,
    \[
        \inf_{x \in X} \betr{y_\delta - x_\delta}
        = \begin{cases}
            0 & \textrm{if } y_\delta > 0 \textrm{ and }
                \{0\}^\delta \times \intCO{0}{+\infty}
                \times \{0\}^{n-\delta-1} \subseteq X, \\
            0 & \textrm{if } y_\delta < 0 \textrm{ and }
                \{0\}^\delta \times \intOC{-\infty}{0}
                \times \{0\}^{n-\delta-1} \subseteq X, \\
            \betr{y_\delta} \leq R & \textrm{otherwise}.
        \end{cases}
    \]
    Certainly, $X_\beta \cap X_\gamma$ is nonempty; at least the origin is part
    of this intersection.
    To finish the proof, assume that $d_\infty(y, X_\beta \cap X_\gamma) > R$.
    Then there exists a $\delta$-th coordinate such that
    \begin{equation}
    \label{eqn:coordinateDeltaTooBig}
        \inf \set{\betr{y_\delta - x_\delta}}{x \in X_\beta \cap X_\gamma} > R.
    \end{equation}
    Then $\betr{y_\delta} > R$ because
    $X_\beta \cap X_\gamma$ is blocky.
    This forbids the \enquote{otherwise}-case for both $X = X_\beta$ and
    $X = X_\gamma$. Both of the remaining two cases force the
    ray from the origin in the $\delta$-th dimension that contains
    $(0, \ldots, 0, y_\delta, 0, \ldots, 0)$ to be a subset of both
    $X_\beta$ and $X_\gamma$. But now, setting $y_\delta$ to zero will not
    affect the distance to the blocky set $X_\beta \cap X_\gamma$:
    \begin{align*}
        d_\infty \big((y_0, \ldots, y_{\delta-1}, y_\delta, y_{\delta+1},
            \ldots, y_{n-1}), X_\beta \cap X_\gamma\big)& \\
        = d_\infty \big((y_0, \ldots, y_{\delta-1}, 0, y_{\delta+1},
            \ldots, y_{n-1}), X_\beta \cap X_\gamma\big)&.
    \end{align*}
    After setting $y_\delta$ to $0$,
    if there are still coordinates remaining that satisfy
    \ref{eqn:coordinateDeltaTooBig}, repeat this argument and set those
    coordinates to $0$, too, without altering the distance
    to $X_\beta \cap X_\gamma$.
    Eventually, we find that no coordinates satisfy
    \ref{eqn:coordinateDeltaTooBig} anymore. Therefore the assumption
    $d_\infty(y, X_\beta \cap X_\gamma) > R$ is false and $y \in
    N_\infty(X_\beta \cap X_\gamma, R)$.
\end{proof}

\begin{cor}
\label{cor:blockyIsExcisive}
    Let $\{X_\beta\}_{\beta \in \alpha}$
    be a collection of blocky subsets of $\rel^n$ such that
    $\bigcup_{\beta \in \alpha} X_\beta = \rel^n$.
    Then $\{X_\beta\}_{\beta \in \alpha}$ is coarsely excisive
    with respect to the $\sup$-metric $d_\infty$.
    In particular,
    for a nonempty finite index set $J \subseteq \alpha$ and $R > 0$, we have
    \[
        \bigcap_{j \in J} N_\infty(X_j, R)
        = N_\infty\Big( \bigcap_{j \in J} X_j, R \Big).
    \]
\end{cor}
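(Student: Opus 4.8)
The plan is to bootstrap the two-set identity of Lemma \ref{lem:blockyIntersection} up to an arbitrary finite index set $J$ by induction on $\card J$, and then to read off coarse excisiveness by simply taking $S = R$. The real content of the statement already lives in the two-set lemma; the corollary is essentially an exercise in iterating it, provided one keeps track of blockiness along the way.

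First I would dispose of the easy containment, which needs no blockiness at all. For any collection of subsets, if a point $y$ lies within $d_\infty$-distance $R$ of $\bigcap_{j \in J} X_j$, then it lies within distance $R$ of each $X_j$ separately; hence $N_\infty\big(\bigcap_{j \in J} X_j, R\big) \subseteq \bigcap_{j \in J} N_\infty(X_j, R)$. The substance is therefore the reverse containment, which I would prove by induction on $\card J$. The base case $\card J = 1$ is a tautology, and $\card J = 2$ is exactly Lemma \ref{lem:blockyIntersection}. For the inductive step, fix some $j_0 \in J$, set $J' = J - \{j_0\}$, and write
\[
    \bigcap_{j \in J} N_\infty(X_j, R)
    = N_\infty(X_{j_0}, R) \cap \bigcap_{j \in J'} N_\infty(X_j, R)
    = N_\infty(X_{j_0}, R) \cap N_\infty\Big( \bigcap_{j \in J'} X_j, R \Big),
\]
where the final equality is the induction hypothesis.

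The crucial observation — and the step I expect to carry the real weight — is that $\bigcap_{j \in J'} X_j$ is again a blocky set: this is guaranteed by the earlier remark that finite intersections of blocky subsets of $\rel^n$ are blocky, and it is nonempty because every $X_j$ contains the origin. With both $X_{j_0}$ and $\bigcap_{j \in J'} X_j$ blocky, I may invoke Lemma \ref{lem:blockyIntersection} once more to obtain
\[
    N_\infty(X_{j_0}, R) \cap N_\infty\Big( \bigcap_{j \in J'} X_j, R \Big)
    = N_\infty\Big( X_{j_0} \cap \bigcap_{j \in J'} X_j, R \Big)
    = N_\infty\Big( \bigcap_{j \in J} X_j, R \Big),
\]
which closes the induction and establishes the displayed equality asserted in the corollary.

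Finally, coarse excisiveness in the sense of Definition \ref{dfn:coarselyExcisiveCover} is immediate: for every nonempty finite $J \subseteq \alpha$ and every $R > 0$, the equality just proved gives $\bigcap_{j \in J} N_\infty(X_j, R) \subseteq N_\infty\big(\bigcap_{j \in J} X_j, S\big)$ with the choice $S = R$, which is in fact stronger than the inclusion the definition demands. The only genuine subtlety in the whole argument is the need to keep the partial intersections blocky so that the two-set lemma can be reapplied at each inductive stage; this is precisely what closure of blockiness under intersection supplies, so I do not anticipate any further obstacle.
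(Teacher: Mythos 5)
Your proposal is correct and follows essentially the same route as the paper: induction on $\card J$ with base case $\card J = 1$, peeling off one index, invoking the closure of blockiness under intersection, and applying Lemma \ref{lem:blockyIntersection} to close the inductive step, then taking $S = R$ for coarse excisiveness. The only cosmetic difference is that you separate out the trivial containment explicitly, which the paper leaves implicit in the equality of the two-set lemma.
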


\begin{proof}
    For $\card J = 1$, the claim is trivial.
    For a $\card J$-fold intersection with $\card J > 1$, use induction
    along the cardinality of $J$: Choose $\beta \in J$ and set
    $J' = J - \{\beta\}$ for which the claim already holds. Then
    \begin{align*}
        \bigcap_{j \in J} N_\infty(X_j, R)
        &= N_\infty(X_\beta, R) \cap \bigcap_{j \in J'} N_\infty(X_j, R) \\
        &= N_\infty(X_\beta, R)
            \cap N_\infty \Big( \bigcap_{j \in J'} X_j, R \Big) \\
        &= N_\infty \Big( \bigcap_{j \in J} X_j, R \Big),
    \end{align*}
    applying Lemma \ref{lem:blockyIntersection} at the end
    because $\bigcap_{j \in {J'}} X_j$ is blocky.
\end{proof}

\begin{pro}
\label{pro:blockDecompSupIsExcisive}
    The block decomposition $\{X_j\}_{j \leq n}$ of $\rel^n$ from Definiton
    \ref{dfn:blockDecomposition}
    is coarsely excisive under the $\sup$-metric $d_\infty$:
    For a nonempty $J \subseteq \setton$ and $R > 0$, we have
    \[
        \bigcap_{j \in J} N_\infty(X_j, R)
        = N_\infty\Big( \bigcap_{j \in J} X_j, R \Big).
    \]
\end{pro}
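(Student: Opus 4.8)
The plan is to reduce the statement directly to Corollary \ref{cor:blockyIsExcisive}, which already proves the equality $\bigcap_{j \in J} N_\infty(X_j, R) = N_\infty\big(\bigcap_{j \in J} X_j, R\big)$ for any cover of $\rel^n$ by blocky sets. It therefore suffices to check the two hypotheses of that corollary for the block decomposition of Definition \ref{dfn:blockDecomposition}: that each block $X_j$ is blocky in the sense of Definition \ref{dfn:blocky}, and that $\bigcup_{j \leq n} X_j = \rel^n$.

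To verify blockiness I would run through the three cases of Definition \ref{dfn:blockDecomposition}. In each case the origin lies in $X_j$, since every factor $\intCO{0}{\infty}$, $\intOC{-\infty}{0}$, and $\rel$ contains $0$. Closure under nonnegative scaling of a single coordinate is equally immediate: for $\lambda \geq 0$ one has $\lambda \cdot \intCO{0}{\infty} \subseteq \intCO{0}{\infty}$, $\lambda \cdot \intOC{-\infty}{0} \subseteq \intOC{-\infty}{0}$, and $\lambda \cdot \rel \subseteq \rel$, so scaling any one coordinate keeps the point inside the product defining $X_j$. This covers $j = 0$, $0 < j < n$, and $j = n$ at once.

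For the covering property I would take an arbitrary $x \in \rel^n$ and pick the smallest index $k \in \settonMinusOne$ with $x_k \leq 0$. If $k = 0$ then $x \in X_0$; if $0 < k < n$ then $x_0, \dots, x_{k-1} > 0$ while $x_k \leq 0$, placing $x$ in $X_k$; and if no such $k$ exists then all of $x_0, \dots, x_{n-1}$ are positive, so $x \in X_n$. Hence the blocks cover $\rel^n$, and Corollary \ref{cor:blockyIsExcisive} gives the claimed equality, with coarse excisiveness holding even for $S = R$. There is no genuine obstacle here; the only points that demand care are reading ``blocky'' as closure under coordinate \emph{scaling} (rather than translation) and tracking the half-open boundary conventions so that the cover leaves no gap, while the metric estimate itself is entirely absorbed into the corollary.
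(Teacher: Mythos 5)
Your proposal is correct and follows the paper's own route exactly: the paper likewise proves this by observing that each $X_j$ is blocky as a Cartesian product of copies of $\intOC{-\infty}{0}$, $\intCO{0}{\infty}$, and $\rel$, and then invoking Corollary \ref{cor:blockyIsExcisive}. You simply spell out the blockiness check and the covering property in more detail than the paper does.
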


\begin{proof}
    Each $X_j$ is blocky as a cartesian product of copies of
    $\intOC{-\infty}{0}$, $\intCO{0}{\infty}$, and $\rel$.
    The result follows from Corollary \ref{cor:blockyIsExcisive}.
\end{proof}

\begin{lem}[Relating 1-metric and $\sup$-metric]
\label{lem:block1sup}
    Let $X \subseteq \rel^n$ be arbitrary and choose $R > 0$. Denote by
    $N_1(X, R)$ the neighborhood of $X$ under the 1-metric $d_1$
    and by $N_\infty(X, R)$ its neighboorhood under the
    $\sup$-metric $d_\infty$. Then
    \begin{equation}
    \label{eqn:block1sup}
        N_1(X, R)
        \subseteq N_\infty(X, R)
        \subseteq N_1(X, nR).
    \end{equation}
\end{lem}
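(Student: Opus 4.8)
The plan is to reduce this statement about $R$-neighborhoods to the elementary pointwise comparison of the two metrics on $\rel^n$. For any $x$, $y \in \rel^n$ the numbers $\betr{x_j - y_j}$ are nonnegative, so the largest of them is bounded above by their sum, while the sum of $n$ nonnegative numbers is bounded above by $n$ times the largest. This yields the chain of pointwise inequalities
\[
    d_\infty(x, y) \leq d_1(x, y) \leq n \cdot d_\infty(x, y),
\]
valid for all $x$, $y \in \rel^n$. Both inclusions in \ref{eqn:block1sup} then follow by feeding these inequalities into Definition \ref{dfn:rneighborhood}, where membership in an $R$-neighborhood is measured through an infimum of distances to $X$.

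For the first inclusion $N_1(X, R) \subseteq N_\infty(X, R)$, I would take $x \in N_1(X, R)$, so that $\inf_{y \in X} d_1(x, y) \leq R$. Since $d_\infty(x, y) \leq d_1(x, y)$ holds for every $y \in X$, monotonicity of the infimum under a pointwise inequality gives $\inf_{y \in X} d_\infty(x, y) \leq \inf_{y \in X} d_1(x, y) \leq R$, whence $x \in N_\infty(X, R)$.

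For the second inclusion $N_\infty(X, R) \subseteq N_1(X, nR)$, I would start from $x \in N_\infty(X, R)$, i.e.\ $\inf_{y \in X} d_\infty(x, y) \leq R$. Applying the right-hand inequality $d_1(x, y) \leq n \cdot d_\infty(x, y)$ pointwise and pulling the positive constant $n$ out of the infimum yields $\inf_{y \in X} d_1(x, y) \leq n \inf_{y \in X} d_\infty(x, y) \leq nR$, so $x \in N_1(X, nR)$.

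Because $X$ is completely arbitrary, the only point requiring any care is that the neighborhood is defined through an infimum rather than an attained minimum; accordingly I work with infima throughout instead of selecting a nearest point, and the monotonicity of $\inf$ under a pointwise bound is exactly what makes both steps go through. This is genuinely the whole difficulty — there is no geometric obstacle, and the degenerate case $X = \emptyset$, where all three neighborhoods are empty, is subsumed automatically since the infimum over the empty set is infinite.
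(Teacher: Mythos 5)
Your proof is correct and follows essentially the same route as the paper: establish the pointwise comparison $d_\infty(x,y) \leq d_1(x,y) \leq n\, d_\infty(x,y)$ and then pass to the infimum over $X$ in the definition of the $R$-neighborhood. Your explicit attention to the infimum (rather than an attained minimum) and to the empty-set case is slightly more careful than the paper's wording, but the substance is identical.
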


\begin{proof}
    For $x$, $y \in \rel^n$ arbitrary, we always have
    \[
        d_1(x,y) = \sum_{j < n} \betr{x_j - y_j}
        \geq \sup_{j < n} \betr{x_j - y_j}
        = d_\infty(x,y)
    \]
    and
    \[
        nd_\infty(x,y)
        = \sum_{j < n} \sup_{{j'} < n} \betr{x_{j'} - y_{j'}}
        \geq \sum_{j < n} \betr{x_j - y_j}
        = d_1(x,y).
    \]
    These estimates continue to hold
    when we take $\inf_{x \in X}$ for each term instead of a
    single point $x$, yielding the distance to $X$.
    Passing to neighborhoods, since larger metrics mean smaller
    neighborhoods, we get for
    the inclusion on the right-hand side of \ref{eqn:block1sup}:
    \begin{align*}
        N_\infty(X, R)
        &= \set{y \in \rel^n}{d_\infty(X,y) \leq R} \\
        &= \set{y \in \rel^n}{nd_\infty(X,y) \leq nR} \\
        &\subseteq \set{y \in \rel^n}{d_1(X,y) \leq nR} \\
        &= N_1(X, nR).
    \end{align*}
    A similar estimate holds for the inclusion on the left-hand side
    of \ref{eqn:block1sup}.
\end{proof}

\begin{pro}
\label{pro:blockDecomp1isExcisive}
    The block decomposition $\{X_j\}_{j \leq n}$ is coarsely
    excisive under the 1-metric $d_1$:
    Given a nonempty $J \subseteq \setton$ and $R > 0$, we can set
    $S = nR$ to ensure
    \[
        \bigcap_{j \in J} N_1(X_j, R)
        \subseteq
        N_1\Big( \bigcap_{j \in J} X_j, nR \Big).
    \]
\end{pro}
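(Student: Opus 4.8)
The plan is to deduce the $d_1$ statement directly from the $d_\infty$ statement of Proposition \ref{pro:blockDecompSupIsExcisive} by sandwiching the two kinds of neighborhoods with the metric comparison of Lemma \ref{lem:block1sup}. No new geometry of $\rel^n$ is needed: the hard work has already been carried out in the $\sup$-metric, and the only remaining ingredient is that $d_1$ and $d_\infty$ are coarsely comparable with the factor $n$.

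First I would apply the left inclusion $N_1(X, R) \subseteq N_\infty(X, R)$ from Lemma \ref{lem:block1sup} to each block $X_j$ with $j \in J$, and intersect over $j \in J$ to obtain
\[
    \bigcap_{j \in J} N_1(X_j, R) \subseteq \bigcap_{j \in J} N_\infty(X_j, R).
\]
By Proposition \ref{pro:blockDecompSupIsExcisive}, the right-hand side is \emph{exactly} $N_\infty\big(\bigcap_{j \in J} X_j, R\big)$. Finally I would apply the right inclusion $N_\infty(X, R) \subseteq N_1(X, nR)$ of Lemma \ref{lem:block1sup} to the single set $X = \bigcap_{j \in J} X_j$, yielding
\[
    N_\infty\Big(\bigcap_{j \in J} X_j, R\Big) \subseteq N_1\Big(\bigcap_{j \in J} X_j, nR\Big).
\]
Chaining these three steps produces the claimed inclusion with $S = nR$.

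The only point worth flagging is that the argument genuinely uses the \emph{equality} furnished by Proposition \ref{pro:blockDecompSupIsExcisive}, not a mere inclusion: we first enlarge to $\sup$-neighborhoods, then pass back down through the excisive identity, and only afterwards pay the factor-$n$ cost of returning to $d_1$. Since each $X_j$ and every intersection $\bigcap_{j \in J} X_j$ is blocky, Corollary \ref{cor:blockyIsExcisive} supplies that equality, so there is no genuine obstacle here — the proposition is essentially a two-line corollary of the two preceding results.
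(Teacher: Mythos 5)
Your proposal is correct and is essentially identical to the paper's own proof: both chain $\bigcap_{j \in J} N_1(X_j, R) \subseteq \bigcap_{j \in J} N_\infty(X_j, R) = N_\infty\big(\bigcap_{j \in J} X_j, R\big) \subseteq N_1\big(\bigcap_{j \in J} X_j, nR\big)$ using Lemma \ref{lem:block1sup} on both ends and the $d_\infty$-equality from Proposition \ref{pro:blockDecompSupIsExcisive} in the middle. Your remark that the middle step uses the genuine equality (not a mere inclusion) is a fair observation, though an inclusion there would in fact also suffice for the claimed containment.
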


\begin{proof}
    Combining Proposition \ref{pro:blockDecompSupIsExcisive} with Lemma
    \ref{lem:block1sup}, we get
    \begin{align*}
        \bigcap_{j \in J} N_1(X_j, R)
        &\subseteq \bigcap_{j \in J} N_\infty(X_j, R) \\
        &= N_\infty \Big( \bigcap_{j \in J} X_j, R \Big) \\
        &\subseteq N_1 \Big( \bigcap_{j \in J} X_j, nR \Big).
        \qedhere
    \end{align*}
\end{proof}

This finishes the preparations for our proof of Theorem
\ref{thm:kHomologyOfEuclideanSpace}: We would like to show
\kTheoryOfRelN

\begin{proof}[Proof of Theorem \ref{thm:kHomologyOfEuclideanSpace}]
    The block decomposition $\{X_j\}_{j \leq n}$ of $\rel^n$ into $n+1$ pieces
    from Definition \ref{dfn:blockDecomposition} is coarsely excisive
    under either the $\sup$-metric by Proposition
    \ref{pro:blockDecompSupIsExcisive}
    or the 1-metric by Proposition \ref{pro:blockDecomp1isExcisive}.

    Intersecting fewer than $n+1$ blocks $X_j$ yields a flasque space $Y$
    with trivial $K_* \cualalg Y = 0$. Intersecting
    all $n+1$ points gives the compact one-point set $\{0\}$ with
    $K_s \cualalg \{0\} = \zet$ for $s$ even and $K_s \cualalg \{0\} = 0$
    for $s$ odd.

    These results fit into our spectral sequence from
    Theorem \ref{thm:sseqForFiniteCoarselyExcisiveCovers}
    for coarsely excisive covers, letting $J$ range over all nonempty subsets
    of $\setton$: The first page is
    \[
        E^1_{p,q} \cong
            \begin{dcases*}
                \bigoplus_{\card J = p+1}
                K_q \cualalg \Big( \bigcap_{j \in J} X_j \Big)
                &
                for $0 \leq p \leq n$,\\
                0 & for $p < 0$ or $p > n$,
            \end{dcases*}
    \]
    and the spectral sequence converges to $K_* \cualalg \rel^n$.
    The $E^1$-term has only one nonzero column $E^1_{n,*}$ from intersecting
    all $n+1$ pieces:
    \[
        \begin{tikzpicture}
            \masseq{
                \phantom{-}2 & 0 & \dotsb & 0 & \zet & 0 & 0 \\
                \phantom{-}1 & 0 & \dotsb & 0 & 0    & 0 & 0 \\
                \phantom{-}0 & 0 & \dotsb & 0 & \zet & 0 & 0 \\
                -1 & 0 & \dotsb & 0 & 0    & 0 & 0 \\
                & 0 & \dotsb & n-1 & \phantom{-.}n\phantom{-.}  & n+1 & n+2 \\
                };
            \drawSseqAxes{7}{5}
        \end{tikzpicture}
    \]
    This spectral sequence collapses on the first page. There is no extension
    problem to solve. We may read $K_s \cualalg \rel^n$ directly
    from the $s$-th diagonal $p + q = s$ of $E^1_{*,*}$:
    If $s - n$ is even, this K-theory is $\zet$; otherwise, it vanishes.
\end{proof}

\section{Infinite sums of ideals}\thispagestyle{plain}
\label{sec:infiniteIdeals}

For $A = I_0 + I_1 + \dotsb + I_n$, we have a spectral sequence. What happens
when $A$ is the norm closure of a sum over countably many C*-ideals
instead of over finitely many? Now $A$ is a direct limit C*-algebra,
\[
    A = \overline{\sum_{j \in \nat} I_j}
    = \overline{\bigcup_{n \in \nat} \Big( \sum_{j < n} I_j \Big)}
    = \overline{\bigcup_{
            \substack{J \subs \nat \\ \card J \in \nat}
        } \Big( \sum_{j \in J} I_j \Big)}.
\]
Can we replace $\Delta^n$ by an infinite simplex in the underlying
construction?
Or can we take the existing spectral sequence for a subalgebra
$A(n) = \sum_{j<n} I_j$, then use the inclusion of algebras
$A(n) \to A(n+1)$ to link the spectral sequences together? Does the direct
limit of these spectral sequences converge to $K_*A$, or can we at least
salvage some information about $K_*A$ from this construction?

After developing a spectral sequence for
$A = \overline{\sum_{j \in \nat} I_j}$,
Section \ref{sec:uncountableSums} will generalize the result to direct limits
$A = \overline{\sum_{\beta \in \alpha} I_\beta}$ of sums of uncountably many C*-ideals $I_\beta$ for arbitrary index sets $\alpha$.

\subsection{Na\"{i}ve approaches}

Let $A = \overline{\sum_{j \in \nat} I_j}$
be a C*-algebra with the $I_j \subseteq A$
closed two-sided ideals. For $n \in \nat$ and $j \leq n$, we have defined
the \emph{cake piece}
$\Delta^n_j$ in Definition \ref{dfn:cakePieces} as a subset
of the standard simplex $\Delta^n$.
We form C*-algebras $B = B(I_0, \ldots, I_n)$
based on the first $n + 1$ ideals like in Definition \ref{dfn:cakealgebra}:
\equationDefineB
This leads to algebras $B(I_0, \ldots, I_n)_J$ for
$J \subseteq \setton$. But everything depends on our initial
choice of the simplex $\Delta^n$.

One immediate idea is to generalize the underlying function algebras $B_J$:

\begin{dfn}
    We define the \emph{infinite simplex}
    \[
        \Delta^\nat = \set{(x_j)_j \in [0,1]^\nat}
            { \sum_{j=0}^\infty x_j \leq 1 }.
    \]
    This becomes a topological space with the usual product topology.
\end{dfn}

Let $A = \overline{\sum_{j \in \nat} I_j}$ be a C*-algebra.
Even if we succeed in defining a function space
$\Delta^\nat_J$ for $J \subseteq \nat$
and function algebras $B_J \subseteq B$ of certain functions
$\Delta^\nat \to A$,
it will be hard interpret the function algebras appropriately.
In the finite case with $n+1$ ideals, the algebra
\[
    \set{f\colon \Delta^n \to A}{f \restr \partial \Delta^n = 0}
\]
is isomorphic to the $n$-fold suspension of $A$. The suspension isomorphism
allowed us to relate the K-theory of ideals to the K-theory of $A$.
When we replace $\Delta^n$ with $\Delta^\nat$, we lose the suspension
isomorphism and cannot give a convergence theorem for a spectral sequence.

For another approach,
recall the spectral sequence for ideal inclusions,
constructed both in Section \ref{sec:schochetSseq}
and by C. Schochet in \cite{schochet-sseq}:

\schochetSseq*

In our setting, we do not have $A = \overline{\bigcup_{p \in \nat} I_p}$
but merely $A = \overline{\sum_{j=0}^\infty I_j}$.
A plausible adaption to our setting might be:
\begin{itemize}
    \item Compute $K_*\big(\sum_{j=0}^n I_j\big)$
        from $I_0$, $I_1$, $I_2$, $\ldots$, $I_n$ using our spectral sequence
        that takes finite sums of ideals.
    \item For each $n$, compute
        $K_*\big(\sum_{j=0}^n I_j / \sum_{j=0}^{n-1} I_j\big)$
        with the six-term exact sequence.
    \item Feed these results at once into the spectral sequence
        from Theorem \ref{thm:schochetSseq}
        to compute $K_* A$.
\end{itemize}
The downside is the multilayered computation: We build many spectral
sequences, solve an extension problem for every single one, and then fit the
results into yet another spectral sequence. This is unlikely to work except
in trivial cases where $K_*\big(\overline{\sum_{j=0}^\infty I_j}\big)$
would have been straightforward to compute by other means, or when
the K-theory would
already equal $K_*\big(\sum_{j=0}^n I_j\big)$ for an $n \in \nat$.
Instead, we would
like a more robust approach featuring a single spectral sequence
$\{E^r_{p,q}, d^r\}_{r,p,q}$
with terms $E^1_{p,q}$ that are easier to describe and compute.

\subsection{Linking two chains of ideals}

\begin{ntt}
    Fix a C*-algebra $A = \overline{\sum_{j=0}^\infty I_j}$
    where the $I_j$ are closed two-sided
    ideals of $A$. We denote by
    $E(n)^r_{p,q}$ the $(p,q)$-th module in the
    $r$-th page of the spectral sequence for the sum of the first
    $n$ ideals $I_0 + I_1 + \dotsb + I_{n-1}$:
    Each of these spectral sequences
    $\{E(n)^r_{p,q}, d(n)^r\}_{r,p,q}$ is constructed according to
    our main Theorem \ref{thm:sseqforsums} about finite sums of
    C*-ideals.
\end{ntt}

We will construct a morphism of spectral sequences from $E(n)$ to $E(n+1)$.
This requires several technical propositions. Each spectral sequence
arises from a chain of ideals
$Q_0 \subseteq Q_1 \subseteq Q_2 \subseteq \dotsb$ that strongly depends
on a fixed number of ideals $I_0$, $I_1$, $\ldots$, $I_j$, $\ldots$
chosen in the beginning of the construction -- see Section
\ref{sec:cakePieceAlgebras}.
To relate $E(n)$ with $E(n+1)$, we first construct
morphisms between the two chains of ideals that lead to
these two spectral sequences.

Along this way, we diligently track naturality
with respect to \starhoms{} that preserve ideal decompositions
(Definition \ref{dfn:preserveIdealDecomps1} and Remark
\ref{rem:preserveIdealDecomps2}).

\begin{lem}
\label{lem:addzeroideal}
    Let $I_0, \ldots, I_n$ be C*-ideals. Fix $k \in \setton$. Define
    $I'_k = 0$, moreover $I'_j = I_j$ for $j \neq k$.
    Fix $J \subseteq \setton - \{k\}$. Then
    \[
        B(I_0, \ldots, I_n)_J =
        B(I'_0, \ldots, I'_n)_J =
        B(I'_0, \ldots, I'_n)_{J \cup \{k\}}.
    \]
\end{lem}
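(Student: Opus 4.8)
The plan is to unwind Definition \ref{dfn:cakealgebra} for each of the three algebras and to check that they impose identical conditions on a continuous function $f\colon \Delta^n \to A$, so that as sets of functions --- and hence as C*-algebras under the pointwise operations --- they coincide.

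First I would list the constraints. A function lies in $B(I_0, \ldots, I_n)_J$ precisely when $f \restr \partial\Delta^n = 0$, $f(\Delta^n_j) \subseteq I_j$ for every $j \in \setton$, and $f(\Delta^n_{j'}) = 0$ for every $j' \notin J$. Because $k \notin J$, the last clause already forces $f(\Delta^n_k) = 0$; for the remaining indices $j \neq k$ the conditions are $f(\Delta^n_j) \subseteq I_j$, supplemented by $f(\Delta^n_j) = 0$ when additionally $j \notin J$.

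Next I would compare this with the two algebras built from $I'_0, \ldots, I'_n$. In $B(I'_0, \ldots, I'_n)_J$ the defining constraint $f(\Delta^n_k) \subseteq I'_k = 0$ enforces $f(\Delta^n_k) = 0$ on its own, and since $I'_j = I_j$ for $j \neq k$, all other constraints agree verbatim with those above. In $B(I'_0, \ldots, I'_n)_{J \cup \{k\}}$ the index $k$ now belongs to the selecting set, so the clause $f(\Delta^n_{j'}) = 0$ for $j' \notin J \cup \{k\}$ no longer names $k$; but $f(\Delta^n_k) = 0$ remains forced by $f(\Delta^n_k) \subseteq I'_k = 0$. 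Thus adjoining $k$ to the index set changes nothing, the vanishing on the $k$-th cake piece being guaranteed by the trivial ideal regardless.

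The one step that needs care is the nominal target algebra: functions in $B(I_0, \ldots, I_n)$ take values in $A = \sum_{j=0}^n I_j$, whereas those in $B(I'_0, \ldots, I'_n)$ take values in $A' = \sum_{j \neq k} I_j$, and a priori $A'$ could be a proper subalgebra of $A$. I would dispose of this using that the cake pieces cover the simplex, $\Delta^n = \bigcup_{j=0}^n \Delta^n_j$, since each point lies in the cake piece indexed by a coordinate at which its value is minimal. For every function appearing in any of the three algebras we have just seen $f(\Delta^n_k) = 0$ and $f(\Delta^n_j) \subseteq I_j$ for $j \neq k$, whence $f(\Delta^n) \subseteq \sum_{j \neq k} I_j = A'$; so the distinction between the targets $A$ and $A'$ is immaterial. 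I do not anticipate a genuine obstacle --- the proof is a definition-chase --- and the only thing to keep straight is which $\Delta^n_j$-constraint arises from the ambient algebra $B$ and which from the index set $J$.
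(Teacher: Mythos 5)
Your proof is correct and follows essentially the same definition-chasing argument as the paper: both equalities come down to observing that the only constraints that differ involve the $k$-th cake piece, where the functions are forced to vanish in every case. Your extra remark about the nominal codomain $\sum_{j\neq k} I_j$ versus $\sum_j I_j$ is a careful touch the paper elides, and your covering argument disposes of it correctly.
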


\begin{proof}
    The first equality holds because functions in
    $B(I_0, \ldots, I_n)_J$ and $B(I'_0, \ldots, I'_n)_J$ are never allowed
    to take nonzero values in $I_k$ or $I'_k$, the only ideals that
    differ in the construction.

    The second equality holds because $B_J$ and
    $B_{J \cup \{k\}}$ differ at most by conditions enforced on the subspace
    $\Delta^n_k \subseteq \Delta^n$, but on $\Delta^n_k$,
    functions map to $I'_k = 0$ anyway.
\end{proof}

\begin{pro}
\label{pro:addzeroideal}
    Let $I_0$, $\ldots$, $I_n$ be C*-ideals that sum to $A$.
    Construct the cake algebra $B(I_0, \ldots, I_n)$
    as in Definition \ref{dfn:cakealgebra}.
    Now include the zero algebra $0 = I_{n+1}$
    as an extra ideal, leading to a
    different cake algebra $B(I_0, \ldots, I_n, 0)$:
    \begin{align*}
        B(I_0, \ldots, I_n)
        &\subseteq \cont(\Delta^n, A),\\
        B(I_0, \ldots, I_n, 0)
        &\subseteq \cont(\Delta^{n+1}, A).
    \end{align*}
    Let $J \subseteq \setton$ be an index set.
    There is a suspension isomorphism
    \[
        S\big(B(I_0, \ldots, I_n)_J\big) \cong
        B(I_0, \ldots, I_n, 0)_{J \cup \{n+1\}}.
    \]
\end{pro}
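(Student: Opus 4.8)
The plan is to reduce both sides to concrete function algebras on explicit subspaces of the simplices, and then to produce a single stratum-preserving homeomorphism that realizes the extra interval coordinate of the suspension.

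First I would unwind the two algebras. Writing the added ideal as $I_{n+1}=0$, a function $f\in B(I_0,\ldots,I_n,0)_{J\cup\{n+1\}}$ is forced to vanish on $\Delta^{n+1}_{n+1}$ (its values there lie in $I_{n+1}=0$) and on every $\Delta^{n+1}_{j'}$ with $j'\in\setton-J$, so it is supported on the region $R=\bigcup_{j\in J}\Delta^{n+1}_j$, vanishes there on $\partial\Delta^{n+1}$ and on the wall $R\cap\Delta^{n+1}_{n+1}$, and takes values in $I_j$ on $\Delta^{n+1}_j$. On the other side, unwinding the suspension convention $SB=\set{g\colon[0,1]\to B}{g(0)=g(1)=0}$, an element of $S\big(B(I_0,\ldots,I_n)_J\big)$ is exactly a continuous $H\colon[0,1]\times\Delta^n\to A$ that vanishes on $\{0,1\}\times\Delta^n$ and on $[0,1]\times\partial\Delta^n$, vanishes on $[0,1]\times\Delta^n_{j'}$ for $j'\notin J$, and takes values in $I_j$ on $[0,1]\times\Delta^n_j$. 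Restriction to supports, as in Proposition \ref{pro:werner}, makes both descriptions literal.

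Next I would build a homeomorphism $\Phi\colon[0,1]\times\Delta^n\to R$ matching these data. The guiding idea: for $x\in R$ set $\mu(x)=\min_{j\le n}x_j$, so that $R=\set{x}{x_{n+1}\ge\mu(x)}$ and the wall is $\set{x}{x_{n+1}=\mu(x)}$; let the interval parameter $t$ interpolate, for fixed transverse position, between $x_{n+1}=\mu(x)$ (the wall, $t=0$) and the maximal admissible $x_{n+1}$, attained on $\partial\Delta^{n+1}$ ($t=1$), while the $\Delta^n$ variable records the normalized remaining coordinates. By construction $\Phi$ sends $\{0\}\times\Delta^n$ onto the wall $R\cap\Delta^{n+1}_{n+1}$, sends $\{1\}\times\Delta^n$ together with $[0,1]\times\partial\Delta^n$ onto $\partial\Delta^{n+1}\cap R$, and sends each $[0,1]\times\Delta^n_j$ onto $\Delta^{n+1}_j\cap R$; hence it carries all forced-vanishing loci and all ideal-valued strata of one side to the other. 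Precomposition with $\Phi$ is then the desired \starhom{} isomorphism, and restricting to the sub-union indexed by $J$ reduces general $J$ to the full case $J=\setton$. Because $\Phi$ depends only on the simplices and not on the $I_j$, and sends $\Delta^n_j$-strata to $\Delta^{n+1}_j$-strata, the isomorphism commutes with postcomposition by any \starhom{} preserving the ideal decomposition, which supplies the naturality needed in the later linking construction.

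The main obstacle is verifying that $\Phi$ is a genuine homeomorphism respecting every stratum simultaneously, rather than one cake piece at a time. The delicate points are continuity and bijectivity where the ``maximal $x_{n+1}$'' locus degenerates, and along the multiple intersections $\Delta^n_L$, where the strata of both sides must be matched compatibly. I would make this rigorous by first treating a single cake piece, where Corollary \ref{cor:howDeltaNLooksLike} and Lemma \ref{lem:pairishomeomorphic} identify $\Delta^{n+1}_j\cong[0,1]\times\Delta^n_j$ as a pair and Proposition \ref{pro:isnfoldsuspension} already exhibits both sides as $S^{n+1}I_j$ when $J=\{j\}$, and then checking that these piecewise identifications agree on the shared faces $\Delta^{n+1}_L$, so that they glue to the global $\Phi$.
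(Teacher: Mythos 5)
Your proposal is correct and is essentially the paper's own argument: both exhibit $\bigcup_{j\le n}\Delta^{n+1}_j$ as a stratum-preserving cylinder over $\Delta^n$, with the wall $\Delta^{n+1}_{n+1}$ at one end and $\partial\Delta^{n+1}$ at the other, and let the vanishing conditions absorb the degenerate fibers; the paper's explicit map $\varphi(y,t)=(y_0-t\,y_{\min},\ldots,y_n-t\,y_{\min},(n+1)t\,y_{\min})$ differs from your radial normalization only in formula, not in substance. The only cosmetic divergence is that the paper reduces general $J$ to $J=\setton$ by zeroing out the ideals $I_{j'}$ for $j'\notin J$ (Lemma \ref{lem:addzeroideal}) rather than by restricting the homeomorphism to the sub-union of cake pieces.
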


Figure \ref{fig:addzeroideal}
gives the geometric idea. Functions on the line vanish on
its two end points. Functions on the grey four-sided shape vanish on the
continuously drawn boundary lines but not on the dashed line.

We claim that the suspension algebra of the functions on the line is isomorphic
to the function algebra on the grey shape.

\BeginSimonFigure
\begin{tikzpicture}
	\coordinate (A1) at (0 cm, 0 cm);
	\coordinate (A2) at (4 cm, 0 cm);
    \coordinate (Ce) at (barycentric cs:A1=1,A2=1);

	\draw[thick] (A1) -- (A2);
    \eiddot{A1}{below}{(1, 0)}
    \eiddot{A2}{below}{(0, 1)}
    \eiddot{Ce}{below}{(\frac12, \frac12)}
    \node[above] at (barycentric cs:A1=1,Ce=1) {$I_1$};
    \node[above] at (barycentric cs:A2=1,Ce=1) {$I_0$};

	\coordinate (B1) at (6 cm, 0 cm);
	\coordinate (B2) at (10 cm, 0 cm);
	\coordinate (B3) at (8 cm, 3.464 cm);
    \coordinate (Cb) at (barycentric cs:B1=1,B2=1,B3=1);

    \fill[gray!50] (B1) -- (Cb) -- (B2) -- (B3) -- cycle;
	\draw[thick] (B1) -- (B2) -- (B3) -- (B1) -- (Cb) -- (B2);
    \draw[thick, dashed] (Cb) -- (B3);
    \eiddot{B1}{below}{(1, 0, 0)}
    \eiddot{B2}{below}{(0, 1, 0)}
    \eiddot{B3}{above}{(0, 0, 1)}
    \eiddot{Cb}{below}{}
    \node at (barycentric cs:B1=1,Cb=1,B3=1) {$I_1$};
    \node at (barycentric cs:B2=1,Cb=1,B3=1) {$I_0$};
    \node at (barycentric cs:B1=1,Cb=1,B2=1) {$I_2 = 0$};
\end{tikzpicture}
\EndSimonFigure{fig:addzeroideal}{
    Geometric idea of Proposition \ref{pro:addzeroideal}
}

\begin{proof}[Proof of Proposition \ref{pro:addzeroideal}]
    By Lemma \ref{lem:addzeroideal},
    we can reduce the case of arbitrary $J$ to $J = \setton$ by replacing
    $I_j$ with $0$ for all $j \notin J$, completing the construction in this
    proof, then re-inserting the original ideals $I_j$.

    Define
    \[
        X = \bigcup_{j = 0}^n \Delta^{n+1}_j, \qquad
        Y = X \cap \partial \Delta^{n+1}, \qquad
        Z = X \cap \Delta^{n+1}_{n+1}.
    \]
    The set $X$
    corresponds to the grey area in the example figure above, $Y$ to the
    grey area's ceiling, $Z$ to its floor.
    Functions in $B(I_0, \ldots, I_n, 0)$ vanish
    on $Y$ and $Z$, but they may assume nontrivial
    values in $I_0$, $\ldots$, $I_n$ on the interior $X - Y - Z$.

    The points $x$ in $X$ have $n+2$ barycentric coordinates
    $(x_0, \ldots x_{n+1})$. For functions in $B(I_0, \ldots, I_n, 0)$,
    the relative values of the first $n+1$ of these coordinates select ideals
    among $I_0$, $\ldots$, $I_n$ as the function's range.
    The final coordinate $x_{n+2}$
    does not affect that choice, but $x_{n+2}$ is not well-suited to see that
    $B(I_0, \ldots, I_n, 0)$ is isomorphic to a suspension algebra.
    Instead, we show this with a
    reparametrization $\varphi$ of $X$. Set
    \newcommand{\ymin}{\mathnormal{y_\mathrm{min}}}
    \newcommand{\interv}{[\frac1{n+2}, 1]}
    \[
        \ymin = \min \set{y_j}
        {y_j \textrm{ is a barycentric coordinate of }
        y = (y_0, \ldots, y_n)},
    \]
    \[
        \varphi\colon
        \Delta^n \times \left[\frac{1}{n+2}, 1\right]
        \to \Delta^{n+1},
    \]
    \[
        \varphi(y_0, \ldots, y_n, t) =
        (y_0 - {t \ymin}, \ldots,
         y_n - {t \ymin}, (n+1) t \ymin).
    \]
    This function $\varphi$ maps continuously to $X$:
    $\varphi$ distributes $(n+1)$ portions
    of $t \ymin$ from the first $(n+1)$ coordinates to the last
    coordinate. Thus
    the sum of all coordinates remains 1. Furthermore,
    $\varphi$ cannot map to the
    interior of $\Delta^{n+1}_{n+1}$ because the last coordinate cannot be
    the uniquely smallest: With $t \geq \frac1{n+2}$ by definition
    of $\varphi$, we have
    \[
        (n+1) t \ymin \geq \ymin - t \ymin.
    \]
    Equality holds exactly for $t = \frac1{n+2}$.

    The map $\varphi$ is surjective onto $X$: Construct a preimage of
    $(y_0, \ldots, y_n, y_{n+1})$ by distributing $\frac{y_{n+1}}{n+1}$
    onto each of the first $n+1$ coordinates, then choose $t$.
    Furthermore, $\varphi$ is injective on the
    interior of $\Delta^n \times \interv$.

    On the simplex boundaries, we need not check injectivity.
    Because $B(I_0, \ldots, I_n)$ or
    $B(I_0, \ldots, I_n, 0)$ must vanish on the simplex boundaries,
    it suffices to verify that all points of
    $\del \Delta^n \times \interv$ map to $\del \Delta^{n+1}$. This holds
    because $\ymin = 0$, thus $\ymin - t\ymin = 0$ regardless of the value
    $t \in \interv$.

    On the interior of $X$, that is, on the interior of
    $\varphi\big((\Delta^n)^\circ \times
    \mathnormal{]}\frac{1}{n+2}, 1\mathnormal{[}
    \big)$,
    functions in $B(I_0, \ldots, I_n, 0)$ are subject
    to the restrictions from the relations of the first $n+1$ barycentric
    coordinates, but not to any restriction from the final zero ideal or
    from the coordinate $t \in \interv$.
    On $\varphi\big((\Delta^n)^\circ \times \big\{\frac{1}{n+2}, 1\big\}\big)$,
    the functions must be
    zero. Thus via $\varphi$, we see that $B(I_0, \ldots, I_n, 0)$
    is isomorphic to the suspension of $B(I_0, \ldots, I_n)$.

    It suffices to parametrize $X$ instead of the entire simplex
    $\Delta^{n+1}$ because every considered C*-function living on $X$ has only
    one possible extension -- by zero --
    to a function in $B(I_0, \ldots, I_n, 0)$.
\end{proof}

\begin{rem}
    The isomorphism from Proposition \ref{pro:addzeroideal}
    is natural with respect to
    finite ideal decompositions: A collection of \starhoms{}
    $i_j\colon I_j \to I'_j$ for $j \in \setton$
    on the input C*-ideals, together with the constructed isomorphisms on
    either side, leads to a commutative diagram
    \[
        \begin{tikzpicture}
        \mama {
            S\big(B(I_0, \ldots, I_n)_J\big)
            & B(I_0, \ldots, I_n, 0)_{J \cup \{n+1\}}
            \\
            S\big(B(I'_0, \ldots, I'_n)_J\big)
            & B(I'_0, \ldots, I'_n, 0)_{J \cup \{n+1\}}.
            \\
        };
        \path[->, font=\scriptsize]
            (m-1-1) edge node[above] {$\cong$} (m-1-2)
            (m-2-1) edge node[below] {$\cong$} (m-2-2)
            (m-1-1) edge node[left] {$S\big(B(i_0, \ldots, i_n)\big)$} (m-2-1)
            (m-1-2) edge node[right] {$B(i_0, \ldots, i_n, 0)$} (m-2-2)
        ;
        \end{tikzpicture}
    \]
\end{rem}

\begin{pro}
\label{pro:conncetQpWithQpPlusOne}
    Let $A = I_0 + I_1 + \dotsb + I_n$ be a sum of C*-ideals.
    The function algebras $B_J = B(I_0, \ldots, I_n)_J$
    for $J \subseteq \setton$
    give rise to cake sums $Q_p = \sum_{\card J \leq p+1} B_J$ by Definition
    \ref{dfn:cakeSums}.

    Let
    $I_{n+1} = 0$ be an extra zero ideal. The function algebras $\widetilde B_J
    = B(I_0, \ldots, I_n, 0)_J$ for this larger set of ideals
    and $J \subseteq \{0, 1, \ldots, n, n+1\}$
    give rise to $\widetilde Q_p = \sum_{\card J \leq p+1} \widetilde B_J$.

    Then $\widetilde Q_p \cong SQ_p$ for $p \in \zet$
    and $\widetilde Q_{n+1} = \widetilde Q_{n}$.
    In other words, the chains of ideal inclusions $Q_p \to Q_{p+1}$
    and $\widetilde Q_p \to \widetilde Q_{p+1}$
    fit into this commutative diagram:
    \[
        \begin{tikzpicture}
        \mama {
            SQ_0 & SQ_1 & \cdots & SQ_n &
            \\
            \widetilde Q_0 & \widetilde Q_1 & \cdots
            & \widetilde Q_n & \widetilde Q_{n+1}.
            \\
        };
        \path[->, font=\scriptsize]
            (m-1-1) edge (m-1-2)
            (m-1-2) edge (m-1-3)
            (m-1-3) edge (m-1-4)
            (m-2-1) edge (m-2-2)
            (m-2-2) edge (m-2-3)
            (m-2-3) edge (m-2-4)
            (m-1-1) edge node[left]{$\cong$} (m-2-1)
            (m-1-2) edge node[left]{$\cong$} (m-2-2)
            (m-1-4) edge node[left]{$\cong$} (m-2-4)
            (m-1-4) edge node[above]{$\cong$} (m-2-5)
        ;
        \path[-]
            (m-2-4) \maeq (m-2-5)
        ;
        \end{tikzpicture}
    \]
\end{pro}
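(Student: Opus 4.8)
The plan is to reduce the whole statement to the single suspension isomorphism of Proposition \ref{pro:addzeroideal}, by a careful bookkeeping of index sets that exploits the fact that the extra zero ideal $I_{n+1} = 0$ forces every admissible function to vanish on the cake piece $\Delta^{n+1}_{n+1}$.

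First I would rewrite the defining sum $\widetilde Q_p = \sum_{\card K \leq p+1} \widetilde B_K$ as a sum running only over index sets that contain $n+1$. Since $I_{n+1} = 0$, Lemma \ref{lem:addzeroideal} (applied with the zero ideal sitting at position $k = n+1$) gives $\widetilde B_K = \widetilde B_{K \cup \{n+1\}}$ whenever $n+1 \notin K$. Splitting the sum according to whether $n+1 \in K$: the sets $K$ with $n+1 \in K$ and $\card K \leq p+1$ contribute the terms $\widetilde B_{J \cup \{n+1\}}$ with $J = K \setminus \{n+1\}$ and $\card J \leq p$, while the sets $K$ with $n+1 \notin K$ and $\card K \leq p+1$ contribute, via the Lemma, the terms $\widetilde B_{J \cup \{n+1\}}$ with $\card J = \card K \leq p+1$. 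Taking the union of these two families yields the clean identity
\[
    \widetilde Q_p = \sum_{\substack{J \subseteq \setton \\ \card J \leq p+1}} \widetilde B_{J \cup \{n+1\}},
\]
whose index family matches exactly the one defining $Q_p = \sum_{\card J \leq p+1} B_J$.

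Next I would feed in Proposition \ref{pro:addzeroideal}, which supplies isomorphisms $\widetilde B_{J \cup \{n+1\}} \cong S B_J$ for each $J \subseteq \setton$. The crucial point is that all of these arise from a single reparametrization $\varphi$ of $X = \bigcup_{j=0}^n \Delta^{n+1}_j$ that does not depend on $J$; hence the induced algebra isomorphism $\varphi^*$ carries the whole sum $\widetilde Q_p$ onto $S Q_p$, giving $\widetilde Q_p \cong S Q_p$ for all $p \in \zet$. For the stabilization claim, Remark \ref{rem:cakeSumTrivial}, applied to the $(n+2)$-ideal system, gives $\widetilde Q_{n+1} = \widetilde B$; on the other hand Lemma \ref{lem:addzeroideal} shows $\widetilde B = \widetilde B_{\setton}$, which is a summand of $\widetilde Q_n$ since $\card \setton = n+1$, so $\widetilde B \subseteq \widetilde Q_n \subseteq \widetilde B$ and therefore $\widetilde Q_n = \widetilde B = \widetilde Q_{n+1}$ as an equality of algebras.

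Finally I would assemble the commutative diagram. The top horizontal maps are the suspensions $S(\incl)$ of the inclusions $Q_p \to Q_{p+1}$, the bottom horizontal maps are the inclusions $\widetilde Q_p \to \widetilde Q_{p+1}$, and the vertical arrows are the isomorphisms $\varphi^*$ just constructed. Because every vertical arrow is a restriction of one and the same reparametrization isomorphism $\varphi^*$ and the horizontal arrows are plain subalgebra inclusions, each square commutes automatically; the rightmost triangle is consistent since $Q_n = B$ forces $S Q_n = S B \cong \widetilde B = \widetilde Q_n = \widetilde Q_{n+1}$. I expect the main obstacle to be the middle step: verifying rigorously that the single map $\varphi^*$ sends the finite cake-sum $\widetilde Q_p$ \emph{exactly} onto $S Q_p$ — equivalently, that suspension commutes with the finite sum of cake ideals and that no index set is lost or gained in the passage — rather than merely obtaining the isomorphism summand by summand.
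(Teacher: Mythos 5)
Your proof is correct and follows essentially the same route as the paper: both arguments reduce the claim to Lemma \ref{lem:addzeroideal} and Proposition \ref{pro:addzeroideal} via index-set bookkeeping, the only cosmetic difference being that you normalize every index set to contain $n+1$ while the paper drops $n+1$ from all of them. Your direct set-equality argument for $\widetilde Q_{n+1} = \widetilde Q_n$ and your explicit remark that all the summandwise suspension isomorphisms come from one global reparametrization $\varphi$ (so that the sum is carried exactly onto $SQ_p$) are, if anything, slightly more careful than the paper's own treatment of those two points.
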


\begin{proof}
    Our index sets will sometimes contain indices in $\setton$, sometimes
    in $\{0, 1, \ldots, n, n+1\}$.
    For clarity, given $p \in \zet$, we define these collections of index sets:
    \begin{align*}
        \mathscr A(p) &= \set{J \subseteq \setton}{\card J \leq p}, \\
        \mathscr B(p) &= \set{J \subseteq \{0, 1, \ldots, n+1\}}
                            {\card J \leq p}.
    \end{align*}
    For $p < 0$, $\mathscr A(p)$ and $\mathscr B(p)$ are empty and
    $Q_p$ is the zero algebra.

    The following argument will work without
    modification for all $p \in \zet$.
    For a given $p \in \zet$,
    we may write the collection
    $\mathscr B(p+1)$ as the disjoint union of
    $\mathscr A(p+1)$ and $\set{J \cup \{n+1\}}{J \in \mathscr A(p)}$.
    Applying this to the definition of $\widetilde Q_p$, we obtain
    \newcommand{\clapsum}[2]{\sum_{{\mathscr{#1}({#2})}}}
    \[
        \widetilde Q_p = \sum_{\mathscr B(p+1)} \widetilde B_J
        = \clapsum A {p+1} \widetilde B_J
        + \clapsum A{p} \widetilde B_{J \cup \{n+1\}}.
    \]
    By Lemma \ref{lem:addzeroideal}, the index $n+1$ can be dropped from
    $B_{J \cup \{n+1\}}$ with no change because $I_{n+1} = 0$:
    \[
        \widetilde Q_p
        = \clapsum A {p+1} \widetilde B_J
        + \clapsum A{p} \widetilde B_J.
    \]
    By definition of $\mathscr A(p)$ as a collection of all index sets
    up to cardinality $p$, we have
    $\mathscr A(p) \subseteq \mathscr A(p+1)$. The sum simplifies to
    \[
        \widetilde Q_p = \clapsum A {p+1} \widetilde B_J.
    \]
    None of the sets in $\mathscr A(p+1)$ contain the index $n+1$.
    This allows us to rewrite $\widetilde B_J$ as $SB_J$
    via a natural isomorphism according to Proposition \ref{pro:addzeroideal}.
    Taking sums commutes with suspensions:
    \[
        \widetilde Q_p
        = \clapsum A {p+1} \widetilde B_J
        \cong \clapsum A {p+1} SB_J
        = SQ_p.
    \]
    This shows the main result.
    The extra result $\widetilde Q_{n+1} = \widetilde Q_n$ follows from
    $\widetilde Q_n \cong SQ_n = SQ_{n+1} \cong \widetilde Q_{n+1}$.
\end{proof}

\begin{rem}
\label{rem:qTildeIsNatural}
    The constructed isomorphism is natural: It is a composition of equalities
    and the natural isomorphism from Proposition \ref{pro:addzeroideal}.
\end{rem}

\subsection{Compatibility of suspensions}
\label{sec:compatibilitySuspensions}

\begin{lem}
\label{lem:suspendedQuotient}
    Let $I$ be a C*-ideal in $A$. Then
    $S(A/I) \cong SA/SI$.
    Explicitly, there is an isomorphism $\Phi\colon SA/SI \to S(A/I)$ with
    \[
        \Phi(f + SI)(t) = f(t) + I \qquad
        \textrm{for } f\colon [0,1] \to A
        \textrm{ with } f(0) = f(1) = 0.
    \]
\end{lem}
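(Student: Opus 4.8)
The plan is to realize $\Phi$ as the map induced by the suspension of the quotient \starhom{} $\pi\colon A \to A/I$, and then to check that this induced map is bijective. Concretely, I would apply the suspension functor from the definition of $SA$ to $\pi$, obtaining a \starhom{} $S\pi\colon SA \to S(A/I)$ with $(S\pi)(f) = \pi \circ f$, so that $(S\pi)(f)(t) = f(t) + I$ for every $f \in SA$ and $t \in [0,1]$. The first steps are then routine: $S\pi$ is a \starhom{} because $S$ is a functor, and its kernel is exactly $SI$. Indeed, $f \in SA$ lies in $\ker(S\pi)$ if and only if $f(t) + I = 0$ for all $t$, i.e.\ $f(t) \in I$ for all $t$; since $I$ is closed in $A$, such an $f$ is precisely a continuous $I$-valued function on $[0,1]$ vanishing at $0$ and $1$, that is, an element of $SI$.

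By the first isomorphism theorem for C*-algebras, $S\pi$ therefore factors as $S\pi = \Phi \circ q$, where $q\colon SA \to SA/SI$ is the quotient map and $\Phi\colon SA/SI \to S(A/I)$ is an injective \starhom{}. Unwinding the factorization gives exactly $\Phi(f + SI)(t) = (S\pi)(f)(t) = f(t) + I$, the formula in the statement. Since an injective \starhom{} between C*-algebras is automatically isometric, $\Phi$ is an isometric embedding, and it will be an isomorphism as soon as $S\pi$ is surjective.

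The main obstacle is the surjectivity of $S\pi$: every continuous path $g\colon [0,1] \to A/I$ with $g(0) = g(1) = 0$ must lift to a continuous path $f\colon [0,1] \to A$ with $f(0) = f(1) = 0$ and $\pi \circ f = g$. I would obtain this from a continuous section of the quotient map: by the Bartle--Graves theorem the surjection $\pi\colon A \to A/I$ of Banach spaces admits a continuous (not necessarily linear) section $\sigma\colon A/I \to A$ with $\pi \circ \sigma = \id(A/I)$ and $\sigma(0) = 0$. Setting $f = \sigma \circ g$ yields a continuous lift with $f(0) = \sigma(g(0)) = 0 = f(1)$, so $f \in SA$ and $(S\pi)(f) = g$. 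A more self-contained alternative is to note that suspension is exact, since $SA \cong C_0\big(\intOO{0}{1}\big) \otimes A$ and tensoring by the commutative (hence nuclear) C*-algebra $C_0\big(\intOO{0}{1}\big)$ preserves short exact sequences; applying this to $0 \to I \to A \to A/I \to 0$ shows $0 \to SI \to SA \to S(A/I) \to 0$ is exact and gives surjectivity at once. Either way, once surjectivity is established, $\Phi$ is a bijective \starhom{} and hence the desired isometric $*$-isomorphism $SA/SI \cong S(A/I)$.
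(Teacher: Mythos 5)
Your proof is correct, and your ``self-contained alternative'' is in fact exactly the paper's argument: the paper identifies $SA'$ with $A' \otimes \conto\intOO{0}{1}$, uses nuclearity of $\conto\intOO{0}{1}$ to see that suspension preserves the exactness of $0 \to I \to A \to A/I \to 0$, and then concludes with the five lemma applied to a three-row diagram comparing $S(A/I)$ with $SA/SI$ (where you instead invoke the first isomorphism theorem after computing $\ker(S\pi) = SI$ -- a cosmetic difference). Your primary route, lifting a path $g\colon [0,1] \to A/I$ pointwise via a Bartle--Graves continuous section $\sigma$ of the Banach-space surjection $\pi$ normalized so that $\sigma(0)=0$, is genuinely different and arguably more elementary: it avoids tensor products and nuclearity entirely and makes the surjectivity of $S\pi$ completely concrete. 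What the paper's (and your alternative) argument buys in exchange is that it slots directly into the naturality bookkeeping the paper needs afterwards -- the isomorphism is exhibited as a connecting map in a diagram of natural short exact sequences, which makes the subsequent naturality lemma immediate -- whereas a Bartle--Graves section is non-canonical, so with that route you would still want to observe (as you implicitly do) that the resulting $\Phi$ is independent of the choice of section, being determined by the explicit formula $\Phi(f+SI)(t) = f(t) + I$.
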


Later, we show naturality of $\Phi$ in a separate lemma; first,
we construct this isomorphism.

\begin{proof}[Proof of Lemma \ref{lem:suspendedQuotient}]
    \newcommand{\cc}{\otimes \conto\mathnormal{]}0,1\mathnormal{[}}
    The function $\Phi$
    is well-defined, additive, and multiplicative because $I$ is an ideal;
    it preserves the involution because $I$ is a C*-ideal.
    If $I$ contains the range of $\Phi(f)$, then $f$ was already in $SI$,
    therefore $\Phi$ is injective.

    For surjectivity, we will use $A'\cc \cong SA'$ for arbitrary C*-algebras
    $A'$ and the nuclearity of
    $\conto\mathnormal{]}0,1[$ as proven, e.g., in \cite{weggeolsen-93}.
    As a result, the top two rows of this commutative diagram become exact:
    \[
        \begin{tikzpicture}
        \mama {
            0 & I\cc & A\cc & A/I \cc & 0
            \\
            0 & SI & SA & S(A/I) & 0
            \\
            0 & SI & SA & SA/SI & 0.
            \\
        };
        \path[->, font=\scriptsize]
            (m-1-1) edge (m-1-2)
            (m-1-2) edge (m-1-3)
            (m-1-3) edge (m-1-4)
            (m-1-4) edge (m-1-5)
            (m-2-1) edge (m-2-2)
            (m-2-2) edge (m-2-3)
            (m-2-3) edge (m-2-4)
            (m-2-4) edge (m-2-5)
            (m-3-1) edge (m-3-2)
            (m-3-2) edge (m-3-3)
            (m-3-3) edge (m-3-4)
            (m-3-4) edge (m-3-5)
            (m-1-2) edge node[auto]{$\cong$} (m-2-2)
            (m-1-3) edge node[auto]{$\cong$} (m-2-3)
            (m-1-4) edge node[auto]{$\cong$} (m-2-4)
            (m-3-4) edge node[right]{$\Phi$} (m-2-4)
        ;
        \path[-]
            (m-2-2) \maeq (m-3-2)
            (m-2-3) \maeq (m-3-3)
        ;
        \end{tikzpicture}
    \]
    The square in the bottom right commutes by our explicit
    construction of $\Phi$. Since the bottom row is the standard quotient exact
    sequence for the inclusion $SI \to SA$,
    the \starhom{} $\Phi$ is an isomorphism by the five lemma.
\end{proof}

\begin{lem}
    The isomorphism $\Phi\colon SA/SI \to S(A/I)$ constructed in Lemma
    \ref{lem:suspendedQuotient} is natural with respect to \starhoms{}
    $h\colon A \to A'$ that map $I$ into $I'$, where $I' \subseteq A'$ is
    a given C*-ideal.
\end{lem}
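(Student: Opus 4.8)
The plan is to verify the naturality square directly from the explicit pointwise formula for $\Phi$. Given a \starhom{} $h\colon A \to A'$ with $h(I) \subseteq I'$, I first record the four maps that make up the square. The suspension functor yields $Sh\colon SA \to SA'$, $f \mapsto h \circ f$; since $h \circ f$ still vanishes at $0$ and $1$, and since $h(I) \subseteq I'$ forces $Sh(SI) \subseteq SI'$, this descends to a \starhom{} $\overline{Sh}\colon SA/SI \to SA'/SI'$ sending $f + SI \mapsto (h \circ f) + SI'$. On the other side, the hypothesis $h(I) \subseteq I'$ makes the quotient map $\bar h\colon A/I \to A'/I'$, $a + I \mapsto h(a) + I'$, well-defined, and applying $S$ gives $S\bar h\colon S(A/I) \to S(A'/I')$, $g \mapsto \bar h \circ g$. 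Writing $\Phi'\colon SA'/SI' \to S(A'/I')$ for the isomorphism of Lemma~\ref{lem:suspendedQuotient} applied to the pair $I' \subseteq A'$, the assertion is precisely that $S\bar h \circ \Phi = \Phi' \circ \overline{Sh}$.

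The verification is then a pointwise computation. Fix $f + SI \in SA/SI$ and $t \in [0,1]$. Going across and then down,
\[
    \big(S\bar h \circ \Phi\big)(f + SI)(t)
    = \bar h\big(\Phi(f+SI)(t)\big)
    = \bar h\big(f(t) + I\big)
    = h(f(t)) + I'.
\]
Going down and then across,
\[
    \big(\Phi' \circ \overline{Sh}\big)(f + SI)(t)
    = \Phi'\big((h \circ f) + SI'\big)(t)
    = (h \circ f)(t) + I'
    = h(f(t)) + I'.
\]
The two expressions agree for every $t \in [0,1]$, so the two composite \starhoms{} agree as functions into $S(A'/I')$, and the square commutes.

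Since every construction in sight is defined coordinatewise in $t$, there is no genuine obstacle here beyond bookkeeping: the only points that require care are the well-definedness of the three induced maps $\overline{Sh}$, $\bar h$, and $S\bar h$, each of which follows at once from the single hypothesis $h(I) \subseteq I'$ together with the fact that \starhoms{} respect the relevant ideal- and boundary-vanishing conditions. Naturality of $\Phi$ is then immediate from its explicit formula; the explicit nature of $\Phi$ (rather than its abstract construction via the five lemma in Lemma~\ref{lem:suspendedQuotient}) is exactly what makes this check trivial.
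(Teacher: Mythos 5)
Your proof is correct and follows essentially the same route as the paper: both set up the naturality square with the induced maps $\overline{Sh}$ and $S\bar h$ and verify commutativity by the pointwise computation showing each composite sends $f + SI$ to $t \mapsto h(f(t)) + I'$, using $h(I) \subseteq I'$ for well-definedness. No gaps.
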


\begin{proof}
    Construct $\Phi'\colon SA'/SI' \to S(A'/I')$ for the C*-ideal $I' \subseteq
    A'$ according to Lemma \ref{lem:suspendedQuotient}.
    Then $h\colon A \to A'$ gives rise to a diagram:
    \[
        \begin{tikzpicture}
        \mama {
            SA/SI & S(A/I)
            \\
            SA'/SI' & S(A'/I').
            \\
        };
        \path[->, font=\scriptsize]
            (m-1-1) edge node[auto]{$\Phi$} (m-1-2)
            (m-1-1) edge node[left]{$Sh / S(h \restr I)$} (m-2-1)
            (m-2-1) edge node[below]{$\Phi'$} (m-2-2)
            (m-1-2) edge node[right]{$S\big(h/(h \restr I)\big)$} (m-2-2)
        ;
        \end{tikzpicture}
    \]
    This diagram commutes: Consider $(f + SI) \in SA/SI$ for a given
    $f\colon [0,1] \to A$ with $f(0) = f(1) = 0$.
    The upper right path through the diagram maps this to
    $t \mapsto f(t) + I$ and then to the class containing
    $t \mapsto (h \circ f)(t) + h(I)$, which is
    $t \mapsto (h \circ f)(t) + I'$ in $S(A'/I')$.

    The lower left path maps $f + SI$ first to $h \circ f + h(SI) + SI'$,
    which is $h \circ f + SI'$ since $h(I) \subseteq I'$, and then onwards
    also to $t \mapsto (h \circ f)(t) + I'$ according to the
    construction of $\Phi'$.
\end{proof}

\begin{pro}
\label{pro:compatibilitySuspensionBoundary}
    Let $0 \to A \to B \to C \to 0$
    be a short exact sequence of C*-algebras.
    Then for each $s \in \zet$, the following diagram is commutative:
    \[
        \begin{tikzpicture}
            \mamax{2cm} {
                K_{s+1}(C)
                & K_s(A)
                \\
                K_s(SC)
                & K_{s-1}(SA).
                \\
            };
            \path[->, font=\scriptsize]
                (m-1-1) edge node[above]{$\del_{s+1}(C,A)$} (m-1-2)
                (m-1-1) edge node[left]{$\sigma_{s+1}(C)$}
                             node[right]{$\cong$} (m-2-1)
                (m-2-1) edge node[below]{$\del_s(SC,SA)$} (m-2-2)
                (m-1-2) edge node[right]{$\sigma_s(A)$}
                             node[left]{$\cong$} (m-2-2)
            ;
        \end{tikzpicture}
    \]
    Here $\sigma$ denotes suspension isomorphisms and $\del$ denotes the
    boundary maps in the long exact K-theory sequences that arise from the
    original short exact sequence and from its suspension
    $0 \to SA \to SB \to SC \to 0$.
\end{pro}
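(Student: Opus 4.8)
The plan is to recognize both families of maps in the square --- the boundary maps $\del$ and the suspension isomorphisms $\sigma$ --- as connecting homomorphisms of short exact sequences, so that the asserted commutativity becomes an instance of the compatibility of connecting homomorphisms in a $3 \times 3$ diagram. First I would recall that the suspension isomorphism itself arises as a connecting map: the cone sequence $0 \to SX \to CX \to X \to 0$ is a short exact sequence of C*-algebras, and since $CX$ is contractible (Proposition \ref{pro:coneiscontractible}) its K-theory vanishes, so the long exact sequence degenerates and its boundary map $K_s(X) \to K_{s-1}(SX)$ is forced to be an isomorphism; up to the standard sign convention this is exactly $\sigma_s(X)$. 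Thus $\sigma_{s+1}(C)$ and $\sigma_s(A)$ are the column boundary maps of the cone sequences for $C$ and $A$, while $\del_{s+1}(C,A)$ and $\del_s(SC,SA)$ are the row boundary maps of the original sequence and of its suspension. The degrees match: both composites run $K_{s+1}(C) \to K_{s-1}(SA)$, dropping two in total.

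Second, I would assemble these four sequences into a single commutative $3 \times 3$ diagram of short exact sequences: the bottom row is $0 \to A \to B \to C \to 0$, the middle row its cone $0 \to CA \to CB \to CC \to 0$, the top row its suspension $0 \to SA \to SB \to SC \to 0$, and the three columns are the cone sequences $0 \to SX \to CX \to X \to 0$ for $X \in \{A,B,C\}$. Exactness of the suspension row is precisely Lemma \ref{lem:suspendedQuotient} (identifying $SC \cong SB/SA$), whose naturality also guarantees that the relevant squares commute; the cone row is exact by the same tensoring argument applied to the cone functor, and the columns are exact by construction. Since every map is induced functorially from the maps of the original sequence, all squares of the grid commute.

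Finally, I would invoke the standard homological fact that in such a $3 \times 3$ diagram the square built from the row and column connecting homomorphisms commutes up to sign: chasing an element of $K_{s+1}(C)$ down the column (through $CC$) and then across, versus across the bottom row (through $B$) and then down, produces the same class in $K_{s-1}(SA)$ modulo a sign arising from the two orders of desuspension. This sign is the one genuine subtlety, and it is the step I expect to be the main obstacle --- the underlying diagram commutes, but the connecting homomorphisms of a $3 \times 3$ grid \emph{anticommute} under naive conventions. I would resolve it by pinning down the sign convention for $\sigma$ (equivalently, for the cone boundary map) so that the two connecting maps agree rather than anticommute, and by checking that this choice is consistent with the conventions adopted for $\del$ in the six-term sequence (Theorem \ref{thm:sixTermExactSequence}). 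With the convention fixed, the square commutes on the nose and the proposition follows.
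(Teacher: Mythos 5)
Your strategy is genuinely different from the paper's. The paper reduces everything to the single base case relating the index map and the exponential map, and disposes of that case by observing that the Bott map and the exponential map are \emph{constructed} in \cite[Chapters 11--12]{rordam} precisely so that $\sigma_1(A)\circ\delta_0 = \del_1(SC,SA)\circ\sigma_2(C)\circ\beta(C)$; the higher and lower cases then hold because the boundary maps in those degrees are \emph{defined} by conjugating with suspension and Bott isomorphisms. Your plan instead realizes $\sigma$ as the connecting map of the cone extension $0 \to SX \to CX \to X \to 0$ and appeals to the compatibility of row and column connecting maps in a $3\times 3$ diagram. The diagram you describe is correctly assembled (the suspension row is exact by Lemma \ref{lem:suspendedQuotient}, the cone row by nuclearity of $C_0(\intOC{0}{1})$, and the identification of the column boundary with $\sigma$ is a theorem in the degree where it is the index map).

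The gap is the sign, and it is not the peripheral convention-chasing you make it out to be --- it is the entire content of the proposition. With the uniform cone-boundary realization of all four maps, the square genuinely \emph{anti}commutes: specialize the bottom row to the cone sequence of $X$ itself, and the two composites $K_{s+1}(X) \to K_{s-1}(S^2X)$ become the two orders of double suspension, which differ by the coordinate flip on $C_0(\rel^2)$, an automorphism acting by $-1$ on K-theory. So the naive $3\times 3$ argument proves the \emph{negative} of the stated square. Your proposed remedy --- "pinning down the sign convention for $\sigma$ so that the two connecting maps agree" --- is not available: $\sigma$ and $\del$ are already fixed (they are the standard suspension isomorphism and the standard index/exponential maps used throughout the paper), so the sign must be \emph{computed} with those fixed conventions, not chosen. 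That computation forces you back to the explicit descriptions of the exponential map and the Bott projection, i.e., exactly the input the paper's proof cites; equivalently, one must notice that the higher boundary maps are defined recursively so as to absorb the flip sign, leaving only the one nontrivial degree to check. Without that step your argument establishes commutativity only up to an undetermined sign, which does not suffice.
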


Even though $\partial$ is natural with respect to \starhoms{},
the claim does not follow immediately from functoriality because
the suspension isomorphisms $\sigma$
arise only in K-theory, not on the level of C*-algebras.

\begin{proof}[Proof of Proposition \ref{pro:compatibilitySuspensionBoundary}]
    The \emph{Bott isomorphism}
    \[
        \begin{tikzpicture}
            \mamax{1.5cm} {
                \beta(C) \colon K_0(C) & K_1(SC) &K_2(C), \\
            };
            \path[->, font=\scriptsize]
                (m-1-1) edge node[above] {$\cong$} (m-1-2)
                (m-1-2) edge node[above] {$\sigma_2(C)^{-1}$} (m-1-3)
            ;
        \end{tikzpicture}
    \]
    which is a composition of two isomorphisms,
    and the \emph{exponential map}
    \[
        \begin{tikzpicture}
            \mamax{1.5cm} {
                \delta_0\colon K_0(C) & K_2(C) &K_1(A) \\
            };
            \path[->, font=\scriptsize]
                (m-1-1) edge node[above] {$\beta(C)$} (m-1-2)
                (m-1-2) edge node[above] {$\del_2(C,A)$} (m-1-3)
            ;
        \end{tikzpicture}
    \]
    are constructed in \cite[Chapters 11--12]{rordam} explicitly to make
    the two outermost paths $\sigma_1(A) \circ \delta_0$ and
    $\del_1(SC,SA) \circ \sigma_2(C) \circ \beta(C)$
    in the following diagram commute:
    \[
        \begin{tikzpicture}
            \mamax{2cm} {
                K_{0}(C)
                & K_2(C)
                & K_1(A)
                \\
                & K_1(SC)
                & K_0(SA).
                \\
            };
            \path[->, font=\scriptsize]
                (m-1-1) edge node[above]{$\beta(C)$}
                             node[below]{$\cong$} (m-1-2)
                (m-1-1) edge[bend left=40] node[above]{$\delta_0$} (m-1-3)
                (m-1-2) edge node[above]{$\del_2(C,A)$} (m-1-3)
                (m-1-2) edge node[left]{$\sigma_2(C)$}
                             node[right]{$\cong$} (m-2-2)
                (m-2-2) edge node[below]{$\del_1(SC,SA)$} (m-2-3)
                (m-1-3) edge node[right]{$\sigma_1(A)$}
                             node[left]{$\cong$} (m-2-3)
            ;
        \end{tikzpicture}
    \]
    Because $\beta(C)$ is an isomorphism, commutativity of the square
    follows from commutativity of the two outermost paths.
    This commuting square proves the claim for $n = 1$,
    the lowest $n$ of interest.

    For higher $s$, the claim follows from this base case by composing
    the entire diagram with an $(s-1)$-fold suspension isomorphism.
    The higher boundary maps in K-theory are defined precisely to agree
    with such a suspension. The claim for lower $s$ follows from composing with
    Bott isomorphisms.
\end{proof}

\subsection{Linking spectral sequences}

Having linked the chain of ideal inclusions $Q_p \to Q_{p+1}$ with $\widetilde
Q_p \to \widetilde Q_{p+1}$, we can now link the spectral sequences that arise
from the first $n$ ideals along increasing cardinalities $n \in \nat$.

\begin{pro}
\label{pro:eToETildeIso}
    Let $A = I_0 + I_1 + \dotsb + I_{n-1}$ be a sum of C*-ideals.
    Construct the spectral sequence $\{E_{*,*}^r, d^r\}_r$
    for this $n$-fold sum as in
    Theorem \ref{thm:sseqforsums}.

    Alternatively, add an extra ideal $I_n = 0$ and construct a second spectral
    sequence $\{\widetilde E_{*,*}^r, \widetilde d^r\}_r$
    for the $(n+1)$-fold sum $I_0 + I_1 + \dotsb + I_{n-1} + 0$.

    Then there are isomorphisms
    $E_{p,q}^r \cong \widetilde E_{p,{q-1}}^r$ for all $r \geq 1$ and
    $p$, $q \in \zet$
    with the following properties:
    \begin{itemize}
        \item
            They are natural with respect to \starhoms{} that
            preserve the ideal decompositions: \starhoms{}
            $h\colon A \to A'$
            for a C*-algebra $A' = I'_0 + I'_1 + \dotsb + I'_{n-1} + 0$
            such that, for all $j < n$, we have $h(I_j) \subseteq I'_j$.
        \item
            They commute with the differentials $d^r$ and
            $\widetilde d^r$.
    \end{itemize}
\end{pro}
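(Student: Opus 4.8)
The plan is to recognize both spectral sequences as instances of the same machine: by the construction underlying Theorem~\ref{thm:sseqforsums}, each of $E$ and $\widetilde E$ is produced (via the H-system of Theorem~\ref{thm:hsystemsseq} and the Schochet sequence of Theorem~\ref{thm:schochetSseq}) from a chain of cake sums. The sequence $E$ comes from the chain $Q_0 \subseteq Q_1 \subseteq \dotsb$ attached to $I_0, \dotsc, I_{n-1}$, while $\widetilde E$ comes from the chain $\widetilde Q_0 \subseteq \widetilde Q_1 \subseteq \dotsb$ attached to $I_0, \dotsc, I_{n-1}, 0$. Proposition~\ref{pro:conncetQpWithQpPlusOne} already supplies isomorphisms $\widetilde Q_p \cong S Q_p$ that commute with the inclusions of the two chains, i.e.\ an isomorphism of the inclusion chains themselves. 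So the whole proof is to promote this to an isomorphism of the two spectral sequences and to read off the degree shift.

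First I would upgrade the chain isomorphism to an isomorphism of the full graded H-systems. For indices $p'' \leq p' \leq p$, Lemma~\ref{lem:suspendedQuotient} gives natural isomorphisms $\widetilde Q_p / \widetilde Q_{p'} \cong S Q_p / S Q_{p'} \cong S(Q_p/Q_{p'})$, so that on K-theory the suspension isomorphism $\sigma$ yields
\[
    \widetilde H_s(p,p') = K_s(\widetilde Q_p/\widetilde Q_{p'})
    \cong K_s\bigl(S(Q_p/Q_{p'})\bigr)
    \cong K_{s+1}(Q_p/Q_{p'}) = H_{s+1}(p,p').
\]
These isomorphisms commute with the maps $i$ automatically, being induced by the inclusion-compatible $\widetilde Q_p \cong S Q_p$. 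The one point needing genuine input is commutation with the connecting homomorphisms $\del$ of the H-system: by Definition~\ref{dfn:howhbecomesk2} these are assembled from the K-theoretic boundary maps $\del_K$, whereas $\sigma$ lives purely in K-theory and so \emph{a priori} need not respect $\del_K$. This is exactly what Proposition~\ref{pro:compatibilitySuspensionBoundary} guarantees, applied to the short exact sequences $0 \to Q_{p'}/Q_{p''} \to Q_p/Q_{p''} \to Q_p/Q_{p'} \to 0$. With that in hand, the degree-shifted family is an isomorphism of graded H-systems, and since Theorem~\ref{thm:hsystemsseq} produces the pages $E^r_{p,q}$ and the differentials $d^r$ functorially from the H-system, it induces an isomorphism of spectral sequences commuting with every $d^r$ and $\widetilde d^r$.

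It then remains to track the bookkeeping through the re-indexing of Theorem~\ref{thm:sseqforsums}. The grading shift $\widetilde H_s \cong H_{s+1}$ displaces the underlying Schochet page by one in the $q$-direction, and combining this with the suspension-induced dimension shift between the two cake constructions produces precisely the stated relabeling $E^r_{p,q} \cong \widetilde E^r_{p,q-1}$ on every page, for $r \geq 1$. Finally, naturality is inherited termwise: the isomorphism $\widetilde Q_p \cong S Q_p$ is natural by Remark~\ref{rem:qTildeIsNatural}, both Lemma~\ref{lem:suspendedQuotient} and Proposition~\ref{pro:compatibilitySuspensionBoundary} are stated naturally, and Theorem~\ref{thm:hsystemsseq} is functorial, so any \starhom{} $h$ with $h(I_j) \subseteq I'_j$ for $j < n$ induces a commuting ladder of H-systems and hence a commuting ladder of spectral sequences. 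I expect the main obstacle to be exactly the compatibility of $\sigma$ with the connecting homomorphism $\del$ — the verification that forces us to invoke Proposition~\ref{pro:compatibilitySuspensionBoundary} rather than bare functoriality — together with keeping the degree indices honest across the two successive re-indexings.
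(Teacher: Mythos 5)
Your proposal is correct and follows essentially the same route as the paper: the isomorphism is assembled from $\widetilde Q_p \cong SQ_p$ (Proposition \ref{pro:conncetQpWithQpPlusOne}), $SQ_p/SQ_{p'} \cong S(Q_p/Q_{p'})$ (Lemma \ref{lem:suspendedQuotient}), and the suspension isomorphism, with Proposition \ref{pro:compatibilitySuspensionBoundary} supplying the compatibility with the K-theoretic boundary maps and hence with the differentials, and naturality inherited termwise. The only difference is cosmetic: you carry the isomorphism through the entire graded H-system, while the paper establishes it on the $E^1$-page and lets the higher pages follow.
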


\begin{proof}
    Recall the first page of the spectral sequence
    from the main statement of Theorem \ref{thm:sseqforsums}, adapted
    to $n$ instead of $n+1$ ideals:
    \[
        E^1_{p,q} \cong
            \begin{dcases*}
                \bigoplus_{\card{J} = p+1} K_q \Big(
                    \bigcap_{j \in J} I_j \Big)
                & for $0 \leq p < n$,\\
                0 & for $p < 0$ or $p \geq n$.
            \end{dcases*}
    \]
    Suspend $Q_p/Q_{p-1}$ and compensate for this suspension by a degree shift
    in K-theory to maintain isomorphy.
    Apply our isomorphism results from above:
    \begin{align*}
        E^1_{p,q}
        &\cong K_q(Q_p / Q_{p-1}) \\
        &\cong K_{q-1}\big(S(Q_p/Q_{p-1})\big) \\
        &\cong K_{q-1}(SQ_p/SQ_{p-1}) \\
        &\cong K_{q-1}(\widetilde Q_p / \widetilde Q_{p-1}) \\
        &\cong \widetilde E^1_{p,{q-1}}.
    \end{align*}
    All isomorphisms come from our earlier propositions and are therefore
    natural. We do not have to show anything new here for naturality with
    respect to $h\colon A \to A'$ of the isomorphism between the two spectral
    sequences.

    Commutation of isomorphisms and differentials
    follows from the definition of the
    differentials according to Theorem \ref{thm:hsystemsseq} and Definition
    \ref{dfn:howhbecomesk2}:
    The differentials are compositions of maps induced by ideal inclusions,
    maps induced by natural quotient projections, and boundary maps from
    long exact sequences in K-theory that arise from ideal inclusions.
    All these maps commute individually with all isomorphisms applied
    above; in particular,
    Proposition \ref{pro:compatibilitySuspensionBoundary} guarantees that
    the differentials behave well with suspensions.
\end{proof}

\begin{dfn}[Link between spectral sequences for increasingly-many ideals]
\label{dfn:linkBetweenSseqs}
    Let $A(n+1) = I_0 + I_1 + \dotsb + I_n$ be a sum of $n+1$
    C*-ideals and denote by $A(n) \subseteq A(n+1)$
    the sum of the first $n$ ideals, $I_0 + I_1 + \dotsb + I_{n-1}$.

    For all pages $r \geq 1$ and $p$, $q \in \zet$, let $\{g^r_{p,q}\}\colon
    \{E(n)^r_{p,q}, d(n)^r\} \to \{\widetilde E^r_{p,q-1},
        \widetilde d^r\}
    $ be the isomorphism constructed in Proposition \ref{pro:eToETildeIso}
    from the spectral sequence for $A(n)$ into the
    spectral sequence for $I_0 + I_1 + \dotsb + I_{n-1} + 0$.
    The ideal inclusions
    \[
        I_0 = I_0, \qquad
        I_1 = I_1, \qquad
        \ldots, \qquad
        I_{n-1} = I_{n-1}, \qquad
        0 \to I_n
    \]
    induce a morphism $\{i^r_{p,q}\}$ that preserves all degrees,
    \[
        \{i^r_{p,q}\} \colon
        \{\widetilde E^r_{p,q}\} \to \{E(n+1)^r_{p,q}\},
    \]
    of spectral
    sequences between the intermediate
    $\{\widetilde E^r_{p,q}, \widetilde d^r\}_{r,p,q}$
    and the desired spectral sequence
    $\{E(n+1)^r_{p,q}, d(n+1)\}_{r,p,q}$ for $A(n+1)$.

    The \emph{link} between the spectral sequences for $A(n)$ and $A(n+1)$ is
    \[
        \ell(n) = \{\ell(n)^r_{p,q}\} = \{i^r_{p,q-1} \circ g^r_{p,q}\}
        \colon \{E(n)^r_{p,q}\} \to \{E(n+1)^r_{p,q-1}\}.
    \]
\end{dfn}

\begin{lem}[Faithfulness of the link $\ell(n)$]
\label{lem:faithfulLinks}
    Given $n \in \nat$ and $p, q \in \zet$, consider a set of indices
    $J \subs {\{0, 1, \ldots, n-1\}}$ with $\card J = p+1$
    and the direct summand
    $V = K_{q}\big(\bigcap_{j \in J} I_j\big)$ for $J$
    in the group $E(n)^1_{p,q}$.

    Then $\ell(n)^1_{p,q}$ maps $V$ isomorphically onto
    $V' = K_{q-1}\big(\bigcap_{j \in J} I_j\big)$ in $E(n+1)^1_{p,q-1}$
    No other summand of $E(n)^1_{p,q}$ besides $V$ maps onto $V'$ nontrivially.
    No other summand in $E(n+1)^1_{p,q-1}$ besides $V'$ is a nontrivial
    image of $V$.
\end{lem}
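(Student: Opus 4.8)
The plan is to reduce the statement to the explicit direct-sum description of the first pages in Theorem \ref{thm:sseqforsums} and to the observation that both maps composing $\ell(n)$ are block-diagonal with respect to the indexing by subsets $J$. First I would write out the three first pages involved. For $A(n) = I_0 + \dotsb + I_{n-1}$ the block $V$ indexed by $J \subs \{0, \dotsc, n-1\}$ with $\card J = p+1$ is one summand of $E(n)^1_{p,q} \cong \bigoplus_{\card J = p+1} K_q\big(\bigcap_{j \in J} I_j\big)$. For the intermediate sequence $\widetilde E$ attached to $I_0 + \dotsb + I_{n-1} + 0$ the analogous decomposition of $\widetilde E^1_{p,q-1}$ runs over $J \subs \{0, \dotsc, n\}$; but whenever $n \in J$ the relevant intersection contains the zero ideal $\widetilde I_n = 0$, so that block vanishes. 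Thus $\widetilde E^1_{p,q-1}$ is a direct sum over precisely the index sets $J \subs \{0, \dotsc, n-1\}$, the same sets that index $E(n)^1_{p,q}$.

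Next I would check that $g^1_{p,q}$ is block-diagonal. By Proposition \ref{pro:eToETildeIso} it is the composite of suspension isomorphisms, and these are assembled in Propositions \ref{pro:conncetQpWithQpPlusOne} and \ref{pro:addzeroideal} (together with Lemma \ref{lem:addzeroideal} and Lemma \ref{lem:suspendedQuotient}) from the per-block isomorphisms $\widetilde B_J \cong S B_J$. Consequently $g^1_{p,q}$ carries the block for $J$ in degree $q$ isomorphically onto the block for the same $J$ in degree $q-1$, and never mixes distinct index sets.

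Then I would analyze $i^1_{p,q-1}$, induced by the ideal inclusions that are the identity on $I_0, \dotsc, I_{n-1}$ and $0 \to I_n$. By the functoriality of the spectral sequence (Theorem \ref{thm:sseqForSumsIsFunctorial}) together with the naturality of the decomposition in Theorem \ref{thm:sumofnfoldsuspensions}, this map also respects the block structure: a decomposition-preserving \starhom{} sends the cake algebra $B_J$ of the source to the cake algebra $B_J$ of the target, hence cannot move between distinct blocks. On a block with $n \notin J$ the induced map of intersections $\bigcap_{j \in J} \widetilde I_j \to \bigcap_{j \in J} I_j$ is the identity, so $i^1_{p,q-1}$ is an isomorphism there; on a block with $n \in J$ the source summand of $\widetilde E$ was already zero. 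Composing, $\ell(n)^1_{p,q} = i^1_{p,q-1} \circ g^1_{p,q}$ carries $V$ isomorphically onto $V' = K_{q-1}\big(\bigcap_{j \in J} I_j\big)$, the block for the same $J$ in $E(n+1)^1_{p,q-1}$. The two \enquote{no other summand} clauses are then immediate from block-diagonality: since each of $g^1$ and $i^1$ sends the block for an index set $J'$ only into the block for $J'$, the summand $V'$ receives a nontrivial contribution only from $V$, and $V$ maps nontrivially only into $V'$.

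I expect the only real work to be the verification that $i^1_{p,q-1}$ is block-diagonal, i.e.\ that the identification of $K_{p+q}(Q_p/Q_{p-1})$ with a direct sum of suspended intersections in Theorem \ref{thm:sumofnfoldsuspensions} is natural with respect to the ideal inclusions. This is exactly what the functoriality in Theorem \ref{thm:sseqForSumsIsFunctorial} provides, because a decomposition-preserving morphism carries each $B_J$ to the correspondingly indexed cake algebra and therefore preserves the decomposition of $Q_p/Q_{p-1}$ summand by summand.
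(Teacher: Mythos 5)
Your proposal is correct and follows essentially the same route as the paper: both decompose $\ell(n)^1_{p,q}$ into the suspension isomorphism $g^1_{p,q}$ and the inclusion-induced $i^1_{p,q-1}$, observe that the blocks of $\widetilde E^1_{p,q-1}$ indexed by sets containing $n$ vanish because $\widetilde I_n = 0$, and use naturality with respect to decomposition-preserving \starhoms{} to see that neither component mixes blocks. Your version is slightly more explicit in tracing the block-diagonality of $g^1_{p,q}$ back to the per-block isomorphisms $\widetilde B_J \cong SB_J$, but the argument is the same.
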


\begin{proof}
    We examine the components of $\ell(n)^1_{p,q}$ in the notation
    of Definition \ref{dfn:linkBetweenSseqs}.

    The first component of $\ell(n)^1_{p,q}$ is
    $g^1_{p,q}\colon E(n)^1_{p,q} \to \widetilde E^1_{p,q-1}$. As a suspension
    isomorphism, it agrees with quotients, direct sums,
    and other suspensions. Because it is natural with respect to
    \starhoms{} that preserve ideal decompositions, $g^1_{p,q}$ cannot
    permute $V$ with other direct summands for different
    choices of ideals than $J$ among the first $n$ ideals. Again by naturality,
    it cannot map to
    $K_{q-1}\big(I_n \cap \bigcap_{j \in J} I_j\big)$ within
    $\widetilde E^1_{p,q-1}$ either: This K-theory group must always vanish
    regardless of $A$ because, by definition,
    $\{\widetilde E^1_{p,q}\}_{r,p,q}$ is the spectral sequence for
    $I_n = 0$.

    On all ideals $I_j$ with $j \neq n$, the second component
    $i^1_{p,q-1}\colon \widetilde E^1_{p,q-1} \to E(n+1)^1_{p,q-1}$
    is induced by the identity. It maps
    $g^1_{p,q}(V)$ necessarily to $V'$
    because $n \notin J$ and hits no other summands besides $V'$.

    Thus $\ell(n)^1_{p,q} = i^1_{p,q-1} \circ g^1_{p,q}$ has the claimed
    isomorphy property and hits no other summands besides $V'$.
    It follows that no other summand in $E(n+1)^1_{p,q-1}$
    besides $V'$ may be a nontrivial image of $V$.
\end{proof}

\begin{pro}
    The link $\ell(n)$
    between the spectral sequences $\{E(n)^r_{p,q}, d(n)^r\}_{r,p,q}$
    and $\{E(n+1)^r_{p,q}, d(n+1)^r\}_{r,p,q}$ is natural
    with respect to \starhoms{} $h\colon A \to A'$ for another
    C*-algebra
    $A' = I'_0 + I'_1 + \dotsb + I'_{n'}$ that is a sum of
    $n'+1 \geq n+1$ C*-ideals,
    as long as $h(I_j) \subseteq I'_j$ for all $j \leq n$.
\end{pro}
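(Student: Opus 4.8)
The plan is to exploit the factorization $\ell(n) = i \circ g$ from Definition \ref{dfn:linkBetweenSseqs}, where $g$ is the suspension-compensated isomorphism of Proposition \ref{pro:eToETildeIso} and $i$ is the morphism induced by the ideal inclusions $0 \to I_n$ (identity on all earlier ideals). Naturality of each factor has essentially already been established earlier, and a composite of natural transformations is again natural; so I would reduce the claim to pasting two commuting squares. Throughout I write $E'(n)^r_{p,q}$, $E'(n+1)^r_{p,q}$, ${\widetilde E'}^r_{p,q}$ for the spectral sequences attached to $A'$, and $g'$, $i'$, $\ell(n)'$ for the corresponding maps.

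First I would fix a decomposition-preserving \starhom{} $h\colon A \to A'$ with $h(I_j) \subseteq I'_j$ for all $j \leq n$. Since $h(A(n)) = h(I_0 + \dotsb + I_{n-1}) \subseteq I'_0 + \dotsb + I'_{n-1} = A'(n)$, the map $h$ restricts to decomposition-preserving \starhoms{} $A(n) \to A'(n)$ and $A(n+1) \to A'(n+1)$, inducing the vertical maps $h_*$ on the relevant spectral sequences. Adjoining a zero ideal on both sides gives the intermediate \starhom{} $\widetilde h\colon A(n) \to A'(n)$ with decompositions $(I_0, \ldots, I_{n-1}, 0)$ and $(I'_0, \ldots, I'_{n-1}, 0)$; it induces $\widetilde h_*$ on the intermediate spectral sequences. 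The core of the argument is then the rectangle
\[
\begin{tikzpicture}
\mama {
    E(n)^r_{p,q} & \widetilde E^r_{p,q-1} & E(n+1)^r_{p,q-1} \\
    E'(n)^r_{p,q} & {\widetilde E'}^r_{p,q-1} & E'(n+1)^r_{p,q-1} \\
};
\path[->, font=\scriptsize]
    (m-1-1) edge node[above]{$g$} (m-1-2)
    (m-1-2) edge node[above]{$i$} (m-1-3)
    (m-2-1) edge node[below]{$g'$} (m-2-2)
    (m-2-2) edge node[below]{$i'$} (m-2-3)
    (m-1-1) edge node[left]{$h_*$} (m-2-1)
    (m-1-2) edge node[left]{$\widetilde h_*$} (m-2-2)
    (m-1-3) edge node[right]{$h_*$} (m-2-3)
;
\end{tikzpicture}
\]
whose two squares I would treat separately.

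The left square commutes directly by the naturality clause of Proposition \ref{pro:eToETildeIso}: the isomorphism $g$ is natural with respect to decomposition-preserving \starhoms{} into a target carrying an appended zero ideal, and $\widetilde h$ is exactly such a map. For the right square, both horizontal arrows are induced by the inclusion $\iota\colon A(n) \hookrightarrow A(n+1)$ (with $0$ promoted to $I_n$), so $i = \iota_*$ and $i' = \iota'_*$; since $h \circ \iota = \iota' \circ \widetilde h$ already holds on the level of C*-algebras — both send $a \in A(n)$ to $h(a) \in A'(n+1)$ — the functoriality of the finite-sum spectral sequence (Theorem \ref{thm:sseqForSumsIsFunctorial}) yields $h_* \circ i = (h \circ \iota)_* = (\iota' \circ \widetilde h)_* = i' \circ \widetilde h_*$. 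Concatenating the squares gives
\[
    h_* \circ \ell(n) = h_* \circ i \circ g = i' \circ \widetilde h_* \circ g = i' \circ g' \circ h_* = \ell(n)' \circ h_*
\]
on page $r = 1$; since a morphism of spectral sequences is determined on all higher pages by passing to homology (Definition \ref{dfn:morphismOfSseq}), the equality persists for every $r$. The main obstacle is bookkeeping rather than mathematics: I must keep the degree shift $q \mapsto q-1$ attached to $g$ and $g'$ (and absent from the degree-preserving $i$, $i'$, and $h_*$) consistent across the whole rectangle, and I must verify that $\widetilde h$ genuinely preserves the decompositions with the adjoined zero ideal so that Proposition \ref{pro:eToETildeIso} applies verbatim. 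Once these identifications are pinned down, no further computation is required.
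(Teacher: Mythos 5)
Your proposal is correct and follows the same route as the paper, which simply observes that $\ell(n)$ is a composition of two maps whose naturality was established in the preceding lemmas. You merely spell out in full the two commuting squares (naturality of $g$ from Proposition \ref{pro:eToETildeIso} and functoriality of the finite-sum spectral sequence for the inclusion-induced $i$) that the paper's one-line proof leaves implicit.
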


\begin{proof}
    We have constructed $\ell(n)$ as a composition of two maps that already
    satisfy this desired naturality with respect to $h$ as shown in the
    various earlier lemmas.
\end{proof}

    \newpage
    
\subsection{Main theorem for countably many ideals}
\label{sec:unsereSseqUnendlich}

\begin{thm}[Spectral sequence for countable sums]
\label{thm:unsereSseqUnendlich}
    Let $A$ be the direct
    limit C*-algebra $\overline{I_0 + I_1 + \dotsb + I_j + \dotsb}$ of sums of
    countably many C*-ideals $I_j \subseteq A$.
    There is a spectral sequence
    $\{E^r_{p,q}, d^r\}_{r,p,q}$ with
    \begin{equation}
    \label{eqn:unsereSseqUnendlich}
        \sseqUnendlich
    \end{equation}
    where $J$ ranges over all nonempty finite subsets of indices.
    In general, this is a half-page spectral sequence, any term $E^1_{p,q}$
    with $p \geq 0$ may be nonzero.

    This spectral sequence converges strongly to $K_* A$.
    It is functorial with respect to \starhoms{} that preserve
    countable ideal decompositions.
\end{thm}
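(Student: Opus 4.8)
The plan is to realize the desired spectral sequence as a \emph{direct limit} of the finite spectral sequences from Theorem \ref{thm:sseqforsums}. Write $A(n) = I_0 + I_1 + \dotsb + I_{n-1}$ for the sum of the first $n$ ideals, so that $A = \overline{\bigcup_n A(n)}$, and let $\{E(n)^r_{p,q}, d(n)^r\}$ be the spectral sequence for $A(n)$, which converges strongly to $K_* A(n)$. The links $\ell(n)\colon E(n) \to E(n+1)$ of Definition \ref{dfn:linkBetweenSseqs} are morphisms of spectral sequences that commute with the differentials and are natural with respect to decomposition-preserving \starhoms{}. The whole construction rests on the single soft fact that filtered colimits of abelian groups are exact, and hence commute with taking kernels, images, and homology.

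\textbf{The $E^1$-page.} Because filtered colimits are exact, the termwise colimit of the directed system $\{E(n), \ell(n)\}_n$ is again a spectral sequence: $E^{r+1}$ is the homology of $(E^r, d^r)$ since homology commutes with the colimit. To read off $E^1$, I would invoke the faithfulness Lemma \ref{lem:faithfulLinks}: for a fixed finite nonempty $J$ with $\card J = p+1$, once $n$ exceeds $N_J = \max J + 1$ (the stage at which $J$ is ``born''), the link restricts to an \emph{isomorphism} of the summand $K_\bullet\big(\bigcap_{j \in J} I_j\big)$ onto its counterpart at the next stage, while the summands created by intersecting with the freshly adjoined zero ideal vanish. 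Thus every finite nonempty $J$ contributes exactly one surviving copy of $K_q\big(\bigcap_{j\in J} I_j\big)$. Since the link drops the K-theory degree by one at each stage, I index the colimit so that each $J$ enters at its birth stage $N_J$ at the common target degree $q$; the colimit of the resulting chain of isomorphisms is canonically that one group, yielding $E^1_{p,q} \cong \bigoplus_{\card J = p+1} K_q\big(\bigcap_{j\in J} I_j\big)$ for $p \geq 0$ and $0$ for $p < 0$. Arbitrarily large $p$ occur, so this is a half-plane spectral sequence.

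\textbf{Convergence.} On abutments, continuity of K-theory gives $K_* A = \colim_n K_* A(n)$. Each $E(n)$ carries the filtration $F^p K_* A(n)$ of Definition \ref{dfn:filtrationOfKA}; the links are compatible with the maps $K_* A(n) \to K_* A(n+1)$ induced by $A(n) \subseteq A(n+1)$ --- up to the suspension isomorphism built into $\ell(n)$ via Proposition \ref{pro:eToETildeIso} --- so the filtrations form a directed system whose colimit I take as $F^p K_* A$. Exactness of the colimit identifies $E^\infty$ of the limit spectral sequence with $\colim_n E(n)^\infty_{p,q}$, and hence, by Theorem \ref{thm:einftyisfiltered} applied stagewise, with $F^p K_{p+q} A / F^{p-1} K_{p+q} A$. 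It then remains to verify that $\{F^p K_* A\}_p$ is Hausdorff, exhaustive, and complete. Exhaustiveness and the Hausdorff property pass through the colimit directly from Proposition \ref{pro:filtrationExhausts}; completeness is immediate because $E^1_{p,q} = 0$ for $p < 0$ forces $F^p K_* A = 0$ for $p < 0$, so the inverse system as $p \to -\infty$ and its derived limit both vanish. As in the Schochet case, the spectral sequence has exiting differentials, so strong convergence follows.

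\textbf{Functoriality and the main difficulty.} Naturality with respect to decomposition-preserving \starhoms{} is inherited at every stage: from Theorem \ref{thm:sseqForSumsIsFunctorial} for the finite spectral sequences, from the established naturality of the links, and from functoriality of the colimit. The hard part is not any single computation but making the degree-shifting direct limit precise and showing that \emph{strong} convergence survives the passage to the limit --- that the colimit filtration stays Hausdorff, exhaustive, and complete and that $E^\infty$ of the colimit really is the associated graded of $K_* A$. The two facts that rescue this are the exactness of filtered colimits (which lets every exact-sequence and homology computation commute with the limit) and the vanishing $F^p K_* A = 0$ for $p < 0$ (which makes completeness automatic despite the filtration being unbounded above).
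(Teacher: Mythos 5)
Your overall strategy --- realizing the spectral sequence as a direct limit of the finite spectral sequences $E(n)$ along the links $\ell(n)$, using exactness of filtered colimits to pass homology and the filtration quotients through the limit --- is the same as the paper's. But there is a genuine gap in how you handle the degree shift of the links. Each $\ell(n)$ has bidegree $(0,-1)$: it maps $E(n)^r_{p,q}$ to $E(n+1)^r_{p,q-1}$, because adjoining the zero ideal replaces every cake algebra by its suspension. Your proposed fix --- ``index the colimit so that each $J$ enters at its birth stage $N_J$ at the common target degree $q$'' --- does not produce a termwise colimit of the directed system: for two index sets $J$ and $J'$ with different birth stages, the chains you want to assemble into the single limit term $E^1_{p,q}$ sit, at a common stage $n$, at \emph{different} positions $(p,\,q-(n-N_J))$ and $(p,\,q-(n-N_{J'}))$. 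The differentials $d(n)^r$ connect fixed positions on each page, so they do not descend to well-defined maps on groups glued together from varying positions. If instead you take an honest termwise colimit after a uniform regrading, the surviving summand for $J$ is $K_{q-N_J}\big(\bigcap_{j\in J} I_j\big)$, not $K_q\big(\bigcap_{j\in J} I_j\big)$, and the stated $E^1$-page is wrong as an identification.

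The missing ingredient is Bott periodicity. The paper composes two links with the Bott isomorphism to form degree-amending links $\lambda(n) = \beta \circ \ell(n+1)\circ\ell(n)\colon E(n)\to E(n+2)$ of bidegree $(0,0)$, takes the colimit along $\lambda(2n)$, and separately checks that the limit is independent of the parity offset and of where $\beta$ is inserted. Your argument nowhere invokes $\beta$, and without it the degree bookkeeping cannot close up. (This is also why the KO-theoretic version in the paper's final section needs chains of eight links rather than two.) The rest of your proposal --- faithfulness of the links for the $E^1$-computation, the colimit filtration on $K_*A$, Hausdorff/exhaustive/completeness via vanishing for $p<0$, and functoriality inherited stagewise --- matches the paper's Propositions \ref{pro:sseqUnendlichExistsNoLinkDef} and \ref{pro:unsereSseqUnendlichConvergence} and would go through once the links are made degree-preserving.
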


We prove Theorem \ref{thm:unsereSseqUnendlich} in two steps:
First, in Proposition \ref{pro:sseqUnendlichExistsNoLinkDef},
we prove existence,
that the spectral sequence is well-defined, that it is functorial,
and that its $E^1_{*,*}$-term matches the description
\ref{eqn:unsereSseqUnendlich}.
Later, in Proposition \ref{pro:unsereSseqUnendlichConvergence},
we prove the strong convergence.

Our strategy is to take the direct limit along
a directed system of links $\ell(n)$ for
$n \to \infty$, but these links $\ell(n)$ do not connect
the spectral sequences $E(n)$ and
$E(n+1)$ perfectly.
There is an index shift from $E(n)^r_{p,q}$ to
$E(n+1)^r_{p,q-1}$. As shown before, the shifted index affects
the degree of the K-theory;
it has no other effect on equation \ref{eqn:unsereSseqUnendlich}.

Complex K-theory admits Bott isomorphisms $\beta$,
\[
    \dotsb
    \stackrel{\beta}{\cong} K_{s-2}\Big( \bigcap_{j \in J} I_j \Big)
    \stackrel{\beta}{\cong} K_{s}\Big( \bigcap_{j \in J} I_j \Big)
    \stackrel{\beta}{\cong} K_{s+2}\Big( \bigcap_{j \in J} I_j \Big)
    \stackrel{\beta}{\cong} \dotsb;
\]
their naturality allows us to work around the index shift.

\begin{rem}
    The Bott isomorphisms $\beta$ are natural with respect to
    $\ast$-homo\-mor\-phisms
    and commute with the suspension isomorphisms $\sigma$
    for any C*-algebra $A$:
    \[
        \begin{tikzpicture}
            \mama {
                K_s (A) &
                K_{s+2} (A)
                \\
                K_{s-1} (SA) &
                K_{s+1} (SA).
                \\
            };
            \path[->, font=\scriptsize]
                (m-1-1) edge node[above]{$\beta$} (m-1-2)
                (m-2-1) edge node[below]{$\beta$} (m-2-2)
                (m-1-1) edge node[left]{$\sigma_s$} (m-2-1)
                (m-1-2) edge node[right]{$\sigma_{s+2}$} (m-2-2)
            ;
        \end{tikzpicture}
    \]
\end{rem}

\begin{dfn}[Degree-amending link $\lambda(n)$]
\label{dfn:degreeAmendingLink}
    Compose two links with the Bott isomorphism to create a morphism
    $\lambda(n)$ of bidegree $(0, 0)$, called the
    \emph{degree-amending link}, between two spectral sequences:
    \[
        \lambda(n) = \beta \circ \ell(n+1) \circ \ell(n)
            \colon \{E(n)^r_{p,q}\}_{r,p,q} \to \{E(n+2)^r_{p,q}\}_{r,p,q}.
    \]
\end{dfn}

\begin{pro}
\label{pro:sseqUnendlichExistsNoLinkDef}
    Let $A = \overline{I_0 + I_1 + \dotsb + I_j + \dotsb}$
    be a sum of C*-ideals.
    The spectral sequence postulated in Theorem \ref{thm:unsereSseqUnendlich}
    with
    \[
        \sseqUnendlich
    \]
    exists and is functorial with respect to \starhoms{} that
    preserve countable ideal decompositions.
\end{pro}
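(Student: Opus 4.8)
The plan is to realize the desired spectral sequence as a direct limit of the finite-sum spectral sequences $E(n)$ along the degree-amending links, and to read off its first page and functoriality from the naturality statements already in place.

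First I would record the general principle that filtered direct limits of spectral sequences exist and are computed pagewise. Given a sequential system of spectral sequences connected by morphisms of bidegree $(0,0)$, one sets $\mathcal E^r_{p,q} = \colim_i (\mathcal S_i)^r_{p,q}$ with differential $\colim_i d^r_i$. Because filtered colimits of abelian groups are exact, they commute with kernels and cokernels and hence with the passage to homology; therefore $\mathcal E^{r+1}_{p,q}$ is the homology of $(\mathcal E^r, \colim d^r)$ at $(p,q)$, so $\{\mathcal E^r_{p,q}, \colim d^r\}$ is again a spectral sequence. This commutation of the colimit with the homology functor on each page is the technical heart of the argument and the one step I would write out in full; everything else is assembled from earlier results.

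Next I would apply this to the degree-amending links $\lambda(n) = \beta \circ \ell(n+1) \circ \ell(n)$ of Definition \ref{dfn:degreeAmendingLink}. Each $\lambda(n)$ has bidegree $(0,0)$ (the two links contribute $(0,-2)$ and Bott restores $(0,+2)$), so the subsequence $E(0) \loto{\lambda(0)} E(2) \loto{\lambda(2)} E(4) \to \dotsb$ is a genuine sequential system of spectral sequences, and it is cofinal for exhausting every nonempty finite index subset of $\nat$ (any finite $J$ lies in $\{0, \dotsb, 2k-1\}$ for large $k$). I would define $\{E^r_{p,q}, d^r\}$ as its colimit. For the first page, Lemma \ref{lem:faithfulLinks} shows that $\ell(n)^1_{p,q}$ carries the summand $K_q\big(\bigcap_{j \in J} I_j\big)$ for a fixed $J$ isomorphically onto the summand for the same $J$, with no mixing; composing two links and applying $\beta$ restores the K-theory degree, so $\lambda(n)^1_{p,q}$ is the degree-preserving inclusion of the finite direct sum indexed by $J \subseteq \{0,\dotsb,2k-1\}$ with $\card J = p+1$ into the one indexed by $J \subseteq \{0,\dotsb,2k+1\}$. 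These inclusions are injective, so the colimit is the increasing union $\bigoplus_{\card J = p+1} K_q\big(\bigcap_{j \in J} I_j\big)$ over all nonempty finite $J \subseteq \nat$, vanishing for $p < 0$; this is exactly formula \ref{eqn:unsereSseqUnendlich}.

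Finally I would settle functoriality. A \starhom{} $h \colon A \to A'$ preserving the countable ideal decomposition restricts to decomposition-preserving maps on each truncation, inducing morphisms $E(n) \to E'(n)$ of the finite spectral sequences by Theorem \ref{thm:sseqForSumsIsFunctorial}; by the naturality of the links $\ell(n)$ and of $\beta$ already established, these commute with every $\lambda(n)$. Hence $h$ induces a morphism of the two directed systems and therefore a morphism on the colimit spectral sequences. The only real obstacle is the exactness-and-commutation argument for colimits of spectral sequences; the $E^1$ computation and functoriality then follow formally from the faithfulness Lemma \ref{lem:faithfulLinks} and the naturality lemmas proved earlier.
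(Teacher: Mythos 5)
Your proposal follows essentially the same route as the paper: both take the direct limit of the finite-sum spectral sequences along the degree-amending links $\lambda(2n)$, read off the first page from the faithfulness of the links (Lemma \ref{lem:faithfulLinks}), and deduce functoriality from the naturality of all earlier constructions; your explicit justification that filtered colimits of abelian groups commute with passage to homology, so that the pagewise colimit is again a spectral sequence, is a point the paper merely asserts. The only piece you omit is the paper's verification that the limit is independent of the offset ($2n$ versus $2n+1$) and of where the Bott isomorphism sits in the composition --- not needed for bare existence of one such spectral sequence, but included in the paper to make the construction canonical.
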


\begin{proof}
    Take the direct limit along $\lambda(2n)$ for $n \to \infty$.
    This yields again a spectral sequence.
    Functoriality of this spectral sequence with respect to ideal inclusions
    follows from all earlier constructions that, as we have remarked
    repeatedly, are natural with respect to \starhoms{} that preserve
    countable ideal decompositions.

    It remains to show that the choice of offset, i.e., $2n$ or $2n+1$,
    and the position of $\beta$ among the links $\ell$
    have no effect on the direct limit of degree-amending links;
    i.e., the following system $\lambda'(2n)$
    of morphisms produces the same direct limit:
    \[
        \lambda'(n) = \ell(n+1) \circ \beta \circ \ell(n)
            \colon \{E(n)^r_{p,q}\} \to \{E(n+2)^r_{p,q}\}.
    \]
    The position of $\beta$ is irrelevant because the Bott isomorphism is
    natural with respect to both \starhom{}s and suspension isomorphisms. In
    Definition
    \ref{dfn:linkBetweenSseqs}, we have defined $\ell(n)$ as a composition
    of several suspension isomorphisms and several
    direct morphisms between C*-algebras. Thus
    for $n \to \infty$, the systems $\lambda(2n)$ and $\lambda'(2n)$ produce
    naturally isomorphic direct limits, without even requiring index shifts.

    To compare the systems $\lambda(2n)$ and $\lambda(2n+1)$ for
    $n \to \infty$, unroll the compositions $\lambda$ into their definitions,
    and take the direct limit -- necessarily yielding the same limit as before
    -- across the unrolled system:
    \begin{align*}
        \colim_{n \to \infty} \lambda(2n+1)
        &= \colim \big(
        \dotsb \to \bullet \loto{\ell(n+1)} \bullet \loto{\ell(n+2)}
            \bullet \loto{\beta} \bullet \loto{\ell(n+3)} \bullet \to \dotsb
            \big)
        \\
        &= \colim \big(
        \dotsb \to \bullet \loto{\ell(n+1)} \bullet
            \loto{\beta} \bullet \loto{\ell(n+2)}
            \bullet \loto{\ell(n+3)} \bullet \to \dotsb
            \big)
        \\
        &= \colim \big(
        \dotsb \to \bullet \loto{\ell(n+2)}
            \bullet \loto{\ell(n+3)} \bullet \loto{\beta} \bullet \to \dotsb
            \big)
        \\
        &= \colim_{n \to \infty} \lambda(2n),
    \end{align*}
    because the morphisms $\ell$ commute with $\beta$ and because removing
    the first element of a directed system will not change the limit.

    Thus our limit spectral
    sequence is well-defined as $\colim_n \lambda(2n)$.
    Finally, because
    \[
        E(n)^r_{p,q}
        \cong
            \begin{dcases*}
                \bigoplus_{\substack{\card J = p + 1 \\ \max{J} < n}}
                K_q \Big( \bigcap_{j \in J} I_j \Big)
                & for $0 \leq p < n$,
                \\
                0 & for $p < 0$ or $p \geq n$,
            \end{dcases*}
    \]
    and because the links $\ell(n)$
    preserve all structure due to their faithfulness
    (Lemma \ref{lem:faithfulLinks}),
    the direct limit $E^r_{p,q}$
    along the K-theory morphisms $\lambda(2n)$
    for $n \to \infty$ is the desired direct sum of K-theory groups
    over all nonempty subsets $J \subseteq \nat$ with
    $\card J = p + 1$ without restrictions about any maximum of $J$.
\end{proof}

\subsection{Convergence}
\label{sec:sseqUnendlichConvergence}

To prove the convergence of the spectral sequence for countable
sums of ideals, we will define a filtration of $K_* A$ via direct limits of
existing filtrations.

Even though the following lemma about direct limits is known theory,
we reprove it with attention to detail, tracking the naturality of all
constructions.

\begin{lem}[Continuity of inclusion chains of abelian group quotients]
\label{lem:continuityOfQuotients}
    Let $(G_n)_{n \in \nat}$ be a directed system of abelian
    groups along inclusions $i_n\colon G_n \subseteq G_{n+1}$.
    For all $n \in \nat$,
    let $H_n \subseteq G_n$ be a subgroup such that
    $H_n \subseteq H_{n+1}$. Write
    \[
        G = \colim_{n \to \infty} (G_n, i_n), \qquad
        H = \colim_{n \to \infty} (H_n, i_n \restr H_n).
    \]
    Then there is an isomorphism
    \[
        \Phi\colon G/H \cong \colim_{n \to \infty} (G_n / H_n)
    \]
    that is natural with respect to morphisms
    between systems $(G_n)_{n \in \nat}$
    and $(\widetilde G_n)_{n \in \nat}$ that preserve their inclusions and
    their respective systems of subgroups
    $(H_n \subseteq G_n)_{n \in \nat}$ and
    $(\widetilde H_n \subseteq \widetilde G_n)_{n \in \nat}$.
\end{lem}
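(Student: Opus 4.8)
The plan is to construct the isomorphism $\Phi$ explicitly on representatives, exploiting that the systems are indexed by $\nat$ along honest inclusions, so that both $G = \bigcup_{n} G_n$ and $H = \bigcup_{n} H_n$ are realized as unions. First I would note that $i_n(H_n) \subs H_{n+1}$ makes the quotient projections $\pi_n \colon G_n \to G_n/H_n$ into a morphism of directed systems: the inclusions $i_n$ descend to structure maps $\bar i_n \colon G_n/H_n \to G_{n+1}/H_{n+1}$, $g + H_n \mapsto i_n(g) + H_{n+1}$. Write $\kappa_n \colon G_n/H_n \to \colim_m (G_m/H_m)$ for the canonical maps into the colimit, so that $\kappa_{n+1} \circ \bar i_n = \kappa_n$ for all $n$.

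Next I would define $\Phi \colon G/H \to \colim_n (G_n/H_n)$ by the recipe: given $g + H$, choose $n$ with $g \in G_n$ and set $\Phi(g + H) = \kappa_n(g + H_n)$. Well-definedness splits into two checks. Independence of the index $n$ is immediate from $\kappa_{n+1}\circ\bar i_n = \kappa_n$ together with $\bar i_n(g + H_n) = g + H_{n+1}$. Independence of the representative $g$ uses that $H = \bigcup_m H_m$: if $g + H = g' + H$, then $g - g' \in H_N$ for some $N$, and choosing $n \geq N$ large enough that $g, g' \in G_n$ gives $g + H_n = g' + H_n$. That $\Phi$ is a homomorphism is then clear, since sums can always be computed on representatives chosen at a common stage $G_n$.

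The bijectivity is where the real content lies. Surjectivity is routine: every element of the colimit equals $\kappa_n(g + H_n)$ for some $n$ and $g \in G_n$, and $g + H$ is a preimage. Injectivity is the one genuinely load-bearing step, and I expect it to be the main obstacle: if $\Phi(g + H) = \kappa_n(g + H_n) = 0$, I must conclude $g \in H$. This is exactly the place where the directedness of the index set $\nat$ enters — a class in a filtered colimit of abelian groups vanishes precisely when it already vanishes at some finite stage — so there exists $m \geq n$ with (the image of) $g$ lying in $H_m$, hence $g \in H$ and $g + H = 0$. Everything else in the argument is bookkeeping around this "an element dies at a finite stage" property.

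Finally I would treat naturality. Given a morphism of systems $(f_n) \colon (G_n) \to (\widetilde G_n)$ compatible with the inclusions and satisfying $f_n(H_n) \subs \widetilde H_n$, the induced maps $G/H \to \widetilde G/\widetilde H$ and $\colim_n (G_n/H_n) \to \colim_n (\widetilde G_n/\widetilde H_n)$ fit into a square that commutes because both $\Phi$ and its counterpart $\widetilde\Phi$ are computed by the same recipe on representatives, and the $f_n$ respect that recipe. I would remark in passing that the whole statement is also the exactness of the filtered colimit functor applied to the short exact sequences $0 \to H_n \to G_n \to G_n/H_n \to 0$, which yields naturality automatically; but the explicit construction of $\Phi$ is the version the later sections rely on, so I would carry it out by hand as above.
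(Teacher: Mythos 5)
Your proof is correct, but it takes a genuinely different route from the paper. You build $\Phi\colon G/H \to \colim_n (G_n/H_n)$ explicitly on representatives, using that the structure maps are honest inclusions so that $G = \bigcup_n G_n$ and $H = \bigcup_n H_n$, and you verify injectivity by the ``an element of a filtered colimit vanishes at some finite stage'' principle. The paper instead works at the level of diagrams: it assembles, for each $n$, a commutative square relating $G_n$, $G_n/H_n$, $G$, $G/H_n$, and $G/H$, takes the direct limit of this system of diagrams, applies exactness of filtered colimits to the short exact sequences $0 \to H_n \to G_n \to G_n/H_n \to 0$, and then invokes the five lemma to identify $\colim_n(G_n/H_n)$ with $G/H$ (producing the isomorphism in the direction opposite to yours, which is immaterial). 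You actually name this second strategy yourself in your closing remark. What each approach buys: your element-level construction is more elementary and makes the recipe for $\Phi$ completely concrete, which is convenient when one later needs to evaluate it on specific classes; the paper's version isolates the two categorical facts doing the work (exactness of filtered colimits, five lemma) and gets naturality essentially for free, since every arrow in sight is a canonical projection or a colimit of such. Your naturality argument via ``both sides are computed by the same recipe on representatives'' is a bit terser than the paper's but is sound.
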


\begin{proof}
    For each $n \in \nat$, consider the following diagram $D_n$.
    All of the horizontal arrows in $D_n$
    are natural projections and the square
    commutes due to naturality of the projections:
    \[
        \begin{tikzpicture}
            \mamax{2cm} {
                G_n & G_n/H_n & \\
                G & G/H_n & G/H. \\
            };
            \path[->, font=\scriptsize]
                (m-1-1) edge node[above]{$\alpha_n$} (m-1-2)
                (m-1-2) edge node[right]{$\gamma_n/H_n$} (m-2-2)
                (m-1-1) edge node[left]{$\gamma_n$}(m-2-1)
                (m-2-1) edge node[below]{$\delta_n$} (m-2-2)
                (m-2-2) edge node[below]{$\pi_n$} (m-2-3)
            ;
        \end{tikzpicture}
    \]
    To construct a direct limit of the sequence of entire diagrams
    $(D_n)_{n \in \nat}$, link two diagrams $D_n$ and $D_{n+1}$ by
    the following system of morphisms:
    \begin{itemize}
        \item $i_n\colon G_n \to G_{n+1}$;
        \item the composition
            $(\tau_{n+1} \circ i_n/H_n)\colon G_n/H_n \to G_{n+1}/H_n \to
            G_{n+1}/H_{n+1}$ where the natural projection
            $\tau_{n+1}\colon G_{n+1}/H_n \to G_{n+1}/H_{n+1}$
            is well-defined because $H_n \subseteq H_{n+1}$;
        \item the natural projection $G/H_n \to G/H_{n+1}$
            from dividing by $H_{n+1}/H_n$; and
        \item the identities on $G$ and $G/H$, respectively.
    \end{itemize}
    All squares that arise from linking two diagrams $D_n$ and $D_{n+1}$
    via these morphisms commute due to naturality of projections; e.g.,
    $\alpha_{n+1} \circ i_n = \tau_{n+1} \circ i_n/H_n \circ \alpha_n$.
    Take the direct limit along $(D_n)_{n \in \nat}$; the limit object is again
    a commutative diagram:
    \begin{equation}
    \label{eqn:limitDiagramAlpha}
        \begin{tikzpicture}
            \mamax{2cm} {
                G & \colim_n (G_n/H_n) & \\
                G & \colim_n (G/H_n) & G/H. \\
            };
            \path[-]
                (m-1-1) \maeq (m-2-1)
            ;
            \path[->, font=\scriptsize]
                (m-1-1) edge node[above]{$\colim_n \alpha_n$} (m-1-2)
                (m-1-2) edge node[right]{$\colim_n(\gamma_n/H_n)$} (m-2-2)
                (m-2-1) edge node[below]{$\colim_n \delta_n$} (m-2-2)
                (m-2-2) edge node[below]{$\colim_n \pi_n$} (m-2-3)
            ;
        \end{tikzpicture}
    \end{equation}
    Furthermore, for each $n \in \nat$, there is a short exact sequence
    \[
        0 \to H_n \to G_n \loto{\alpha_n} G_n/H_n \to 0.
    \]
    Taking direct limits of abelian groups is an exact functor, resulting
    in a short exact sequence of direct limits which fits into the following
    diagram as the top row:
    \[
        \begin{tikzpicture}
            \mamax{2cm} {
                0 & H & G & \colim_n (G_n / H_n) & 0 \\
                0 & H & G & G/H & 0. \\
            };
            \path[-]
                (m-1-2) \maeq (m-2-2)
                (m-1-3) \maeq (m-2-3)
            ;
            \path[->, font=\scriptsize]
                (m-1-1) edge (m-1-2)
                (m-1-2) edge (m-1-3)
                (m-1-3) edge node[above] {$\colim_n \alpha_n$} (m-1-4)
                (m-1-4) edge (m-1-5)
                (m-2-1) edge (m-2-2)
                (m-2-2) edge (m-2-3)
                (m-2-3) edge node[below] {
                    $\colim_n (\pi_n \circ \delta_n)$} (m-2-4)
                (m-2-4) edge (m-2-5)
                (m-1-4) edge node[right] {
                    $\colim_n (\pi_n \circ \gamma_n/H_n)$} (m-2-4)
            ;
        \end{tikzpicture}
    \]
    The right square commutes because its morphisms already commuted
    in the earlier diagram \ref{eqn:limitDiagramAlpha} of direct limits.

    Finally, because both rows are short exact sequences,
    the five lemma guarantees that
    $\colim_n (\pi_n \circ \gamma_n/H_n)$ is the desired
    isomorphism $\Phi$.
    Naturality of $\Phi$ follows from the constructions in this proof:
    Both taking direct limits and taking quotients is natural.
\end{proof}

\begin{ntt}
\label{ntt:anForLimit}
    For a sum $A = \overline{I_0 + I_1 + \dotsb + I_j + \dotsb}$
    of C*-ideals, write
    \[
        A(n) = I_0 + I_1 + \dotsb + I_{n-1}
    \]
    for the sum of the first $n$ ideals and let, for $s \in \zet$,
    \[
        a(n)_s \colon K_s A(n) \to K_s A(n+1)
    \]
    be the map induced in K-theory by the inclusion of C*-algebras
    $A(n) \subseteq A(n+1)$.

    Given $A(n)$, define the cake algebras
    $B(n)_J = B(I_0, I_1, \ldots, I_{n-1})_J$ for index sets
    $J \subseteq \{0, 1, \ldots, n-1\}$
    as in Definition \ref{dfn:cakealgebra}
    and the sums of cake algebras as in
    Definition \ref{dfn:cakeSums},
    \[
        Q(n)_p = \sum_{\card J \leq p + 1} B(n)_J
    \]
    for $p \in \zet$ and $J \subseteq \{0, 1, \ldots, n-1\}$.
\end{ntt}

\begin{rem}
    By Lemma \ref{lem:cakeSumQuotients}, we have a chain of ideals
    $Q(n)_p \subseteq Q(n)_{p+1}$ across all $p \in \zet$.
    The chain stabilizes with $Q(n)_p = 0$ for $p < 0$
    and $Q(n)_p = B(n)_{\{0, 1, \ldots, n-1\}}$ for $p \geq n - 1$.

    Furthermore, $\sum_{p=0}^{n-1} Q(n)_p = B(n)_{\{0, 1, \ldots, n-1\}}
        \cong S^{n-1} A(n)$
    by Theorem \ref{thm:inclusioninducesiso}.
\end{rem}

\begin{ntt}[$i(n, p)_s$]
\label{ntt:inps}
    We recall the filtrations on the spectral sequences for finitely many
    ideals: The convergence target of $\{E(n)^r_{p,q}, d(n)^r\}$ is
    $K_* A(n)$, filtered by
    \begin{align}
    \label{eqn:fpSusp}
        F^p K_s A(n) &\cong
            \im i\colon K_{s-n+1} Q(n)_p \to K_{s-n+1} S^{n-1} A(n)
            \\
        \label{eqn:fpShift}
            &\cong \im i(n, p)_s\colon K_{s-n+1} Q(n)_p \to K_s A(n)
    \end{align}
    as in Definition \ref{dfn:filtrationOfKA} for $p$, $s \in \zet$.
    In the construction of the spectral sequence in Section
    \ref{sec:schochetSseq}, the symbol $i$ may also stand for various
    other maps in K-theory.

    Here in Section \ref{sec:sseqUnendlichConvergence}, we will write
    $i(n, p)_s$ for the maps in \ref{eqn:fpShift}
    that define a filtration of $K_s A(n)$, not of $K_{s-n+1} S^{n-1} A(n)$,
    for a given $p$. Normally, we would index maps in K-theory with the
    degree of the domain, but for $i(n, p)_s$, it will be easiest to track
    the K-theory degree $s$ of the desired convergence target $K_s A(n)$.

    We reserve $i$
    (without annotation in parentheses) to discuss
    internals of Section \ref{sec:schochetSseq}.
\end{ntt}

\begin{rem}
    Because $Q(n)_p$ is an ideal in $S^{n-1} A(n)$, not in $A(n)$,
    we have shifted the
    K-theoretic degree from $K_s Q(n)_p$ to
    $K_{s-n+1} Q(n)_p$ to compensate.
    This shift is similar to the shift in the proof of Theorem
    \ref{thm:sseqforsums} about the spectral sequence for sums of finitely many
    C*-ideals.
\end{rem}

\begin{dfn}[Link between filtrations]
\label{dfn:linkBetweenFiltrations}
    Fix $n \in \nat$ and $p \in \zet$.
    Besides $Q(n)_p$, recall the C*-algebra
    $\widetilde Q(n)_p$ for the $n+1$ ideals $I_0$, $I_1$, $\ldots$,
    $I_{n-1}$, $0$
    as in Proposition \ref{pro:conncetQpWithQpPlusOne}.
    For all $s \in \zet$, define the
    \emph{link between the filtrations of $K_* A(n)$ and
    $K_* A(n+1)$},
    \[
        \psi(n,p)_s \colon K_{s-n+1} Q(n)_p \to K_{s-n} Q(n+1)_p,
    \]
    as the composition
    \[
        \begin{tikzpicture}
            \mamaxy{1.2cm}{0.3cm} {
                \phantom{K_{s-n+1} Q(n)_p}
                &&& \phantom{K_{s-n} \widetilde Q(n)_p}
                \\
                K_{s-n+1} Q(n)_p
                & K_{s-n} SQ(n)_p
                & K_{s-n} \widetilde Q(n)_p,
                & K_{s-n} Q(n+1)_p;
                \\
            };
            \path[-, font=\scriptsize]
                (m-2-1.north) edge (m-1-1.center)
                (m-1-1.center) edge node[above]{$\psi(n,p)_s$} (m-1-4.center)
            ;
            \path[->, font=\scriptsize]
                (m-1-4.center) edge (m-2-4.north)
                (m-2-1) edge node[below] {$\cong$} (m-2-2)
                (m-2-2) edge node[below] {$\cong$} (m-2-3)
                (m-2-3) edge node[below] {$a(n) + 0$} (m-2-4)
            ;
        \end{tikzpicture}
    \]
    here the left arrow is the suspension isomorphism,
    the middle arrow is the natural isomorphism constructed in
    Proposition \ref{pro:conncetQpWithQpPlusOne}, and the right arrow
    is induced by the inclusion
    of the $(n+1)$-fold ideal decomposition $A(n) + 0$ into $A(n+1)$.
\end{dfn}

\begin{lem}
    For all $n \in \nat$ and $p$, $s \in \zet$, the following diagram
    commutes:
    \begin{equation}
    \label{eqn:filtrationLinksCommute}
        \begin{tikzpicture}
            \mamax{2cm} {
                K_{s-n+1} Q(n)_p
                & K_{s-n} Q(n+1)_p
                \\
                K_s A(n) & K_s A(n+1).
                \\
            };
            \path[->, font=\scriptsize]
                (m-1-1) edge node[above] {$\psi(n,p)_s$} (m-1-2)
                (m-1-1) edge node[left] {$i(n,p)_s$} (m-2-1)
                (m-1-2) edge node[right] {$i(n+1,p)_s$} (m-2-2)
                (m-2-1) edge node[below] {$a(n)_s$} (m-2-2)
            ;
        \end{tikzpicture}
    \end{equation}
\end{lem}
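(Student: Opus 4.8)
The plan is to verify commutativity of \ref{eqn:filtrationLinksCommute} by interposing a third row built from the top cake algebras and reducing everything to naturality properties already established. Write $T(n) = Q(n)_{n-1}$ for the top of the chain $Q(n)_\bullet$; by Theorem \ref{thm:inclusioninducesiso} there is a natural isomorphism $T(n) \cong S^{n-1}A(n)$, and likewise $T(n+1) = Q(n+1)_n \cong S^{n}A(n+1)$. Unwinding Notation \ref{ntt:inps}, the filtration map $i(n,p)_s$ factors as the inclusion-induced map $K_{s-n+1}Q(n)_p \to K_{s-n+1}T(n)$ followed by the $(n-1)$-fold suspension isomorphism $K_{s-n+1}S^{n-1}A(n) \cong K_s A(n)$; crucially, no boundary map enters. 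Similarly $i(n+1,p)_s$ factors through $K_{s-n}T(n+1)$ via the $n$-fold suspension isomorphism. I would place between the two rows of \ref{eqn:filtrationLinksCommute} the row $K_{s-n+1}T(n) \to K_{s-n}T(n+1)$, whose horizontal map $\rho$ is the composite of $\sigma$, then the isomorphism $\Theta_{n-1}\colon SQ(n)_{n-1} \to \widetilde Q(n)_{n-1}=\widetilde Q(n)_n$ of Proposition \ref{pro:conncetQpWithQpPlusOne}, then the map $\phi$ induced by the decomposition-preserving inclusion $A(n)+0 \hookrightarrow A(n+1)$ at the top cake algebras $\widetilde Q(n)_n \to Q(n+1)_n = T(n+1)$.

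First I would check the upper cell, namely that the inclusions $Q(n)_p \hookrightarrow T(n)$ and $Q(n+1)_p \hookrightarrow T(n+1)$ intertwine $\psi(n,p)_s$ with $\rho$. Expanding $\psi(n,p)_s = \phi_{p*}\circ \Theta_{p*}\circ \sigma$ from Definition \ref{dfn:linkBetweenFiltrations}, this is a chase through three commuting sub-cells: the suspension isomorphism $\sigma$ commutes with the inclusion $Q(n)_p \hookrightarrow T(n)$ by naturality of the functor $S$; the isomorphisms $\Theta_p$ and $\Theta_{n-1}$ intertwine the chain inclusions $SQ(n)_p \to SQ(n)_{n-1}$ and $\widetilde Q(n)_p \to \widetilde Q(n)_{n-1}$ by the commuting ladder of Proposition \ref{pro:conncetQpWithQpPlusOne}, which is natural by Remark \ref{rem:qTildeIsNatural}; and $\phi$ is a morphism of the chains $\widetilde Q(n)_\bullet \to Q(n+1)_\bullet$, so it commutes with the inclusions into the respective tops. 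Composing these three identities is exactly what produces $\rho$ from $\psi(n,p)_s$, so the upper cell commutes.

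Next I would check the lower cell, that the suspension isomorphisms carry $\rho$ to $a(n)_s$. The only nontrivial point is to identify $\phi$, under the natural identifications $\widetilde Q(n)_n \cong S^{n}A(n)$ (Proposition \ref{pro:conncetQpWithQpPlusOne} together with Theorem \ref{thm:inclusioninducesiso}) and $T(n+1) \cong S^{n}A(n+1)$, with the $n$-fold suspension $S^{n}\iota$ of the inclusion $\iota \colon A(n) \hookrightarrow A(n+1)$. Granting this, $\rho$ becomes, after identifications, the composite of a single suspension isomorphism $K_{s-n+1}S^{n-1}A(n) \cong K_{s-n}S^{n}A(n)$ with $(S^{n}\iota)_*$; applying naturality of the suspension isomorphism to $\iota$ and using that one suspension step composed with the $n$-fold isomorphism $K_{s-n}S^{n}A(n)\cong K_s A(n)$ equals the $(n-1)$-fold isomorphism $K_{s-n+1}S^{n-1}A(n)\cong K_s A(n)$, the lower cell collapses to $\sigma^{n}\circ\rho = \iota_*\circ\sigma^{n-1} = a(n)_s\circ\sigma^{n-1}$. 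No boundary maps appear, so Proposition \ref{pro:compatibilitySuspensionBoundary} is not needed here. Pasting the two cells gives the claim.

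The main obstacle I anticipate is the degree bookkeeping: the link $\psi$ contributes a single suspension, whereas the two filtration maps use $(n-1)$- and $n$-fold suspensions respectively, and I must confirm that this one extra suspension is precisely what converts the $S^{n-1}A(n)$-filtration into the $S^{n}A(n+1)$-filtration. This is exactly the content of identifying $\phi$ with $S^{n}\iota$: the extra suspension produced by $\psi$ is absorbed by $\Theta_{n-1}$, which turns $SQ(n)_{n-1}$ into $\widetilde Q(n)_n \cong S^{n}A(n)$, after which $\phi$ is genuinely the suspended inclusion. Verifying this identification carefully -- tracing the decomposition-preserving map $A(n)+0 \to A(n+1)$ through the functorial isomorphisms of Proposition \ref{pro:addzeroideal} and Theorem \ref{thm:inclusioninducesiso} -- is the one place where the otherwise routine naturality bookkeeping must be done with care.
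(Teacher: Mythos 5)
Your proof is correct and follows essentially the same route as the paper: both expand $\psi(n,p)_s$ into its three constituents (suspension isomorphism, the natural isomorphism $SQ(n)_p \cong \widetilde Q(n)_p$ of Proposition \ref{pro:conncetQpWithQpPlusOne}, and the decomposition-preserving inclusion) and reduce commutativity to the definitional factoring of $i(n,p)_s$ through suspension isomorphisms plus naturality of these constructions with respect to ideal-decomposition-preserving \starhoms{}. Your version merely organizes the check by interposing the row of top cake algebras $T(n) \cong S^{n-1}A(n)$, which makes the degree bookkeeping more explicit than the paper's two-square argument, and you correctly observe that no boundary maps (hence no appeal to Proposition \ref{pro:compatibilitySuspensionBoundary}) are involved.
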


\begin{proof}
    Replace $\psi(n,p)_s$ by its definition as a composition of three maps
    to get the top row of the following diagram where $i'$
    denotes the map induced by the ideal inclusion
    $Q(n)_p \subseteq S^{n-1} A(n)$ with appropriate K-theoretic degree shifts:
    \[
        \begin{tikzpicture}
            \mamax{0.8cm} {
                K_{s-n+1} Q(n)_p
                & K_{s-n} SQ(n)_p
                & K_{s-n} \widetilde Q(n)_p,
                && K_{s-n} Q(n+1)_p
                \\
                K_s A(n) & K_s A(n) &&& K_s A(n+1).
                \\
            };
            \path[->, font=\scriptsize]
                (m-1-1) edge node[above] {$\cong$} (m-1-2)
                (m-1-2) edge node[above] {$\cong$} (m-1-3)
                (m-1-3) edge node[above] {$a(n)_s + 0$} (m-1-5)
                (m-1-1) edge node[left] {$i(n,p)_s$} (m-2-1)
                (m-1-2) edge node[left] {$i'$} (m-2-2)
                (m-1-5) edge node[right] {$i(n+1,p)_s$} (m-2-5)
                (m-2-2) edge node[below] {$a(n)_s$} (m-2-5)
            ;
            \path[-]
                (m-2-1) \maeq (m-2-2)
            ;
        \end{tikzpicture}
    \]
    The left square commutes by definition of $i'$
    because the top arrow is the suspension isomorphism.

    The right side commutes because, by Remark
    \ref{rem:qTildeIsNatural},
    the isomorphism at the top
    is natural with respect to \starhoms{} that preserve $(n+1)$-fold
    ideal decompositions: The two \starhoms{} here are the ideal inclusion
    that induces $a(n)$ and
    the ideal inclusion $Q(n+1)_p \to S^{n} A(n+1)$
    that induces $i'$ and $i(n+1,p)_s$ and restricts to $\widetilde Q(n)_p$;
    these two ideal inclusions commute with each other already on the level
    of C*-algebras.
\end{proof}

\begin{dfn}[Filtration of $K_* A$]
\label{dfn:filtrationOfLimit}
    Fix $p$, $s \in \zet$ and compose diagram
    \ref{eqn:filtrationLinksCommute} with itself across all $n \geq 1$:
    \[
        \begin{tikzpicture}
            \mamax{1.2cm} {
                K_{s} Q(1)_p
                & K_{s-1} Q(2)_p
                & K_{s-2} Q(3)_p
                & \dotsb
                \\
                K_s A(1) & K_s A(2) & K_s A(3)
                & \dotsb.
                \\
            };
            \path[->, font=\scriptsize]
                (m-1-1) edge node[left] {$i(1,p)_s$} (m-2-1)
                (m-1-2) edge node[left] {$i(2,p)_s$} (m-2-2)
                (m-1-3) edge node[left] {$i(3,p)_s$} (m-2-3)
                (m-2-1) edge node[below] {$a(1)_s$} (m-2-2)
                (m-2-2) edge node[below] {$a(2)_s$} (m-2-3)
                (m-2-3) edge node[below] {$a(3)_s$} (m-2-4)
                (m-1-1) edge node[above] {$\psi(1, p)_s$} (m-1-2)
                (m-1-2) edge node[above] {$\psi(2, p)_s$} (m-1-3)
                (m-1-3) edge node[above] {$\psi(3, p)_s$} (m-1-4)
            ;
        \end{tikzpicture}
    \]
    The direct limit of $K_s A(n)$ for $n \to \infty$ along $a(n)$ is
    $K_s A$ by continuity of K-theory.
    Take the direct limit of $K_{s-n+1} Q(n)_p$ for $n \to \infty$ along
    $\psi(n,p)_s$ and consider, by functoriality of the direct limit,
    the direct limit of the vertical arrows $i(n,p)_s$,
    \[
        \colim_{n \to \infty} i(n,p)_s\colon
            \Big( \colim_{n \to \infty} K_{s-n+1} Q(n)_p \Big) \to K_s A.
    \]
    With this map, define the \emph{filtration of $K_* A$},
    $\{F^p K_* A\}_{p \in \zet}$, by
    \[
        F^p K_s A = \im \Big( \colim_{n \to \infty} i(n,p)_s \Big)
            \subseteq K_s A.
    \]
\end{dfn}

\begin{lem}
\label{lem:limitFiltrationHausdorff}
    The filtration $\{F^p K_s A\}_{p \in \zet}$ of $K_s A$ from Definition
    \ref{dfn:filtrationOfLimit} is an increasing filtration.
\end{lem}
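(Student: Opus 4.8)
The plan is to reduce the claim to the already-established fact that, at each finite stage $n$, the filtration $\{F^p K_s A(n)\}_{p \in \zet}$ is increasing, and then to show that this nesting survives the passage to the direct limit. Recall from Remark \ref{rem:iDiagramCommutes}, applied to the chain $\dotsb \subseteq Q(n)_p \subseteq Q(n)_{p+1} \subseteq \dotsb$ of Lemma \ref{lem:cakeSumQuotients}, that the inclusion $Q(n)_p \subseteq Q(n)_{p+1}$ induces a map $\iota(n,p)_s\colon K_{s-n+1} Q(n)_p \to K_{s-n+1} Q(n)_{p+1}$ through which the filtration maps of Notation \ref{ntt:inps} factor: $i(n,p)_s = i(n,p+1)_s \circ \iota(n,p)_s$. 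Consequently $F^p K_s A(n) \subseteq F^{p+1} K_s A(n)$ at every finite level, and the task is only to lift this to $A$.

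First I would verify that the inclusions $\iota(n,p)_s$ assemble into a morphism of the two directed systems whose colimits define $F^p K_s A$ and $F^{p+1} K_s A$ in Definition \ref{dfn:filtrationOfLimit}; concretely, that the square
\[
\begin{tikzpicture}
    \mamax{2cm} {
        K_{s-n+1} Q(n)_p
        & K_{s-n} Q(n+1)_p
        \\
        K_{s-n+1} Q(n)_{p+1}
        & K_{s-n} Q(n+1)_{p+1}
        \\
    };
    \path[->, font=\scriptsize]
        (m-1-1) edge node[above] {$\psi(n,p)_s$} (m-1-2)
        (m-1-1) edge node[left] {$\iota(n,p)_s$} (m-2-1)
        (m-1-2) edge node[right] {$\iota(n+1,p)_s$} (m-2-2)
        (m-2-1) edge node[below] {$\psi(n,p+1)_s$} (m-2-2)
    ;
\end{tikzpicture}
\]
commutes for all $n \in \nat$ and $p$, $s \in \zet$.

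The hard part will be this commutativity, but it follows formally from the naturality of the three maps composing $\psi(n,p)_s$ in Definition \ref{dfn:linkBetweenFiltrations}. The suspension isomorphism $K_{s-n+1} Q(n)_p \cong K_{s-n} SQ(n)_p$ is natural in its argument, hence compatible with the inclusion $Q(n)_p \subseteq Q(n)_{p+1}$; the isomorphism $SQ(n)_p \cong \widetilde Q(n)_p$ is natural by Remark \ref{rem:qTildeIsNatural}; and the final map, induced by the C*-algebra inclusion of the decomposition $A(n)+0$ into $A(n+1)$, commutes with the inclusions $\widetilde Q(n)_p \subseteq \widetilde Q(n)_{p+1}$ and $Q(n+1)_p \subseteq Q(n+1)_{p+1}$, since these are the same inclusion of cake sums read off in two ambient algebras. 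Stacking the three resulting naturality squares yields the displayed square.

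Granting the square, taking the direct limit over $n \to \infty$ produces a map $\colim_n \iota(n,p)_s\colon \colim_n K_{s-n+1} Q(n)_p \to \colim_n K_{s-n+1} Q(n)_{p+1}$, and the finite-level factorisations $i(n,p)_s = i(n,p+1)_s \circ \iota(n,p)_s$ pass to the colimit to give $\colim_n i(n,p)_s = \big(\colim_n i(n,p+1)_s\big) \circ \big(\colim_n \iota(n,p)_s\big)$ as maps into $K_s A$. Comparing images then yields $F^p K_s A = \im\big(\colim_n i(n,p)_s\big) \subseteq \im\big(\colim_n i(n,p+1)_s\big) = F^{p+1} K_s A$, which is precisely the asserted increasing property. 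No exactness of the colimit is needed here, only its functoriality and compatibility with composition.
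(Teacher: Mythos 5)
Your proof is correct and follows essentially the same route as the paper's: reduce to the finite-level nesting $\im i(n,p)_s \subseteq \im i(n,p+1)_s$ coming from the ideal inclusions $Q(n)_p \subseteq Q(n)_{p+1}$ (Remark \ref{rem:iDiagramCommutes}), then pass to the direct limit along $n$. The commuting square you isolate is exactly what the paper invokes implicitly when it says the direct limit construction may be pulled back along the system of maps $i$ (and its commutativity is already supplied by the diagram in Proposition \ref{pro:conncetQpWithQpPlusOne} together with Remark \ref{rem:qTildeIsNatural}), so your version simply makes that step explicit.
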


\begin{proof}
    For all $p \leq p'$, we have $Q(n)_p \subseteq Q(n)_{p'}$.
    Using $i$ as in the notation of Section
    \ref{sec:schochetSseq},
    $i\big(K_{s-n+1} Q(n)_p\big) \subseteq K_{s-n+1} Q(n)_{p'}$.
    Because the $i(n,p)_s$ arise from the directed system of morphisms $i$
    from Remark \ref{rem:iDiagramCommutes},
    \[
        K_s Q(n)_p \loto{i} K_s Q(n)_{p+1} \loto{i}
            \dotsb \loto{i} K_s Q(n)_n = K_s Q(n)_{n+1} = \dotsb,
    \]
    merely via isomorphisms that implement a K-theoretic
    degree shift, we may pull back our direct limit construction
    for $\colim_n i(n,p)_s$ via these isomorphisms to the system of $i$.
    Passing to the direct limit morphism along $n \to \infty$ gives
    \[
        \Big(\colim_{n \to \infty} i \Big)
            \Big(\colim_{n \to \infty} K_{s-n+1} Q(n)_p \Big)
            \subseteq \colim_{n \to \infty} K_{s-n+1} Q(n)_{p'}.
    \]
    Thus the filtration is increasing:
    \begin{align*}
        F^p K_s A &=
        \Big(\colim_{n \to \infty} i(n,p)_s\Big)
            \Big( \colim_{n \to \infty} K_{s-n+1} Q(n)_p \Big)
        \\
        &\subseteq
        \Big(\colim_{n \to \infty} i(n,p')_s\Big)
            \Big( \colim_{n \to \infty} K_{s-n+1} Q(n)_{p'} \Big)
        \\
        &= F^{p'} K_s A. \qedhere
    \end{align*}
\end{proof}

\begin{lem}
    The $p$-indexed filtration
    of $K_* A$ from Definition \ref{dfn:filtrationOfLimit} is
    Hausdorff, exhaustive, and complete according to Definition
    \ref{dfn:Hausdorff}.
\end{lem}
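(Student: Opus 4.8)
The plan is to deduce all three properties from the corresponding statements for the finite-stage filtrations $\{F^p K_s A(n)\}_{p \in \zet}$, which are Hausdorff, exhaustive, and complete by Proposition \ref{pro:filtrationExhausts}, together with the single formal fact that directed colimits of abelian groups are exact and therefore commute with the formation of images and of directed unions. Concretely, since $F^p K_s A(n) = \im i(n,p)_s$ and the maps $i(n,p)_s$ assemble into a morphism of directed systems by the commutativity of diagram \ref{eqn:filtrationLinksCommute} (which relates $i(n,p)_s$ to $\psi(n,p)_s$ on domains and to $a(n)_s$ on codomains), exactness of the colimit yields the identification $F^p K_s A = \colim_n F^p K_s A(n)$, where the image is formed inside $K_s A = \colim_n K_s A(n)$. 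Every property below will be read off from this identification.

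For the Hausdorff and completeness conditions I would first observe that $Q(n)_p = 0$ for every $p < 0$ and every $n$, so $K_{s-n+1} Q(n)_p = 0$ and hence $F^p K_s A = 0$ for all $p < 0$. The Hausdorff property is then immediate, since $\bigcap_{p \in \zet} F^p K_s A \subseteq F^{-1} K_s A = 0$. Completeness follows exactly as in Proposition \ref{pro:filtrationExhausts}: the inverse system $(F^p K_s A)$ is the zero system for $p \to -\infty$, so both its inverse limit and the right-derived functor of the inverse limit vanish.

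For exhaustion I would use that the finite filtration already stabilizes: since $Q(n)_{n-1} = B(n)_{\{0,1,\ldots,n-1\}} \cong S^{n-1} A(n)$ by Theorem \ref{thm:inclusioninducesiso}, we have $F^{n-1} K_s A(n) = K_s A(n)$, so $\bigcup_p F^p K_s A(n) = K_s A(n)$ for each fixed $n$. Passing to the directed colimit over $n$ and interchanging it with the directed union over $p$ (again justified by exactness of filtered colimits, as both operations are filtered colimits and therefore commute) gives $\bigcup_{p \in \zet} F^p K_s A = \colim_n \bigcup_p F^p K_s A(n) = \colim_n K_s A(n) = K_s A$, as required.

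The only point demanding genuine care, and the one I expect to be the main obstacle to write cleanly, is the interchange of the colimit over $n$ with the image and with the union over $p$. It rests on two ingredients already in place: exactness of the filtered colimit functor on abelian groups, and the commutativity of diagram \ref{eqn:filtrationLinksCommute}, which turns the family $i(n,p)_s$ into a morphism of directed systems and hence produces a well-defined limit map $\colim_n i(n,p)_s$ whose image defines $F^p K_s A$. Once these are invoked, the three conditions of Definition \ref{dfn:Hausdorff} reduce entirely to the finite-stage statements established earlier.
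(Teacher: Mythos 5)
Your proposal is correct and follows essentially the same route as the paper: Hausdorff and completeness both come from the vanishing of $Q(n)_p$ (hence of $F^p K_s A$) for $p<0$, and exhaustion comes from the fact that every class of $K_s A$ already lives in some $K_s A(n)$, where the finite-stage filtration is surjective at $p\geq n-1$. The paper phrases the exhaustion step as an element chase through the diagram of Definition \ref{dfn:filtrationOfLimit} rather than as an interchange of filtered colimits, but the underlying argument is the same.
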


\begin{proof}
    For $p < 0$, all terms $K_{s-n+1} Q(n)_p$
    vanish regardless of $s$ and $n$, thus their direct limit
    also vanishes. This renders the filtration Hausdorff and complete.

    For any class $[x] \in K_s A$, there is $n \in \nat$ such that $K_s A(n)$
    contains the preimage of $[x]$. We can choose $p = n$ to
    make $i(n,n)_s \colon K_{s-n+1} Q(n)_n \to K_s A(n)$ surjective, thereby
    including that preimage of $[x]$ in the range of $i(n,n)_s$. Commutativity
    of the diagram in Definition
    \ref{dfn:filtrationOfLimit} shows that $[x]$ is in
    the range of $\colim_n i(n,p)_s$. Thus the filtration is exhaustive.
\end{proof}

\begin{rem}
    The biggest problem in passing to direct limits for $n \to \infty$
    were the iterated suspensions $S^{n-1}$, but these have already been handled
    by the degree-amending links $\lambda(2n)$ from
    Definition \ref{dfn:degreeAmendingLink}
    via degree shifts and Bott isomorphisms.
    Thus throughout Section \ref{sec:sseqUnendlichConvergence},
    we may rest assured
    that any direct limits of modules or differentials on
    $E^r_{p,q}$ for $r \geq 1$
    or $r = \infty$ remain well-defined for the spectral sequence
    $\{E^r_{p,q}, d^r\}_{r,p,q}$ for countably many C*-ideals.
\end{rem}

\begin{pro}[Convergence of the limit spectral sequence]
\label{pro:unsereSseqUnendlichConvergence}
    Let $A$ be a C*-algebra with
    $A = \overline{I_0 + I_1 + \dotsb + I_j + \dotsb}$,
    a sum of C*-ideals.
    The limit spectral sequence constructed in
    Theorem \ref{thm:unsereSseqUnendlich}, defined
    as the direct limit along the system $\lambda(2n)$ with
    \[
        E^r_{p,q}
            = \colim_{n \to \infty}
                \Big(
                    \beta \circ \ell(2n+1) \circ \ell(2n)
                    \colon \{E(2n)^r_{p,q}\} \to \{E(2n+2)^r_{p,q}\}
                \Big),
    \]
    converges strongly to $K_*A$.
\end{pro}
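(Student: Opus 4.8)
The plan is to reduce the assertion to the single isomorphism
\[
    E^\infty_{p,q} \cong F^p K_{p+q} A / F^{p-1} K_{p+q} A,
\]
because the filtration $\{F^p K_s A\}_{p \in \zet}$ of Definition \ref{dfn:filtrationOfLimit} has already been shown to be increasing, Hausdorff, exhaustive, and complete; once this isomorphism is in place, strong convergence is immediate from Definition \ref{dfn:strongConvergence}. The idea is to exhibit both sides as the direct limit over $n$ of the corresponding data for the finite spectral sequences $E(2n)$, which converge strongly to $K_* A(2n)$ by Theorem \ref{thm:sseqforsums}, and then to transport the finite convergence isomorphisms through the colimit by means of the continuity lemma for quotients, Lemma \ref{lem:continuityOfQuotients}.

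First I would show that passing to the limit commutes with passing to the $E^\infty$-page, that is, $E^\infty_{p,q} \cong \colim_{n} E(2n)^\infty_{p,q}$. Since filtered colimits of abelian groups are exact, the limit-page operators $\colim_n d(2n)^r$ are the differentials $d^r$ of the limit spectral sequence, and $\colim$ commutes with kernels and images; hence $Z^r_{p,q} = \colim_n Z(2n)^r_{p,q}$ and $B^r_{p,q} = \colim_n B(2n)^r_{p,q}$ as subgroups of $E^1_{p,q} = \colim_n E(2n)^1_{p,q}$. The intersection $\bigcap_r Z^r_{p,q}$ is harmless because these spectral sequences have exiting differentials: the outgoing differential $d^r_{p,q}$ lands in the vanishing column $p - r < 0$ once $r > p$, so both $Z(2n)^r_{p,q}$ and $Z^r_{p,q}$ stabilize at $r = p$, uniformly in $n$, and $\colim_n$ therefore commutes with $\bigcap_r$. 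The union $\bigcup_r B^r_{p,q}$ is a directed union, and colimits commute with colimits, so $\colim_n \bigcup_r B(2n)^r_{p,q} = \bigcup_r \colim_n B(2n)^r_{p,q} = \bigcup_r B^r_{p,q}$. Exactness of $\colim$ with respect to the quotient then gives the identification of $E^\infty_{p,q}$.

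Next I would feed in the finite convergence. For each $n$, Theorem \ref{thm:sseqforsums}, read through Definition \ref{dfn:strongConvergence}, supplies isomorphisms $E(2n)^\infty_{p,q} \cong F^p K_{p+q} A(2n) / F^{p-1} K_{p+q} A(2n)$. The crucial point is that these are compatible with the connecting maps, so that they constitute an isomorphism of directed systems: under them the degree-amending link $\lambda(2n)$ on the spectral-sequence side corresponds to the map on filtration quotients induced by the target maps $a(n)_s$ together with the Bott isomorphism. This is exactly the compatibility built into Definition \ref{dfn:linkBetweenFiltrations} and the commuting square \ref{eqn:filtrationLinksCommute}, which makes the filtration links $\psi(n,p)_s$ intertwine the $i(n,p)_s$ with the $a(n)_s$; the suspension and Bott isomorphisms hidden inside $\ell$ and $\lambda$ respect the K-theoretic boundary maps by Proposition \ref{pro:compatibilitySuspensionBoundary} and the naturality of $\beta$. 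Granting this compatibility, I would pass to $\colim_n$ and invoke Lemma \ref{lem:continuityOfQuotients} together with the identity $\colim_n F^p K_s A(2n) = F^p K_s A$ of Definition \ref{dfn:filtrationOfLimit}:
\[
    \colim_{n \to \infty} E(2n)^\infty_{p,q}
    \cong \colim_{n \to \infty} \frac{F^p K_{p+q} A(2n)}{F^{p-1} K_{p+q} A(2n)}
    \cong \frac{\colim_n F^p K_{p+q} A(2n)}{\colim_n F^{p-1} K_{p+q} A(2n)}
    = \frac{F^p K_{p+q} A}{F^{p-1} K_{p+q} A}.
\]
Combining this with the first step yields the desired isomorphism and hence strong convergence.

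The main obstacle is the compatibility in the third paragraph: verifying that the finite strong-convergence isomorphisms are natural with respect to the Bott-corrected links, so that the two colimit systems agree as directed systems and not merely term by term. The delicate bookkeeping is the single $q$-shift introduced by each $\ell(n)$ and its cancellation against the Bott isomorphism in $\lambda(2n)$; one must check that the suspension isomorphism used to build $\ell$ and the Bott isomorphism commute with the boundary maps appearing both in the differentials and in the filtration, which is precisely what Proposition \ref{pro:compatibilitySuspensionBoundary} and the naturality of $\beta$ furnish. A secondary point worth stating explicitly is the reliance on exiting differentials in the first step, without which $\colim_n$ need not commute with the infinite intersection $\bigcap_r Z^r_{p,q}$.
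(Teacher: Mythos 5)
Your proposal is correct and follows essentially the same route as the paper: invoke the strong convergence of each finite spectral sequence $E(2n)$, identify $E^\infty_{p,q}$ with $\colim_n E(2n)^\infty_{p,q}$ via the links $\lambda(2n)$, and transport the finite filtration quotients through the colimit using Lemma \ref{lem:continuityOfQuotients} together with the already-established Hausdorff, exhaustive, and complete properties of the limit filtration. Your second paragraph is in fact more careful than the paper's proof at the one delicate point — justifying that $\colim_n$ commutes with $\bigcap_r Z^r_{p,q}$ via the uniform stabilization coming from exiting differentials — which the paper passes over by simply asserting the $\lambda(2n)$ have "all desirable properties."
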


\begin{proof}
    For all $n \in \nat$,
    the spectral sequence
    $\{E(n)^r_{p,q}, d(n)^r\}$ converges strongly:
    \[
        E(n)^\infty_{p,q} \cong F^p K_{p+q} A(n) / F^{p-1} K_{p+q} A(n).
    \]
    The terms $E^\infty_{p,q}$ are the direct limits of these
    $E(n)^\infty_{p,q}$ along the morphisms induced on $E(2n)^\infty_{p,q}$
    by the $\lambda(2n)$ because these $\lambda(2n)$ had all desirable
    properties -- naturality with respect to \starhoms{}
    that preserve countable ideal decompositions and commutativity with the
    differentials. Furthermore, K-theory is continuous. Along
    the system of $\lambda(2n)$,
    \[
        \colim_{n \to \infty}
        \big( F^p K_{p+q} A(2n) / F^{p-1} K_{p+q} A(2n) \big)
        \cong
        \colim_{n \to \infty}
        \big( E(2n)^\infty_{p,q}, \lambda(2n) \big)
        \cong
        E^\infty_{p,q}.
    \]
    The filtration $\{F^p K_* A\}_{p \in \zet}$ of $K_* A$
    is Hausdorff, exhaustive, and complete by Lemma
    \ref{lem:limitFiltrationHausdorff} and
    $E^\infty_{p,q}$ is isomorphic to
    $F^p K_{p+q} A / F^{p-1} K_{p+q} A$ by
    Lemma \ref{lem:continuityOfQuotients}.
    Therefore the limit spectral sequence
    $\{E^r_{p,q}, d^r\}_{r,p,q}$
    converges strongly to $K_* A$.
\end{proof}

This concludes the proof of Theorem
\ref{thm:unsereSseqUnendlich} about the existence, well-definedness,
functoriality, and strong convergence of the limit spectral sequence
for countably many C*-ideals.

\subsection{Uncountable sums of ideals}
\label{sec:uncountableSums}

In most geometrical applications, if a C*-algebra may be written as a sum
of easily computable ideals, this sum will be a countable sum.
We have described a spectral sequence for this case.
Still, it seems reasonable to generalize the cardinality of the algebra
decomposition.

\newcommand{\fins}{\mathop{\mathrm{Fin}}(\alpha)}

\restateAbstractSseqUncountable

This generalizes Theorem \ref{thm:unsereSseqUnendlich} about
countable $\alpha$.
To prove Theorem \ref{thm:uncountableAbstractSseq} for uncountable $\alpha$,
we will adapt our construction for countable index sets to suitable
direct limits that capture all of $\alpha$.

\begin{dfn}[Directed system of finite sets]
    For a set $\alpha$, we define its \emph{directed system of finite sets},
    \[
        \fins = \set{J \subs \alpha}{\card J \textrm{ is finite}};
    \]
    this system is partially ordered by the subset relation $\subs$.
\end{dfn}

\begin{rem}
    This is indeed a directed system: Given arbitrary $J$, $J' \in \fins$,
    both are equal to or smaller than their union $J \cup J'$, still a
    finite set.

    The direct limit of $\fins$ in the category of sets is $\alpha$.
    All chains, i.e., linear subsystems, in $\fins$
    are either finite or have the order type of ${(\nat, \leq)}$.

    We may consider the partially ordered set $\fins$ itself a thin category:
    Its elements become objects. Comparable sets
    $J \subs J'$ are linked with a unique morphism $J \to J'$.
\end{rem}

\begin{ntt}[Algebras for subsets $J \subs \alpha$]
    For the C*-algebra $A = \overline{\sum_{\beta \in \alpha} I_\beta}$
    as in the statement of Theorem \ref{thm:uncountableAbstractSseq}
    and $J \in \fins$, define a subalgebra $A(J)$ of $A$ by
    \[
        A(J) = \sum_{j \in J} I_j.
    \]
    Let $J' \in \fins$ be another subset with $J \subs J'$.
    For $s \in \zet$, let
    \[
        a(J, J')_s\colon K_s A(J) \to K_s A(J')
    \]
    be the map induced in K-theory by the inclusion of C*-algebras
    $A(J) \subs A(J')$.
\end{ntt}

\begin{rem}[Directed system in K-theory]
    For each $s \in \zet$, consider the functor from $\fins$
    to abelian groups that maps $J$ to $K_s A(J)$ and a comparable pair of
    sets $J \subs J'$ to $a(J, J')_s\colon K_s A(J) \to K_s A(J')$.
    This turns $\set{K_s A(J)}{J \in \fins}$ with the system
    of morphisms $\set{a(J, J')_s}{J \subs J' \in \fins}$
    into a directed system.
    Because K-theory is continuous,
    \[
        \colim_{J \in \fins} K_s A(J) = K_s A.
    \]
\end{rem}

\begin{ntt}[Spectral sequence for $J$]
    For $J \in \fins$, the finite sum of ideals $A(J)$ has a spectral
    sequence $\{E(J)^r_{p,q}, d(J)^r\}_{r,p,q}$
    according to Theorem \ref{thm:sseqforsums} that converges strongly to
    $A(J)$ and has first-page terms of the form
    \[
        E(J)^1_{p,q} \cong
            \begin{dcases*}
                \bigoplus_{\substack{\card L = p+1 \\ L \subs J}}
                    K_q \Big( \bigcap_{j \in L} I_j \Big)
                & for $0 \leq p < \card J$, \\
                0
                & for $p < 0$ or $p \geq \card J$,
            \end{dcases*}
    \]
    where $L$ ranges over all nonempty subsets of $J$.
\end{ntt}

\begin{rem}[Directed system of spectral sequences]
\label{rem:directedSystemOfJSseqs}
    Let $J \subs J' \in \fins$ be two sets such that
    $\card J$ and $\card{J'}$
    differ by an even number. Then the spectral sequences
    $\{E(J)^r_{p,q}, d(J)^r\}_{r,p,q}$ and $\{E(J')^r_{p,q}, d(J')^r\}_{r,p,q}$
    fit into a directed system of spectral sequences connected
    by degree-amending links shaped like
    $\lambda(2n)$ from Definition \ref{dfn:degreeAmendingLink}.
    These morphisms have bidegree $(0, 0)$.

    Let $F \subs \fins$ be the subsystem of $\fins$ of all sets
    $J \in \fins$ with even cardinality. Then
    $F$ and $\fins$ have the same direct limit $\alpha$
    in the category of sets.

    Consider the category of spectral sequences of the form
    $\{E(J)^r_{p,q}, d(J)^r\}_{r,p,q}$ for $J \in F$ with morphisms
    shaped like $\lambda(2n)$:
    Passing from $J \in \fins$ to the spectral sequence
    $\{E(J)^r_{p,q}, d(J)^r\}_{r,p,q}$ becomes a functor between directed
    systems.
\end{rem}

\begin{rem}
\label{rem:evenOddDoesNotMatter}
    It does not matter whether the sets $J \in F$ have even or odd
    cardinalities. As we have seen in the proof of
    Proposition \ref{pro:sseqUnendlichExistsNoLinkDef}, the limit
    spectral sequence along ${(\nat, \leq)}$
    does not depend on whether we consider the subsystem linked
    by $\lambda(2n)$ or that linked by $\lambda(2n+1)$.
\end{rem}

\begin{lem}
\label{lem:structureOfUncountableSseq}
    Let $F \subs \fins$ be the subsystem of $\fins$ of all sets
    $J \in \fins$ with even cardinality.
    Fix $p \in \zet$ with $p \geq 0$ and fix $q \in \zet$.

    Consider all groups $E(J)^1_{p,q}$ for
    $J \in F$: This is a directed system of abelian groups; the morphisms
    are restrictions of degree-amending links of the shape of $\lambda(2n)$
    from Definition \ref{dfn:degreeAmendingLink}.
    Then
    \[
        \colim_{J \in F}
            E(J)^1_{p,q}
            =
            \colim_{J \in F}
                \bigoplus_{\substack{\card L = p+1 \\ L \subs J}}
                    K_q \Big( \bigcap_{j \in L} I_j \Big)
            =
                \bigoplus_{\substack{\card L = p+1 \\ L \subs \alpha}}
                    K_q \Big( \bigcap_{j \in L} I_j \Big).
    \]
\end{lem}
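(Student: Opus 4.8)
The plan is to prove the two displayed equalities separately. The first, $\colim_{J \in F} E(J)^1_{p,q} = \colim_{J \in F} \bigoplus_{L} K_q(\bigcap_{j\in L} I_j)$, asserts that the per-$J$ identification of the first page furnished by Theorem \ref{thm:sseqforsums} is compatible with the transition morphisms of the directed system, so that the two systems over $F$ are isomorphic and hence share a colimit. The second, $\colim_{J \in F} \bigoplus_{L \subs J} K_q(\bigcap_{j \in L} I_j) = \bigoplus_{L \subs \alpha} K_q(\bigcap_{j \in L} I_j)$, is a formal computation: a filtered colimit of direct sums whose structure maps are coordinate inclusions. Throughout I assume $\alpha$ is infinite, which is the case of interest here; finite $\alpha$ is handled directly by Theorem \ref{thm:sseqforsums}.

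For the first equality I would fix $J \subs J' \in F$ and write $J'$ as $J$ enlarged one element at a time, so that the transition morphism $E(J)^1_{p,q} \to E(J')^1_{p,q}$ is a composite of single-ideal links (Definition \ref{dfn:linkBetweenSseqs}) and Bott isomorphisms. Lemma \ref{lem:faithfulLinks} records exactly how each single-ideal link acts on the first page: it carries the summand $K_q(\bigcap_{j\in L} I_j)$ indexed by $L \subs J$ isomorphically onto the summand indexed by the same $L$, it introduces no cross-terms between distinct summands, and the summands newly available after enlargement — those $L$ not contained in $J$ — are not hit. The Bott isomorphisms inside the degree-amending links are natural and only correct the K-theoretic degree, so they preserve this diagonal summand-by-summand structure. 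Hence, under the identifications of Theorem \ref{thm:sseqforsums}, each transition morphism becomes the canonical inclusion of the sub-direct-sum indexed by $\mathcal L(J) = \{L \subs J : \card L = p+1\}$ into that indexed by $\mathcal L(J')$. This is an isomorphism of directed systems, which induces an isomorphism on colimits.

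For the second equality I would regard the right-hand system concretely: set $G_L = K_q(\bigcap_{j\in L} I_j)$ for each finite $L \subs \alpha$ with $\card L = p+1$ and $S_J = \bigoplus_{L \in \mathcal L(J)} G_L$, with the coordinate inclusions $S_J \to S_{J'}$ just identified. The compatible inclusions $S_J \to \bigoplus_{L} G_L$ (summing over all finite $(p+1)$-subsets $L \subs \alpha$) induce a map out of $\colim_{J \in F} S_J$. It is surjective, since any element of $\bigoplus_L G_L$ has finite support $L_1,\dots,L_k$ and lies in the image of $S_J$ once $J \in F$ contains $L_1 \cup \dotsb \cup L_k$; it is injective, since each $S_J \to \bigoplus_L G_L$ is injective — this is the standard interchange of direct sums with filtered colimits. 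It remains to verify $\bigcup_{J \in F} \mathcal L(J) = \{L \subs \alpha : \card L = p+1\}$: given such an $L$, pad it by one further element of $\alpha$ when $p+1$ is odd to obtain an even-cardinality finite $J \supseteq L$ in $F$, which is always possible for infinite $\alpha$. Restricting to $F$ rather than all of $\fins$ loses nothing because $F$ is cofinal in $\fins$ (cf.\ Remark \ref{rem:evenOddDoesNotMatter}).

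The genuinely load-bearing step is the naturality claim in the first equality, and this is precisely what Lemma \ref{lem:faithfulLinks} was built to supply: without faithfulness of the links one could not conclude that the transition maps respect the direct-sum decomposition of the first page. Granting that lemma, the rest is the purely formal interchange of a filtered colimit with a direct sum, so I expect no obstacle beyond the minor bookkeeping of the padding and cofinality argument.
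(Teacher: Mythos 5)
Your proposal is correct and follows essentially the same route as the paper's proof: both rest on the faithfulness of the links (Lemma \ref{lem:faithfulLinks}) to see that the transition morphisms act as summand-by-summand inclusions, and then identify the filtered colimit of direct sums with the direct sum over all $(p+1)$-subsets of $\alpha$ using cofinality of $F$ in $\fins$. Your explicit padding argument for odd $p+1$ and the restriction to infinite $\alpha$ make precise a point the paper leaves implicit in its cofinality claim.
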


\begin{proof}
    For $J \subs J' \in F$,
    the morphisms $E(J)^1_{p,q} \to E(J')^1_{p,q}$ are well-defined
    because degree-amending links
    have bidegree $(0, 0)$.

    These morphisms were defined as compositions of Bott
    isomorphisms and links (Definition \ref{dfn:linkBetweenSseqs})
    between spectral sequences; they preserve all information:
    Given $L \subs J$ of cardinality $p + 1$,
    the direct summand $K_q \big( \bigcap_{j \in L} I_j \big)$
    from $E(J)^1_{p,q}$
    maps isomorphically onto its copy in the direct sum $E(J')^1_{p,q}$.
    Given $L \subs J'$ such that $L$ is not a subset of $J$,
    the direct summand $K_q \big( \bigcap_{j \in L} I_j \big)$
    is not in the range.

    In the category of abelian groups, the direct limit may be constructed
    from a large direct sum of all objects, dividing by relations according
    to the morphisms. Here the links behave like inclusions,
    enforcing trivial relations.

    Finally, $F$ is cofinal in $\fins$: For any given set $J \subs \alpha$,
    the system $F$ contains a set $J'$ with $J \subs J'$.
    Therefore
    the desired direct limit is the direct sum taken over all subsets of
    $\alpha$ that have cardinality $p + 1$.
\end{proof}

With these preparations, we may now prove our main theorem.

\begin{proof}[Proof of Theorem \ref{thm:uncountableAbstractSseq}]
    Let $F \subs \fins$ be the subsystem of $\fins$ of all sets
    $J \in \fins$ with even cardinality.

    Consider the directed system of spectral sequences
    $\{E(J)^r_{p,q}, d(J)^r\}_{r,p,q}$ along all $J \in F$ from
    Remark \ref{rem:directedSystemOfJSseqs}.
    Lemma \ref{lem:structureOfUncountableSseq} guarantees that the page
    $E^1_{*,*}$ of the limit spectral sequence has the desired structure.

    All constructions in earlier sections behaved well with the differentials
    $d(J)^r$. Here in Section \ref{sec:uncountableSums}, we have applied
    various functorial direct limit constructions. Therefore the
    limit spectral sequence has the desired differentials.

    Likewise, functoriality of the spectral sequence with respect to
    ideal decompositions follows from all earlier sections and
    the functoriality of direct limits.

    Finally, we must prove the strong convergence.
    All direct limit results from
    Section \ref{sec:sseqUnendlichConvergence} about
    strong convergence along the directed system $(\nat, \leq)$ continue
    to hold for our directed system $F$ because all infinite chains
    in $F$ have the order type ${(\nat, \leq)}$:
    Whenever the symbol $n \in \nat$
    determines a K-theoretic degree shift in
    Section \ref{sec:sseqUnendlichConvergence}, this may be replaced
    with $n = \card J$ for $J \in F$.
    Even though the diagram in Definition \ref{dfn:filtrationOfLimit}
    of the morphism $\colim_n i(n,p)_s$ relies on the linearity
    of ${(\nat, \leq)}$, the construction itself is worded purely
    with direct limits: Objects in this category are morphisms of the form
    $i(n,p)_s$ and morphisms in this category are commutative diagrams.
    This construction does not require linearity of the
    underlying system, yet
    provides the desired filtration on $E^\infty_{*,*}$.
\end{proof}

\section{Infinite coarse excision}\thispagestyle{plain}

\newcommand{\letXbeCoarseCountable}{Let
    $(X, d)$ be a coarse space and
    let $\{X_\beta\}_{\beta \in \alpha}$
    be a countable coarsely excisive cover of $(X, d)$.
}

\subsection{Direct limits of coarse algebras}

The spectral sequence for infinite sums of C*-ideals allows us to strengthen
Theorem \ref{thm:sseqForFiniteCoarselyExcisiveCovers}, the spectral sequence
for finite coarsely excisive covers, to infinite coarsely excisive covers.

\begin{pro}
\label{pro:theCoarseDirectLimit}
    Let $(X, d)$ be a coarse space and
    let $\{X_\beta\}_{\beta \in \alpha}$
    be a coarsely excisive cover of $(X, d)$. The index set $\alpha$
    may be finite, countably infinite, or uncountable.
    \letFualalgBeEither
    For each $\beta \in \alpha$, consider the C*-ideal
    $\fualalg X_\beta \cong \fualalg(X_\beta \subs X)$ of $\fualalg X$.

    Then the direct limit of finite sums of ideals,
    \begin{equation}
    \label{eqn:theCoarseDirectLimit}
        \overline{\bigcup_{\substack{
            J \subs \alpha
            \\ \card J \in \nat
        }}
        \Big( \sum_{j \in J} \fualalg(X_j \subs X) \Big)},
    \end{equation}
    is a C*-ideal of $\fualalg X$.
\end{pro}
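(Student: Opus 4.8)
The plan is to show that the set inside the norm closure is already an algebraic two-sided ideal of $\fualalg X$, and then to invoke the elementary fact that the norm closure of a two-sided ideal in a C*-algebra is a C*-ideal.

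First I would record that each relative algebra $\fualalg(X_\beta \subs X)$ is a C*-ideal of $\fualalg X$. For $\fualalg = \cualalg$ and $\fualalg = \dualalg$ this is exactly the content of the remark following Definition \ref{dfn:relativeD}. For $\fualalg = \qualalg$, recall that $\qualalg(X_\beta \subs X)$ is the image of the C*-ideal $\dualalg(X_\beta \subs X)$ under the quotient \starhom{} $\dualalg X \to \qualalg X = \dualalg X / \cualalg X$; since a surjective \starhom{} of C*-algebras carries closed two-sided ideals to closed two-sided ideals (its image $\dualalg(X_\beta \subs X) + \cualalg X$ is a closed ideal of $\dualalg X$ containing $\cualalg X$), the algebra $\qualalg(X_\beta \subs X)$ is again a C*-ideal of $\qualalg X$. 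In all three cases each summand is in particular a self-adjoint two-sided ideal.

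Next I would show that
\[
    U = \bigcup_{\substack{J \subs \alpha \\ \card J \in \nat}}
        \Big( \sum_{j \in J} \fualalg(X_j \subs X) \Big)
\]
is an algebraic two-sided ideal of $\fualalg X$. The family of finite partial sums is directed under inclusion: given finite $J$, $J' \subs \alpha$, both $\sum_{j \in J} \fualalg(X_j \subs X)$ and $\sum_{j \in J'} \fualalg(X_j \subs X)$ lie inside $\sum_{j \in J \cup J'} \fualalg(X_j \subs X)$, and $J \cup J'$ is again finite. Hence $U$ is a directed union of linear subspaces, so $U$ is itself a linear subspace; it is a two-sided ideal because each finite sum of two-sided ideals is a two-sided ideal, and it is self-adjoint because each summand is.

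Finally I would pass to the closure. For $a \in \fualalg X$ and $u \in \overline U$, choose $u_n \in U$ with $u_n \to u$; then $a u_n$, $u_n a \in U$, and by continuity of multiplication in the C*-algebra $\fualalg X$ we get $a u_n \to a u$ and $u_n a \to u a$, so $a u$, $u a \in \overline U$. Thus $\overline U$ is a two-sided ideal; it is closed by construction and self-adjoint as the closure of a self-adjoint set (equivalently, every closed two-sided ideal of a C*-algebra is automatically a C*-ideal). This is exactly the ideal \ref{eqn:theCoarseDirectLimit}, so the proof is complete. The only point demanding genuine care is the $\qualalg$ bookkeeping and the verification that the pre-closure union is an ideal; once that is in place, the passage to the closure is routine continuity of multiplication, so I expect no substantive obstacle.
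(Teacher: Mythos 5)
Your proposal is correct and follows essentially the same route as the paper: both verify that the pre-closure union is an algebraic two-sided (self-adjoint) ideal using the directedness of finite index sets and the fact that finite sums of C*-ideals are C*-ideals, and both then pass to the norm closure via continuity of multiplication. Your explicit treatment of the $\qualalg$ case as the image of $\dualalg(X_\beta \subs X)$ under the quotient map is a welcome extra detail that the paper leaves implicit.
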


\begin{proof}
    Let $A$ denote the C*-algebra in expression
    \ref{eqn:theCoarseDirectLimit}.

    For each $\beta \in \alpha$, the algebra
    $\fualalg(X_\beta \subs X)$ is a C*-ideal in $\fualalg X$.
    The inclusion
    $\fualalg(X_\beta \subs X) \to \fualalg X$ factors through any finite
    sum $\sum_{j \in J} \fualalg(X_j \subs X)$ when
    $\beta \in J$, and
    the inclusion of that finite sum in $\fualalg X$ factors again
    through $A$; thus the closed $A$ is a sub-C*-algebra of $\fualalg X$.

    To check that $A$ is a C*-ideal in $\fualalg X$, it remains
    to show that $A$ is an algebraic two-sided ideal.
    Given $a \in A$ and $b \in \fualalg X$,
    find a sequence of finite sets $(J_n)_{n \in \nat}$ with $J_n \subs \alpha$
    and a sequence $(a_n)_{n \in \nat}$ with
    $a_n \in \sum_{j \in J_n} \fualalg (X_j \subs X)$ that
    converges to $a$. Finite sums of C*-ideals $\fualalg(X_j \subs X)$
    are again
    C*-ideals, thus both $(a_n b)_{n \in \nat}$ and $(b a_n)_{n \in \nat}$
    stay within the closed $A$. These sequences converge in $A$ to
    $ab$ and $ba$ respectively because multiplication is continuous.
\end{proof}

\begin{rem}
\label{rem:infiniteDecompsArentFullAlgebra}
    For finite decompositions, the
    direct limit of finite sums from
    expression \ref{eqn:theCoarseDirectLimit}
    equals $\fualalg X$ by Theorem \ref{thm:connectTwoWorlds}.
\end{rem}

\subsection{Corollaries for coarsely excisive covers}

\restatSseqCoarseUncountable

\begin{proof}
    Apply the spectral sequence from Theorem \ref{thm:uncountableAbstractSseq}
    about arbitrary sums of abstract C*-algebras to the algebras from
    Theorem \ref{thm:connectTwoWorlds} for coarse spaces.

    By Theorem \ref{thm:uncountableAbstractSseq}, the spectral sequence
    converges strongly to
    the K-theory of the norm closure of finite sums of the input ideals.
    These ideals are
    $\fualalg X_\beta \cong \fualalg (X_\beta \subs X)$ and we have described
    the norm closure of their finite sums in Proposition
    \ref{pro:theCoarseDirectLimit}.

    The special case for finite coarsely excisive covers
    follows from Remark
    \ref{rem:infiniteDecompsArentFullAlgebra}.
\end{proof}

\begin{rem}[Warning about uncountable decompositions]
    Let $(X, d)$ be a coarse space. The important C*-algebras
    $\cualalg X$, $\dualalg X$, and $\qualalg X$ are defined via very ample
    representations $\varrho\colon \conto X \to BH$ for a separable Hilbert
    space $H$. Separability of the Hilbert space is crucial for several
    isomorphism theorems. Ampleness of the representation guarantees that
    no two functions $f \neq f' \in \conto X$ may be represented on that
    separable Hilbert space by operators
    $\varrho(f)$ and $\varrho(f')$ that differ only by a compact operator.

    If the coarsely excisive cover $\{X_\beta\}_{\beta \in \alpha}$
    of $(X, d)$ has an uncountable index set $\alpha$, the separability
    requirement may force $\fualalg X_\beta = \fualalg X_{\beta'}$ for
    many $\beta \neq \beta'$, or
    may force outright triviality of ideals.
    It appears hard to construct interesting examples for
    uncountable coarse excision.
\end{rem}

\newcommand{\xcap}{{(X_0 \cap X_1)}}

With Theorem \ref{thm:sseqCoarseUncountable},
we can strengthen the following Mayer-Vietoris result.
Let $(X, d)$ be a coarse space and $X_0$, $X_1 \subseteq X$ such that
$\{X_0, X_1\}$ is a coarsely excisive cover.
All rows and columns in the following diagram are long exact sequences;
the index $j$ in $\bigoplus_j$ ranges over $\{0,1\}$:
\[
    \begin{tikzpicture}
    \mamax{0.3cm}{
        \dotsb &
        K_{s+1} \cualalg X &
        K_{s} \cualalg \xcap &
        \bigoplus_j K_s \cualalg X_j &
        K_s \cualalg X &
        \dotsb
    \\
        \dotsb &
        K_{s+1} \dualalg X &
        K_{s} \dualalg \xcap &
        \bigoplus_j K_s \dualalg X_j &
        K_s \dualalg X &
        \dotsb
    \\
        \dotsb &
        K_{s+1} \qualalg X &
        K_{s} \qualalg \xcap &
        \bigoplus_j K_s \qualalg X_j &
        K_s \qualalg X &
        \dotsb
    \\
        \dotsb &
        K_{s} \cualalg X &
        K_{s-1} \cualalg \xcap &
        \bigoplus_j K_{s-1} \cualalg X_j &
        K_{s-1} \cualalg X &
        \dotsb.
    \\};
    \path[->, font=\scriptsize]
        (m-1-1) edge (m-1-2)
        (m-1-2) edge (m-1-3)
        (m-1-3) edge (m-1-4)
        (m-1-4) edge (m-1-5)
        (m-1-5) edge (m-1-6)
        (m-2-1) edge (m-2-2)
        (m-2-2) edge (m-2-3)
        (m-2-3) edge (m-2-4)
        (m-2-4) edge (m-2-5)
        (m-2-5) edge (m-2-6)
        (m-3-1) edge (m-3-2)
        (m-3-2) edge (m-3-3)
        (m-3-3) edge (m-3-4)
        (m-3-4) edge (m-3-5)
        (m-3-5) edge (m-3-6)
        (m-4-1) edge (m-4-2)
        (m-4-2) edge (m-4-3)
        (m-4-3) edge (m-4-4)
        (m-4-4) edge (m-4-5)
        (m-4-5) edge (m-4-6)
        (m-1-2) edge (m-2-2)
        (m-2-2) edge (m-3-2)
        (m-3-2) edge (m-4-2)
        (m-1-3) edge (m-2-3)
        (m-2-3) edge (m-3-3)
        (m-3-3) edge (m-4-3)
        (m-1-4) edge (m-2-4)
        (m-2-4) edge (m-3-4)
        (m-3-4) edge (m-4-4)
        (m-1-5) edge (m-2-5)
        (m-2-5) edge (m-3-5)
        (m-3-5) edge (m-4-5)
    ;
    \end{tikzpicture}
\]
The columns are exact by definition of $\cualalg$, $\dualalg$,
and $\qualalg$.
Commutativity follows from the naturality of the Mayer-Vietoris sequence.

\begin{pro}
    Let $(X, d)$ be a coarse space and
    let $\{X_\beta\}_{\beta \in \alpha}$ be a
    coarsely excisive cover of $(X, d)$.
    Consider the following diagram with exact columns; horizontal
    arrows are induced by (direct sums of) inclusions, and unions
    range over all finite index sets $J \subs \alpha$:
    \[
        \newcommand{\limitAlg}[1]{
            \big( \overline{ \bigcup_{J}
            \sum_{j \in J} #1 (X_j \subs X) } \big)
        }
        \begin{tikzpicture}
        \mama{
            \bigoplus_{\beta \in \alpha} K_s \cualalg X_\beta &
            K_s \limitAlg{\cualalg} &
            K_s \cualalg X
        \\
            \bigoplus_{\beta \in \alpha} K_s \dualalg X_\beta &
            K_s \limitAlg{\dualalg} &
            K_s \dualalg X
        \\
            \bigoplus_{\beta \in \alpha} K_s \qualalg X_\beta &
            K_s \limitAlg{\qualalg} &
            K_s \qualalg X
        \\
            \bigoplus_{\beta \in \alpha} K_{s-1} \cualalg X_\beta &
            K_{s-1} \limitAlg{\cualalg} &
            K_{s-1} \cualalg X.
        \\};
        \path[->, font=\scriptsize]
            (m-1-1) edge (m-1-2)
            (m-2-1) edge (m-2-2)
            (m-3-1) edge (m-3-2)
            (m-4-1) edge (m-4-2)
            (m-1-2) edge (m-1-3)
            (m-2-2) edge (m-2-3)
            (m-3-2) edge (m-3-3)
            (m-4-2) edge (m-4-3)
            (m-1-1) edge (m-2-1)
            (m-2-1) edge (m-3-1)
            (m-3-1) edge (m-4-1)
            (m-1-2) edge (m-2-2)
            (m-2-2) edge (m-3-2)
            (m-3-2) edge (m-4-2)
            (m-1-3) edge (m-2-3)
            (m-2-3) edge (m-3-3)
            (m-3-3) edge (m-4-3)
        ;
        \end{tikzpicture}
    \]
    Then this diagram commutes.
\end{pro}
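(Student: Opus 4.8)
The plan is to recognise the whole array as two commutative ladders between three copies of the long exact K-theory sequence, so that every square is closed by one of exactly two principles. First I would identify each column as the long exact sequence attached to a short exact sequence of C*-algebras of the form $0\to\cualalg\to\dualalg\to\qualalg\to 0$. For the right-hand column this is the defining sequence $0\to\cualalg X\to\dualalg X\to\qualalg X\to 0$ (Notation \ref{ntt:qualalg}); for the left-hand column it is the C*-direct sum $0\to\bigoplus_\beta\cualalg X_\beta\to\bigoplus_\beta\dualalg X_\beta\to\bigoplus_\beta\qualalg X_\beta\to 0$, which is exact because each summand is a defining sequence and C*-direct sums preserve exactness; for the middle column it is the corresponding sequence of the limit ideals of Proposition \ref{pro:theCoarseDirectLimit}. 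The three vertical maps in each column are then the inclusion-induced $\cualalg\to\dualalg$, the projection-induced $\dualalg\to\qualalg$, and the connecting homomorphism $\partial\colon K_s\qualalg\to K_{s-1}\cualalg$.

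Next I would check that the two horizontal inclusions -- the direct sum into the limit ideal and the limit ideal into $\fualalg X$ -- are morphisms of these short exact sequences already at the level of C*-algebras: each carries the $\cualalg$-, $\dualalg$- and $\qualalg$-term into the corresponding term of the next short exact sequence, and the resulting squares of C*-algebras commute because every arrow in them is an honest inclusion or a canonical quotient. (The passage between the absolute $\fualalg X_\beta$ of the left column and the relative $\fualalg(X_\beta\subs X)$ is made with the natural K-theory isomorphisms of Theorem \ref{thm:isoOfRelativeRoeA} and Proposition \ref{pro:connectTwoWorldsCapQ}, which were constructed precisely to commute with the $\cualalg$--$\dualalg$--$\qualalg$ ladder.) Granting this, every square that does not meet the boundary row commutes simply by functoriality of $K_s$ applied to a commuting square of C*-algebras, and each of the two squares adjacent to $\partial$ commutes by the naturality of the connecting homomorphism with respect to a morphism of short exact sequences, i.e.\ the single identity $\partial'\circ h_* = f_*\circ\partial$.

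The one point that needs genuine work -- and the expected main obstacle -- is the exactness of the middle column together with the compatibility of the inclusions with it. I would settle this by the sum property of Theorem \ref{thm:connectTwoWorlds}: Lemmas \ref{lem:relSumA} and \ref{lem:relSumB} identify each finite partial sum $\sum_{j\in J}\fualalg(X_j\subs X)$ with the relative algebra $\fualalg(\bigcup_{j\in J}X_j\subs X)$, so for every finite $J$ the defining sequence $0\to\cualalg(\bigcup_{j\in J}X_j\subs X)\to\dualalg(\bigcup_{j\in J}X_j\subs X)\to\qualalg(\bigcup_{j\in J}X_j\subs X)\to 0$ is short exact, the quotient being $\dualalg(\,\cdot\,)/\cualalg(\,\cdot\,)$. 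These sequences form a directed system along the inclusions $J\subs J'$ of $\fins$, and since the direct limit of C*-algebras is an exact functor, passing to the limit yields the required short exact sequence of limit ideals, whose transition maps are exactly the horizontal inclusions of the diagram. Once this compatibility is in hand, the two naturality principles of the previous paragraph close every square and complete the proof with no further computation.
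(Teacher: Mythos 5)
Your proposal is correct and follows essentially the same route as the paper: the paper's own proof is just two sentences, asserting exactness of the columns "by definition" and deducing commutativity from continuity of K-theory under the direct limits in the middle column together with functoriality/naturality with respect to the inclusion morphisms. Your version supplies the details the paper leaves implicit — in particular the identification of the finite partial sums with relative algebras of unions (Lemmas \ref{lem:relSumA} and \ref{lem:relSumB}) to get exactness of the middle column, the use of Theorem \ref{thm:isoOfRelativeRoeA} to pass from the absolute $\fualalg X_\beta$ in the left column to the relative algebras, and the naturality of the connecting homomorphism for the two squares meeting the boundary row — which is a cleaner justification than the paper's somewhat oblique appeal to functoriality of the spectral sequence of Theorem \ref{thm:unsereSseqUnendlich}.
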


\begin{proof}
    The columns are exact again by definition of $\cualalg$, $\dualalg$,
    and $\qualalg$.
    Commutativity follows from continuity of K-theory -- the algebras in the
    center column are direct limits -- and from
    functoriality of the spectral sequence in Theorem
    \ref{thm:unsereSseqUnendlich} with respect to morphisms
    between C*-algebras; here, direct sums of inclusion morphisms.
\end{proof}

\subsection{The coarse space
    \texorpdfstring{$\zet^\infty$}{Z-infty}}
\label{subsection:zinfty}

Consider the free $\zet$-module $\zet^\infty = \bigoplus_\nat \zet$
of $\nat$-indexed tuples $(x_0, x_1, \ldots, x_n, \ldots)$
with only finitely many entries
different from 0. This space can be metrized in different ways.

\begin{dfn}[Weight functions, weighted 1-metric]
    Let $w\colon \nat \to \rel_{>0}$ be an arbitrary function, for example
    \[
        w\colon n \mapsto n+1, \qquad
        w\colon n \mapsto 1, \textrm{ or} \qquad
        w\colon n \mapsto \frac1{n+1}.
    \]
    We call $w$ a \emph{weight function}.
    Given a weight function $w$, define a metric $d_w$ on $\zet^\infty$:
    \[
        d_w\big((x_0, x_1, \ldots x_n, 0, 0, \ldots),
                (y_0, y_1, \ldots, y_{n'}, 0, 0, \ldots)\big)
        = \sum_{j=0}^\infty w(j)\left|x_j - y_j\right|.
    \]
\end{dfn}

\begin{exm}
    For the constant weight function $w\colon n \mapsto 1$, the metric
    $d_w$ coincides with the usual 1-metric $d_1$.
\end{exm}

\begin{rem}[Topological properties of $\zet^\infty$]
    For $w\colon n \mapsto n+1$, the metric $d_w$ is proper:
    With minimum distance $k+1$ between points in the
    $k$-th dimension, any ball of finite diameter is
    finite, thus compact. For $w\colon n \mapsto 1$ or
    $w\colon n \mapsto \frac1{n+1}$, closed $d_w$-balls with
    finite radius larger than 1 are not compact anymore.
    Under $w\colon n \mapsto \frac1{n+1}$,
    the space $(\zet^\infty, d_w)$ is not even locally compact.
\end{rem}

More than with the topological properties of these spaces, we are concerned
with their coarse properties.

The identity on any coarse space is a coarse map:
Choose $S = R$ in Definition \ref{dfn:coarseMap}.
But the identity $\zet^\infty \to \zet^\infty$ fails to be a coarse map
when the two spaces are metrized according to two different weight functions
among
$n \mapsto n+1$,
$n \mapsto 1$, and
$n \mapsto \frac1{n+1}$.
The identity ceases to be uniformly expansive:
Points with distance $1$ in dimension $n$ may have distance $n+1$
or even $(n+1)^2$ in the target space. No constant $S > 0$ can serve as an
upper bound across all dimensions $n$.

If the identity fails as a coarse equivalence, can other maps substitute?
The answer is no, for a similar reason:

\begin{pro}[Coarse properties of $\zet^\infty$]
    The weight functions $n\mapsto n+1$ and $n \mapsto 1$
    generate different coarse structures on
    $\zet^\infty$; i.e., there is no coarse equivalence
    according to Definition \ref{dfn:coarseEquivalence}.
\end{pro}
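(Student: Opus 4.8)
The plan is to argue by contradiction, exploiting that the weight $w\colon n \mapsto n+1$ makes $(\zet^\infty, d_w)$ proper --- every $d_w$-ball of finite radius is a finite set, as recorded in the remark on the topological properties of $\zet^\infty$ --- while the standard metric $d_1$ (the weight $n \mapsto 1$) admits \emph{infinite bounded sets}. Write $X = (\zet^\infty, d_w)$ with $w(n) = n+1$ and $Y = (\zet^\infty, d_1)$, and let $e_k \in \zet^\infty$ denote the $k$-th standard basis vector. The distinguishing feature I will extract is that $Y$ contains, for every threshold, an \emph{infinite} family of points that is simultaneously bounded and uniformly separated, which is impossible in the proper space $X$.

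Suppose, for contradiction, that $X$ and $Y$ are coarsely equivalent (Definition \ref{dfn:coarseEquivalence}). Then there are coarse maps $f\colon X \to Y$ and $g\colon Y \to X$ with $f \circ g$ close to $\id(Y)$; let $T > 0$ be a closeness constant, so $d_1(y, f(g(y))) \leq T$ for all $y \in Y$. First I would record that $g$ is \emph{coarsely injective}: if $g(y) = g(y')$, then $f(g(y)) = f(g(y'))$, and the triangle inequality gives $d_1(y, y') \leq d_1(y, f(g(y))) + d_1(f(g(y')), y') \leq 2T$. Equivalently, $d_1(y, y') > 2T$ forces $g(y) \neq g(y')$. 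Note that only this half of the coarse equivalence is needed.

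Next I would build the obstructing family. Fix an integer $M > T$ and set $y_k = M e_k$ for $k \in \nat$. These points all lie in the single $d_1$-ball of radius $M$ about the origin, since $d_1(0, y_k) = M$; yet they are uniformly separated, $d_1(y_k, y_l) = 2M$ for $k \neq l$. Because $g$ is uniformly expansive, applying its modulus at $R = 2M$ yields a constant $S > 0$ with $d_w(g(y_k), g(y_l)) \leq S$ for all $k$ and $l$; hence the whole set $\set{g(y_k)}{k \in \nat}$ lies in the $d_w$-ball of radius $S$ about $g(y_0)$. Since $2M > 2T$, coarse injectivity makes $k \mapsto g(y_k)$ injective, so this ball contains infinitely many distinct points. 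But $X$ is proper, so every $d_w$-ball of finite radius is finite --- a contradiction. Therefore no coarse equivalence exists.

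The main obstacle, and the conceptual heart of the argument, is recognizing the correct coarse invariant: not properness in isolation (coarse maps are only required to be proper, which permits collapsing bounded sets), but the combination \enquote{uniformly discrete and bounded}, i.e.\ the failure of bounded geometry. Once the family $\set{M e_k}{k \in \nat}$ is in hand, the only care needed is the quantifier order --- the separation threshold $M$ must be chosen \emph{after} the closeness constant $T$ has been produced by the hypothetical equivalence --- and the routine verification that $d_w$-balls are finite, which is immediate from $w(n) = n+1 \geq 1$.
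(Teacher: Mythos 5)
Your proof is correct, but it runs along a genuinely different line from the paper's. The paper fixes the expansion constant $S$ of $g$ at input scale $R=1$ and observes that a single unit hop in $(\zet^\infty, d_{n\mapsto 1})$ cannot change any coordinate of $g(x)$ in dimensions $\geq S$, where the weight already exceeds $S$; since any two points of $(\zet^\infty,d_1)$ are joined by finitely many unit hops, the entire image of $g$ has a fixed tail from dimension $S$ onward, and a point $z$ differing there can never be within $S$ of $g(f(z))$. Your argument instead isolates the coarse invariant "there is no infinite, bounded, uniformly separated family": the family $\{M e_k\}_{k\in\nat}$ exhibits this in $(\zet^\infty,d_1)$, while finiteness of $d_w$-balls for $w(n)=n+1$ forbids it; the closeness constant $T$ of $f\circ g$ to $\id$ gives the coarse injectivity needed to keep the image infinite, and uniform expansiveness keeps it bounded. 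Both proofs are complete; your quantifier discipline (choosing $M>T$ after $T$ is produced) is exactly the point that needs care, and you handle it. What each buys: your argument is the more standard and more portable one --- it shows $(\zet^\infty,d_1)$ is not coarsely equivalent to \emph{any} metric space with finite balls, and it only uses one half of the equivalence data. The paper's argument is more coordinate-specific but transfers verbatim (as the paper remarks) to the comparison with the weight $n\mapsto \frac{1}{n+1}$, where your separated family $\{Me_k\}$ would need to be rescaled per coordinate (e.g.\ $y_k = M(k+1)e_k$) before the same contradiction applies.
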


\begin{proof}
    Assume that there is a pair of coarse equivalences
    \begin{align*}
        f\colon& \left(\zet^\infty, d_{n \mapsto n+1}\right)
            \to  {\left(\zet^\infty, d_{n \mapsto 1}\right)},\\
        g\colon& \left(\zet^\infty, d_{n \mapsto 1}\right)
            \to  {\left(\zet^\infty, d_{n \mapsto n+1}\right)}.
    \end{align*}
    Then let $S > 0$ satisfy all of these conditions:
    \begin{itemize}
        \item For all $x \in \zet^\infty$, we have
            $d_{n \mapsto n+1}(x, gfx) \leq S$, this is possible
            because $f$ and $g$ are a pair of coarse equivalences.
        \item Whenever $x$, $x' \in \zet^\infty$ with
            $d_{n \mapsto 1}(x, x') \leq 1$, then
            $d_{n \mapsto n+1}(gx, gx') \leq S$, this is possible because
            $g$ is a coarse map.
        \item For simplicity, $S \in \nat$.
    \end{itemize}
    Let $x$ and $x'$ have $d_{n \mapsto 1}(x, x') \leq 1$.
    Then $d_{n \mapsto n+1}(gx, g0) \leq S$
    and in all dimensions
    $S$, $(S+1)$, $(S+2)$, $\ldots$, the coordinates of $g(x)$ and $g(x')$ must
    be identical.

    Any two points $y$, $y'$ can be linked by a finite sequence of hops
    between two points each with
    $d_{n \mapsto 1}$-distance $\leq 1$. By induction,
    $g(y)$ and $g(y')$ agree in all coordinates from dimension $S$ onwards.

    Let $z \in (\zet^\infty, d_{n \mapsto n+1})$ have different coordinates
    than $g(y)$ in dimension $S$.
    Then $(g \circ f)(z)$ and $z$ have distance at least $S+1$.
    This is not allowed when $f$ and $g$ are coarse equivalences.
\end{proof}

\begin{rem}
    When we replace the weight function $n \mapsto 1$ with
    $n \mapsto \frac{1}{n+1}$, the same argument shows that
    the coarse structure induced by that weight function is not equivalent
    to $(\zet^\infty, d_{n \mapsto n+1})$ either.
\end{rem}


\begin{ntt}
    Let $X \subseteq \zet^\infty$ be a set. For a metric $d_w$ as above
    and $R > 0$, we write
    $N_w(X, R)$ instead of $N_{d_w}(X, R)$ for the $R$-neighborhood
    of $X$ under the metric $d_w$ according to Definition
    \ref{dfn:rneighborhood}.
\end{ntt}

\begin{lem}
\label{lem:zinfty}
    For all three weight functions $w\colon n \mapsto n+1$,
    $w\colon n \mapsto 1$, and $w\colon n \mapsto \frac1{n+1}$,
    the decomposition $\{X_j\}_{j \in \nat}$ with
    \[
        X_j = \zet_{\geq 0}^j \times \zet_{\leq 0}
            \times \zet^\infty
    \]
    defines a coarsely excisive cover of $(\zet^\infty, d_w)$.
\end{lem}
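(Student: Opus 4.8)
The plan is to follow the template of the \enquote{blocky} argument from Section \ref{sec:applicationKCRn}, adapted from the sup-metric to the weighted 1-metric $d_w$. The decisive feature I will exploit is that each $X_j$ -- and each finite intersection $\bigcap_{j \in J} X_j$ -- is a \emph{product set}: in coordinate $i$ it constrains $x_i$ to lie in a subset $C_i \subseteq \zet$ that always contains $0$ (one of $\zet_{\geq 0}$, $\zet_{\leq 0}$, $\{0\}$, or all of $\zet$). Because $d_w(x,y) = \sum_i w(i)\betr{x_i - y_i}$ splits as a weighted sum over coordinates and every point of $\zet^\infty$ is finitely supported, the distance to such a product set decomposes coordinatewise as $d_w(y, X) = \sum_i w(i)\, d(y_i, C_i)$, the infimum being attained by choosing each $x_i$ to be a nearest point of $C_i$ to $y_i$; since $y_i = 0$ for large $i$ and $0 \in C_i$, the minimizer stays finitely supported and inside $X$. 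This decomposition is valid for any weight function with $w(i) > 0$, so the three cases $w\colon n \mapsto n+1$, $w\colon n \mapsto 1$, and $w\colon n \mapsto \frac1{n+1}$ need no separate treatment.

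First I would dispatch the two easy conditions of Definition \ref{dfn:coarselyExcisiveCover}. Each $X_j$ is closed, being cut out by the finitely many constraints $x_0 \geq 0, \ldots, x_{j-1} \geq 0$ and $x_j \leq 0$. For the cover property, given any finitely supported $x$, let $j$ be the smallest index with $x_j \leq 0$; such a $j$ exists because all but finitely many coordinates vanish. Then $x_i \geq 1 > 0$ for $i < j$ and $x_j \leq 0$, so $x \in X_j$, whence $\bigcup_j X_j = \zet^\infty$.

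The core is the excision inequality. Write $J = \{j_1 < \cdots < j_k\}$ and set $m = \max J$. Unwinding the constraints shows that $\bigcap_{j \in J} X_j$ forces $x_i = 0$ for $i \in J \setminus \{m\}$, $x_i \geq 0$ for $i < m$ with $i \notin J$, $x_m \leq 0$, and leaves $x_i$ free for $i > m$. Denoting by $C_i^\cap$ and $C_i^{(j)}$ the coordinate constraints of the intersection and of $X_j$, I would prove the pointwise bound
\[
    d(y_i, C_i^\cap) \leq \sum_{j \in J} d(y_i, C_i^{(j)})
\]
by a short case analysis on the position of $i$ relative to $m$ and on membership of $i$ in $J$: for $i > m$ the left side is $0$; for $i = m$ both sides equal $\max(y_i, 0)$; for $i < m$ the constraint $x_i \geq 0$ coming from $m \in J$ (with $m > i$) already supplies the term $\max(-y_i, 0)$, and when moreover $i \in J$ the term $\max(y_i, 0)$ contributed by $X_i$ itself completes $\betr{y_i} = \max(y_i,0) + \max(-y_i, 0)$. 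Multiplying by $w(i)$ and summing gives $d_w(y, \bigcap_{j \in J} X_j) \leq \sum_{j \in J} d_w(y, X_j)$. Hence if $y \in \bigcap_{j \in J} N_w(X_j, R)$, each summand is at most $R$, so $d_w(y, \bigcap_{j \in J} X_j) \leq \card J \cdot R$; taking $S = \card J \cdot R$ (allowed to depend on $R$ and $J$ by Remark \ref{rem:coarselyExcisiveCover}) establishes coarse excision.

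The only genuine obstacle is getting this coordinatewise inequality exactly right -- in particular noticing that it is essential that $m = \max J$ lies in $J$, so that for every coordinate $i < m$ there really is some $j \in J$ with $j > i$ contributing the \enquote{$x_i \geq 0$} constraint; this is precisely what keeps the bound from degenerating. Everything else reduces to the routine bookkeeping with positive and negative parts already foreshadowed by Lemma \ref{lem:blockyIntersection}.
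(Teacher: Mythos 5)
Your proof is correct, but it takes a genuinely different route from the paper. The paper reduces to the first $\max J + 1$ coordinates via the product decomposition $\zet^\infty \cong \zet^{n+1} \times \zet^\infty$, bounds the weight function on those finitely many indices by a constant $M$ to sandwich $d_w$ between multiples of $d_1$ and $d_\infty$, and then invokes the finite-dimensional blocky-set equality $\bigcap_{j \in J} N_\infty(X_j, R) = N_\infty\big(\bigcap_{j\in J} X_j, R\big)$ from Proposition \ref{pro:blockDecompSupIsExcisive}, arriving at $S = M^3 R$. You instead exploit that $d_w$ is a weighted $1$-metric and that every $X_j$ and every finite intersection is a product set whose coordinate constraints all contain $0$, so that the distance to such a set splits coordinatewise; the pointwise inequality $d(y_i, C_i^\cap) \leq \sum_{j \in J} d(y_i, C_i^{(j)})$ then yields the subadditivity $d_w\big(y, \bigcap_{j\in J} X_j\big) \leq \sum_{j \in J} d_w(y, X_j)$ and the constant $S = \card{J} \cdot R$. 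Your case analysis is sound (in particular the role of $m = \max J \in J$ in supplying the constraint $x_i \geq 0$ for every $i < m$), and the coordinatewise minimizer does stay finitely supported because $0 \in C_i$ for all $i$. What each approach buys: the paper's recycles the blocky machinery already built for $\rel^n$ and makes explicit that only finitely many coordinates matter; yours is self-contained, treats all three weight functions (indeed any positive weight) uniformly with no metric comparison, and produces a cleaner constant depending only on $\card J$ and $R$ rather than on $w$ and $\max J$. Both choices of $S$ are admissible by Remark \ref{rem:coarselyExcisiveCover}.
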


The block decomposition of $\rel^n$ from Definition
\ref{dfn:blockDecomposition} was similar, but covered only a
finite-dimensional space.

\begin{proof}[Proof of Lemma \ref{lem:zinfty}]
    \newcommand{\capj}{\bigcap_{j \in J} X_j}
    Consider a finite nonempty index set $J \subseteq \nat$ and
    the finite subcollection $\{X_j\}_{j \in J}$ of the decomposition
    $\{X_j\}_{j \in \nat}$. We have to show that, for $R > 0$, there
    exists $S > 0$ with
    \[
        \bigcap_{j \in J} N_w(X_j, R)
        \subseteq N_w\Big(\bigcap_{j \in J} X_j, S\Big).
    \]
    Let $n$ denote the highest index in $J$.
    We may write
    \[
        \bigcap_{j \in J} X_j
            = Y_0 \times Y_1 \times \ldots \times Y_{n-1}
            \times \zet_{\leq 0} \times \zet^\infty,
    \]
    where $Y_j = \{0\}$ for $j \in J$, otherwise $Y_j = \zet_{\geq 0}$.
    For easier notation, we will write $Y_n$ for $\zet_{\leq 0}$.

    We are interested in the $R$-neighborhood of each $X_j$
    for $R > 0$ and in the
    $S$-neighborhood of the intersection for a suitable $S$.
    Let $x = (x_0, x_1, x_2, \ldots)$ be a point in $\zet^\infty$.
    The coordinates after the $n$-th coordinate do not matter anymore: For
    the intersection $\capj$, the distance of $x$ to $\capj$
    depends only on the early coordinates up to the $n$-th.

    Since the metric $d_w$ on $\zet^\infty$ is a weighted 1-metric --
    distance is the weighted sum of the dimension-wise distances -- it makes
    sense to decompose $(\zet^\infty, d_w)$ into a product of metric
    spaces, $\zet^{n+1} \times \zet^\infty$, of the first $n+1$
    dimensions and the remainder $\zet^\infty$ that is irrelevant
    for the chosen $J$. Let $p$ and $q$ be the projections for this
    product decomposition,
    \begin{align*}
        p\colon \zet^\infty \to \zet^{n+1},\phantom{\zet^\infty} \,
        &p(x_0, x_1, x_2, \ldots) = (x_0, x_1, \ldots, x_n),\\
        q\colon \zet^\infty \to \zet^\infty,\phantom{\zet^{n+1}} \,
        &q(x_0, x_1, x_2, \ldots) = (x_{n+1}, x_{n+2}, \ldots).
    \end{align*}
    Both projections admit one-sided inverse
    by padding all other coordinates with
    zeros. We pull back the metric $d_w$ to either factor along these inverses.
    For $x \in \zet^\infty$ and a space $Y$ that is either
    $Y = X_j$ for a $j \in J$ or $Y = \capj$, we then have
    \begin{equation}
    \label{eqn:onlyFiniteDimensionsMatter}
        d_w(x, Y) =
        d_w\big(px, p(Y)\big)
        + \underbrace{d_w\big(qx, q(Y)\big)}_{
            \textrm{= 0}
        };
    \end{equation}
    the rightmost summand vanishes because
    both $q(X_j)$ and $q\big(\capj\big)$ are the entire range $q(\zet^\infty)$
    by construction.
    On the finite-dimensional remainder $\zet^{n+1}$, the weight function $w$
    admits an upper and a lower bound: There is a constant
    $M \geq n+1$ such that $\frac1M \leq w(k) \leq M$ for all $k < n+1$.
    For $x$, $y \in \zet^{n+1}$,
    Lemma \ref{lem:block1sup} shows
    \begin{equation}
    \label{eqn:mmm}
        d_w(x, y)
        \leq M d_1(x, y)
        \leq M d_\infty(x, y)
        \leq M^2 d_1(x, y)
        \leq M^3 d_w(x, y).
    \end{equation}
    The finite-dimensional space $\zet^{n+1}$
    embeds isometrically into $\rel^{n+1}$. This embedding is
    not necessarily the
    inclusion $\zet^{n+1} \subseteq \rel^{n+1}$; rather, depending on $w$,
    its image lattice is scaled differently per dimension.
    Still, we can now apply Proposition
    \ref{pro:blockDecompSupIsExcisive} for the $\sup$-metric $d_\infty$
    on $\rel^{n+1}$:
    \[
        \bigcap_{j \in J} N_\infty\big(p(X_j), R\big) =
        N_\infty\Big( \bigcap_{j \in J} p(X_j), R \Big).
    \]
    Together with \ref{eqn:onlyFiniteDimensionsMatter} and \ref{eqn:mmm},
    we conclude that $S = M^3R$ certifies the desired coarse excisiveness
    in the infinite-dimensional space $\zet^\infty$:
    \[
        \bigcap_{j \in J} N_w(X_j, R)
        \subseteq N_w\Big( \bigcap_{j \in J} X_j, M^3R \Big).
        \qedhere
    \]
\end{proof}

\begin{pro}
    Let $\{X_j\}_{j \in \nat}$ be the coarsely excisive cover
    of $\zet^\infty$ as in
    Lemma \ref{lem:zinfty} and let $A$ denote the direct limit C*-algebra
    \[
        A
        =
        \overline{\bigcup_{
            \substack{J \subs \nat \\ \card J \in \nat}}
            \sum_{j \in J}
            \cualalg(X_j \subs \zet^\infty)}
        =
        \overline{\sum_{j \in \nat} \cualalg(X_j \subs \zet^\infty)}.
    \]
    Under any of the three considered weight functions,
    the algebra $A$ then has trivial K-theory:
    $K_* A = 0$.
\end{pro}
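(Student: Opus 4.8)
The plan is to invoke the spectral sequence for coarsely excisive covers, Theorem~\ref{thm:sseqCoarseUncountable}, applied with $\fualalg = \cualalg$ to the countable cover $\{X_j\}_{j \in \nat}$ of $\zet^\infty$ that was shown to be coarsely excisive in Lemma~\ref{lem:zinfty}. Since the index set $\nat$ is infinite, the theorem guarantees that this spectral sequence converges strongly to the K-theory of the direct limit ideal $\overline{\bigcup_{J} \sum_{j \in J} \cualalg(X_j \subs \zet^\infty)}$, which is precisely the algebra $A$ in the statement. Its first page is
\[
    E^1_{p,q} \cong \bigoplus_{\card J = p+1} K_q \cualalg\Big( \bigcap_{j \in J} X_j \Big)
\]
for $p \geq 0$, and $0$ for $p < 0$, where $J$ ranges over nonempty finite subsets of $\nat$. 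It therefore suffices to show that every intersection term vanishes in K-theory.

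The key geometric observation is that every finite intersection $\bigcap_{j \in J} X_j$ retains a half-line factor, in sharp contrast to the finite-dimensional situation of $\rel^n$, where the top intersection collapsed to a point and produced a nonzero $\zet$. Writing $n = \max J$, a coordinate-by-coordinate analysis exactly as in the proof of Lemma~\ref{lem:zinfty} yields
\[
    \bigcap_{j \in J} X_j = Y_0 \times \cdots \times Y_{n-1} \times \zet_{\leq 0} \times \zet^\infty,
\]
where $Y_k = \{0\}$ for $k \in J$ with $k < n$ and $Y_k = \zet_{\geq 0}$ for $k \notin J$ with $k < n$; the coordinate at position $n$ is never forced to be nonnegative and thus contributes the half-line $\zet_{\leq 0}$. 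I would then show that this intersection is flasque in the sense of Definition~\ref{dfn:flasque}: the self-map shifting the $n$-th coordinate by $t \mapsto t - 1$ and fixing all others maps the set into itself (since $t \leq 0$ implies $t - 1 \leq 0$), is a translation and hence close to the identity and uniformly coarse, and its iterates push the $n$-th coordinate to $-\infty$, eventually leaving any bounded set. This is the direct analogue of the argument that $X \times \rel_{\geq 0}$ is flasque. By Proposition~\ref{pro:flasqueiszero}, we conclude $K_* \cualalg\big( \bigcap_{j \in J} X_j \big) = 0$.

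Consequently every summand of every $E^1_{p,q}$ vanishes, so the whole first page is zero; the spectral sequence collapses immediately, $E^\infty_{*,*} = 0$, and strong convergence forces $K_* A = 0$.

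The main obstacle is purely the bookkeeping of the intersection formula together with the flasqueness verification; once these are in place, the spectral sequence does all the remaining work with no extension problem to solve. The one point requiring mild care is that the flasqueness witness must be a genuine coarse map under each of the three weight functions $w$, but since a shift by one lattice step in a single coordinate is an isometry that moves points by the constant $w(n)$, uniform expansiveness (take $S = R$) and properness hold regardless of the choice of $w$.
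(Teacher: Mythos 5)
Your proposal is correct and follows essentially the same route as the paper: apply the spectral sequence for the (countable) coarsely excisive cover, observe that every finite intersection $\bigcap_{j\in J} X_j$ retains the flasque factor $\zet_{\leq 0}$ in coordinate $\max J$, invoke Proposition \ref{pro:flasqueiszero} to kill every $E^1$-term, and conclude $K_*A = 0$ by strong convergence. The only cosmetic difference is that you cite Theorem \ref{thm:sseqCoarseUncountable} where the paper invokes Theorem \ref{thm:unsereSseqUnendlich} directly, and you spell out the flasqueness witness explicitly where the paper just points to the $\zet_{\leq 0}$ factor; both are fine.
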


\begin{proof}
    We may use our spectral sequence from Theorem
    \ref{thm:unsereSseqUnendlich} because the cover $\{X_j\}_{j \in \nat}$
    is coarsely excisive.

    Each $X_j$ is flasque (Definition \ref{dfn:flasque})
    because it contains the flasque factor
    $\zet_{\leq 0}$.

    For finite $J \subseteq \nat$,
    the intersection $\bigcap_{j \in J} X_j$ is again flasque:
    Let $n$ be the largest index in $J$. We may describe this intersection
    by listing its one-dimensional factors: First, there are $n$ factors that
    we may ignore. Then there is the flasque factor
    $\zet_{\leq 0}$.
    All further factors form a copy of $\zet^\infty$.
    Because of the flasque factor, we conclude that
    $K_* \cualalg \big(\bigcap_{j \in J} X_j\big) = 0$.

    Since this holds for all finite $J \subseteq \nat$, Theorem
    \ref{thm:unsereSseqUnendlich} gives a spectral
    sequence with first page $E^1_{*,*} = 0$, converging to
    $K_* A = 0$.
\end{proof}

The countable intersection $\bigcap_{j \in \nat} X_j$ is a single
point, but infinite intersections do not appear in the spectral sequence.

\subsection{Wedge sum of rays}

Let $\alpha$ be either $\nat$ or a finite cardinality with $\alpha > 0$.
Let $X_\beta = (\rel_{\geq 0}, 0)$ be a ray for all $\beta \in \alpha$, pointed
at the origin. Define
\[
    X = \bigvee_{\beta \in \alpha} X_\beta,
\]
a finite or countable wedge sum of the half-open rays glued together at their
origins. Define a coarse structure on $X$ by the metric
\[
    d(x, y) = \begin{cases}
        |x - y| & \textrm{if } x \textrm{ and } y
                    \textrm{ lie in the same ray,}\\
        |x| + |y| & \textrm{otherwise.}
    \end{cases}
\]
If local compactness were desired, we could remove the common point
$0$. This would not change any large-scale properties of $X$. But in
this example, local compactness is irrelevant.

\begin{lem}
\label{lem:wedgeSumYBeta}
    Let $Y_\beta = X_0 \cup X_\beta$ for all $\beta \in \alpha$.
    Cover $X$ by $\set{Y_\beta}{\beta \in \alpha}$.

    This is a coarsely excisive cover of the wedge sum $X$.
\end{lem}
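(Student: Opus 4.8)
The plan is to verify, in turn, the two requirements of a coarsely excisive cover from Definition \ref{dfn:coarselyExcisiveCover}. First I would dispatch the easy structural facts: the family $\{Y_\beta\}_{\beta \in \alpha}$ does cover $X$, since $\bigcup_{\beta \in \alpha} Y_\beta = X_0 \cup \bigcup_{\beta \in \alpha} X_\beta = X$, and each $Y_\beta = X_0 \cup X_\beta$ is closed as a union of two closed rays. The real content is the neighborhood inclusion \ref{eqn:coarselyExcisiveCover}, and the decisive preliminary step is to compute the finite intersections $\bigcap_{j \in J} Y_j$ explicitly. Because every ray shares the basepoint $0$ and two distinct rays meet only in $0$, distributing the unions shows that for $\card J \geq 2$ the intersection collapses to the single ray $X_0$ (each $Y_j \supseteq X_0$, while a point outside $X_0$ can lie in at most one ray $X_j$), whereas for $\card J = 1$ it is simply $Y_j$.

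Having pinned down the intersections, I would compute the pointwise distance $d(x, Y_j)$ as a function of which ray contains $x$. The metric makes this transparent: for $x \notin X_0$ lying in a ray $X_\gamma$, the nearest point of $X_0$, and of any ray $X_j$ with $j \neq \gamma$, is the shared basepoint $0$, so that $d(x, X_0) = \betr{x}$ and $d(x, X_j) = \betr{x}$ for $j \neq \gamma$, while $d(x, X_\gamma) = 0$. Consequently $d(x, Y_j) = \min\big(d(x, X_0), d(x, X_j)\big)$ equals $0$ when $x \in X_0$ or $x \in X_j$, and equals $\betr{x}$ in every other case.

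For $\card J = 1$ the inclusion \ref{eqn:coarselyExcisiveCover} holds trivially with $S = R$. For $\card J \geq 2$ I would feed the distance computation into the intersection of neighborhoods: a point $x$ lies in $\bigcap_{j \in J} N_d(Y_j, R)$ exactly when $d(x, Y_j) \leq R$ for every $j \in J$. Since $J$ has at least two elements, any $x \notin X_0$ fails to lie in $X_j$ for at least one $j \in J$, which forces $\betr{x} \leq R$. Thus the intersection is $X_0$ together with the radius-$R$ portion of every other ray near the basepoint, and each such $x$ satisfies $d(x, 0) = \betr{x} \leq R$ with $0 \in X_0$. This yields $\bigcap_{j \in J} N_d(Y_j, R) \subseteq N_d(X_0, R) = N_d\big(\bigcap_{j \in J} Y_j, R\big)$, so $S = R$ certifies coarse excisiveness. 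I do not anticipate a genuine obstacle; the only care required is the bookkeeping around the shared basepoint, and---unlike the block decomposition of $\rel^n$ in Proposition \ref{pro:blockDecomp1isExcisive}---no metric-distortion constant arises, since the one-dimensional geometry of each ray lets me take $S = R$ exactly.
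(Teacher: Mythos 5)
Your proof is correct and follows essentially the same route as the paper: identify $\bigcap_{j \in J} Y_j = X_0$ for $\card J \geq 2$, observe that the intersection of the $R$-neighborhoods is $X_0$ together with the radius-$R$ balls around the basepoint in the remaining rays, and conclude with $S = R$. The only cosmetic difference is that you argue via pointwise distances $d(x, Y_j) = \min\bigl(d(x,X_0), d(x,X_j)\bigr)$ where the paper writes out both sets explicitly as $X_0 \cup N_d(\{0\}, R)$; the content is identical.
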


\begin{proof}
    We have to check: For all finite $\{Y_{\beta(0)}, Y_{\beta(1)}, \ldots,
    Y_{\beta(n-1)}\}$ and all $R > 0$,
    there exists $S > 0$ such that the $n$-fold intersection of the
    $R$-neighborhoods lies in the $S$-neighborhood of the intersection:
    \[
        \bigcap_{j<n} N_d(Y_{\beta(j)}, R) \subseteq
        N_d{\Big(\bigcap_{j<n} Y_{\beta(j)}, S\Big)}.
    \]
    For $n = 1$, this is trivial with $S = R$.
    For $n > 1$, consider $n$ pairwise different $Y_{\beta(j)}$:
    The intersection $\bigcap_{j<n} Y_{\beta(j)}$ is always $Y_0 = X_0$.

    The common point $0$ joins all rays $X_\beta$, its $R$-neighborhood
    $N_d(\{0\}, R)$
    is therefore the union of the intervals $[0, R]$ from all rays $X_\beta$.
    Thus:
    \begin{align*}
        \bigcap_{j<n} N_d(Y_{\beta(j)}, R)
            &= X_0 \cup N_d(\{0\}, R),
        \\
        N_d\Big(\bigcap_{j<n} Y_{\beta(j)}, S\Big)
            = N_d(X_0, S) &= X_0 \cup N_d(\{0\}, S).
    \end{align*}
    Choose $S = R$ to see that
    the cover $\{Y_\beta\}_{\beta \in \alpha}$ is coarsely excisive.
\end{proof}

\newcommand{\wedgeBunchZ}{{
    \bigoplus\limits_{\substack{\beta \in \alpha \\ \beta \neq 0}} \zet
}}

\begin{pro}
    For the wedge sum $X = \bigvee_{\beta \in \alpha} X_\beta$ with
    each $X_\beta = (\rel_{\geq 0}, 0)$ a ray pointed at the origin
    and the coarsely excisive cover $\{Y_\beta\}_{\beta \in \alpha}$
    from Lemma \ref{lem:wedgeSumYBeta},
    let $A$ be the direct limit C*-algebra of sums
    $\sum_{j \in J} \cualalg(Y_j \subs X)$ over finite sets
    $J \subs \alpha$. Then
    \[
        K_s A = \begin{cases}
            0 & \textrm{for } s \textrm{ even,} \\
            \wedgeBunchZ & \textrm{for } s \textrm{ odd.} \\
        \end{cases}
    \]
\end{pro}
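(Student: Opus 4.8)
The plan is to feed the coarsely excisive cover $\{Y_\beta\}_{\beta \in \alpha}$ from Lemma \ref{lem:wedgeSumYBeta} into the spectral sequence of Theorem \ref{thm:sseqCoarseUncountable} with $\fualalg = \cualalg$. Its convergence target is precisely the K-theory of $\overline{\bigcup_J \sum_{j \in J} \cualalg(Y_j \subs X)} = A$, so it suffices to identify the $E^1$-page and show that it collapses. Explicitly,
\[
    E^1_{p,q} \cong \bigoplus_{\card J = p+1}
        K_q \cualalg\Big( \bigcap_{j \in J} Y_j \Big)
\]
for $p \geq 0$ and $E^1_{p,q} = 0$ for $p < 0$, where $J$ ranges over the nonempty finite subsets of $\alpha$.

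First I would compute the intersections $\bigcap_{j \in J} Y_j$. Because each $Y_j = X_0 \cup X_j$ contains $X_0$ and distinct rays meet only in the common origin, one checks directly that $\bigcap_{j \in J} Y_j = X_0$ whenever $\card J \geq 2$: if $0 \in J$ this is immediate, and otherwise $\bigcap_{j \in J}(X_0 \cup X_j) = X_0 \cup \bigcap_{j \in J} X_j = X_0 \cup \{0\} = X_0$. Only the one-element index sets $J = \{j\}$ produce something larger, namely $Y_j$ itself.

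Next I would compute the relevant Roe K-groups. The ray $X_0 = \rel_{\geq 0}$ is flasque, being of the form $\{\mathrm{pt}\} \times \rel_{\geq 0}$, so $K_* \cualalg X_0 = 0$ by Proposition \ref{pro:flasqueiszero}; in particular $K_q \cualalg Y_0 = 0$. For $j \neq 0$, the space $Y_j = X_0 \cup X_j$ is two rays glued at the origin, and the map sending $X_0$ identically onto $\rel_{\geq 0}$ and $X_j$ onto $\rel_{\leq 0}$ by negation is an isometry $Y_j \cong \rel$ (the wedge distance $\betr{a} + \betr{b}$ between rays matches the distance $\betr{a - (-b)}$ in $\rel$). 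Hence $K_q \cualalg Y_j \cong K_q \cualalg \rel$, which by Theorem \ref{thm:kHomologyOfEuclideanSpace} with $n = 1$ equals $\zet$ for $q$ odd and $0$ for $q$ even.

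Assembling these facts, the column $p = 0$ reads $E^1_{0,q} = \bigoplus_{\beta \neq 0} K_q \cualalg \rel$, which is $\bigoplus_{\beta \neq 0} \zet$ for $q$ odd and $0$ for $q$ even, while every column with $p \geq 1$ vanishes since each of its summands involves $K_* \cualalg X_0 = 0$. With only one nonzero column, every differential $d^r$ is forced to vanish, the spectral sequence collapses at $E^1$, and each diagonal $p + q = s$ meets at most the single term $E^\infty_{0,s} = E^1_{0,s}$. There is thus no extension problem to solve, and $K_s A \cong E^1_{0,s}$ gives exactly the claimed value. I expect no genuine obstacle in this argument; the only steps demanding a little care are the isometry $Y_j \cong \rel$ and the flasqueness of a single ray, both routine once the intersection pattern of the cover has been pinned down.
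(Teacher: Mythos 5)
Your proposal is correct and follows essentially the same route as the paper: apply the spectral sequence for coarsely excisive covers, observe that $Y_\beta \cong \rel$ (coarsely) for $\beta \neq 0$ while $Y_0 = X_0$ and all intersections of two or more distinct $Y_\beta$ equal the flasque ray $X_0$, so only the column $p=0$ survives and the sequence collapses at $E^1$ with no extension problem. Your explicit isometry $Y_j \cong \rel$ and the intersection computation are exactly the details the paper's terser proof leaves implicit.
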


\begin{proof}
    The set $Y_0$ remains a flasque ray.
    Each other $Y_\beta$ is coarsely equivalent
    to $\rel$ and therefore has K-theory
    $K_0\cualalg Y_n = 0$ and $K_1 \cualalg Y_n = \zet$.
    This determines the column $E^1_{0,*}$ of the first page.

    Each finite intersection of at least two different $Y_n$ is
    the flasque space $Y_0 = X_0$.
    The K-theory of its Roe algebra vanishes.
    Therefore the $E^1_{*,*}$-term looks as follows:
    \[
        \begin{tikzpicture}
        \masseq{
            \phantom{-}3 & & \vdots & & \\
            \phantom{-}2 & & 0      & & \\
            \phantom{-}1 & & \wedgeBunchZ & & \\
            \phantom{-}0 & & 0      & & \\
            -1 & & \wedgeBunchZ & & \\
            & -1  &  0  &  1  & 2 \\
            };
        \drawSseqAxes{5}{6}
        \end{tikzpicture}
    \]
    This spectral sequence collapses on the first page.
    We may read the K-theory of $A \subs \cualalg X$ from the only
    nonzero column: If $\alpha$ is countably infinite,
    the dimension of the free $\zet$-module
    in odd degrees is countably infinite;
    if $\alpha$ is finite, the dimension is $\alpha - 1$.
\end{proof}

For finite $\alpha$, an alternative proof to compute $A = \cualalg X$
by induction repeats the Mayer-Vietoris principle $\alpha - 1$
times for two-fold coverings: Glue
single rays, one after another, to the wedge sum that starts with a single
ray.

\section{Generalizations}\thispagestyle{plain}

\subsection{KO-theory}

Instead of K-theory of C*-algebras over $\com$, we may examine
\emph{KO-theory} for a C*-algebra $A$ over $\rel$, denoted $KO_*A$.
All basic definitions carry over without change, turning $KO_*$ into
a $\zet$-graded covariant continuous functor into abelian groups.

The major difference is the degree of the Bott isomorphism:
Instead of $K_s A \cong K_{s+2} A$, real Bott periodicity admits a natural
isomorphism $\beta\colon KO_s A \cong KO_{s+8} A$ for all $s \in \zet$.
As a result, for C*-ideals $I \subseteq A$, the six-term exact sequence
\[
    \dotsb \to K_0 I \to K_0 A \to K_0 (A/I) \loto{\partial_2 \circ \beta}
        K_1 I \to K_1 A \to K_1 (A/I) \loto{\partial_1} K_0 I \to \dotsb
\]
becomes a 24-term exact sequence in the real case:
\[
    \dotsb \to KO_0 A \to KO_0 (A/I) \loto{\partial_8 \circ \beta} KO_7 I \to
        KO_7 A \to KO_7 (A/I) \loto{\partial_7} KO_6 I \to \dotsb.
\]
Looking back to the constructions of the various spectral sequences,
we relied on the Bott isomorphism merely for the construction
of the degree-amending links
$\lambda(n)\colon E(n)^r_{p,q} \to E(n+2)^r_{p,q}$ from Definition \ref{dfn:degreeAmendingLink}. These morphisms
and Proposition \ref{pro:sseqUnendlichExistsNoLinkDef} about the existence
of the limit spectral sequence
can be adapted to work with KO-theory: Define
\[
    \lambda_\rel(n)\colon E(n)^r_{p,q} \to E(n+8)^r_{p,q}
\]
by chaining 8 links $\{\ell(n)^r_{p,q}\}_{r,p,q}$
from Definition \ref{dfn:linkBetweenSseqs}
-- instead of only
2 such links in the case of K-theory -- with the real Bott isomorphism.
The direct limit along these $\lambda_\rel(8n)$ for $n \to \infty$ does
not depend on the position of the Bott isomorphism within the chain
nor on whether we consider
the directed system along $\lambda_\rel(8n)$,
$\lambda_\rel(8n+1)$, $\ldots$, or $\lambda_\rel(8n+7)$.

Convergence is proven as in Section \ref{sec:sseqUnendlichConvergence}
and generalized to uncountable ideal decompositions
as in Section \ref{sec:uncountableSums}.
This yields a well-defined spectral sequence that computes the KO-theory
of C*-ideal sums: The statement from
Theorem \ref{thm:uncountableAbstractSseq} holds when we replace K-theory
with KO-theory.

\subsection{Group actions}

Let $(X, d)$ be a metric space. Let $G$ be a countable discrete group
that acts on $X$ freely and properly by $d$-isometries.
This action extends to $\conto X$ via $(gf)(x) = f(g^{-1}x)$ for
$g \in G$, $f \in \conto X$, and $x \in X$.
In addition to a very ample representation $\varrho\colon \conto X \to BH$
for a separable Hilbert space $H$, let $U\colon G \to H$ be a unitary
representation with $U(g) \varrho(f) = \varrho(gf) U(g)$.

This gives rise to C*-algebras $C^*_G X$ and $D^*_G X$ by
changing the usual definitions of $\cualalg X$ and $\dualalg X$: The
norm closure is taken of only the $G$-invariant
operators in $BH$ that satisfy all other requirements of $\cualalg X$
and $\dualalg X$, respectively.
Furthermore, define
$Q^*_G X = D^*_G X / C^*_G X$.

In \cite[Definition 3.6]{siegel-mv}, for a $G$-invariant subspace $Y \subs X$
that we may assume to be closed, P. Siegel constructs the relative
C*-algebras $C^*_G(Y \subs X)$ and $D^*_G(Y \subs X)$ by imposing
on operators in $C^*_G X$ and $D^*_G X$ the
conditions from Section \ref{sec:relativeRoeAlgebras}
for support near $Y$ and local compactness outside $Y$.
Finally, define $Q^*_G(Y \subs X)$ as the quotient
$D^*_G(Y \subs X) / C^*_G(Y \subs X)$.
There is a long exact sequence for $s \in \zet$,
\[
    \dotsb
    \to K_{s} C^*_G X
    \to K_{s} D^*_G X
    \to K_{s} Q^*_G X
    \to K_{s-1} C^*_G X
    \to \dotsb.
\]
Furthermore, $Q^*_G X \cong \qualalg X_G$. The sequence may thus
be rewritten with the K-homology of $X_G$ instead of $Q^*_G X$
according to Remark \ref{rem:kHomology}.

Let the functor $F^*_G$ stand for either $C^*_G$, $D^*_G$, or $Q^*_G$.
According to \cite[Propositions 3.8, 3.9]{siegel-mv}, for
closed $G$-invariant subspaces $Y \subs X$ and $s \in \zet$, we have
\[
    K_s F^*_G (Y \subs X) \cong K_s F^*_G Y.
\]
For $G$-invariant coarsely excisive covers $\{X_0, X_1\}$ of $X$, we have
\begin{align*}
    F^*_G (X_0 \subs X) + F^*_G (X_1 \subs X) &= F^*_G X,
    \\
    F^*_G (X_0 \subs X) \cap F^*_G (X_1 \subs X)
    &= F^*_G (X_0 \cap X_1 \subs X).
\end{align*}
This leads to a Mayer-Vietoris exact sequence.
We may expect a generalization
of our spectral sequence to arbitrary $G$-invariant
coarsely excisive covers $\{X_\beta\}_{\beta \in \alpha}$ of $X$.
The definitions of $C^*_G(Y \subs X)$ and $D^*_G(Y \subs X)$ treat the
$G$-invariance in the least intrusive way possible. There should be no
difficulty in adapting
the equations to finite selections $J \subs \alpha$ of the coarsely
excisive cover:
\begin{align*}
    K_s \Big( \sum_{j \in J} F^*_G (X_j \subs X) \Big)
    &\cong
    K_s F^*_G \Big( \bigcup_{j \in J} X_j \Big),
    \\
    K_s \Big( \bigcap_{j \in J} F^*_G (X_j \subs X) \Big)
    &\cong
    K_s F^*_G \Big( \bigcap_{j \in J} X_j\Big),
\end{align*}
with $F^*_G$ standing for $C^*_G$, $D^*_G$, or $Q^*_G$.
The constructions will be similar to those leading to our
Theorem \ref{thm:connectTwoWorlds} for these equations where $G$ is trivial.

This provides a $G$-invariant version of our spectral sequence
from Theorem \ref{thm:sseqCoarseUncountable} for coarsely excisive
covers.

    \newpage
    \bibliography{res/z_bibtex}
    \bibliographystyle{alpha}

\if{0}
    \cleardoublepage
    \thispagestyle{empty}
    \pagenumbering{gobble}
    \singlespacing
    \input{./res/09-ende/03-lebenslauf.tex}
\fi
\end{document}